\newcommand{\PrR}{\Pr^{\mathrm{R}}}
\newcommand{\PrLk}{\Pr^{\mathrm{L}}_{\kappa}}
\newcommand{\PrRk}{\Pr^{\mathrm{R}}_{\kappa}}
\newcommand{\MapC}{\Map_{\mathcal{C}}}
\newcommand{\MapCc}{\Map_{\mathcal{C}}^{\txt{Cart}}}
\newcommand{\MAP}{\txt{MAP}}
\newcommand{\Gpd}{\txt{Gpd}}
\newcommand{\CAT}{\txt{CAT}}
\newcommand{\To}{\Rightarrow}
\newcommand{\CatIC}{\Cat_{\infty/\mathcal{C}}}
\newcommand{\CatICc}{\CatIC^{\txt{cart}}}
\newcommand{\CatIDc}{\Cat_{\infty/\mathcal{D}}^{\txt{cart}}}
\newcommand{\Tw}{\txt{Tw}}
\def\firstpage{1}\def\lastpage{42}
\def\received{}\def\revised{}
\def\communicated{}
\def\magnification{\afterassignment\m@g\count@}
\def\m@g{\mag=\count@\hsize6.5truein\vsize8.9truein\dimen\footins8truein}
\font\eightrm=cmr8
\font\caps=cmcsc10                    
\font\Caps=cmcsc10 scaled \magstep1   
\def\TSkip{\bigskip}
\newbox\TheTitle{\obeylines\gdef\GetTitle #1
\ShortTitle  #2
\SubTitle    #3
\Author      #4
\ShortAuthor #5
\EndTitle
{\setbox\TheTitle=\vbox{\baselineskip=20pt\let\par=\cr\obeylines%
\halign{\centerline{\Caps##}\cr\noalign{\medskip}\cr#1\cr}}%
	\copy\TheTitle\TSkip\TSkip%
\def\next{#2}\ifx\next\empty\gdef\STitle{#1}\else\gdef\STitle{#2}\fi%
\def\next{#3}\ifx\next\empty%
    \else\setbox\TheTitle=\vbox{\baselineskip=20pt\let\par=\cr\obeylines%
    \halign{\centerline{\caps##} #3\cr}}\copy\TheTitle\TSkip\TSkip\fi%
\centerline{\caps #4}\TSkip\TSkip%
\def\next{#5}\ifx\next\empty\gdef\SAuthor{#4}\else\gdef\SAuthor{#5}\fi%
\ifx\received\empty\relax
    \else\centerline{\eightrm Received: \received}\fi%
\ifx\revised\empty\TSkip%
    \else\centerline{\eightrm Revised: \revised}\TSkip\fi%
\ifx\communicated\empty\relax
    \else\centerline{\eightrm Communicated by \communicated}\fi\TSkip\TSkip%
\catcode'015=5}}\def\Title{\obeylines\GetTitle}
\def\Abstract{\begingroup\narrower
    \parskip=\medskipamount\parindent=0pt{\caps Abstract. }}
\def\EndAbstract{\par\endgroup\TSkip}
\long\def\MSC#1\EndMSC{\def\arg{#1}\ifx\arg\empty\relax\else
     {\par\narrower\noindent%
     2010 Mathematics Subject Classification: #1\par}\fi}
\long\def\KEY#1\EndKEY{\def\arg{#1}\ifx\arg\empty\relax\else
	{\par\narrower\noindent Keywords and Phrases: #1\par}\fi\TSkip}
\newbox\TheAdd\def\Addresses{\vfill\copy\TheAdd\vfill
    \ifodd\number\lastpage\vfill\eject\phantom{.}\vfill\eject\fi}
{\obeylines\gdef\GetAddress #1
\Address #2 
\Address #3
\Address #4
\EndAddress
{\def\xs{5.9truecm}\parindent=0pt
\setbox0=\vtop{{\obeylines\hsize=\xs#1\par}}\def\next{#2}
\ifx\next\empty 
     \setbox\TheAdd=\hbox to\hsize{\hfill\copy0\hfill}
\else\setbox1=\vtop{{\obeylines\hsize=\xs#2\par}}\def\next{#3}
\ifx\next\empty 
     \setbox\TheAdd=\hbox to\hsize{\hfill\copy0\hfill\copy1\hfill}
\else\setbox2=\vtop{{\obeylines\hsize=\xs#3\par}}\def\next{#4}
\ifx\next\empty\ 
     \setbox\TheAdd=\vtop{\hbox to\hsize{\hfill\copy0\hfill\copy1\hfill}
                \vskip20pt\hbox to\hsize{\hfill\copy2\hfill}}
\else\setbox3=\vtop{{\obeylines\hsize=\xs#4\par}}
     \setbox\TheAdd=\vtop{\hbox to\hsize{\hfill\copy0\hfill\copy1\hfill}
	        \vskip20pt\hbox to\hsize{\hfill\copy2\hfill\copy3\hfill}}
\fi\fi\fi\catcode'015=5}}\gdef\Address{\obeylines\GetAddress}
\begin{document}
\def\LOCAL{\jobname.files}
\Title
Lax Colimits and Free Fibrations in $\infty$-Categories
\ShortTitle 
Lax Colimits and Free Fibrations in $\infty$-Categories
\SubTitle   
\Author 
David Gepner, Rune Haugseng, and Thomas Nikolaus
\ShortAuthor 
David Gepner, Rune Haugseng, Thomas Nikolaus
\EndTitle
\Abstract 
We define and discuss lax and weighted colimits of diagrams in 
$\infty$-categories and show that the coCartesian fibration corresponding to a 
functor is given by its lax colimit. A key ingredient, of independent interest, 
is a simple characterization of the free Cartesian fibration on a functor of 
$\infty$-categories. As an application of these results, we prove that 2-representable 
functors are preserved under exponentiation, and also that the total space of a 
presentable Cartesian fibration between  is presentable, generalizing a 
theorem of Makkai and Paré to the $\infty$-categories setting. Lastly, in an appendix, we 
observe that pseudofunctors between (2,1)-categories give rise to functors 
between $\infty$-categories via the Duskin nerve. 
setting and the Duskin nerve. 
\EndAbstract
\MSC 
    18D30, 18A30
\EndMSC
\KEY 
Fibered categories, presentable categories, lax limits and colimits
\EndKEY
\Address
David Gepner
Department of Mathematics
Purdue University
West Lafayette
Indiana
USA
dgepner@purdue.edu
\Address
Rune Haugseng
University of Copenhagen
Copenhagen
Denmark
haugseng@math.ku.dk
\Address
Thomas Nikolaus
Max-Planck-Institut für Mathematik
Bonn
Germany
\Address
\EndAddress


\tableofcontents

\section{Introduction}
In the context of ordinary category theory, Grothendieck's theory of
fibrations~\cite{SGA1} can be used to give an alternative description
of functors to the category $\Cat$ of categories. This has been
useful, for example, in the theory of stacks in algebraic geometry, as
the fibration setup is usually more flexible. When working with
\icats{}, however, the analogous notion of \emph{Cartesian} fibrations
is far more important: since defining a functor to the \icat{} $\CatI$
of \icats{} requires specifying an infinite amount of coherence data,
it is in general not feasible to ``write down'' definitions of
functors, so that manipulating Cartesian fibrations is often the only
reasonable way to define key functors.

For ordinary categories, the \emph{Grothendieck construction} gives a
simple description of the fibration classified by a functor $F \colon
\mathbf{C}^{\op} \to \Cat$; this can also be described formally as a
certain weighted colimit, namely the lax colimit of the functor
$F$. For \icats{}, on the other hand, the equivalence between
Cartesian fibrations and functors has been proved by Lurie using the
\emph{straightening functor}, a certain left Quillen functor between
model categories. This leaves the corresponding right adjoint, the
\emph{unstraightening} functor, quite inexplicit. 

One of our main goals in this paper is to show that Lurie's
unstraightening functor is a model for the \icatl{} analogue of the
Grothendieck construction. More precisely, we introduce \icatl{}
versions of lax and oplax limits and colimits and prove the following:
\begin{thm}\ 
  \begin{enumerate}[(i)]
  \item Suppose $F \colon \mathcal{C} \to \CatI$ is a functor of
    \icats{}, and $\mathcal{E} \to \mathcal{C}$ is a coCartesian
    fibration classified by $F$. Then $\mathcal{E}$ is the oplax
    colimit of the functor $F$.
  \item Suppose $F \colon \mathcal{C}^{\op} \to \CatI$ is a functor of
    \icats{}, and $\mathcal{E} \to \mathcal{C}$ is a Cartesian
    fibration classified by $F$. Then $\mathcal{E}$ is the lax
    colimit of the functor $F$.
  \end{enumerate}
\end{thm}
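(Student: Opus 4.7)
My plan is to prove (i) directly via the universal property of lax colimits as weighted colimits, and to deduce (ii) from (i) by a duality argument. Since passing to opposites exchanges Cartesian and coCartesian fibrations and also interchanges lax and oplax natural transformations, the two statements are formally dual and it suffices to prove (i).

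For (i), I would write the lax colimit as the weighted colimit of $F$ with weight given by the appropriate slice functor, so that its universal property states that for any $\mathcal{D} \in \CatI$, maps from the lax colimit of $F$ to $\mathcal{D}$ correspond to lax natural transformations $F \to \txt{const}_{\mathcal{D}}$. The key technical input is the paper's promised description of the free coCartesian fibration: applied to a point $\{c\} \hookrightarrow \mathcal{C}$, it recovers the slice $\mathcal{C}_{c/} \to \mathcal{C}$, which is unstraightened from the representable functor $\mathcal{C}(c, -) : \mathcal{C} \to \CatI$.

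The heart of the argument is then a colimit decomposition: writing $F \simeq \operatorname{colim}_{(c,x) \in \int F} \mathcal{C}(c,-)$ as a colimit of representables indexed by its own Grothendieck construction $\int F$, and transporting this across the straightening/unstraightening equivalence, one presents $\mathcal{E}$ as the colimit of the slice fibrations $\mathcal{C}_{c/} \to \mathcal{C}$ indexed by $\int F$. But this is exactly the construction defining the lax colimit of $F$ as a weighted colimit.

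The main obstacle I foresee is compatibility: one must verify that the colimit of slice fibrations computed in the $\infty$-category of coCartesian fibrations over $\mathcal{C}$ --- where the straightening equivalence naturally lives --- agrees, after forgetting the structure map, with the weighted colimit computed absolutely in $\CatI$. This should be essentially formal given the free fibration characterization, but requires care with variance and with the precise identification of weights, since different conventions for ``lax'' versus ``oplax'' can disagree by an opposite.
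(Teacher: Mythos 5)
There is a genuine gap at the heart of your argument: the co-Yoneda decomposition $F \simeq \colim_{(c,x)\in \int F} \mathcal{C}(c,-)$ is valid for \emph{space}-valued functors, but fails for $\CatI$-valued ones, and it is exactly the $\CatI$-valued case that is at stake here. The corepresentables $\mathcal{C}(c,-)$ take values in $\mathcal{S}\subset \CatI$, and since $\mathcal{S}\hookrightarrow\CatI$ preserves colimits (it is a left adjoint to $\iota$) and colimits in $\Fun(\mathcal{C},\CatI)$ are computed pointwise, \emph{any} colimit of corepresentables is again space-valued; so no functor $F$ with a non-groupoid value can be so decomposed. Concretely, take $\mathcal{C}=\ast$ and $F=\Delta^{1}$: then $\int F=\Delta^{1}$ and your proposed presentation of $\mathcal{E}$ as $\colim_{\int F}\mathcal{C}_{c/}$ yields $\colim_{\Delta^{1}}\ast\simeq\ast$, i.e.\ the groupoidification of $\mathcal{E}$ rather than $\mathcal{E}$ itself. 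In general the plain colimit of the slices over $\int F$ computes the localization of $\mathcal{E}$ at the coCartesian edges, which is why the lax colimit must instead be the coend $\colim_{\txt{Tw}(\mathcal{C})}\mathcal{C}_{\blank/}\times F(\blank)$ over the twisted arrow category; your closing identification of the $\int F$-indexed colimit with this weighted colimit is asserted but not justified, and the two constructions genuinely differ.

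For comparison, the paper does not decompose $F$ at all. It first proves the formula over simplices $\Delta^{n}$ by exhibiting the mapping simplex of $\phi\colon[n]\to\sSet^{+}$ as a (homotopy) colimit over a cofinal subcategory of $\txt{Tw}([n])$, then bootstraps to general $\mathcal{C}$ via a representability argument: using the free Cartesian fibration (applied to $\id_{\mathcal{C}}$, not to points) and the identification of natural transformations as an end, it shows that sections of a Cartesian fibration compute the oplax limit, identifies the exponential fibration $\Phi^{F}_{\mathcal{X}}$ with the unstraightening of $\Fun(F(\blank),\mathcal{X})$ by writing $\mathcal{C}$ as a colimit of its simplices, and concludes by Yoneda from $\Fun(\txt{Un}^{\txt{co}}_{\mathcal{C}}(F),\mathcal{X})\simeq\lim_{\txt{Tw}(\mathcal{C})}\Fun(\mathcal{C}_{\blank/}\times F(\blank),\mathcal{X})$. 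Your duality reduction of (ii) to (i) and your identification of the free fibration on a point with $\mathcal{C}_{c/}$ are both correct and do match the paper, but the central decomposition step would need to be replaced by something like this coend/mapping-simplex argument (or a genuinely lax, $(\infty,2)$-categorical co-Yoneda lemma, which would require substantial additional machinery).
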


To prove this we make use of an explicit description of the free
Cartesian fibration on an arbitary functor of \icats{}.  More
precisely, the \icat{} $\CatICc$ of Cartesian fibrations over
$\mathcal{C}$ is a subcategory of the slice \icat{} $\CatIC$, and we
show that the inclusion admits a left adjoint given by a simple
formula:
\begin{thm}
  Let $\mathcal{C}$ be an \icat{}. For $p \colon \mathcal{E} \to
  \mathcal{C}$ any functor of \icats{}, let $F(p)$ denote the map
  $\mathcal{E} \times_{\mathcal{C}^{\{1\}}} \mathcal{C}^{\Delta^{1}} \to
  \mathcal{C}^{\{0\}}$ (i.e.\ the pullback is along the map
  $\mathcal{C}^{\Delta^{1}} \to \mathcal{C}$ given by evaluation at $1
  \in \Delta^{1}$ and the projection is induced by evaluation at
  $0$). Then $F$ defines a functor $\CatIC \to \CatICc$, which is left
  adjoint to the forgetful functor $\CatICc \to \CatIC$.
\end{thm}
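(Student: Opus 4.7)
The plan is to construct a unit $\eta_p \colon p \to U F(p)$ and verify that precomposition induces an equivalence
\[
\eta_p^{*} \colon \MapCc(F(p), q) \xrightarrow{\;\sim\;} \MapC(p, Uq)
\]
for every Cartesian fibration $q \colon \mathcal{D} \to \mathcal{C}$. The argument decomposes into three steps: show that $F(p) \to \mathcal{C}$ is a Cartesian fibration, construct the unit, and verify the universal property.

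First I would check that $F(p) \to \mathcal{C}$ is a Cartesian fibration. The key input is the standard fact that evaluation at $0$, $\mathcal{C}^{\Delta^{1}} \to \mathcal{C}$, is itself a Cartesian fibration (classifying $c \mapsto \mathcal{C}_{c/}$): given $f \colon c' \to c$ and $\phi \colon c \to a$ above $c$, the Cartesian lift is $\phi \circ f \colon c' \to a$, viewed as a square with $f$ along the domain edge and identity along the codomain edge. Pulling back along $p$ via evaluation at $1$, one checks directly that a morphism $(g, \sigma)$ of $F(p)$ is Cartesian over $\mathcal{C}$ iff $g$ is an equivalence in $\mathcal{E}$ and $\sigma$ is Cartesian for evaluation at $0$; explicit Cartesian lifts at $(e, \phi)$ are obtained by taking $g = \id_{e}$ and $\sigma$ the Cartesian lift in $\mathcal{C}^{\Delta^{1}}$. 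Functoriality of $F$ and preservation of Cartesian morphisms on arrows of $\CatIC$ then follow from naturality of the pullback and the explicit description of Cartesian arrows just given.

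The unit $\eta_{p}$ is obtained by applying the degeneracy $s_{0} \colon \mathcal{C} \to \mathcal{C}^{\Delta^{1}}$ (the constant arrow) to the fiber-product definition: this gives $(\id_{\mathcal{E}}, s_{0} \circ p) \colon \mathcal{E} \to F(p)$ sending $e \mapsto (e, \id_{p(e)})$. It is a map over $\mathcal{C}$ because evaluation at $0$ composed with $s_{0}$ is $\id_{\mathcal{C}}$, and it is manifestly natural in $p$.

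The main task---and the principal obstacle---is the universal property. Informally, the inverse to $\eta_{p}^{*}$ sends $f \colon \mathcal{E} \to \mathcal{D}$ to the map $\tilde f \colon F(p) \to \mathcal{D}$ given on objects by $\tilde f(e, \phi) = \phi^{*} f(e)$, the Cartesian pullback. Making this coherent in the \icatl{} setting is the technical heart: one needs a \emph{functor} encoding the choice of Cartesian lifts for $q$. The standard approach is to exhibit a section of the natural projection from the \icat{} of $q$-Cartesian arrows of $\mathcal{D}$ to $\mathcal{D} \times_{\mathcal{C}} \mathcal{C}^{\Delta^{1}}$, which is a trivial Kan fibration by the universal characterization of $q$-Cartesian arrows --- this is precisely where the Cartesian hypothesis on $q$ is used essentially. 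Composing this lifting functor with $f$ in the $\mathcal{E}$-coordinate produces $\tilde f$; that it preserves Cartesian morphisms uses the identification from the first step (Cartesian arrows in $F(p)$ live in identities on the $\mathcal{E}$-coordinate, so they go to Cartesian arrows in $\mathcal{D}$ by construction), and the two triangle identities follow because $\tilde f \circ \eta_{p}(e)$ is the Cartesian lift of $\id_{p(e)}$ at $f(e)$, which is canonically $f(e)$, and the dual check for $\eta_{F(p)}$ unwinds similarly.
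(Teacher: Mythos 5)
Your proposal is correct in outline but follows a genuinely different route from the paper. You prove the adjunction by writing down an explicit inverse to $\eta_p^*$, using that for a Cartesian fibration $q\colon\mathcal{D}\to\mathcal{C}$ the projection from the full subcategory of $\Fun(\Delta^1,\mathcal{D})$ spanned by $q$-Cartesian arrows to $\mathcal{D}\times_{\mathcal{C}}\mathcal{C}^{\Delta^1}$ is a trivial Kan fibration, so that a choice of section packages all Cartesian lifts coherently; this is the \icatl{} version of the classical cleavage argument, and it is where your Cartesian hypothesis on $q$ does its work. The paper instead never constructs the inverse: after producing the same unit $\eta$ and verifying (as you do) that $F$ lands in $\CatICc$ with Cartesian arrows the ones projecting to equivalences in $\mathcal{E}$, it proves that $F$ preserves colimits (via flatness of $\mathcal{C}_{x/}\to\mathcal{C}$) and then checks that $\eta^*$ is an equivalence only on the generators $\Delta^0$ and $\Delta^1$ of $\CatIC$ under colimits --- the $\Delta^0$ case via the contravariant model structure and the trivial cofibration $\{x\}\hookrightarrow\mathcal{C}_{/x}$, the $\Delta^1$ case via a pushout decomposition of $\mathcal{C}^{\Delta^1}\times_{\mathcal{C}}\Delta^1$. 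Your approach is more direct and closer to the $1$-categorical proof, and it yields the explicit formula $\tilde f(e,\phi)=\phi^*f(e)$ as an actual functor rather than an abstract adjoint; the cost is that the burden of coherence falls on the trivial-fibration section, and the step you pass over most quickly --- showing $\widetilde{g\circ\eta_p}\simeq g$ for $g$ preserving Cartesian arrows, which needs the identification $g(e,\phi)\simeq\phi^*g(e,\id_{p(e)})$ coming precisely from Cartesian-preservation --- is where the real content of that direction sits and should be spelled out. The paper's approach trades this explicitness for the colimit-generation argument, which avoids choosing lifts entirely but requires the auxiliary flatness and model-categorical inputs.
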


In the special case where $p\colon\mathcal{E}\to\mathcal{C}$ is a Cartesian
fibration and $\mathcal{C}$ is an $\infty$-category equipped with a
``mapping $\infty$-category'' functor
$\MAP_\mathcal{C}\colon\mathcal{C}^{\op}\times\mathcal{C}\to\CatI$,
such as is the case when $\mathcal{C}$ is the underlying \icat{} of an
$(\infty,2)$-category, then it is natural to ask when $p$ is classified
by a functor of the form
$\MAP_\mathcal{C}(-,X)\colon\mathcal{C}^{\op}\to\CatI$ for some object
$X$ of $\mathcal{C}$.  We say that $p$ is {\em 2-representable} when
this is the case.  As an application of our theorems, we show that, if
$p\colon\mathcal{E}\to\mathcal{C}$ is a 2-representable Cartesian
fibration such that the mapping \icat{} functor $\MAP_\mathcal{C}$ is
tensored and cotensored over $\CatI$, and $\mathcal{D}$ is any
\icat{}, then the exponential
$q\colon\Fun(\mathcal{D},\mathcal{E})\to \Fun(\mathcal{D},\mathcal{C})$ is itself
2-representable.  This is relevant in the description of the functoriality of twisted cohomology theories as discussed in joint work of the third author with U. Bunke \cite{Differentialcohomology}. More precisely it describes a converse to the construction in Section 3 and Appendix A of this paper.

The third main result of this paper provides a useful extension of the
theory of presentable \icats{} in the context of Cartesian fibrations,
generalizing a theorem of Makkai and Paré~\cite{MakkaiPare} to the
\icatl{} context. More precisely, we show:
\begin{thm}
  Suppose $p \colon \mathcal{E} \to \mathcal{C}$ is a Cartesian and
  coCartesian fibration such that $\mathcal{C}$ is presentable, the
  fibres $\mathcal{E}_{x}$ are presentable for all $x \in
  \mathcal{C}$, and the classifying functor $F \colon \mathcal{C}^{\op}
  \to \LCatI$ preserves $\kappa$-filtered limits for some regular cardinal
  $\kappa$. Then the \icat{} $\mathcal{E}$ is presentable, and the
  projection $p$ is an accessible functor (i.e. it preserves
  $\lambda$-filtered colimits for some sufficiently large cardinal
  $\lambda$).
\end{thm}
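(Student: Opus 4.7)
The strategy is to verify cocompleteness and accessibility of $\mathcal{E}$ separately; together these give presentability, and accessibility of $p$ will then be automatic because $p$ in fact preserves all small colimits.

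Cocompleteness of $\mathcal{E}$ follows from the standard criterion for the total space of a coCartesian fibration (e.g.\ \cite{HTT}*{Cor.~4.3.1.11}): it suffices that $\mathcal{C}$ and each fiber $\mathcal{E}_{x}$ be cocomplete and that the pushforwards $f_{!}$ preserve small colimits, which they do since they are left adjoint to the Cartesian pullbacks $f^{*}$. Concretely, the colimit of a diagram $K \to \mathcal{E}$ is computed by first projecting to $\mathcal{C}$, taking the colimit $x$ there, coCartesianly pushing every object into $\mathcal{E}_{x}$, and then taking the colimit in the fiber. In particular, $p$ preserves all small colimits, so accessibility of $p$ will follow once $\mathcal{E}$ is proved accessible.

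For accessibility of $\mathcal{E}$, I would choose a regular cardinal $\lambda \geq \kappa$ large enough that $\mathcal{C}$ is $\lambda$-accessible with a set $\{x_{i}\}$ of $\lambda$-compact generators, each $\mathcal{E}_{x_{i}}$ is $\lambda$-accessible, and $F$ preserves $\lambda$-filtered limits. I claim the set $S$ of those $e \in \mathcal{E}_{x_{i}}$ which are themselves $\lambda$-compact in the fiber is a set of $\lambda$-compact generators of $\mathcal{E}$. To verify that $e_{0} \in \mathcal{E}_{x_{0}} \cap S$ is $\lambda$-compact in $\mathcal{E}$, let $(e_{\alpha})$ be a $\lambda$-filtered diagram with projection $(x_{\alpha})$, let $x = \operatorname{colim}_{\alpha} x_{\alpha}$, and let $\widetilde{e} = \operatorname{colim}_{\alpha} (g_{\alpha})_{!} e_{\alpha}$ be the total colimit in $\mathcal{E}_{x}$. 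Apply the Cartesian fiber sequence
$$\Map_{\mathcal{E}_{y}}(f_{!} e_{0}, -) \longrightarrow \Map_{\mathcal{E}}(e_{0}, -) \longrightarrow \Map_{\mathcal{C}}(x_{0}, p(-))$$
(fiber over $f\colon x_{0} \to y$). The base commutes past the $\lambda$-filtered colimit by $\lambda$-compactness of $x_{0}$ in $\mathcal{C}$. The fibers commute past the colimit by $\lambda$-compactness of $f_{!} e_{0}$ in $\mathcal{E}_{x}$: since $F$ takes values in $\LCatI$, each $f^{*}$ has a further right adjoint and therefore preserves all colimits, so its left adjoint $f_{!}$ preserves $\lambda$-compact objects. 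For the generation statement, given $e \in \mathcal{E}_{x}$ with $x = \operatorname{colim}_{i} x_{i}$ a $\lambda$-filtered colimit of generators, the hypothesis that $F$ preserves $\lambda$-filtered limits identifies $\mathcal{E}_{x} \simeq \lim_{i} \mathcal{E}_{x_{i}}$ in $\LCatI$, from which one expresses $e$ as a $\lambda$-filtered colimit of coCartesian pushforwards $(g_{i})_{!} e_{i}$ with each $e_{i} \in S$.

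The main obstacle I anticipate is reconciling the coCartesian description of the total colimit $\widetilde{e}$ with the Cartesian fiber-sequence description of mapping spaces: the interchange $f^{*}(g_{\beta})_{!}$ has no Beck--Chevalley formula in general, so one must identify $g_{\alpha}^{*} \widetilde{e}$ with a filtered colimit of objects in $\mathcal{E}_{x_{\alpha}}$ by some other means. This is precisely what the hypothesis on $F$ furnishes: the identification $\mathcal{E}_{x} \simeq \lim_{\alpha} \mathcal{E}_{x_{\alpha}}$ means an object of $\mathcal{E}_{x}$ is determined by its compatible pullbacks along the $g_{\alpha}$, and one can match these term-by-term with the filtered system, carrying out the required interchange cofinally in the $\lambda$-filtered index.
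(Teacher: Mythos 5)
Your cocompleteness argument is fine (it is the paper's Lemma~\ref{lem:coCartcolims}), and the observation that $p$ then preserves all small colimits, so that accessibility of $p$ comes for free, is correct. The accessibility half, however, has a genuine gap at the step where you need $f_{!}e_{0}$ to be $\lambda$-compact in $\mathcal{E}_{y}$. You justify this by asserting that ``each $f^{*}$ has a further right adjoint and therefore preserves all colimits''; the hypotheses give no such thing. Being a Cartesian and coCartesian fibration with presentable fibres only makes each $f^{*}$ a right adjoint between presentable $\infty$-categories, so $f^{*}$ is accessible and preserves $\mu_{f}$-filtered colimits for some cardinal $\mu_{f}$ depending on $f$ --- it need not preserve all colimits, and hence $f_{!}$ need not preserve $\lambda$-compact objects for your chosen $\lambda$. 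Worse, in your fibre-sequence argument the target $y = p(e_{\alpha})$ ranges over arbitrary objects of $\mathcal{C}$, a proper class, so no single $\lambda$ can be chosen uniformly without first using the hypothesis that $F$ preserves $\kappa$-filtered limits to reduce everything to the small subcategory of $\kappa$-compact objects of the base. You invoke that hypothesis only at the end, for generation, and the interchange problem you correctly flag as ``the main obstacle'' is acknowledged but not overcome: the equivalence $\mathcal{E}_{x} \simeq \lim_{\alpha} \mathcal{E}_{x_{\alpha}}$ does not by itself produce the term-by-term matching of $g_{\alpha}^{*}\widetilde{e}$ with a filtered system, and the generation claim is likewise asserted rather than proved (note that $g_{i}^{*}e$ need not be $\lambda$-compact in $\mathcal{E}_{x_{i}}$, so a further resolution and a cofinality argument are required).

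For comparison, the paper sidesteps every one of these set-theoretic and Beck--Chevalley issues by never exhibiting compact generators directly. It first base-changes along $\mathcal{B} \simeq \Ind_{\kappa}\mathcal{B}^{\kappa} \hookrightarrow \mathcal{P}(\mathcal{B}^{\kappa})$ and applies \cite{HTT}*{Proposition 5.4.6.6} to reduce to the extended fibration over a presheaf category; it then identifies the total space of that fibration, via the machinery of \S\ref{sec:proof}, with the full subcategory $\mathcal{P}^{\txt{cocart}}_{\kappa}(\mathcal{E}_{\mathcal{B}}^{\triangleleft})$ of a presheaf $\infty$-category cut out by a \emph{small set} of locality conditions, hence an accessible localization. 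Your direct route is not hopeless, but to complete it you would essentially have to redo this reduction by hand; as written, the compactness and generation claims do not follow from what you have established.
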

While the theory of \emph{accessible} and \emph{presentable}
categories is already an important part of ordinary category theory,
when working with \icats{} the analogous notions turn out to be
indispensable. Whereas, for example, it is often possible to give an
explicit construction of colimits in an ordinary category, when
working with \icats{} we often have to conclude that colimits exist by
applying general results on presentable \icats{}. Similarly, while for
ordinary categories one can frequently just write down an adjoint to a
given functor, for \icats{} an appeal to the adjoint functor theorem,
which is most naturally considered in the presentable context, is
often unavoidable. It is thus very useful to know that various ways of
constructing \icats{} give accessible or presentable ones; many such
results are proved in \cite{HTT}*{\S 5}, and our result adds to these
by giving a criterion for the source of a Cartesian fibration to be
presentable.

\subsection{Overview}
In \S\ref{sec:twist} we briefly review the definitions of twisted
arrow \icats{} and \icatl{} ends and coends, and use these to define
weighted (co)limits. Then in \S\ref{sec:simplex} we prove our main
result for coCartesian fibrations over a simplex, using the
\emph{mapping simplex} defined in \cite{HTT}*{\S 3.2.2}. Before we
extend this result to general coCartesian fibrations we devote three
sections to preliminary results: in \S\ref{sec:freefib} we give a
description of the \emph{free Cartesian fibration}, i.e. the left
adjoint to the forgetful functor from Cartesian fibrations over
$\mathcal{C}$ to the slice \icat{} $\CatIC$; in \S\ref{sec:nattrend}
we prove that the space of natural transformations between two
functors is given by an end (a result first proved by Glasman \cite[Proposition 2.3.]{GlasmanTHHHodge}), and in \S\ref{sec:enhmap} we prove that
the straightening equivalence extends to an equivalence of the natural
enrichments in $\CatI$ of the two \icats{}
involved. \S\ref{sec:Cartwtcolim} then contains the proof of our main
result: Cartesian and coCartesian fibrations are given by weighted
colimits of the classifying functors. In \S\ref{sec:laxrep} we give a
simple application of our results to functors that are representable
via an enrichment in $\CatI$, and in \S\ref{sec:proof} we apply them
to identify the functor classifying a certain simple Cartesian
fibration; this is a key step in our proof in \S\ref{sec:pres} that
the source of a presentable fibration is presentable. Finally, in
appendix~\ref{sec:pseudo} we use Duskin's nerve for strict
(2,1)-categories to check that the pseudonaturality of the
unstraightening functors on the level of model categories implies that
they are natural on the level of \icats{}.

\subsection{Notation}
Much of this paper is based on work of Lurie in \cite{HTT,HA}; we have
generally kept his notation and terminology. In particular, by an
\emph{\icat{}} we mean an $(\infty,1)$-category or more specifically a
quasicategory.
 We also use the following conventions, some of which differ from those
of Lurie:
\begin{itemize}
\item Generic categories are generally denoted by single capital
  bold-face letters ($\mathbf{A},\mathbf{B},\mathbf{C}$) and generic
  \icats{} by single caligraphic letters
  ($\mathcal{A},\mathcal{B},\mathcal{C}$). Specific categories and
  \icats{} both get names in the normal text font.
\item If $\mathcal{C}$ is an \icat{}, we write $\iota \mathcal{C}$ for
  the \emph{interior} or \emph{underlying space} of $\mathcal{C}$,
  i.e.\ the largest subspace of $\mathcal{C}$ that is a Kan complex.
\item If $f \colon \mathcal{C} \to \mathcal{D}$ is left adjoint to a
  functor $g \colon \mathcal{D} \to \mathcal{C}$, we will refer to the
  adjunction as $f \dashv g$.
\item We write $\PrL$ for the \icat{} of presentable \icats{} and
  functors that are left adjoints, i.e.\ colimit-preserving functors,
  and $\PrR$ for the \icat{} of presentable \icats{} and
  functors that are right adjoints, i.e.\ accessible functors that
  preserve all small limits.
\item If $\mathcal{C}$ and $\mathcal{D}$ are \icats{}, we will denote
  the \icat{} of functors $\mathcal{C} \to \mathcal{D}$ by both
  $\Fun(\mathcal{C}, \mathcal{D})$ and $\mathcal{D}^{\mathcal{C}}$.
\item If $S$ is a simplicial set, we write 
  \[ \txt{St}^{+}_{S} : (\sSet^{+})_{/S^{\sharp}}  \rightleftarrows
  \Fun(\mathfrak{C}(S)^{\op}, \sSet^{+}) : \txt{Un}^{+}_{S} \]
  for the marked (un)straightening Quillen equivalence, as
  defined in \cite{HTT}*{\S 3.2}.
  \item We write $\CatICc$ for the subcategory of $\CatIC$ consisting of
  Cartesian fibrations over $\mathcal{C}$, with morphisms the functors
  that preserve Cartesian edges, $\MapCc(\blank,\blank)$ for the
  mapping spaces in $\CatICc$, and
  $\Fun_{\mathcal{C}}^{\txt{cart}}(\blank, \blank)$ for the \icat{} of
  functors that preserve Cartesian edges, defined as a full
  subcategory of the \icat{} $\Fun_{\mathcal{C}}(\blank, \blank)$ of
  functors over $\mathcal{C}$. Similarly, we write
  $\CatIC^{\txt{cocart}}$ for the \icat{} of coCartesian fibrations
  over $\mathcal{C}$, $\Map_{\mathcal{C}}^{\txt{cocart}}(\blank,
  \blank)$ for the mapping spaces in $\CatIC^{\txt{cocart}}$, and
  $\Fun_{\mathcal{C}}^{\txt{cocart}}(\blank, \blank)$ for the full
  subcategory of $\Fun_{\mathcal{C}}(\blank, \blank)$ spanned by the
  functors that preserve coCartesian edges.

\item If $\mathcal{C}$ is an \icat{}, we write 
  \[ \txt{St}_{\mathcal{C}} \colon \CatICc \rightleftarrows
  \Fun(\mathcal{C}^{\op}, \CatI) \colon \txt{Un}_{\mathcal{C}} \]
  for the adjoint equivalence of \icats{} induced by the
  (un)straightening Quillen equivalence via \cite{HTT}*{Proposition 5.2.4.6}.
\item If $S$ is a simplicial set, we write 
  \[ \txt{St}^{+,\txt{co}}_{S} : (\sSet^{+})_{/S^{\sharp}}
  \rightleftarrows \Fun(\mathfrak{C}(S), \sSet^{+}) :
  \txt{Un}^{+,\txt{co}}_{S} \] for the coCartesian marked
  (un)straightening Quillen equivalence, given by
  $\txt{St}^{+,\txt{co}}_{S}(X) :=
  (\txt{St}^{+}_{S^{\op}}(X^{\op}))^{\op}$.
\item If $\mathcal{C}$ is an \icat{}, we write 
  \[ \txt{St}^{\txt{co}}_{\mathcal{C}} \colon \CatIC^{\txt{cocart}} \rightleftarrows
  \Fun(\mathcal{C}, \CatI) \colon \txt{Un}^{\txt{co}}_{\mathcal{C}} \]
  for the adjoint equivalence of \icats{} induced by the
  coCartesian (un)straightening Quillen equivalence.
\item If $\mathcal{C}$ is an \icat{}, we denote the Yoneda embedding
  for $\mathcal{C}$
  by \[ y_{\mathcal{C}} \colon \mathcal{C} \to
  \mathcal{P}(\mathcal{C}),\]
  where $\mathcal{P}(\mathcal{C})$ is the presheaf \icat{}
  $\Fun(\mathcal{C}^{\op}, \mathcal{S})$ with $\mathcal{S}$ the
  \icat{} of spaces.
\end{itemize}

\subsection{Acknowledgments}
\emph{David}: Thanks to Joachim Kock for helpful discussions regarding free fibrations and lax colimits.\\
\emph{Rune}: I thank Clark Barwick for helpful discussions of the
presentability result and Michael Shulman for telling me about
\cite{MakkaiPare}*{Theorem 5.3.4} in answer to a MathOverflow
question.

We thank Aaron Mazel-Gee and Omar Antolín Camarena for pointing out
some inaccuracies in the first version of this paper. We also thank an anonymous referee for a very careful and helpful report.

\section{Twisted Arrow $\infty$-Categories, (Co)ends, and Weighted (Co)limits}\label{sec:twist}
In this section we briefly recall the definitions of twisted arrow
\icats{} and (co)ends, and then use these to give a natural
definition of weighted (co)limits in the \icatl{} setting.

\begin{defn}
  Let $\epsilon \colon \simp \to \simp$ be the functor
  $[n] \mapsto [n] \star [n]^{\op}$. The \emph{edgewise subdivision}
  of a simplicial set $S$ is the composite
  $\epsilon^{*}S = S \circ \epsilon$.
\end{defn}

\begin{defn}
  Let $\mathcal{C}$ be an \icat{}. The \emph{twisted arrow \icat{}}
  $\txt{Tw}(\mathcal{C})$ of $\mathcal{C}$ is the simplicial set
  $\epsilon^{*}\mathcal{C}$. Thus in particular
  \[ \Hom(\Delta^{n}, \txt{Tw}(\mathcal{C})) \cong \Hom(\Delta^{n}
  \star (\Delta^{n})^{\op}, \mathcal{C}).\] The natural transformations
  $\Delta^{\bullet}, (\Delta^{\bullet})^{\op} \to \Delta^{\bullet}
  \star (\Delta^{\bullet})^{\op}$ induce a projection
  $\txt{Tw}(\mathcal{C}) \to \mathcal{C} \times \mathcal{C}^{\op}$.
\end{defn}

\begin{remark}
  The twisted arrow \icat{}, which was originally introduced by Joyal,
  has previously been extensively used by
  Barwick~\cite{BarwickQ,BarwickMackey} and
  collaborators~\cite{BarwickGlasmanNardinCart}, and by 
  Lurie~\cite{HA}*{\S 5.2.1}. By \cite{HA}*{Proposition 5.2.1.3} the
  projection $\txt{Tw}(\mathcal{C}) \to \mathcal{C} \times
  \mathcal{C}^{\op}$ is a right fibration; in particular, the
  simplicial set $\txt{Tw}(\mathcal{C})$ is an \icat{} if
  $\mathcal{C}$ is. The functor $\mathcal{C}^{\op} \times \mathcal{C}
  \to \mathcal{S}$ classified by this right fibration is the mapping
  space functor $\Map_{\mathcal{C}}(\blank, \blank)$ by
  \cite{HA}*{Proposition 5.2.1.11}.
\end{remark}

\begin{warning}
  There are two possible conventions for defining the edgewise
  subdivision (and therefore also the twisted arrow \icat{}); we
  follow that of Lurie in \cite{HA}*{\S 5.2.1}. Alternatively, one can
  define the edgewise subdivision using the functor $[n] \mapsto
  [n]^{\op} \star [n]$, in which case $\Tw(\mathcal{C}) \to
  \mathcal{C}^{\op} \times \mathcal{C}$ is a \emph{left} fibration ---
  this is the convention used in the papers of Barwick cited above.
\end{warning}

\begin{ex}\label{ex:Twn}
  The twisted arrow category $\txt{Tw}([n])$ of the category $[n]$ is
  the partially ordered set with objects $(i,j)$ where $0 \leq i \leq
  j \leq n$ and with $(i,j) \leq (i',j')$ if $i \leq i' \leq j' \leq
  j$.
\end{ex}

A natural definition of (co)ends in the \icatl{} setting is then the
following.
\begin{defn}
  If $F \colon \mathcal{C} \times \mathcal{C}^{\op} \to \mathcal{D}$
  is a functor of \icats{}, the \emph{coend} of $F$
  is the colimit of the
  composite functor
  \[ \txt{Tw}(\mathcal{C}) \to \mathcal{C} \times \mathcal{C}^{\op}
  \to \mathcal{D}. \]
  Similarly, if $G\colon \mathcal{C}^\op \times \mathcal{C}\to \mathcal{D}$ is a functor of \icats{}, then the \emph{end} of $G$ is the limit of the composite functor
  \[ \txt{Tw}(\mathcal{C})^{\op} \to \mathcal{C}^\op \times \mathcal{C}
  \to \mathcal{D}.\]
\end{defn}

\begin{remark}
  These \icatl{} notions of ends and coends are also discussed in
  \cite{GlasmanTHHHodge}*{\S 2}. In the context of simplicial
  categories, a homotopically correct notion of coends was extensively
  used by Cordier and Porter~\cite{CordierPorter}; see their paper for
  a discussion of the history of such definitions.
\end{remark}

Now we can consider weighted (co)limits:
\begin{defn}
  Let $\mathcal{R}$ be a presentably symmetric monoidal \icat{},
  i.e.\ a presentable \icat{} equipped with a symmetric monoidal
  structure such that the tensor product preserves colimits in each
  variable, and let $\mathcal{M}$ be a right $\mathcal{R}$-module
  in $\PrL$. Then $\mathcal{M}$ is in particular tensored and
  cotensored over $\mathcal{R}$, i.e.\ there are functors
  \[ (\blank \otimes \blank) \colon \mathcal{M}\times \mathcal{R} \to \mathcal{M},\]
  \[ (\blank)^{(\blank)} \colon \mathcal{R}^{\op} \times \mathcal{M}
  \to \mathcal{M},\] such that for every $x \in \mathcal{R}$ the
  functor $\blank \otimes x \colon \mathcal{M} \to \mathcal{M}$ is
  left adjoint to $(\blank)^{x}$. Given functors $F \colon \mathcal{C}
  \to \mathcal{M}$ and $W \colon \mathcal{C}^{\op} \to \mathcal{R}$,
  the \emph{$W$-weighted colimit} $\colim^{W}_{\mathcal{C}} F$ of $F$ is defined to be the coend
  $\colim_{\txt{Tw}(\mathcal{C})} F(\blank) \otimes W(\blank)$. Similarly, given $F \colon \mathcal{C} \to \mathcal{M}$
  and $W \colon \mathcal{C} \to \mathcal{R}$, the \emph{$W$-weighted
    limit} $\lim^{W}_{\mathcal{C}} F$ of $F$ is the end $\lim_{\txt{Tw}(\mathcal{C})^{\op}}
  F(\blank)^{W(\blank)}$.
\end{defn}

We are interested in the case where both $\mathcal{R}$ and
$\mathcal{M}$ are the \icat{} $\CatI$ of \icats{}, with the tensoring
given by Cartesian product and the cotensoring by $\Fun(\blank,
\blank)$. In this case there are two special weights for every \icat{}
$\mathcal{C}$: we have functors $\mathcal{C}_{/\blank} \colon
\mathcal{C} \to \CatI$ and $\mathcal{C}_{\blank/} \colon
\mathcal{C}^{\op} \to \CatI$ sending $x \in \mathcal{C}$ to
$\mathcal{C}_{/x}$ and $\mathcal{C}_{x/}$, respectively. Precisely,
these functors are obtained by straightening the source and target
projections $\mathcal{C}^{\Delta^{1}} \to \mathcal{C}$, which are
respectively Cartesian and coCartesian. Using these functors, we can
define lax and oplax (co)limits:
\begin{defn}
  Suppose $F \colon \mathcal{C} \to \CatI$ is a
  functor. Then:
  \begin{itemize}
  \item The \emph{oplax colimit} of $F$ is the colimit of $F$ weighted
    by $\mathcal{C}_{\blank/}$, i.e. \[\colim_{\txt{Tw}(\mathcal{C})}
    F(\blank) \times \mathcal{C}_{\blank/}.\]
  \item The \emph{lax colimit} of $F$ is the colimit of $F$ weighted
    by $(\mathcal{C}^{\op})_{/\blank}$, i.e. \[\colim_{\txt{Tw}(\mathcal{C})}
    F(\blank) \times (\mathcal{C}^\op)_{/\blank} .\]
  \item The \emph{lax limit} of $F$ is the limit of $F$ weighted by
    $\mathcal{C}_{/\blank}$, i.e. \[\lim_{\txt{Tw}(\mathcal{C})^\op}
    \Fun(\mathcal{C}_{/\blank}, F(\blank)).\]
  \item The \emph{oplax limit} of $F$ is the limit of $F$ weighted by
    $(\mathcal{C}^{\op})_{\blank/}$, i.e. \[\lim_{\txt{Tw}(\mathcal{C})^\op}
    \Fun((\mathcal{C}^\op)_{\blank/}, F(\blank)).\]
  \end{itemize}
\end{defn}

\section{CoCartesian Fibrations over a Simplex}\label{sec:simplex}
In this preliminary section we study coCartesian fibrations over the
simplices $\Delta^{n}$, and observe that in this case the description
of a coCartesian fibration as an oplax colimit follows easily from
results of Lurie in \cite{HTT}*{\S 3.2}. More precisely, we will
prove:
\begin{propn}\label{propn:sxweighted}
  There is an equivalence
  \[ \colim_{\txt{Tw}([n])} \phi(\blank) \times [n]_{\blank/}    \isoto
  \txt{Un}^{\txt{co}}_{[n]}(\phi)\] of functors $\Fun([n], \CatI) \to \CatI$,
  natural in $\simp^{\op}$.  
\end{propn}
To see this we first recall fron \cite[\S 3.2]{HTT}
the definition and some features of the \emph{mapping simplex} of a functor $\phi \colon
[n] \to \sSet^{+}$ and show that its fibrant replacement is a
coCartesian fibration classified the corresponding functor
$\Delta^{n} \to \CatI$.

\begin{defn}
  Let $\phi \colon [n] \to \sSet^{+}$ be a functor. The \emph{mapping
    simplex} $M_{[n]}(\phi) \to \Delta^{n}$ has $k$-simplices given by
  a map $\sigma \colon [k] \to [n]$ together with a $k$-simplex
  $\Delta^{k} \to \phi(\sigma(0))$. In particular, an edge of
  $M_{[n]}(\phi)$ is given by a pair of integers $0 \leq i \leq j \leq
  n$ and an edge $f$ in $\phi(i)$; let $S$ be the set of edges of
  $M_{[n]}(\phi)$ where the edge $f$ is marked. Then
  $M^{\natural}_{[n]}(\phi)$ is the marked simplicial set
  $(M_{[n]}(\phi), S)$. This gives a functor $M^{\natural}_{[n]}
  \colon \Fun([n], \sSet^{+}) \to (\sSet^{+})_{/\Delta^{n}}$,
  pseudonatural in $\simp^{\op}$ (with respect to composition and
  pullback) --- see Appendix~\ref{sec:pseudo} for a discussion of
  pseudonatural transformations.
\end{defn}

\begin{defn}
  Let $\phi \colon [n] \to \sSet$ be a functor. The \emph{relative
    nerve} $N_{[n]}(\phi) \to \Delta^{n}$ has $k$-simplices given by
  a map $\sigma \colon [k] \to [n]$ and for every ordered subset $J
  \subseteq [k]$ with greatest element $j$, a map $\Delta^{J} \to
  \phi(\sigma(j))$ such that for $J' \subseteq J$ the diagram
  \nolabelcsquare{\Delta^{J'}}{\phi(\sigma(j'))}{\Delta^{J}}{\phi(\sigma(j))}
  commutes. Given a functor $\overline{\phi} \colon
  [n] \to \sSet^{+}$ we define
  $\mathrm{N}_{[n]}^{+}(\overline{\phi})$ to be the marked simplicial set
  $(\mathrm{N}_{[n]}(\phi), M)$ where $\phi$ is the underlying functor
  $[n] \to \sSet$ of $\overline{\phi}$, and $M$ is the set of edges
  $\Delta^{1} \to \mathrm{N}_{[n]}\phi$ determined by
  \begin{itemize}
  \item a pair of integers $0 \leq i \leq j \leq n$,
  \item a vertex $x \in \phi(i)$,
  \item a vertex $y \in \phi(j)$ and an edge $\phi(i \to j)(x) \to y$ that
    is marked in $\overline{\phi}(j)$.
  \end{itemize}
  This determines a functor $\mathrm{N}_{[n]}^{+}\colon \Fun([n],
  \sSet^{+}) \to (\sSet^{+})_{/\Delta^{n}}$, pseudonatural in
  $\simp^{\op}$.
\end{defn}

\begin{remark}
  By \cite{HTT}*{Proposition 3.2.5.18}, the functor $N^{+}_{[n]}$ is a
  right Quillen equivalence from the projective model structure on
  $\Fun([n], \sSet^{+})$ to the coCartesian model structure on
  $(\sSet^{+})_{/\Delta^{n}}$. In particular, if $\phi \colon [n] \to
  \sSet^{+}$ is a functor such that $\phi(i)$ is fibrant (i.e.\ is a
  quasicategory marked by its set of equivalences) for every $i$, then
  $N_{[n]}^{+}(\phi)$ is a coCartesian fibration.
\end{remark}

\begin{defn}
  There is a natural transformation $\nu_{[n]} \colon
  M_{[n]}^{\natural}(\blank) \to N_{[n]}^{+}(\blank)$ that sends a
  $k$-simplex $(\sigma \colon [k] \to [n], \Delta^{k} \to
  \phi(\sigma(0)))$ in $M_{[n]}^{\natural}(\sigma)$ to the $k$-simplex
  of $N^{+}_{[n]}(\phi)$ determined by the composites $\Delta^{J} \to
  \Delta^{k}\to \phi(\sigma(0)) \to \phi(\sigma(j))$. This is clearly
  pseudonatural in maps in $\simp^{\op}$, i.e.\ we have a pseudofunctor
  $\simp^{\op} \to \Fun([1], \Cat)$ that to $[n]$ assigns
  \[\nu_{[n]} \colon [1] \times \Fun([n], \sSet^{+}) \to
  (\sSet^{+})_{/\Delta^{n}}.\]
\end{defn}

\begin{propn}\label{propn:MNeq}
  Suppose $\phi \colon [n] \to \sSet^{+}$ is fibrant. Then the natural
  map $\nu_{[n],\phi} \colon M^{\natural}_{[n]}(\phi) \to N^{+}_{[n]}(\phi)$ is a
  coCartesian equivalence.
\end{propn}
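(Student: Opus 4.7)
The plan is to reduce the claim to a fiberwise check. Both $M^{\natural}_{[n]}(\phi)$ and $N^{+}_{[n]}(\phi)$ should be fibrant objects of $(\sSet^{+})_{/(\Delta^{n})^{\sharp}}$ with its coCartesian model structure: for $N^{+}_{[n]}(\phi)$ this is immediate from the right Quillen property of $N^{+}_{[n]}$ recorded just above, and for $M^{\natural}_{[n]}(\phi)$ it is one of the basic properties of the mapping simplex established in \cite{HTT}*{\S 3.2.2}. Granting this, the key fact I will invoke is the standard fiberwise criterion from \cite{HTT}*{\S 3.3.1}: a morphism of coCartesian fibrations over $\Delta^{n}$ (with coCartesian edges marked) is a coCartesian equivalence if and only if it induces a categorical equivalence on the fiber over every vertex. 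So it suffices to check that $\nu_{[n],\phi}$ preserves the marking and is an equivalence on every fiber.

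Checking fibers is straightforward. A $k$-simplex of the fiber of $M_{[n]}(\phi)$ over $i \in [n]$ is a pair consisting of the constant map $\sigma \equiv i$ together with a map $\Delta^{k} \to \phi(i)$, so this fiber is canonically $\phi(i)$. A $k$-simplex of the fiber of $N_{[n]}(\phi)$ over $i$ is a compatible family of maps $\Delta^{J} \to \phi(i)$ indexed by nonempty $J \subseteq [k]$; since the compatibility forces each such map to be the restriction of the $J = [k]$ component, this fiber is likewise $\phi(i)$. Tracing through the definition of $\nu_{[n],\phi}$, the map induced on fibers is the identity of $\phi(i)$.

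For the marking, a marked edge of $M^{\natural}_{[n]}(\phi)$ is a datum $(0 \leq i \leq j \leq n,\, f \in \phi(i))$ with $f$ marked. Its image under $\nu_{[n],\phi}$ is the edge of $N^{+}_{[n]}(\phi)$ whose nondegenerate component is $\phi(i \to j)(f)\colon \phi(i \to j)(f(0)) \to \phi(i \to j)(f(1))$ in $\phi(j)$; this is marked in $N^{+}_{[n]}(\phi)$ since $\phi(i \to j)$ is a morphism of marked simplicial sets and therefore preserves marked edges. The main obstacle, insofar as there is one, is therefore not the explicit verifications above but merely locating the right formulation of the fibrancy of the mapping simplex and of the fiberwise criterion in \cite{HTT}; once those are in hand the result is immediate.
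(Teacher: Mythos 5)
Your overall strategy (reduce to a fibrewise check over the vertices of $\Delta^{n}$, then observe that each fibre map is the identity of $\phi(i)$) is the right one, and your fibre and marking computations agree with what the paper does. But there is a genuine gap in the reduction step: you assert that $M^{\natural}_{[n]}(\phi)$ is a fibrant object of $(\sSet^{+})_{/(\Delta^{n})^{\sharp}}$, i.e.\ a coCartesian fibration, and then invoke the fibrewise criterion for maps \emph{between fibrant objects}. The mapping simplex is \emph{not} in general fibrant, even when $\phi$ is --- already for $n=1$ it is the mapping cylinder $(\phi(0)\times\Delta^{1})\amalg_{\phi(0)\times\{1\}}\phi(1)$, which is not a coCartesian fibration over $\Delta^{1}$ unless the edge $\phi(0)\to\phi(1)$ has special properties. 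Indeed, this paper itself points this out later (in the proof of Proposition~\ref{propn:sxweighted}: ``the mapping simplex of a functor $\phi \colon [n] \to \sSet^{+}$ is not in general fibrant''), and the extra care taken there exists precisely because of this issue. So the criterion you cite from \cite{HTT}*{\S 3.3.1} does not apply as stated.

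The fix is to replace your fibrancy claim with the tool Lurie designed for exactly this situation: a map out of a mapping simplex into a coCartesian fibration that preserves markings and is a categorical equivalence on fibres over vertices is a \emph{quasi-equivalence} in the sense of \cite{HTT}*{Definition 3.2.2.6}, and \cite{HTT}*{Proposition 3.2.2.14} asserts that such a quasi-equivalence is a coCartesian equivalence, with no fibrancy hypothesis on the source. This is the route the paper takes (only the fibrancy of $N^{+}_{[n]}(\phi)$, which you correctly derive from \cite{HTT}*{Proposition 3.2.5.18}, is needed). With that single substitution, your fibre identification and your verification that marked edges are sent to marked edges complete the argument exactly as in the paper.
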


\begin{proof}
  Since $N_{[n]}^{+}(\phi) \to \Delta^{n}$ is a coCartesian fibration
  by \cite{HTT}*{Proposition 3.2.5.18}, it follows from
  \cite{HTT}*{Proposition 3.2.2.14} that it suffices to check that
  $\nu_{[n],\phi}$ is a ``quasi-equivalence'' in the sense of
  \cite{HTT}*{Definition 3.2.2.6}. Thus we need only show that the
  induced map on fibres $M^{\natural}_{[n]}(\phi)_{i} \to N^{+}_{[n]}(\phi)_{i}$ is a
  categorical equivalence for all $i = 0,\ldots,n$. But unwinding the
  definitions we see that this can be identified with the identity map
  $\phi(i) \to \phi(i)$ (marked by the equivalences).
\end{proof}

Let $\txt{Un}^{+,\txt{co}}_{[n]} \colon \Fun([n], \sSet^{+}) \to
(\sSet^{+})_{/\Delta^{n}}$ be the coCartesian version of the marked
\emph{unstraightening functor} defined in \cite{HTT}*{\S 3.2.1}. By
\cite{HTT}*{Remark 3.2.5.16} there is a natural transformation
$\lambda_{[n]} \colon N^{+}_{[n]} \to \txt{Un}^{+,\txt{co}}_{[n]}$,
which is a weak equivalence on fibrant objects by
\cite{HTT}*{Corollary 3.2.5.20}. Since this is also pseudonatural in
$\simp^{\op}$, combining this with
Proposition~\ref{propn:MNeq} we immediately get:
\begin{cor}\label{cor:MUneq}
  For every $[n] \in \simp^{\op}$ there is a natural transformation
  $\lambda_{[n]}\nu_{[n]} \colon M_{[n]}^{\natural}(\blank) \to
  \txt{Un}^{+,\txt{co}}_{[n]}(\blank)$, and this is pseudonatural in
  $[n] \in \simp^{\op}$. If $\phi \colon [n] \to \sSet^{+}$ is
  fibrant, then the map $M_{[n]}^{\natural}(\phi) \to
  \txt{Un}^{+,\txt{co}}_{[n]}(\phi)$ is a coCartesian equivalence.
\end{cor}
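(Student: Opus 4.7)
The plan is to obtain the corollary by simple concatenation of the two pseudonatural transformations already in hand: $\nu_{[n]} \colon M_{[n]}^{\natural}(\blank) \to N_{[n]}^{+}(\blank)$, introduced just before Proposition~\ref{propn:MNeq}, and $\lambda_{[n]} \colon N_{[n]}^{+} \to \txt{Un}_{[n]}^{+,\txt{co}}$, recalled from \cite{HTT}*{Remark 3.2.5.16}. Define $\lambda_{[n]}\nu_{[n]}$ pointwise on $\phi \colon [n] \to \sSet^+$ as the composite
\[ M_{[n]}^{\natural}(\phi) \xrightarrow{\nu_{[n],\phi}} N_{[n]}^{+}(\phi) \xrightarrow{\lambda_{[n],\phi}} \txt{Un}_{[n]}^{+,\txt{co}}(\phi),\]
viewed as a morphism in $(\sSet^{+})_{/\Delta^{n}}$. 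Naturality in $\phi$ is inherited from naturality of $\nu_{[n]}$ and $\lambda_{[n]}$, so this defines a natural transformation of functors $\Fun([n],\sSet^{+}) \to (\sSet^{+})_{/\Delta^{n}}$ for each $[n]$.

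For the pseudonaturality in $[n] \in \simp^{\op}$, the point is that the pasting of two pseudonatural transformations between pseudofunctors to $\Fun([1],\CAT)$ is again pseudonatural. Concretely, I would package $\nu$ and $\lambda$ as pseudofunctors $\simp^{\op} \to \Fun([1],\CAT)$ taking $[n]$ to the arrows $M_{[n]}^{\natural} \To N_{[n]}^{+}$ and $N_{[n]}^{+} \To \txt{Un}_{[n]}^{+,\txt{co}}$ respectively; composition in $\Fun([1],\CAT)$ is functorial (up to the same invertible coherence data already supplied), so the concatenated arrow $M_{[n]}^{\natural} \To \txt{Un}_{[n]}^{+,\txt{co}}$ assembles to the required pseudofunctor $\simp^{\op} \to \Fun([1],\CAT)$. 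The coherence 2-cells for a restriction-along-$\sigma \colon [m]\to[n]$ are obtained by pasting the coherence 2-cells of $\nu$ and $\lambda$ along $\sigma$, so nothing beyond bookkeeping is required.

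Finally, suppose $\phi$ is fibrant. Proposition~\ref{propn:MNeq} gives that $\nu_{[n],\phi}$ is a coCartesian equivalence, while \cite{HTT}*{Corollary 3.2.5.20} gives that $\lambda_{[n],\phi}$ is a weak equivalence in the coCartesian model structure on $(\sSet^{+})_{/\Delta^{n}}$. Since coCartesian equivalences satisfy two-out-of-three, the composite $\lambda_{[n],\phi}\nu_{[n],\phi}$ is a coCartesian equivalence, completing the proof.

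The entire argument is essentially formal; the only mild obstacle is verifying that the pasting of coherence data from the two pseudonatural transformations really does produce a pseudonatural transformation in the sense used here. I would dispatch this by appealing to the general discussion of pseudonaturality in appendix~\ref{sec:pseudo}, which makes the composition of pseudonatural transformations transparent, rather than writing out the coherence 2-cells explicitly.
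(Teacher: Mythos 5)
Your proposal is correct and is essentially the paper's own argument: the paper obtains the corollary by composing $\nu_{[n]}$ (a coCartesian equivalence on fibrant objects by Proposition~\ref{propn:MNeq}) with $\lambda_{[n]}$ from \cite{HTT}*{Remark 3.2.5.16} (a weak equivalence on fibrant objects by \cite{HTT}*{Corollary 3.2.5.20}), noting that both are pseudonatural in $\simp^{\op}$ so the composite is as well. Your additional remarks on pasting the coherence $2$-cells just make explicit what the paper treats as immediate.
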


It is immediate from the definition that $M_{[n]}^{\natural}(\phi)$ is the
pushout
\[
\begin{tikzcd}
  \phi(0)^{\natural} \times (\Delta^{\{1,\ldots,n\}})^{\sharp} \arrow[hookrightarrow]{r} \arrow{d} & \phi(0)^{\natural} \times
  (\Delta^{n})^{\sharp} \arrow{d}\\
  M^{\natural}_{[n-1]}(\phi|_{\{1,\ldots,n\}}) \arrow[hookrightarrow]{r} & M_{[n]}^{\natural}(\phi).
\end{tikzcd}
\]

Moreover, since all objects are cofibrant in the model structure on
marked simplicial sets and the top horizontal map is a cofibration,
this is a homotopy pushout. Combining this with
Corollary~\ref{cor:MUneq}, we get the following:
\begin{lemma}\label{lem:Dncocartpo} 
  Suppose $F \colon [n] \to \CatI$ is a functor, and that $\mathcal{E}
  \to \Delta^{n}$ is the corresponding coCartesian fibration. Let
  $\mathcal{E}'$ be the pullback of $\mathcal{E}$ along the inclusion
  $\Delta^{\{1,\ldots,n\}} \hookrightarrow \Delta^{n}$. Then there is
  a pushout square \nolabelcsquare{F(0) \times \Delta^{\{1,\ldots,n\}}}{F(0)
    \times \Delta^{n}}{\mathcal{E}'}{\mathcal{E}}
  in $\CatI$.
\end{lemma}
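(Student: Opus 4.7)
The plan is to transport the strict pushout of marked simplicial sets for the mapping simplex, displayed immediately before the lemma, into a pushout in $\CatI$. Pick a fibrant diagram $\phi\colon[n]\to\sSet^+$ representing $F$, so that each $\phi(i)$ is a quasicategory (marked by its equivalences) modelling $F(i)$. By Corollary~\ref{cor:MUneq} the mapping simplex $M^\natural_{[n]}(\phi)$ is coCartesian-equivalent to the unstraightening $\txt{Un}^{+,\txt{co}}_{[n]}(\phi)$; in particular its underlying simplicial set $M_{[n]}(\phi)$ is categorically equivalent to $\mathcal{E}$. Applying the same corollary to $\phi|_{\{1,\ldots,n\}}$ identifies $M_{[n-1]}(\phi|_{\{1,\ldots,n\}})$ with $\mathcal{E}'$.

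Since the forgetful functor $\sSet^+\to\sSet$ is a left adjoint and hence preserves colimits, applying it to the displayed marked pushout yields a strict pushout of simplicial sets
\[
M_{[n]}(\phi)\;\cong\;\bigl(\phi(0)\times\Delta^n\bigr)\sqcup_{\phi(0)\times\Delta^{\{1,\ldots,n\}}} M_{[n-1]}(\phi|_{\{1,\ldots,n\}}).
\]
The left-hand map $\phi(0)\times\Delta^{\{1,\ldots,n\}}\hookrightarrow\phi(0)\times\Delta^n$ is a monomorphism, hence a Joyal cofibration, and the Joyal model structure is left proper, so this is in fact a homotopy pushout in the Joyal model structure and therefore represents a pushout in $\CatI$. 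The top corners $\phi(0)\times\Delta^{\{1,\ldots,n\}}$ and $\phi(0)\times\Delta^n$ are literal products of quasicategories, so represent $F(0)\times\Delta^{\{1,\ldots,n\}}$ and $F(0)\times\Delta^n$ in $\CatI$. A brief unwinding of the definition of the mapping simplex shows that the left vertical map carries a simplex $(x,\sigma)$ to the coCartesian transport of $x$ along $0\to\sigma(0)$, matching the map $F(0)\times\Delta^{\{1,\ldots,n\}}\to\mathcal{E}'$ obtained from the fibration structure on $\mathcal{E}$.

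The one delicate point is ensuring that the coCartesian equivalence produced by Corollary~\ref{cor:MUneq} really descends to a categorical equivalence of underlying simplicial sets, so that the identifications in the first paragraph are legitimate inputs to the pushout argument. This follows from the fact that between fibrant objects of $(\sSet^+)_{/\Delta^n}$ the coCartesian equivalences are precisely the maps inducing categorical equivalences on total spaces, a standard consequence of the comparison between the coCartesian and Joyal model structures established in \cite{HTT}*{\S 3.1}. Once this is in hand, combining the corner identifications with the homotopy pushout of underlying simplicial sets yields the pushout square in $\CatI$ claimed in the lemma.
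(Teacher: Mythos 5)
Your strategy is essentially the paper's: the displayed square is a strict pushout of marked simplicial sets along a cofibration, hence a homotopy pushout, and Corollary~\ref{cor:MUneq} is used to identify the bottom corners with $\mathcal{E}'$ and $\mathcal{E}$. The one step that needs repair is exactly the point you flag as delicate. You justify passing from the coCartesian equivalence $M^{\natural}_{[n]}(\phi)\to\txt{Un}^{+,\txt{co}}_{[n]}(\phi)$ to a categorical equivalence of underlying simplicial sets by citing the fact that coCartesian equivalences \emph{between fibrant objects} of $(\sSet^{+})_{/\Delta^{n}}$ are precisely the maps inducing categorical equivalences on total spaces. But $M^{\natural}_{[n]}(\phi)$ is not fibrant over $\Delta^{n}$ --- the mapping simplex of a fibrant $\phi$ is in general not a coCartesian fibration, a point the paper itself has to work around in the proof of Proposition~\ref{propn:sxweighted} --- so that criterion does not apply to this map; and for non-fibrant objects a coCartesian equivalence need not be a categorical equivalence of total spaces (e.g.\ the marked anodyne inclusion $\{0\}^{\sharp}\hookrightarrow(\Delta^{1})^{\sharp}$ over $\Delta^{1}$). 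The claim you need is nevertheless true, for the reason underlying Proposition~\ref{propn:MNeq}: the map $\nu_{[n],\phi}$ is a quasi-equivalence, and \cite{HTT}*{Proposition 3.2.2.14} (in its coCartesian form) shows that a quasi-equivalence over $\Delta^{n}$ whose target is a coCartesian fibration is a categorical equivalence of underlying simplicial sets (indeed after any base change); the remaining map $\lambda_{[n],\phi}$ is a weak equivalence between fibrant objects, where your criterion does apply. With that substitution the rest of your argument --- forgetting the markings, left properness/cofibrancy in the Joyal model structure, the identification of the two upper corners, and the description of the left vertical map as coCartesian transport along $0\to\sigma(0)$ --- goes through and recovers the lemma.
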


Unwinding the definition, we see that $M_{[n]}^{\natural}(\phi)$ is the colimit
of the diagram
\[
\begin{tikzcd}[column sep=tiny]
  & \phi(0) \times (\Delta^{n})^{\sharp} \\
\phi(0) \times (\Delta^{\{1,\ldots,n\}})^{\sharp} \arrow{ur} \arrow{dr}&  \\
 & \phi(1) \times (\Delta^{\{1,\ldots,n\}})^{\sharp} \\
\phi(1) \times (\Delta^{\{2,\ldots,n\}})^{\sharp}  \arrow{ur} \arrow{dr}& \\
 & \phi(2) \times (\Delta^{\{2,\ldots,n\}})^{\sharp} \\
 & \vdots \\
  & \phi(n-1) \times (\Delta^{\{n-1,n\}})^{\sharp} \\
\phi(n-1) \times (\Delta^{\{n\}})^{\sharp}  \arrow{ur} \arrow{dr}& \\
 & \phi(n) \times (\Delta^{\{n\}})^{\sharp}.
\end{tikzcd}
\]
By Example~\ref{ex:Twn} the category indexing this colimit is a
cofinal subcategory of the twisted arrow category $\txt{Tw}([n])$ of
$[n]$ --- this is easy to check using \cite{HTT}*{Corollary 4.1.3.3}
since both categories are partially ordered sets. Hence we may
identify $M^{\natural}_{[n]}(\phi)$ with the
coend
\[\colim_{\txt{Tw}([n])}(\phi(\blank) \times \mathrm{N}[n]_{\blank/})
.\]
Moreover, since we can write this colimit as an iterated pushout along
cofibrations, this is a homotopy colimit.  From this we can prove
Proposition~\ref{propn:sxweighted} using the results of
appendix~\ref{sec:pseudo} together with the following observation:
\begin{lemma}\label{lem:fibrant}
  Let $G \colon \mathbf{C} \to \mathbf{D}$ be a right Quillen functor
  between model categories. Suppose $f \colon X \to \bar{X}$ and $g \colon
  Y \to \bar{Y}$ are weak equivalences such that $\bar{X}$ and
  $\bar{Y}$ are fibrant, and $G(f)$ and $G(g)$ are weak equivalences
  in $\mathbf{D}$. Then if $h \colon X \to Y$ is a weak equivalence,
  the morphism $G(h)$ is also a weak equivalence in $\mathbf{D}$.
\end{lemma}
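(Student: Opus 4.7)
The plan is to use the functorial fibrant replacement in $\mathbf{C}$ to reduce the statement to Ken Brown's lemma, which tells us that a right Quillen functor preserves weak equivalences between fibrant objects. Let $R$ denote a functorial fibrant replacement functor on $\mathbf{C}$, with natural weak equivalence $r \colon \txt{id} \To R$ such that $RX$ is fibrant for every $X$. Applying $R$ to $f$, $g$, and $h$ gives a commuting diagram in which every vertical map is a weak equivalence (each square arises from naturality of $r$), and the induced maps $Rf, Rg, Rh$ are weak equivalences between fibrant objects by $2$-out-of-$3$. Moreover, since $\bar{X}$ and $\bar{Y}$ are already fibrant, the maps $r_{\bar{X}} \colon \bar{X} \to R\bar{X}$ and $r_{\bar{Y}} \colon \bar{Y} \to R\bar{Y}$ are weak equivalences between fibrant objects.

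Next I would invoke Ken Brown's lemma for right Quillen functors: $G$ sends any weak equivalence between fibrant objects to a weak equivalence in $\mathbf{D}$. Hence $G(Rf)$, $G(Rg)$, $G(Rh)$, $G(r_{\bar{X}})$, and $G(r_{\bar{Y}})$ are all weak equivalences in $\mathbf{D}$.

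Now I would feed the commuting square
\[
\begin{tikzcd}
X \arrow{r}{f} \arrow{d}{r_X} & \bar{X} \arrow{d}{r_{\bar{X}}} \\
RX \arrow{r}{Rf} & R\bar{X}
\end{tikzcd}
\]
into $G$: we know $G(f)$ by hypothesis, and $G(r_{\bar{X}})$ and $G(Rf)$ by the previous step; two-out-of-three applied to the composite $G(r_{\bar{X}}) \circ G(f) = G(Rf) \circ G(r_X)$ shows that $G(r_X)$ is a weak equivalence. The same argument with $g$ in place of $f$ shows that $G(r_Y)$ is a weak equivalence.

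Finally, from the naturality square for $h$, applying $G$ gives $G(Rh) \circ G(r_X) = G(r_Y) \circ G(h)$; since $G(r_X)$, $G(r_Y)$, and $G(Rh)$ are all weak equivalences, two-out-of-three forces $G(h)$ to be a weak equivalence, as required. The argument is essentially bookkeeping around naturality squares, so there is no real obstacle; the only nontrivial input is Ken Brown's lemma, and the only reason we need the full hypothesis that $G(f)$ and $G(g)$ are weak equivalences (rather than Ken Brown alone) is that we have to bridge the two a priori distinct fibrant replacements $\bar{X}$ and $RX$ after applying $G$.
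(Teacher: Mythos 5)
Your proof is correct and is essentially identical to the paper's argument: both apply the functorial fibrant replacement to the whole diagram, use Ken Brown's lemma (i.e.\ that a right Quillen functor preserves weak equivalences between fibrant objects) on the replaced maps and on the comparison maps out of the already-fibrant objects $\bar{X}$, $\bar{Y}$, and then conclude by two applications of two-out-of-three. There is nothing to add.
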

\begin{proof}
  Choose a factorization of the composite $g \circ h \colon X \to
  \bar{Y}$ as a trivial cofibration $i \colon X \hookrightarrow X'$
  followed by a fibration $p \colon X' \to \bar{Y}$. We then have a
  commutative diagram
  \[
  \begin{tikzcd}
    X \arrow{dd}{h} \arrow{rr}{f} \arrow{dr}{i} & & \bar{X} \\
    {} & X' \arrow{dr}{p} \arrow[dotted]{ur}{q} & \\
    Y \arrow{rr}{g} & & \bar{Y}
  \end{tikzcd}
  \]
  where the dotted arrow $q$ exists since $\bar{X}$ is fibrant and $i$ is
  a trivial cofibration, and all morphisms are weak equivalences by
  the 2-of-3 property. By assumption $G$ takes $f$ and $g$ to weak
  equivalences, and as $G$ is a right Quillen functor by Brown's Lemma it also takes $q$
  and $p$ to weak equivalences as these are weak equivalences between
  fibrant objects. By the 2-of-3 property we can then conclude first that
  $G(i)$ is a weak equivalence and then that $G(h)$ is a weak equivalence.
\end{proof}

\begin{proof}[Proof of Proposition~\ref{propn:sxweighted}]
  We will prove this by applying Proposition~\ref{propn:relGrothfib}
  to a relative Grothendieck fibration constructed in the same way as
  in Proposition~\ref{propn:unstrelGr}. The only difference is that
  the mapping simplex of a functor $\phi \colon [n] \to \sSet^{+}$ is
  not in general fibrant. We must therefore consider a larger relative
  subcategory of $(\sSet^{+})_{/\Delta^{n}}$ containing the mapping
  simplices of fibrant functors whose associated \icat{} is still
  $\Cat_{\infty/\Delta^{n}}^{\txt{cocart}}$.

  By \cite{HTT}*{Proposition 3.2.2.7} every mapping simplex admits a
  weak equivalence to a fibrant object that is preserved under
  pullbacks along all morphisms in $\simp$. We therefore think of
  $M_{[n]}^{\natural}$ and $\txt{Un}^{+,\txt{co}}_{[n]}$ as functors
  from fibrant objects in $\Fun([n], \sSet^{+})$ to objects in
  $(\sSet^{+})_{/\Delta^{n}}$ that admit a weak equivalence to a
  fibrant object that is preserved by pullbacks --- by
  Lemma~\ref{lem:fibrant} all weak equivalences between such objects
  are preserved by pullbacks, so we still get functors of relative
  categories. 

  It remains to show that inverting the weak equivalences in this
  subcategory gives the same \icat{} as inverting the weak
  equivalences in the subcategory of fibrant objects. This follows
  from \cite{BarwickKanRelCat}*{7.5}, since any fibrant replacement
  functor gives a homotopy equivalence of relative categories.
\end{proof}

\section{Free Fibrations}\label{sec:freefib}
Our goal in this section is to prove that for any \icat{} $\mathcal{C}$, the
forgetful functor \[\CatICc \to \CatIC\] has a left adjoint,
given by the following explicit formula:

\begin{defn}
  Let $\mathcal{C}$ be an \icat{}. For $p \colon \mathcal{E} \to
  \mathcal{C}$ any functor of \icats{}, let $F(p)$ denote the map
  $\mathcal{E} \times_{\mathcal{C}^{\{1\}}} \mathcal{C}^{\Delta^{1}} \to
  \mathcal{C}^{\{0\}}$, where the pullback is along the target fibration
  $\mathcal{C}^{\Delta^{1}} \to \mathcal{C}$ given by evaluation at $1
  \in \Delta^{1}$, and the projection $F(p)$ is induced by evaluation at
  $0$. Then $F$ defines a functor $\CatIC \to \CatIC$.
\end{defn}

We will call the projection $F(p) \colon
\mathcal{E}\times_\mathcal{C}\mathcal{C}^{\Delta^1}\to\mathcal{C}$ the
\emph{free Cartesian fibration} on $p \colon
\mathcal{E}\to\mathcal{C}$ --- the results of this section will
justify this terminology.

\begin{ex}
  The free Cartesian fibration on the identity
  $\mathcal{C}\to\mathcal{C}$ is the source fibration $F \colon
  \mathcal{C}^{\Delta^1}\to\mathcal{C}$, given by evaluation at
  $0\in\Delta^1$.
\end{ex}

\begin{lemma}
  The functor $F$ factors through the subcategory $\CatICc \to \CatIC$.
\end{lemma}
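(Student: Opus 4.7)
My plan is to verify the two things implicit in the statement: (a) that $F(p)$ is a Cartesian fibration for every $p\colon\mathcal{E}\to\mathcal{C}$, and (b) that every morphism $g\colon(\mathcal{E},p)\to(\mathcal{E}',p')$ in $\CatIC$ induces a functor $F(g)$ preserving Cartesian edges. I would prove both by explicitly identifying the Cartesian lifts.

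For (a), I would start from the fact that the source evaluation $\mathrm{ev}_0\colon\mathcal{C}^{\Delta^1}\to\mathcal{C}$ is a Cartesian fibration whose Cartesian morphisms are the squares $\sigma\colon\Delta^1\times\Delta^1\to\mathcal{C}$ for which $\mathrm{ev}_1(\sigma)$ is an equivalence. Given $f\colon c\to c_0$ in $\mathcal{C}$ and $X=(e,\phi\colon c_0\to p(e))$ an object of $F(p)$ over $c_0$, the candidate Cartesian lift $\tilde{f}\colon\tilde{X}\to X$ is the pair $(\mathrm{id}_e,\sigma)$ in $\mathcal{E}\times\mathcal{C}^{\Delta^1}$, where $\tilde{X}=(e,\phi\circ f)$ and $\sigma$ is the square with top edge $f$, left edge $\phi\circ f$, right edge $\phi$, and bottom edge $\mathrm{id}_{p(e)}$. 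Since $\mathrm{ev}_1(\sigma)=\mathrm{id}_{p(e)}=p(\mathrm{id}_e)$, this defines a morphism in the pullback $F(p)$, and clearly $F(p)(\tilde{f})=f$. To verify that $\tilde{f}$ is $F(p)$-Cartesian I would invoke the mapping-space criterion (\cite{HTT}*{\S 2.4.1}): for $Y=(e',\psi'\colon c'\to p(e'))\in F(p)$, both $\Map_{F(p)}(Y,\tilde{X})$ and $\Map_{F(p)}(Y,X)\times_{\Map_{\mathcal{C}}(c',c_0)}\Map_{\mathcal{C}}(c',c)$ decompose, via the defining pullback of $F(p)$ together with the standard description of mapping spaces in $\mathcal{C}^{\Delta^1}$, as iterated pullbacks of the mapping spaces in $\mathcal{E}$, in $\mathcal{C}$, and in $\mathcal{C}^{\Delta^1}$; they agree up to reassociation of those pullbacks precisely because $\sigma$ is $\mathrm{ev}_0$-Cartesian in $\mathcal{C}^{\Delta^1}$.

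For (b), the functor $F(g)$ sends $(e,\phi)\mapsto(g(e),\phi)$ on objects and accordingly takes the Cartesian lift $(\mathrm{id}_e,\sigma)$ in $F(p)$ to $(\mathrm{id}_{g(e)},\sigma)$, which is by construction the Cartesian lift produced by the same prescription for $F(p')$; hence $F(g)$ preserves Cartesian edges. The main obstacle is the pullback diagram chase in (a) — identifying the two iterated fiber products of spaces and checking that the canonical comparison is exactly the one coming from $\mathrm{ev}_0$-Cartesianness of $\sigma$. Once this is in place, both the Cartesian-fibration property and the preservation of Cartesian edges under $F(g)$ follow formally.
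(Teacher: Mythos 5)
Your proposal is correct and follows essentially the same route as the paper: part (a) is exactly the content of \cite{HTT}*{Corollary 2.4.7.12}, which the paper simply cites (including the characterization of $F(p)$-Cartesian edges as those whose component in $\mathcal{E}$ is an equivalence — your lifts $(\mathrm{id}_e,\sigma)$ are the special case), and your part (b) is the paper's observation that $F(g)$ commutes with the projections to $\mathcal{E}$ and $\mathcal{F}$ and hence preserves morphisms with this property. The only difference is that you re-derive the cited HTT result by hand via the mapping-space criterion, which is fine but not necessary.
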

\begin{proof}
  By \cite{HTT}*{Corollary 2.4.7.12} the projection $F(p) \to
  \mathcal{C}$ is a Cartesian fibration for any $p \colon \mathcal{E}
  \to \mathcal{C}$, and a
  morphism in $F(p)$ is Cartesian \IFF{} its image in $\mathcal{E}$ is
  an equivalence. It is thus clear that for any map $\phi \colon \mathcal{E} \to
  \mathcal{F}$ in $\CatIC$, the induced map $F(\phi)$ preserves Cartesian
  morphisms, since the diagram
  \nolabelcsquare{\mathcal{E} \times_{\mathcal{C}}
    \mathcal{C}^{\Delta^{1}}}{\mathcal{F} \times_{\mathcal{C}}
    \mathcal{C}^{\Delta^{1}}}{\mathcal{E}}{\mathcal{F}} commutes.  
\end{proof}
\begin{remark}
  If $p \colon \mathcal{E} \to \mathcal{C}$ is a functor, the objects
  of $F(p)$ can be identified with pairs $(e, \phi \colon c \to
  p(e))$ where $e \in \mathcal{E}$ and $\phi$ is a morphism in
  $\mathcal{C}$. Similarly, a morphism in $F(p)$ can be identified
  with the data of a morphism $\alpha \colon e' \to e$ in
  $\mathcal{C}$ and a commutative diagram
  \nolabelcsquare{c'}{p(e')}{c}{p(e).}
  If $(e, \phi)$ is an object in $F(p)$ and $\psi \colon c' \to c$ is
  a morphism in $\mathcal{C}$, the Cartesian morphism over $\psi$ with
  target $(e, \phi)$ is the obvious morphism from $(e, \phi\psi)$.
\end{remark}

\begin{thm}\label{thm:free}
  Let $\mathcal{C}$ be an \icat{}. The functor $F \colon \CatIC \to
  \CatICc$ is left adjoint to the forgetful functor $U \colon \CatICc
  \to \CatIC$.
\end{thm}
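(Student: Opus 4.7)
The plan is to exhibit the adjunction by constructing its unit $\eta\colon\mathrm{id}_{\CatIC}\to UF$ and verifying a universal mapping property object-wise. For each $p\colon \mathcal{E}\to\mathcal{C}$, define $\eta_p\colon \mathcal{E}\to F(p) = \mathcal{E}\times_\mathcal{C}\mathcal{C}^{\Delta^1}$ to be the map induced by $\mathrm{id}_\mathcal{E}$ together with the composite $\mathcal{E}\xrightarrow{p}\mathcal{C}\xrightarrow{s}\mathcal{C}^{\Delta^1}$, where $s$ is the constant-edges section of $\mathrm{ev}_1$; in the language of the preceding remark, this is $e\mapsto (e,\mathrm{id}_{p(e)})$. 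Naturality in $p$ is immediate, so $\eta$ defines a natural transformation $\mathrm{id}_{\CatIC}\to UF$. It then suffices to show that for every Cartesian fibration $q\colon \mathcal{F}\to\mathcal{C}$, precomposition with $\eta_p$ induces an equivalence of mapping spaces
\[ \eta_p^*\colon \MapCc(F(p),\mathcal{F}) \xrightarrow{\sim} \MapC(\mathcal{E},\mathcal{F}). \]

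The inverse of $\eta_p^*$ is produced by Cartesian transport, made rigorous via the trivial fibration characterization of Cartesian edges. Write $\mathcal{F}^{\Delta^1,\mathrm{cart}}\subseteq\mathcal{F}^{\Delta^1}$ for the full subcategory on $q$-Cartesian edges; the natural map
\[ \mathcal{F}^{\Delta^1,\mathrm{cart}} \to \mathcal{F}\times_{\mathcal{C},\mathrm{ev}_1}\mathcal{C}^{\Delta^1} \]
induced by $\mathrm{ev}_1$ on the target and $q_*$ on the underlying edge is a trivial Kan fibration --- this is essentially the statement that Cartesian lifts exist uniquely up to contractible choice (see \cite{HTT}*{\S 2.4}). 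Given $g\colon\mathcal{E}\to\mathcal{F}$ over $\mathcal{C}$, the pair $(g\circ\mathrm{pr}_\mathcal{E},\mathrm{pr}_{\mathcal{C}^{\Delta^1}})$ defines a map $F(p)\to \mathcal{F}\times_{\mathcal{C}}\mathcal{C}^{\Delta^1}$; lifting along the above trivial fibration and evaluating at $0\in\Delta^1$ yields the desired $\tilde g\colon F(p)\to\mathcal{F}$. On objects $\tilde g(e,\phi)$ is the source of a $q$-Cartesian lift of $\phi$ ending at $g(e)$, and since the Cartesian edges of $F(p)$ are precisely those whose $\mathcal{E}$-component is an equivalence (cf.\ \cite{HTT}*{Corollary 2.4.7.12}), $\tilde g$ sends them to $q$-Cartesian edges, so lies in $\Fun_{\mathcal{C}}^{\txt{cart}}(F(p),\mathcal{F})$. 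The relation $\tilde g\circ\eta_p\simeq g$ is automatic because identity edges are Cartesian with canonical source $g(e)$.

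The main obstacle is upgrading this recipe from a bijection on homotopy classes to an equivalence of mapping spaces. This is handled by the fact that the space of sections of a trivial Kan fibration is contractible: for each $g$ the space of Cart-preserving extensions $\tilde g$ is contractible, so $\eta_p^*$ has contractible homotopy fibres and is therefore a weak equivalence. Naturality in $\mathcal{F}$ follows from the naturality of the trivial-fibration characterization of Cartesian edges, and combined with naturality of $\eta_p$ in $p$ this yields the full adjunction via the standard criterion for $\infty$-categorical adjunctions in terms of the unit (e.g., \cite{HTT}*{Proposition 5.2.2.8}).
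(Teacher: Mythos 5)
Your route is genuinely different from the paper's, which proves that $F$ preserves colimits (Lemma~\ref{lem:colim}) and then checks the unit condition only on the generators $\Delta^{0},\Delta^{1}\to\mathcal{C}$ of $\CatIC$, handling points via the contravariant model structure and edges via the pushout of Proposition~\ref{propn:pushout}. Your direct Cartesian-transport argument can be made to work and would avoid all of that, but as written it has a gap at the decisive step: you verify only one of the two required identities, namely $\tilde g\circ\eta_p\simeq g$. You never show that a Cartesian-edge-preserving functor $h\colon F(p)\to\mathcal{F}$ over $\mathcal{C}$ is recovered, up to contractible ambiguity, from its restriction $h\circ\eta_p$. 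Equivalently, the contractible space you produce is the space of lifts of $(g\circ\mathrm{pr}_{\mathcal{E}},\mathrm{pr}_{\mathcal{C}^{\Delta^1}})$ through the trivial fibration $\mathcal{F}^{\Delta^1,\mathrm{cart}}\to\mathcal{F}\times_{\mathcal{C}}\mathcal{C}^{\Delta^1}$, whereas the homotopy fibre of $\eta_p^*$ over $g$ is the space of pairs $(h,\alpha)$ with $h\in\MapCc(F(p),\mathcal{F})$ and $\alpha\colon h\circ\eta_p\simeq g$. Identifying these two spaces is precisely the content of the theorem, so the sentence ``the space of Cart-preserving extensions $\tilde g$ is contractible'' assumes what is to be proved.

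To close the gap you need the counit-type datum: the map $\max\colon\Delta^{1}\times\Delta^{1}\to\Delta^{1}$ induces a natural transformation $F(p)\times\Delta^{1}\to F(p)$ from $\mathrm{id}_{F(p)}$ to $\eta_p\circ\mathrm{pr}_{\mathcal{E}}$ whose component at $(e,\phi)$ is the Cartesian edge $(e,\phi)\to(e,\mathrm{id}_{p(e)})$. A Cartesian-preserving $h$ takes this to a family of $q$-Cartesian edges, i.e.\ to a lift of $(h\eta_p\mathrm{pr}_{\mathcal{E}},\mathrm{pr}_{\mathcal{C}^{\Delta^1}})$ through $\mathcal{F}^{\Delta^1,\mathrm{cart}}$ whose evaluation at $0$ is $h$ itself; contractibility of the lift space then forces $h\simeq\widetilde{h\circ\eta_p}$ and, carried out in families, identifies the homotopy fibre of $\eta_p^{*}$ with the lift space. (You also assert without argument that $\tilde g$ preserves Cartesian edges; this needs the cancellation property of Cartesian edges, \cite{HTT}*{Proposition 2.4.1.7}, applied to the square comparing $\phi'^{*}g(e')\to g(e')\to g(e)$ with $\phi'^{*}g(e')\to\phi^{*}g(e)\to g(e)$.) With these additions your proof is complete and arguably more economical than the paper's.
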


\begin{remark}
  Analogues of this result in the setting of ordinary categories (as
  well as enriched and internal variants) can be found in
  \cite{StreetFibBicat} and \cite{WeberYoneda}.
\end{remark}

Composition with the degeneracy $s_{0} \colon \Delta^{1} \to
\Delta^{0}$ induces a functor $\mathcal{C} \to
\mathcal{C}^{\Delta^{1}}$ (which sends an object of $\mathcal{C}$ to
the constant functor $\Delta^{1} \to \mathcal{C}$ with that
value). Since the composition of this with both of the evaluation maps
$\mathcal{C}^{\Delta^{1}} \to \mathcal{C}$ is the identity, this
induces a natural map $\mathcal{E} \to \mathcal{E}
\times_{\mathcal{C}} \mathcal{C}^{\Delta^{1}}$ over $\mathcal{C}$,
i.e.\ a natural transformation
\[ \eta \colon \id \to UF \]
of functors $\CatIC \to \CatIC$. We will show that this is a
\emph{unit transformation} in the sense of \cite{HTT}*{Definition
  5.2.2.7}, i.e.\ that it induces an equivalence
\[\MapCc(F(\mathcal{E}), \mathcal{F}) \to \MapC(UF(\mathcal{E}),
U(\mathcal{F})) \to \MapC(\mathcal{E}, U(\mathcal{F}))\] 
for all $\mathcal{E} \to \mathcal{C}$ in $\CatIC$ and $\mathcal{F} \to
\mathcal{C}$ in $\CatICc$.

We first check this for the objects of $\CatIC$ with source
$\Delta^{0}$ and $\Delta^{1}$, which (in a weak sense) generate $\CatIC$ under
colimits. If a map $\Delta^{0}
\to \mathcal{C}$ corresponds to the object $x \in \mathcal{C}$, then its image under $F$
is the projection $\mathcal{C}_{/x} \to \mathcal{C}$. (Strictly
speaking, the image is the ``alternative overcategory''
$\mathcal{C}^{/x}$ in the notation of \cite{HTT}*{\S 4.2.1}, but this
is naturally weakly equivalent to $\mathcal{C}_{/x}$ by
\cite{HTT}*{Proposition 4.2.1.5}.)  Thus in this case we need to show the
following:
\begin{lemma}\label{lem:delta0}\ 
  \begin{enumerate}[(i)]
  \item For every $x \in \mathcal{C}$, the map
    $\MapCc(\mathcal{C}_{/x}, \mathcal{E}) \to \MapC(\{x\},
    \mathcal{E}) \simeq \iota \mathcal{E}_{x}$ is an equivalence.
  \item More generally, for any $\mathcal{X} \in \CatI$, the map
    \[\MapCc( \mathcal{C}_{/x} \times \mathcal{X}, \mathcal{E}) \to
    \MapC( \{x\} \times \mathcal{X}, \mathcal{E}) \simeq
    \Map(\mathcal{X}, \mathcal{E}_{x})\] is an equivalence.
  \end{enumerate}
\end{lemma}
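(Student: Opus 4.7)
The plan is to reduce the lemma to the Yoneda lemma via Cartesian straightening. Part (i) is the special case $\mathcal{X} = \Delta^{0}$ of part (ii), so I focus on (ii).

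Under the straightening equivalence $\txt{St}_{\mathcal{C}} \colon \CatICc \xrightarrow{\sim} \Fun(\mathcal{C}^{\op}, \CatI)$, I would identify the Cartesian fibration $\mathcal{C}_{/x} \to \mathcal{C}$ (in fact a right fibration) with the representable $y_{\mathcal{C}}(x) = \Map_{\mathcal{C}}(\blank, x) \colon \mathcal{C}^{\op} \to \mathcal{S} \hookrightarrow \CatI$; this is standard, and compatible with the earlier description of the target fibration $\mathcal{C}^{\Delta^{1}} \to \mathcal{C}$ as straightening to $\mathcal{C}_{/\blank}$. Since straightening is compatible with fibre products over $\mathcal{C}$ and with constant functors, $\mathcal{X} \times \mathcal{C}_{/x} \to \mathcal{C}$ straightens to the pointwise product $\mathcal{X} \times y_{\mathcal{C}}(x)$. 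Writing $F$ for the straightening of $\mathcal{E}$, so that $F(c) \simeq \mathcal{E}_{c}$, this gives
\[ \MapCc(\mathcal{X} \times \mathcal{C}_{/x}, \mathcal{E}) \simeq \Map_{\Fun(\mathcal{C}^{\op}, \CatI)}(\mathcal{X} \times y_{\mathcal{C}}(x), F). \]

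To strip off $\mathcal{X}$, I would use the tensor-cotensor adjunction on $\CatI$-valued presheaves, whereby pointwise product with the constant $\mathcal{X}$ is left adjoint to pointwise cotensoring by $\Fun(\mathcal{X}, \blank)$, reducing the right-hand side to $\Map(y_{\mathcal{C}}(x), \Fun(\mathcal{X}, F(\blank)))$. Since $y_{\mathcal{C}}(x)$ factors through $\mathcal{S}$, and the inclusion $\mathcal{S} \hookrightarrow \CatI$ has the interior functor $\iota$ as right adjoint, for any $G \colon \mathcal{C}^{\op} \to \CatI$ the ordinary Yoneda lemma for $\mathcal{S}$-valued presheaves yields
\[ \Map_{\Fun(\mathcal{C}^{\op}, \CatI)}(y_{\mathcal{C}}(x), G) \simeq \Map_{\Fun(\mathcal{C}^{\op}, \mathcal{S})}(y_{\mathcal{C}}(x), \iota \circ G) \simeq \iota G(x). \]
Applied to $G(\blank) = \Fun(\mathcal{X}, F(\blank))$, this gives
\[ \MapCc(\mathcal{X} \times \mathcal{C}_{/x}, \mathcal{E}) \simeq \iota \Fun(\mathcal{X}, F(x)) \simeq \Map(\mathcal{X}, \mathcal{E}_{x}), \]
as desired. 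Finally, I would verify that this composite equivalence is induced by precomposition along the unit $\mathcal{X} \times \{x\} \hookrightarrow \mathcal{X} \times \mathcal{C}_{/x}$ sending $x$ to the terminal object $\id_{x}$ of $\mathcal{C}_{/x}$.

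The main obstacle is the geometric input: identifying $\txt{St}_{\mathcal{C}}(\mathcal{C}_{/x}) \simeq y_{\mathcal{C}}(x)$ and checking that the purely formal equivalence constructed above is the one induced by evaluation at $\id_{x}$. Both rely on the standard identification of the slice as a right fibration classifying the representable mapping space, rather than any substantive new computation; once these are in hand, the rest is a chain of adjunctions.
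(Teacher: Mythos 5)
Your argument is correct, but it takes a genuinely different route from the paper's. The paper stays entirely on the fibrational side: it first replaces $\MapCc(\mathcal{C}_{/x},\mathcal{E})$ by maps of right fibrations into the maximal sub-right-fibration $\mathcal{E}_{\txt{cart}} \subseteq \mathcal{E}$, and then invokes the contravariant model structure on $(\sSet)_{/\mathcal{C}}$, where \cite{HTT}*{Proposition 4.4.4.5} says that $\{\id_x\} \hookrightarrow \mathcal{C}_{/x}$ is a trivial cofibration; part (ii) then follows because that model structure is simplicial, so $K \times \{x\} \to K \times \mathcal{C}_{/x}$ is again a trivial cofibration. You instead pass through $\txt{St}_{\mathcal{C}}$ to $\Fun(\mathcal{C}^{\op},\CatI)$, use the pointwise tensor/cotensor adjunction to strip off $\mathcal{X}$, and finish with the Yoneda lemma for $\mathcal{S}$-valued presheaves after adjoining across $\mathcal{S} \rightleftarrows \CatI$ (your use of $\iota$ here plays exactly the role of the paper's passage to $\mathcal{E}_{\txt{cart}}$). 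Both are legitimate; the paper's version is more elementary in that it needs no compatibility of straightening with products or with representables, while yours is cleaner conceptually and handles (i) and (ii) uniformly. Do be aware, though, that the step you defer to a final ``verification'' --- that the composite abstract equivalence is the restriction along $\mathcal{X}\times\{\id_x\} \hookrightarrow \mathcal{X}\times\mathcal{C}_{/x}$ --- is not a formality: it is precisely the statement that the straightening of $\mathcal{C}_{/x}$ is $y_{\mathcal{C}}(x)$ \emph{with its universal point at $\id_x$}, which is the same geometric input the paper extracts from \cite{HTT}*{Proposition 4.4.4.5}. Without pinning that down (e.g.\ via \cite{HTT}*{\S 5.1.3} or the contravariant-model-structure description of the Yoneda equivalence), you would only have shown that the two sides are abstractly equivalent, not that the specific map in the statement is an equivalence.
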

\begin{proof}
  The inclusion of the \icat{} of right fibrations over $\mathcal{C}$
  into $\CatICc$ has a right adjoint, which sends a Cartesian
  fibration $p \colon \mathcal{E} \to \mathcal{C}$ to its restriction to the
  subcategory $\mathcal{E}_{\txt{cart}}$ of $\mathcal{E}$ where the morphisms are the
  $p$-Cartesian morphisms. The map $\MapCc(\mathcal{C}_{/x},
  \mathcal{E}) \to \iota \mathcal{E}_{x}$ thus factors as
  \[ \MapCc(\mathcal{C}_{/x}, \mathcal{E}) \isoto
  \MapC(\mathcal{C}_{/x}, \mathcal{E}_{\txt{cart}}) \to \iota
  \mathcal{E}_{x},\]
  where $\MapC(\mathcal{C}_{/x}, \mathcal{E}_{\txt{cart}})$ is the
  mapping space in the \icat{} of right fibrations over $\mathcal{C}$,
  which is modelled by the contravariant model structure on
  $(\sSet)_{/\mathcal{C}}$ constructed in \cite{HTT}*{\S 2.1.4}.

  By \cite{HTT}*{Proposition 4.4.4.5}, the inclusion $\{x\} \to
  \mathcal{C}_{/x}$ is a trivial cofibration in this model
  category. Since this is a simplicial model category by
  \cite{HTT}*{Proposition 2.1.4.8}, it follows immediately that we
  have an equivalence
  \[ \MapC(\mathcal{C}_{/x}, \mathcal{E}_{\txt{cart}}) \isoto
  \MapC(\{x\}, \mathcal{E}_{\txt{cart}}).\]
  This proves (i). To prove (ii) we simply observe that since the
  model category is simplicial, the product $\{x\} \times K \to\mathcal{C}_{/x} \times K$ is also a trivial cofibration for any
  simplicial set $K$.
\end{proof}

For the case of maps $\Delta^{1} \to \mathcal{C}$, the key observation is:
\begin{propn}\label{propn:pushout}
  If $\Delta^{1} \to \mathcal{C}$ corresponds to a map $f \colon x \to y$ in
  $\mathcal{C}$, then the diagram
  \nolabelcsquare{\mathcal{C}_{/x}}{\mathcal{C}_{/x} \times \Delta^{1}}{\mathcal{C}_{/y}}{\mathcal{C}^{\Delta^{1}} \times_{\mathcal{C}} \Delta^{1}}
  is a pushout square in $\CatICc$, where the top map is induced by
  the inclusion $\{0\} \hookrightarrow \Delta^{1}$.
\end{propn}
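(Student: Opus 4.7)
The plan is to verify the pushout via the (un)straightening equivalence $\txt{St}_{\mathcal{C}} \colon \CatICc \simeq \Fun(\mathcal{C}, \CatI)$, which reduces the claim to a pointwise pushout check in $\CatI$ (since pushouts in $\Fun(\mathcal{C}, \CatI)$ are computed pointwise).

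First I would identify the classifying functors at $c \in \mathcal{C}$: the overcategories $\mathcal{C}_{/x}$ and $\mathcal{C}_{/y}$ straighten to the representables $\Map_{\mathcal{C}}(c, x)$ and $\Map_{\mathcal{C}}(c, y)$, while $\Delta^1 \times \mathcal{C}_{/x}$ straightens to $\Delta^1 \times \Map_{\mathcal{C}}(c, x)$. For $F(p)$, unwinding the definition identifies the fiber $(F(p))_c$ with the mapping cylinder $\txt{Cyl}(f_*\colon \Map_{\mathcal{C}}(c, x) \to \Map_{\mathcal{C}}(c, y))$: explicitly, $(F(p))_c$ has objects $(i, \phi\colon c \to f(i))$ for $i \in \{0, 1\}$, with morphisms within each $i$-fiber coming from the respective mapping space and morphisms from $(0, \phi)$ to $(1, \psi)$ given by paths witnessing $\psi \simeq f\phi$. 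Equivalently, the projection $(F(p))_c \to \Delta^1$ is a left fibration classifying $f_*$, whose total space is by definition the mapping cylinder.

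Next I would recall the standard pushout presentation of the mapping cylinder: for any functor $F\colon \mathcal{A} \to \mathcal{B}$ in $\CatI$, the cylinder $\txt{Cyl}(F)$ sits in a pushout $\mathcal{B} \sqcup_{\mathcal{A}} (\Delta^1 \times \mathcal{A})$ with left leg $F$ and top leg the appropriate endpoint inclusion $\mathcal{A} \hookrightarrow \Delta^1 \times \mathcal{A}$. Applied pointwise to $f_*$, this is precisely the straightened version of the pushout square of the proposition at each $c$: the left leg $\mathcal{C}_{/x} \to \mathcal{C}_{/y}$ straightens to $f_*$, the bottom leg $\mathcal{C}_{/y} \to F(p)$ to the inclusion $\Map_{\mathcal{C}}(c, y) \hookrightarrow \txt{Cyl}(f_*)$, and the right leg $\Delta^1 \times \mathcal{C}_{/x} \to F(p)$ to the inclusion $\Delta^1 \times \Map_{\mathcal{C}}(c, x) \hookrightarrow \txt{Cyl}(f_*)$. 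Naturality in $c$ is immediate because Cartesian transport in each term acts by pre-composition on the $\Map$-sets and is compatible with the mapping cylinder construction, so the pointwise pushouts assemble into a pushout in $\Fun(\mathcal{C}, \CatI)$, whence the pushout in $\CatICc$ follows by unstraightening.

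The main obstacle is the careful identification of $(F(p))_c$ with $\txt{Cyl}(f_*)$, together with the tracking of the orientation of the cylinder so that the specific endpoint inclusion used for the top map of the proposition matches the one needed in the standard mapping cylinder pushout presentation.
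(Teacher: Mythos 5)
Your proposal is correct and follows essentially the same route as the paper: reduce to a fibrewise check via the straightening equivalence, identify the fibre of $\mathcal{C}^{\Delta^1}\times_{\mathcal{C}}\Delta^1$ over $c$ with the left fibration over $\Delta^1$ classifying $f_*\colon \Map_{\mathcal{C}}(c,x)\to\Map_{\mathcal{C}}(c,y)$, and conclude by the mapping-cylinder pushout presentation. The only difference is that what you call the ``standard pushout presentation of the mapping cylinder'' is, in the paper, precisely the $n=1$ case of Lemma~\ref{lem:Dncocartpo}, which is not taken as known there but was established earlier via the mapping simplex.
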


\begin{proof}
  Since colimits in $\CatICc \simeq \Fun(\mathcal{C}^{\op}, \CatI)$
  are detected fibrewise, it suffices to show that for every $c \in
  \mathcal{C}$, the diagram on fibres is a pushout in $\CatI$. This
  diagram can be identified with \nolabelcsquare{\MapC(c,
    x)}{\MapC(c, x) \times \Delta^{1}}{\MapC(c, y)}{\mathcal{C}_{c/}
    \times_{\mathcal{C}} \Delta^{1}.}  This is a pushout by
  Lemma~\ref{lem:Dncocartpo}, since $\mathcal{C}_{c/}
  \times_{\mathcal{C}} \Delta^{1} \to \Delta^{1}$ is the left
  fibration corresponding to the map of spaces $\MapC(c, x) \to
  \MapC(c, y)$ induced by composition with $f$.
\end{proof}

\begin{cor}\label{cor:delta1}
  For every map $\sigma \colon \Delta^{1} \to \mathcal{C}$ and every Cartesian
  fibration $\mathcal{E} \to \mathcal{C}$, the map \[\eta_{\sigma}^{*} \colon \MapCc(\mathcal{C}^{\Delta^{1}}
  \times_{\mathcal{C}} \Delta^{1}, \mathcal{E}) \to \MapC(\Delta^{1}, \mathcal{E})\] is an
  equivalence.
\end{cor}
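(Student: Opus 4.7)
My plan is to reduce the claim to Proposition~\ref{propn:pushout} combined with Lemma~\ref{lem:delta0}. Writing $f \colon x \to y$ for the morphism in $\mathcal{C}$ corresponding to $\sigma$, I apply $\MapCc(-, \mathcal{E})$ to the pushout of Proposition~\ref{propn:pushout}, which converts it to a pullback
\[
\MapCc(\mathcal{C}^{\Delta^{1}} \times_{\mathcal{C}} \Delta^{1}, \mathcal{E}) \simeq \MapCc(\mathcal{C}_{/y}, \mathcal{E}) \times_{\MapCc(\mathcal{C}_{/x}, \mathcal{E})} \MapCc(\Delta^{1} \times \mathcal{C}_{/x}, \mathcal{E}).
\]
Lemma~\ref{lem:delta0}(i) and (ii) identify the three corners as $\iota \mathcal{E}_{y}$, $\iota \mathcal{E}_{x}$, and $\Map(\Delta^{1}, \mathcal{E}_{x})$. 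The left leg $\iota \mathcal{E}_{y} \to \iota \mathcal{E}_{x}$ becomes Cartesian transport along $f$ (since the functor $\mathcal{C}_{/x} \to \mathcal{C}_{/y}$ sends $\id_{x}$ to $f$, and a map $\mathcal{C}_{/y} \to \mathcal{E}$ in $\CatICc$ is determined by its value at $\id_{y}$ via Cartesian lifts), while the right leg $\Map(\Delta^{1}, \mathcal{E}_{x}) \to \iota \mathcal{E}_{x}$ is an evaluation at an endpoint of $\Delta^{1}$.

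In parallel, I would identify $\MapC(\Delta^{1}, \mathcal{E})$ directly as the same pullback. Since $p$ is Cartesian, any edge of $\mathcal{E}$ lying over $f$ factors uniquely up to contractible choice as $\phi$ followed by a $p$-Cartesian edge $f^{*}\bar{e} \to \bar{e}$, where $\bar{e} \in \mathcal{E}_{y}$ is the target and $\phi$ is an edge in the fibre $\mathcal{E}_{x}$. This exhibits $\MapC(\Delta^{1}, \mathcal{E}) \simeq \iota \mathcal{E}_{y} \times_{\iota \mathcal{E}_{x}} \Map(\Delta^{1}, \mathcal{E}_{x})$ via the same cospan.

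It then remains to verify that $\eta_{\sigma}^{*}$ intertwines the two pullback descriptions. Given a Cartesian-preserving functor $G \colon \mathcal{C}^{\Delta^{1}} \times_{\mathcal{C}} \Delta^{1} \to \mathcal{E}$ classified by a pair $(\bar{e}, \phi)$, unwinding shows that $G \circ \eta_{\sigma}$ is the edge in $\mathcal{E}$ obtained by composing $\phi$ with the $p$-Cartesian lift $f^{*}\bar{e} \to \bar{e}$, which is exactly the element of $\MapC(\Delta^{1}, \mathcal{E})$ attached to the datum $(\bar{e}, \phi)$ under the second identification. The main obstacle is careful bookkeeping of orientation conventions---the direction of $\phi$, which endpoint of $\Delta^{1}$ provides the evaluation map, and how the unit $\eta_{\sigma}$ behaves with respect to both sides of the pushout---but once these conventions are fixed the computation is routine.
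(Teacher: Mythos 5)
Your argument is correct and follows essentially the same route as the paper: apply $\MapCc(-,\mathcal{E})$ to the pushout of Proposition~\ref{propn:pushout}, recognize that the Cartesian fibration property exhibits $\MapC(\Delta^{1},\mathcal{E})$ as a pullback of the same cospan (an edge over $f$ factors as a fibre morphism followed by a Cartesian lift), and conclude via Lemma~\ref{lem:delta0} that the induced map on the remaining corner is an equivalence. The paper packages the final compatibility check as a map of pullback squares induced by $\eta_{\sigma}$ rather than unwinding it on objects, but the content is the same.
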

\begin{proof}
  By Proposition~\ref{propn:pushout}, if the map $\sigma$ corresponds
  to a morphism $f \colon x \to y$ in $\mathcal{C}$, we have a
  pullback square \nolabelcsquare{\MapCc(\mathcal{C}^{\Delta^{1}}
    \times_{\mathcal{C}} \Delta^{1},
    \mathcal{E})}{\MapCc(\mathcal{C}_{/y},
    \mathcal{E})}{\MapCc(\mathcal{C}_{/x} \times \Delta^{1},
    \mathcal{E})}{\MapCc(\mathcal{C}_{/x}, \mathcal{E}).}  The map
  $\eta_{\sigma}^{*}$ fits in an obvious map of commutative squares
  from this to the square \nolabelcsquare{\MapC(\Delta^{1},
    \mathcal{E})}{\MapC(\{y\}, \mathcal{E})}{\MapC(\{x\} \times \Delta^{1}, \mathcal{E})}{\MapC(\{x\}, \mathcal{E}),} where the right
  vertical map is given by composition with Cartesian morphims over
  $f$.  Since $\mathcal{E} \to \mathcal{C}$ is a Cartesian fibration,
  this is also a pullback square (this amounts to saying morphisms in
  $\mathcal{E}$ over $f$ are equivalent to composites of a morphism in
  $\mathcal{E}_{x}$ with a Cartesian morphism over $f$). But now, by
  Lemma~\ref{lem:delta0}, we have a natural transformation of pullback
  squares that's an equivalence everywhere except the top left corner,
  so the map in that corner is an equivalence too.
\end{proof}

To complete the proof, we now only need to observe that $F$ preserves
colimits:
\begin{lemma}\label{lem:colim}
  $F$ preserves colimits.
\end{lemma}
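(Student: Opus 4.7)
The plan is to use the straightening equivalence $\txt{St}_\mathcal{C}\colon\CatICc \simeq \Fun(\mathcal{C}^{\op}, \CatI)$, under which colimits in $\CatICc$ correspond to pointwise colimits. Since each evaluation $\mathrm{ev}_c$ for $c\in\mathcal{C}$ preserves colimits and the family $\{\mathrm{ev}_c\}_{c\in\mathcal{C}}$ is jointly conservative, it suffices to show that for each $c$ the composite $\mathrm{ev}_c\circ\txt{St}_\mathcal{C}\circ F\colon\CatIC \to \CatI$ preserves colimits --- that is, that the fibre $F(p)_c$ is colimit-preserving in $p$.

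An unwinding identifies the fibre of $F(p) = \mathcal{E} \times_\mathcal{C} \mathcal{C}^{\Delta^1}$ over $c$ (with respect to the $\mathrm{ev}_0$ projection) with $\mathcal{E}\times_\mathcal{C}\mathcal{C}_{c/}$, where $q_c\colon\mathcal{C}_{c/}\to\mathcal{C}$ is the canonical left fibration classifying $\MapC(c,-)$. The assignment $\mathcal{E}\mapsto\mathcal{E}\times_\mathcal{C}\mathcal{C}_{c/}$ factors as the pullback $q_c^*\colon\CatIC\to\Cat_{\infty/\mathcal{C}_{c/}}$ followed by the forgetful to $\CatI$; the forgetful preserves colimits since colimits in a slice \icat{} are computed in the underlying \icat{}. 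So it remains to show that $q_c^*$ preserves colimits.

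This is a ``base change commutes with colimits'' statement, which in general fails since pullback is only right adjoint to post-composition. We rely on the extra regularity of $q_c$ as a left fibration, hence a smooth morphism in the sense of \cite{HTT}*{\S 4.1.2}: pullback along smooth morphisms respects colimits. Equivalently, $\mathcal{E}\times_\mathcal{C}\mathcal{C}_{c/}\to\mathcal{E}$ is itself the left fibration classified by $\MapC(c, p(-))\colon\mathcal{E}\to\mathcal{S}$, and the preservation of colimits follows from the colimit-preservation of both pre-composition of presheaves of spaces and formation of total spaces of left fibrations.

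The main obstacle is making this last step precise: colimits in $\CatI$ are not universal, so we really do need the structural properties of left fibrations rather than some generic base-change argument. The cleanest formulation is that $q_c^*$ admits a right adjoint given by ``parametrized sections'' along the fibres of $q_c$, exhibiting $q_c^*$ as a left adjoint and making the preservation of colimits automatic.
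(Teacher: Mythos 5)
Your argument is correct and follows essentially the same route as the paper: colimits in $\CatICc$ are detected fibrewise, the fibre of $F(p)$ over $c$ is $\mathcal{C}_{c/}\times_{\mathcal{C}}\mathcal{E}$, and the point is that pullback along $\mathcal{C}_{c/}\to\mathcal{C}$ preserves colimits, followed by the colimit-preserving forgetful functor to $\CatI$. The one caveat is your citation for the key step: \emph{smoothness} in the sense of \cite{HTT}*{\S 4.1.2} concerns preservation of left anodyne maps and cofinality under pullback (i.e.\ the covariant model structure), and does not by itself give preservation of categorical equivalences or of colimits in $\CatI$; the property actually needed is \emph{flatness}, which holds for $\mathcal{C}_{c/}\to\mathcal{C}$ by \cite{HA}*{Example B.3.11} and makes the pullback functor left Quillen (hence colimit-preserving, with the right adjoint you describe) by \cite{HA}*{Corollary B.3.15} --- this is exactly how the paper closes the argument.
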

\begin{proof}
  Colimits in $\CatICc$ are detected fibrewise, so we need to show
  that for every $x \in \mathcal{C}$, the functor $\mathcal{C}_{x/}
  \times_{\mathcal{C}} (\blank) \colon \CatIC \to \CatI$ preserves
  colimits. But $\mathcal{C}_{x/} \to \mathcal{C}$ is a flat fibration
  by \cite{HA}*{Example B.3.11}, so pullback along it preserves
  colimits as a functor $\CatIC \to \Cat_{\infty/\mathcal{C}_{x/}}$
  (since on the level of model categories the pullback functor is a
  left Quillen functor by \cite{HA}*{Corollary B.3.15}), and the
  forgetful functor $\Cat_{\infty/\mathcal{C}_{x/}} \to \CatI$ also
  preserves colimits.
\end{proof}

\begin{proof}[Proof of Theorem~\ref{thm:free}]
By Lemma~\ref{lem:colim} the source and target of the natural map
\[\MapCc(F(\mathcal{E}), \mathcal{F}) \to \MapC(\mathcal{E},
U(\mathcal{F}))\]
both take colimits in $\mathcal{E}$ to limits of spaces. Since $\CatI$
is a localization of $\mathcal{P}(\simp)$, every object of $\CatIC$ is
canonically the colimit of a diagram consisting of objects of the form
$\Delta^{n} \to \mathcal{C}$, so it suffices to show that the map is
an equivalence for such objects. But $\Delta^{n}$ can in turn be
identified with the colimit
$\Delta^{1} \amalg_{\Delta^{0}} \cdots \amalg_{\Delta^{0}}
\Delta^{1}$,
so it suffices to check that the map is an equivalence when
$\mathcal{E} = \Delta^{0}$ and $\Delta^{1}$. Thus the result follows
from Lemma~\ref{lem:delta0} and Corollary~\ref{cor:delta1}.
\end{proof}

\begin{propn}\label{propn:freeonfuncat}\ 
  \begin{enumerate}[(i)]
  \item Suppose $X \to S$ is a map of \icats{} and $K$ is an
    \icat{}. Then there is a natural equivalence $F( K \times X) \simeq
    K \times F(X)$.
  \item The unit map $X\to F(X)$ induces an equivalence of \icats{}
    \[ \Fun^{\txt{cart}}_{S}(F(X), Y)\isoto\Fun_{S}(X, Y).\]
  \end{enumerate}
\end{propn}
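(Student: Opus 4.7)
My plan is to prove (i) by a direct pullback manipulation and then deduce (ii) by combining (i) with Theorem~\ref{thm:free} and the cotensoring of $\CatISc$ in $\CatI$. For (i), observe that the structure map $K\times X\to S$ is the composite of the projection $K\times X\to X$ with $p\colon X\to S$, so the pullback $(K\times X)\times_S S^{\Delta^1}$ splits off the factor $K$: concretely,
\[ F(K\times X) = (K\times X)\times_S S^{\Delta^1} \simeq K\times(X\times_S S^{\Delta^1}) = K\times F(X), \]
and this identification is manifestly compatible with the projections to $S$ (which ignore $K$) and with functors in $X$ and $K$, so it is natural in both variables.

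For (ii), the strategy is to show that pullback along the unit $\eta_X\colon X\to F(X)$ induces an equivalence of mapping spaces after tensoring with an arbitrary \icat{} $K$, so that by the Yoneda lemma it induces an equivalence of \icats{}. Since $\Fun_S^{\txt{cart}}(F(X),Y)$ is the full subcategory of $\Fun_S(F(X),Y)$ on Cartesian-preserving functors, and since Cartesian morphisms in $K\times F(X)\to S$ are precisely pairs (equivalence in $K$, Cartesian morphism in $F(X)$), we have natural identifications
\[ \iota\,\Fun_S^{\txt{cart}}(F(X),Y)^{K} \simeq \MapSC(K\times F(X),Y),\qquad \iota\,\Fun_S(X,Y)^{K} \simeq \MapS(K\times X,Y). \]
Applying (i) to the left-hand side rewrites $K\times F(X)$ as $F(K\times X)$, and then Theorem~\ref{thm:free} gives an equivalence
\[ \MapSC(F(K\times X),Y) \isoto \MapS(K\times X,Y), \]
induced by the unit $\eta_{K\times X}$. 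Stringing these equivalences together and varying $K$ gives the desired equivalence of \icats{}, and one checks it is induced by $\eta_X$ because the unit is natural in its argument and compatible with the identification $F(K\times X)\simeq K\times F(X)$.

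The only real subtlety will be the bookkeeping in the last step: one must verify that the composite equivalence is actually the pullback along $\eta_X$ rather than some other natural transformation. This follows from the naturality in part (i) together with the fact that $\eta$ is a natural transformation of endofunctors of $\CatIS$, so that $\eta_{K\times X}$ restricts on each slice $\{k\}\times X$ to $\eta_X$; once this is unwound the Yoneda argument gives the stated equivalence of \icats{} canonically.
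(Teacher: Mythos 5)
Your proposal is correct and follows essentially the same route as the paper: part (i) is read off directly from the defining pullback, and part (ii) is obtained from the chain of natural equivalences $\Map(K,\Fun_S(X,Y))\simeq\Map_S(K\times X,Y)\simeq\Map_S^{\txt{cart}}(F(K\times X),Y)\simeq\Map_S^{\txt{cart}}(K\times F(X),Y)\simeq\Map(K,\Fun_S^{\txt{cart}}(F(X),Y))$ together with the Yoneda lemma. The only difference is that you make explicit the bookkeeping verifying the composite is induced by the unit $\eta_X$, which the paper leaves implicit.
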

\begin{proof}
  (i) is immediate from the definition of $F$. Then (ii) follows from
  the natural equivalence
  \[
  \begin{split}
\Map(K, \Fun_{S}(A, B)) & \simeq \Map_{S}(K \times A, B) \simeq
  \Map_{S}^{\txt{cart}}(F(K \times A), B) \\ & \simeq
  \Map_{S}^{\txt{cart}}(K \times F(A), B) \simeq \Map(K,
  \Fun_{S}^{\txt{cart}}(F(A), B)).\qedhere
  \end{split}\]
\end{proof}

\section{Natural Transformations as an End}\label{sec:nattrend}
It is a familiar result from ordinary category theory that for two
functors $F, G \colon \mathbf{C} \to \mathbf{D}$ the set of natural
transformations from $F$ to $G$ can be identified with the \emph{end}
of the functor $\mathbf{C}^{\op} \times \mathbf{C} \to \Set$ that
sends $(C, C')$ to $\Hom_{\mathbf{D}}(F(C), G(C'))$. Our goal in this
section is to prove the analogous result for \icats{}:

\begin{propn}\label{propn:nattrend}
  Let $F, G \colon \mathcal{C} \to \mathcal{D}$ be two functors of
  \icats{}. Then the space $\Map_{\Fun(\mathcal{C}, \mathcal{D})}(F,
  G)$ of natural transformations from $F$ to $G$ is naturally
  equivalent to the end of the functor
  \[ \mathcal{C}^{\op} \times \mathcal{C} \xto{(F^{\op}, G)}
  \mathcal{D}^{\op} \times \mathcal{D} \xto{\Map_{\mathcal{D}}}
  \mathcal{S}. \] 
\end{propn}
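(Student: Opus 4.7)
The plan is to exhibit the end as the space of sections of a right fibration obtained by pullback from $\mathrm{Tw}(\mathcal{D})$, construct a natural comparison map from $\Map_{\Fun(\mathcal{C},\mathcal{D})}(F,G)$ to this section space via functoriality of the edgewise subdivision, and then verify this comparison is an equivalence by a colimit reduction to the case of simplices.

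First, recall from \cite{HA}*{Proposition 5.2.1.11} that the twisted arrow right fibration $p_\mathcal{D}\colon\mathrm{Tw}(\mathcal{D})\to\mathcal{D}^{\op}\times\mathcal{D}$ classifies $\Map_\mathcal{D}$. Pulling back along the composite $(F^{\op},G)\circ p_\mathcal{C}\colon\mathrm{Tw}(\mathcal{C})\to\mathcal{D}^{\op}\times\mathcal{D}$ yields a right fibration $E\to\mathrm{Tw}(\mathcal{C})$ classifying the restriction to $\mathrm{Tw}(\mathcal{C})$ of the functor whose end we are computing. Since the limit of a space-valued functor is the space of sections of the corresponding unstraightening, the end is equivalent to the space of maps $\mathrm{Tw}(\mathcal{C})\to\mathrm{Tw}(\mathcal{D})$ over $(F^{\op},G)\circ p_\mathcal{C}$.

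Next, I would construct a comparison map from $\Map_{\Fun(\mathcal{C},\mathcal{D})}(F,G)$ to this section space as follows. A natural transformation $F\To G$ is a functor $\widetilde\alpha\colon\mathcal{C}\times\Delta^1\to\mathcal{D}$, to which we may apply $\mathrm{Tw}$ to obtain $\mathrm{Tw}(\widetilde\alpha)\colon\mathrm{Tw}(\mathcal{C}\times\Delta^1)\to\mathrm{Tw}(\mathcal{D})$; precomposing with the natural map $\mathrm{Tw}(\mathcal{C})\to\mathrm{Tw}(\mathcal{C}\times\Delta^1)$ that on the $\Delta^1$-factor is induced by joining the constant maps to $\{0\}$ and $\{1\}$ on the two halves of $[n]\star[n]^{\op}$, we obtain a functor $\mathrm{Tw}(\mathcal{C})\to\mathrm{Tw}(\mathcal{D})$ sitting over $(F^{\op},G)\circ p_\mathcal{C}$, as required.

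To show this comparison is an equivalence, observe that both sides preserve limits in $G$ (on the right, because $\Map_\mathcal{D}$ does and ends commute with limits) and turn colimits in $F$ into limits, and that both are compatible with colimit decompositions of $\mathcal{C}$. One may then reduce to the case $\mathcal{C}=\Delta^n$, and by further reduction to $\mathcal{C}=\Delta^0$ or $\Delta^1$, where $\mathrm{Tw}(\mathcal{C})$ is the small poset described in Example~\ref{ex:Twn} and the equivalence can be checked directly. The main obstacle will be this last step: setting up the bifunctorial framework precisely enough to justify the colimit reduction, and performing the base case computation. An alternative, possibly cleaner route is to apply the Yoneda embedding $\mathcal{D}\hookrightarrow\mathcal{P}(\mathcal{D})$ to reduce to the case where target values lie in $\mathcal{S}$, where both sides become manifestly section spaces over a fibration and may be compared by straightening; but verifying that this reduction preserves the relevant structure is of comparable difficulty.
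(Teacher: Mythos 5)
Your first step coincides with the paper's: by \cite{HTT}*{Corollary 3.3.3.4} the end is the space of sections of the fibration over $\txt{Tw}(\mathcal{C})$ pulled back from $\txt{Tw}(\mathcal{D}) \to \mathcal{D}^{\op}\times\mathcal{D}$, i.e.\ the space of maps $\txt{Tw}(\mathcal{C})\to\txt{Tw}(\mathcal{D})$ lying over $(F^{\op},G)$. After that the two arguments diverge, and your route has a genuine gap. The paper never constructs an explicit comparison map: it straightens the section space into $\Map_{\Fun(\mathcal{C},\mathcal{P}(\mathcal{C}))}(y_{\mathcal{C}}, F^{*}y_{\mathcal{D}}\circ G)$, applies the adjunction $F_{!}\dashv F^{*}$ together with $F_{!}y_{\mathcal{C}}\simeq y_{\mathcal{D}}\circ F$ (\cite{HTT}*{Proposition 5.2.6.3}), and concludes by full faithfulness of postcomposition with the Yoneda embedding, which is exactly Lemma~\ref{lem:compff}. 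Your ``alternative, possibly cleaner route'' via the Yoneda embedding is in fact pointing at this argument, but you do not carry it out.

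The gap in your main route is the colimit reduction. To reduce to $\mathcal{C}=\Delta^{0}$ and $\Delta^{1}$ you need the end --- i.e.\ the section space over $\txt{Tw}(\mathcal{C})$ --- to carry homotopy colimits in the variable $\mathcal{C}$ to limits of spaces, compatibly with your comparison map. For $\Map_{\Fun(\mathcal{C},\mathcal{D})}(F,G)$ this is clear, but for the section space it requires knowing that $\mathcal{C}\mapsto\txt{Tw}(\mathcal{C})$ preserves homotopy colimits of \icats{}: the functor $\epsilon^{*}$ preserves colimits of simplicial sets, but that these compute the relevant homotopy colimits in $\CatI$, and that the identification is natural enough to transport your explicitly constructed map along a decomposition of $\mathcal{C}$, is precisely the ``bifunctorial framework'' you defer --- and it is neither a formality nor established anywhere in the paper. (The base cases themselves are fine: for $\mathcal{C}=\Delta^{1}$ the end is the pullback $\Map(F(0),G(0))\times_{\Map(F(0),G(1))}\Map(F(1),G(1))$, which can be matched against the mapping space in $\mathcal{D}^{\Delta^{1}}$ using \cite{HTT}*{Corollary 2.4.7.11}.) As written the proposal is a plausible programme rather than a proof; to close it, either supply the colimit-preservation statement for $\txt{Tw}$ with the requisite naturality, or switch to the straightening-plus-Yoneda argument, which avoids both the comparison map and the reduction entirely.
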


A proof of this is also given in \cite{GlasmanTHHHodge}*{Proposition
  2.3}; we include a slightly different proof for completeness.

\begin{lemma}\label{lem:compff}
  Suppose $i \colon \mathcal{C}_{0} \hookrightarrow \mathcal{C}$ is a
  fully faithful functor of \icats{}. Then for any \icat{}
  $\mathcal{X}$ the functor $\Fun(\mathcal{X}, \mathcal{C}_{0}) \to
  \Fun(\mathcal{X}, \mathcal{C})$ is also fully faithful.
\end{lemma}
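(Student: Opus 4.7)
The plan is to combine the standard pullback-square characterization of fully faithful functors with the fact that $\Fun(\mathcal{X}, -)$, being a right adjoint, preserves limits.

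Recall that $i \colon \mathcal{C}_0 \to \mathcal{C}$ is fully faithful if and only if the commutative square
\[
\begin{tikzcd}
\mathcal{C}_0^{\Delta^1} \arrow[r, "i^{\Delta^1}"] \arrow[d, "{(\mathrm{ev}_0,\mathrm{ev}_1)}"'] & \mathcal{C}^{\Delta^1} \arrow[d, "{(\mathrm{ev}_0,\mathrm{ev}_1)}"] \\
\mathcal{C}_0 \times \mathcal{C}_0 \arrow[r, "i\times i"'] & \mathcal{C} \times \mathcal{C}
\end{tikzcd}
\]
is Cartesian in $\CatI$: passing to the homotopy fibre over a pair of objects $(x,y) \in \mathcal{C}_0 \times \mathcal{C}_0$ identifies the induced map on fibres with the comparison map $\Map_{\mathcal{C}_0}(x,y) \to \Map_{\mathcal{C}}(ix, iy)$, and all such comparison maps being equivalences is precisely the definition of fully faithful.

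First I would apply $\Fun(\mathcal{X}, -)$ to this square. Since $\Fun(\mathcal{X}, -)$ is right adjoint to $\mathcal{X} \times (-)$, it preserves pullbacks, so the result is again Cartesian. Using the canonical equivalence $\Fun(\mathcal{X}, \mathcal{D}^{\Delta^1}) \simeq \Fun(\mathcal{X}, \mathcal{D})^{\Delta^1}$ (both of which model $\Fun(\mathcal{X} \times \Delta^1, \mathcal{D})$) to rewrite the corners, the resulting Cartesian square becomes the analogous arrow-\icat{} pullback square for the functor $\Fun(\mathcal{X}, i)$. By the converse direction of the same pullback criterion, this says exactly that $\Fun(\mathcal{X}, i)$ is fully faithful, as desired.

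The only nontrivial input is the pullback characterization above; this is essentially a repackaging of the construction of mapping spaces in an \icat{} $\mathcal{D}$ as homotopy fibres of $(\mathrm{ev}_0,\mathrm{ev}_1)\colon \mathcal{D}^{\Delta^1} \to \mathcal{D} \times \mathcal{D}$. Once granted, the rest of the argument is purely formal, and if one wished to avoid invoking the characterization explicitly, one could instead unfold everything: write $\Map_{\Fun(\mathcal{X}, \mathcal{C}_0)}(F, G)$ as the homotopy fibre of $\Fun(\mathcal{X}, \mathcal{C}_0^{\Delta^1}) \to \Fun(\mathcal{X}, \mathcal{C}_0 \times \mathcal{C}_0)$ over $(F, G)$ and compare directly with the corresponding fibre for $\mathcal{C}$ via the Cartesian square above.
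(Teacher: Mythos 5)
Your argument is correct and follows essentially the same route as the paper: both proofs turn on the Cartesianness of the square with vertical maps $(\mathrm{ev}_0,\mathrm{ev}_1)\colon \mathcal{C}_0^{\Delta^1}\to\mathcal{C}_0\times\mathcal{C}_0$ and $(\mathrm{ev}_0,\mathrm{ev}_1)\colon \mathcal{C}^{\Delta^1}\to\mathcal{C}\times\mathcal{C}$. The difference is one of packaging. You treat ``$i$ is fully faithful if and only if this square is Cartesian in $\CatI$'' as a known two-way criterion and then only need that $\Fun(\mathcal{X},-)$, being a right adjoint, preserves pullbacks and commutes with $(-)^{\Delta^1}$; the paper instead devotes essentially its whole proof to establishing the forward implication of that criterion. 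Be aware that this implication is not quite the formality your closing remark suggests: mapping spaces arise as homotopy fibres of the map on \emph{interiors} $\iota(\mathcal{D}^{\Delta^1})\to\iota\mathcal{D}\times\iota\mathcal{D}$, and in general a commutative square of \icats{} whose induced maps on fibres over all objects of the base are equivalences need not be Cartesian. The paper's fix is to observe that the vertical maps are \emph{bifibrations} (\cite{HTT}*{Corollary 2.4.7.11}), for which equivalences are detected fibrewise (\cite{HTT}*{Propositions 2.4.7.6 and 2.4.7.7}); with that reference supplied your proof is complete, and its two-directional use of the criterion together with limit-preservation of $\Fun(\mathcal{X},-)$ is arguably a cleaner organization than the paper's direct manipulation of mapping spaces.
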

\begin{proof}
  A functor $G \colon \mathcal{A} \to \mathcal{B}$ is fully faithful
  \IFF{} the commutative square of spaces
  \nolabelcsquare{\Map(\Delta^{1}, \mathcal{A})}{\Map(\Delta^{1},
    \mathcal{B})}{\iota \mathcal{A}^{\times 2}}{\iota
    \mathcal{B}^{\times 2}} is Cartesian. Thus, we must show that for
  any $\mathcal{X}$, the square \nolabelcsquare{\Map(\Delta^{1} \times
    \mathcal{X}, \mathcal{C}_{0})}{\Map(\Delta^{1} \times \mathcal{X},
    \mathcal{C})}{\Map(\mathcal{X}, \mathcal{C}_{0})^{\times
      2}}{\Map(\mathcal{X}, \mathcal{C})^{\times 2}} is Cartesian. But
  this is equivalent to the commutative square of \icats{}
  \nolabelcsquare{\mathcal{C}_{0}^{\Delta^{1}}}{\mathcal{C}^{\Delta^{1}}}{\mathcal{C}_{0}^{\times
      2}}{\mathcal{C}^{\times 2}} being Cartesian. By
  \cite{HTT}*{Corollary 2.4.7.11} the vertical maps in this diagram
  are bifibrations in the sense of \cite{HTT}*{Definition 2.4.7.2}, so
  by \cite{HTT}*{Propositions 2.4.7.6 and 2.4.7.7} to prove that this
  square is Cartesian it suffices to show that for all $x,y \in
  \mathcal{C}_{0}$ the induced map on fibres
  $(\mathcal{C}_{0}^{\Delta^{1}})_{(x,y)} \to
  (\mathcal{C}^{\Delta^{1}})_{(ix,iy)}$ is an equivalence. But this
  can be identified with the map $\Map_{\mathcal{C}_{0}}(x, y) \to
  \Map_{\mathcal{C}}(ix, iy)$, which is an equivalence as $i$ is by
  assumption fully faithful.
\end{proof}

\begin{proof}[Proof of Proposition~\ref{propn:nattrend}]
  By \cite{HTT}*{Corollary 3.3.3.4}, we can identify the limit of the
  functor
  \[ \phi \colon \txt{Tw}(\mathcal{C})^{\op} \to \mathcal{C}^{\op} \times
  \mathcal{C} \xto{(F^{\op}, G)} \mathcal{D}^{\op} \times \mathcal{D}
  \xto{\Map_{\mathcal{D}}} \mathcal{S} \] with the space of sections
  of the corresponding left fibration. By
  \cite{HA}*{Proposition 5.2.1.11}, the left fibration classified by
  $\Map_{\mathcal{D}}$ is the projection $\txt{Tw}(\mathcal{D})^{\op} \to
  \mathcal{D}^{\op} \times \mathcal{D}$,
  so the left fibration classified by $\phi$ is the pullback of this
  along $\txt{Tw}(\mathcal{C})^{\op} \to \mathcal{C}^{\op} \times
  \mathcal{C} \to \mathcal{D}^{\op} \times \mathcal{D}$. Thus the
  space of sections is equivalent to the space of commutative diagrams
  \nolabelcsquare{\txt{Tw}(\mathcal{C})^{\op}}{\txt{Tw}(\mathcal{D})^{\op}}{\mathcal{C}^{\op}
    \times \mathcal{C}}{\mathcal{D}^{\op} \times \mathcal{D},}
  i.e.\ the space of maps from $\txt{Tw}(\mathcal{C})$ to the pullback
  of $\txt{Tw}(\mathcal{D})$ in the \icat{} of left fibrations over
  $\mathcal{C}^{\op} \times \mathcal{C}$. Using the ``straightening''
  equivalence between this \icat{} and that of functors
  $\mathcal{C}^{\op} \times \mathcal{C} \to \mathcal{S}$ we can
  identify our limit with the space of maps from $y_{\mathcal{C}}$ to
  $F^{*}\circ y_{\mathcal{D}} \circ G$ in $\Fun(\mathcal{C}^{\op}
  \times \mathcal{C}, \mathcal{S}) \simeq \Fun(\mathcal{C},
  \mathcal{P}(\mathcal{C}))$. Since $F^{*}$ has a left adjoint
  $F_{!}$, we have an equivalence
  \[ \Map_{\Fun(\mathcal{C},
    \mathcal{P}(\mathcal{C}))}(y_{\mathcal{C}},
  F^{*}y_{\mathcal{D}}\circ G) \simeq \Map_{\Fun(\mathcal{C},
    \mathcal{P}(\mathcal{D}))}(F_{!}y_{\mathcal{C}},
  y_{\mathcal{D}}\circ G).\] But by \cite{HTT}*{Proposition 5.2.6.3} the functor
  $F_{!}y_{\mathcal{C}}$ is equivalent to $y_{\mathcal{D}} \circ F$,
  and so the limit is equivalent to $\Map_{\Fun(\mathcal{C},
    \mathcal{P}(\mathcal{D}))}(y_{\mathcal{D}}\circ F, y_{\mathcal{D}}
  \circ G)$. The Yoneda embedding $y_{\mathcal{D}}$ is fully faithful
  by \cite{HTT}*{Proposition 5.1.3.1}, so Lemma~\ref{lem:compff}
  implies that the functor $\Fun(\mathcal{C}, \mathcal{D}) \to
  \Fun(\mathcal{C}, \mathcal{P}(\mathcal{D}))$ given by composition
  with $y_{\mathcal{D}}$ is fully faithful, hence we have an
  equivalence
  \[ \Map_{\Fun(\mathcal{C},
    \mathcal{P}(\mathcal{D}))}(y_{\mathcal{C}}\circ F, y_{\mathcal{D}}
  \circ G) \simeq \Map_{\Fun(\mathcal{C},\mathcal{D})}(F, G),\]
  which completes the proof.
\end{proof}

\section{Enhanced Mapping Functors}\label{sec:enhmap}
The Yoneda embedding for \icats{}, constructed in
\cite{HTT}*{Proposition 5.1.3.1} or \cite{HA}*{Proposition 5.2.1.11},
gives for any \icat{} $\mathcal{C}$ a mapping space functor
$\Map_{\mathcal{C}} \colon \mathcal{C}^{\op} \times \mathcal{C} \to
\mathcal{S}$. In some cases, this is the underlying functor to spaces
of an interesting functor $\mathcal{C}^{\op} \times \mathcal{C} \to
\CatI$ --- in particular, this is the case if $\mathcal{C}$ is the
underlying \icat{} of an $(\infty,2)$-category. For a definition and a comparison of different models of $(\infty,2)$-categories see \cite{LurieGoodwillie}.

\begin{defn}
  A \emph{mapping \icat{} functor} for an \icat{} $\mathcal{C}$ is a
  functor \[\txt{MAP}_{\mathcal{C}} \colon \mathcal{C}^{\op} \times \mathcal{C} \to \CatI\] together with an equivalence from the composite $\mathcal{C}^{\op} \times \mathcal{C} \to \CatI
  \xto{\iota} \mathcal{S}$ to the mapping space functor $\Map_{\mathcal{C}}$.
\end{defn}

\begin{lemma}
  Suppose $\mathcal{C}$ is an $(\infty,2)$-category with underlying
  \icat{} $\mathcal{C}'$. Then $\mathcal{C}'$ has a mapping \icat{}
  functor that sends $(C, D)$ to the \icat{} of maps from $C$ to $D$
  in $\mathcal{C}$.
\end{lemma}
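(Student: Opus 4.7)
The plan is to construct $\txt{MAP}_{\mathcal{C}'}$ from an enriched Yoneda embedding of the $(\infty,2)$-category $\mathcal{C}$ and then check that its underlying space-valued bifunctor agrees with $\Map_{\mathcal{C}'}$. First I would fix a concrete model for $(\infty,2)$-categories, say that of $\CatI$-enriched \icats{} (any of the other standard models, e.g.\ scaled simplicial sets or complete 2-fold Segal spaces, would work equally well). In such a model the underlying \icat{} $\mathcal{C}'$ is, by definition, obtained by change of enrichment along the right adjoint $\iota \colon \CatI \to \mathcal{S}$, i.e.\ by replacing each mapping \icat{} $\txt{MAP}_{\mathcal{C}}(C,D)$ by its underlying Kan complex $\iota\,\txt{MAP}_{\mathcal{C}}(C,D)\simeq\Map_{\mathcal{C}'}(C,D)$. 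Moreover, $\CatI$ itself carries a canonical $(\infty,2)$-categorical refinement $\underline{\CatI}$, with functor \icats{} as 2-morphisms, whose underlying \icat{} in this sense is $\CatI$.

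Next, I would invoke the $(\infty,2)$-categorical Hom bifunctor of $\mathcal{C}$. In the $\CatI$-enriched model this is literally the enriched $\Hom$: the assignment $(C,D)\mapsto\txt{MAP}_{\mathcal{C}}(C,D)$ is part of the data of the enrichment and extends to a morphism of $(\infty,2)$-categories
\[ \underline{\txt{MAP}}_{\mathcal{C}} \colon \mathcal{C}^{\op}\otimes\mathcal{C}\to\underline{\CatI}, \]
which is just the enriched Yoneda embedding in its bifunctorial form. Applying the ``underlying \icat{}'' construction to both sides—so $\mathcal{C}^{\op}\otimes\mathcal{C}$ becomes $(\mathcal{C}')^{\op}\times\mathcal{C}'$ and $\underline{\CatI}$ becomes $\CatI$—yields the desired functor
\[ \txt{MAP}_{\mathcal{C}'}\colon (\mathcal{C}')^{\op}\times\mathcal{C}'\to\CatI, \]
and by construction it sends $(C,D)$ to $\txt{MAP}_{\mathcal{C}}(C,D)$.

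Finally I would verify $\iota\circ\txt{MAP}_{\mathcal{C}'}\simeq\Map_{\mathcal{C}'}$. Postcomposition with $\iota$ corresponds to a further change of enrichment from $\CatI$ down to $\mathcal{S}$, and the first paragraph already identifies this operation as the one producing $\mathcal{C}'$ from $\mathcal{C}$. Applied to the enriched $\Hom$ of $\mathcal{C}$ it therefore produces the $\mathcal{S}$-valued $\Hom$ of $\mathcal{C}'$, which is $\Map_{\mathcal{C}'}$ by the standard characterization of the Yoneda embedding of an \icat{} (\cite{HA}*{Proposition 5.2.1.11}). The main obstacle is purely model-dependent bookkeeping: a clean treatment requires a construction of the $(\infty,2)$-categorical enriched Yoneda embedding compatible with change of enrichment along $\iota$. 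Fixing the $\CatI$-enriched model at the outset makes both ingredients tautological, so the lemma reduces to the identification of mapping spaces above.
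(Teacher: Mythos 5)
Your overall strategy is the same as the paper's: realize the mapping \icat{} functor as the enriched Hom of the $(\infty,2)$-category and observe that change of enrichment along $\iota\colon\CatI\to\mathcal{S}$ recovers $\Map_{\mathcal{C}'}$. The paper's (one-line) proof implements this by running the argument of \cite{HTT}*{\S 5.1.3} in the model of $(\infty,2)$-categories as categories \emph{strictly} enriched in marked simplicial sets: there the Hom bifunctor $\mathbf{C}^{\op}\times\mathbf{C}\to\sSet^{+}$ is a genuinely strict functor, and the coherent nerve converts it into a functor $(\mathcal{C}')^{\op}\times\mathcal{C}'\to\CatI$ exactly as Lurie does for the space-valued mapping functor. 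The one soft spot in your write-up is the claim that in the $\CatI$-enriched model the bifunctor $(C,D)\mapsto\MAP_{\mathcal{C}}(C,D)$ is ``part of the data'' and hence ``tautological'': in the $\infty$-categorical enrichment formalism the data is an algebra-type structure, and extracting from it a coherent bifunctor valued in $\CatI$ is precisely the content of the enriched Yoneda lemma for enriched \icats{}, which is a nontrivial theorem (and was not available off the shelf when this paper was written). So as stated your key step is asserted rather than proved. The fix is either to cite that theorem explicitly or to do what the paper does: pass to a strict enrichment model, where the Hom bifunctor really is tautological and the only remaining work is the (standard) rectification and nerve bookkeeping of \cite{HTT}*{\S 5.1.3}. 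With that adjustment your argument is correct and essentially identical to the paper's.
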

\begin{proof}
  This follows from the same argument as in \cite{HTT}*{\S 5.1.3},
  using the model of $(\infty,2)$-categories as categories enriched in
  marked simplicial sets, cf.\ \cite{LurieGoodwillie}.
\end{proof}

\begin{ex}
  The \icat{} $\CatI$ of \icats{} has a mapping \icat{} functor
  \[\MAP_{\CatI} := \Fun,\] defined using the construction of $\CatI$
  as the coherent nerve of the simplicial category
  of quasicategories.
\end{ex}

\begin{lemma}\label{lem:MAPfuncat}
  Suppose $\mathcal{C}$ is an \icat{} with a mapping \icat{} functor
  $\MAP_{\mathcal{C}}$. Then for any \icat{} $\mathcal{D}$ the functor
  \icat{} $\mathcal{C}^{\mathcal{D}}$ has a mapping \icat{} functor
  $\MAP_{\mathcal{C}^{\mathcal{D}}}$ given by the composite
  \[ (\mathcal{C}^{\mathcal{D}})^{\op} \times
  \mathcal{C}^{\mathcal{D}} \to \Fun(\mathcal{D}^{\op} \times
  \mathcal{D}, \mathcal{C}^{\op} \times \mathcal{C}) \to
  \Fun(\txt{Tw}(\mathcal{D})^{\op}, \CatI) \xto{\lim} \CatI,\] where the
  second functor is given by composition with the projection
  $\txt{Tw}(\mathcal{D})^{\op} \to \mathcal{D}^{\op} \times \mathcal{D}$ and
  $\MAP_{\mathcal{C}}$.
\end{lemma}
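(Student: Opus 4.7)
The plan is to verify directly that the composite in the statement, when further composed with $\iota \colon \CatI \to \mathcal{S}$, recovers the mapping space functor of $\mathcal{C}^{\mathcal{D}}$. Since the functor is defined as an explicit composite of functors of \icats{}, functoriality on $(\mathcal{C}^{\mathcal{D}})^{\op} \times \mathcal{C}^{\mathcal{D}}$ is automatic; the only content is the identification of its underlying space-valued functor.

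First, I would observe that $\iota \colon \CatI \to \mathcal{S}$ is right adjoint to the inclusion $\mathcal{S} \hookrightarrow \CatI$ and hence preserves limits. Evaluating the proposed $\MAP_{\mathcal{C}^{\mathcal{D}}}$ at a pair $(F,G)$ of functors $\mathcal{D} \to \mathcal{C}$ and composing with $\iota$ therefore yields
\[ \iota \lim_{\txt{Tw}(\mathcal{D})} \MAP_{\mathcal{C}}(F(\blank), G(\blank)) \simeq \lim_{\txt{Tw}(\mathcal{D})} \iota \MAP_{\mathcal{C}}(F(\blank), G(\blank)). \]
By the defining property of the mapping \icat{} functor $\MAP_{\mathcal{C}}$, the composite $\iota \circ \MAP_{\mathcal{C}}$ is equivalent to $\Map_{\mathcal{C}}$, so the right-hand side becomes the end of $\Map_{\mathcal{C}}(F^{\op}(\blank), G(\blank))$ over $\txt{Tw}(\mathcal{D})$.

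Next I would invoke Proposition~\ref{propn:nattrend}, which identifies precisely this end with $\Map_{\Fun(\mathcal{D}, \mathcal{C})}(F, G) = \Map_{\mathcal{C}^{\mathcal{D}}}(F, G)$. Naturality of all the equivalences involved (preservation of limits by $\iota$, the equivalence $\iota \MAP_{\mathcal{C}} \simeq \Map_{\mathcal{C}}$, and the equivalence from Proposition~\ref{propn:nattrend}, which is natural in $F$ and $G$) assembles into an equivalence of functors $(\mathcal{C}^{\mathcal{D}})^{\op} \times \mathcal{C}^{\mathcal{D}} \to \mathcal{S}$ between $\iota \circ \MAP_{\mathcal{C}^{\mathcal{D}}}$ and $\Map_{\mathcal{C}^{\mathcal{D}}}$, which is exactly what is needed.

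The only potentially delicate point is checking that the equivalence supplied by Proposition~\ref{propn:nattrend} really is natural in the two functor variables, rather than merely pointwise; but the proof given there constructs it through a chain of natural equivalences involving the Yoneda embedding and straightening, so this naturality is already packaged in. I do not foresee a serious obstacle beyond keeping track of this naturality.
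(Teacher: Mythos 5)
Your argument is exactly the paper's proof, just written out in more detail: compose with $\iota$, use that $\iota$ preserves limits as a right adjoint, and invoke Proposition~\ref{propn:nattrend} to identify the resulting end with $\Map_{\mathcal{C}^{\mathcal{D}}}(F,G)$. The approach and the key ingredients coincide, so there is nothing to add.
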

\begin{proof}
  We must show that the underlying functor to spaces $\iota \circ
  \MAP_{\mathcal{C}^{\mathcal{D}}}$ is equivalent to
  $\Map_{\mathcal{C}^{\mathcal{D}}}$. Since $\iota$ preserves limits
  (being a right adjoint), this follows immediately from
  Proposition~\ref{propn:nattrend}. 
\end{proof}

\begin{defn}
  Suppose $\mathcal{C}$ is an \icat{} with a mapping \icat{} functor
  $\MAP_{\mathcal{C}}$. We say that $\mathcal{C}$ is \emph{tensored
    over $\CatI$} if for every $C \in \mathcal{C}$ the functor
  $\MAP_{\mathcal{C}}(C, \blank)$ has a left adjoint $\blank \otimes C
  \colon \CatI \to \mathcal{C}$; in this case these adjoints determine
  an essentially unique functor $\otimes \colon \CatI \times
  \mathcal{C} \to \mathcal{C}$.
\end{defn}

\begin{ex}
  The \icat{} $\CatI$ is obviously tensored over $\CatI$ via the
  Cartesian product $\times \colon \CatI \times \CatI \to \CatI$.
\end{ex}

\begin{lemma}\label{lem:tensfuncat}
  Suppose $\mathcal{C}$ is an \icat{} with a mapping \icat{}
  $\MAP_{\mathcal{C}}$ that is tensored over $\CatI$. Then for any
  \icat{} $\mathcal{D}$, the mapping \icat{} functor for
  $\mathcal{C}^{\mathcal{D}}$ defined in Lemma~\ref{lem:MAPfuncat} is
  also tensored over $\CatI$, via the composite
  \[ \CatI \times \mathcal{C}^{\mathcal{D}} \to \CatI^{\mathcal{D}}
  \times \mathcal{C}^{\mathcal{D}} \simeq (\CatI \times
  \mathcal{C})^{\mathcal{D}} \to \mathcal{C}^{\mathcal{D}} \]
  where the first functor is given by composition with the functor
  $\mathcal{D} \to *$ and the last by composition with the tensor
  functor for $\mathcal{C}$.
\end{lemma}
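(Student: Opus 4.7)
The plan is to verify, for each fixed $F\in\mathcal{C}^{\mathcal{D}}$, that the functor $K\mapsto K\otimes F$ built from the displayed composite is left adjoint to $\MAP_{\mathcal{C}^{\mathcal{D}}}(F,\blank)$. Concretely, I will produce a natural equivalence
\[ \Map_{\mathcal{C}^{\mathcal{D}}}(K\otimes F,G)\simeq\Map_{\CatI}(K,\MAP_{\mathcal{C}^{\mathcal{D}}}(F,G)) \]
for $K\in\CatI$ and $G\in\mathcal{C}^{\mathcal{D}}$; since $F$ is arbitrary and the construction is visibly functorial, this suffices.

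First I would unwind the composite defining the tensoring: since its first leg sends $K$ to the constant functor $\mathcal{D}\to\CatI$ at $K$ and its last leg is postcomposition with $\otimes\colon\CatI\times\mathcal{C}\to\mathcal{C}$, the value of $K\otimes F$ at $d\in\mathcal{D}$ is simply $K\otimes F(d)$. Proposition~\ref{propn:nattrend} then identifies the left side of the displayed equivalence with the end
\[ \lim_{\txt{Tw}(\mathcal{D})}\Map_{\mathcal{C}}\bigl(K\otimes F(\blank),G(\blank)\bigr). \]
Applying the tensor-hom equivalence $\Map_{\mathcal{C}}(K\otimes C,D)\simeq\Map_{\CatI}(K,\MAP_{\mathcal{C}}(C,D))$ pointwise over the twisted arrow $\infty$-category, and then commuting $\Map_{\CatI}(K,\blank)$ (a right adjoint) past the limit, this becomes
\[ \Map_{\CatI}\Bigl(K,\lim_{\txt{Tw}(\mathcal{D})}\MAP_{\mathcal{C}}\bigl(F(\blank),G(\blank)\bigr)\Bigr), \]
which by Lemma~\ref{lem:MAPfuncat} is exactly the right side.

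The main obstacle is promoting the pointwise tensor-hom equivalence to an equivalence of functors out of $\txt{Tw}(\mathcal{D})$; this is the only place where $\infty$-categorical coherence is non-trivial. I would handle it by regarding the adjunction datum as a natural equivalence of bifunctors $\CatI^{\op}\times\mathcal{C}^{\op}\times\mathcal{C}\to\mathcal{S}$ between $\Map_{\mathcal{C}}(\blank\otimes\blank,\blank)$ and $\Map_{\CatI}(\blank,\MAP_{\mathcal{C}}(\blank,\blank))$, which follows from the existence of $\otimes$ as a functor of two variables adjoint in the second variable to $\MAP_{\mathcal{C}}$. Precomposing this equivalence along the functor $\txt{Tw}(\mathcal{D})\to\CatI^{\op}\times\mathcal{C}^{\op}\times\mathcal{C}$ built from the constant functor at $K$ together with $(F^{\op},G)$ then supplies the coherent equivalence needed to identify the two ends above.
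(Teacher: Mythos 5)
Your proposal is correct and follows essentially the same route as the paper: express both sides via Proposition~\ref{propn:nattrend} as ends over $\txt{Tw}(\mathcal{D})$, apply the tensor--hom adjunction pointwise, and pull $\Map_{\CatI}(K,\blank)$ outside the limit. Your extra paragraph on promoting the pointwise adjunction to a coherent equivalence of diagrams over $\txt{Tw}(\mathcal{D})$ makes explicit a step the paper dispatches with ``clearly,'' but it is the same argument.
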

\begin{proof}
  We must show that for every functor $F \colon \mathcal{D} \to
  \mathcal{C}$ there is a natural equivalence
  \[ \Map_{\mathcal{C}^{\mathcal{D}}}(\mathcal{X} \otimes F, G) \simeq
  \Map_{\CatI}(\mathcal{X}, \MAP_{\mathcal{C}^{\mathcal{D}}}(F, G)).\]
  By Proposition~\ref{propn:nattrend} and the definition of $\otimes$ for $\mathcal{C}^{\mathcal{D}}$, there is a natural equivalence
  \[ \Map_{\mathcal{C}^{\mathcal{D}}}(\mathcal{X} \otimes F, G) \simeq
  \lim_{\txt{Tw}(\mathcal{D})^{\op}} \Map_{\mathcal{C}}(\mathcal{X} \otimes
  F(\blank), G(\blank)).\] Now using that $\mathcal{C}$ is tensored
  over $\CatI$, this is naturally equivalent to
  \[\lim_{\txt{Tw}(\mathcal{D})^{\op}} \Map_{\CatI}(\mathcal{X},
  \MAP_{\mathcal{C}}(F(\blank), G(\blank))).\] Moving the limit inside,
  this is \[\Map_{\CatI}(\mathcal{X}, \lim_{\txt{Tw}(\mathcal{D})^{\op}}
  \MAP_{\mathcal{C}}(F(\blank), G(\blank))),\] which is
  $\Map_{\CatI}(\mathcal{X}, \MAP_{\mathcal{C}^{\mathcal{D}}}(F, G))$
  by definition.
\end{proof}

\begin{ex}
  For any \icat{} $\mathcal{D}$, the \icat{} $\CatI^{\mathcal{D}}$ is
  tensored over $\CatI$: $\mathcal{X} \otimes F$ is the functor $D
  \mapsto \mathcal{X} \times F(D)$.
\end{ex}

In the case where $\mathcal{C}$ is the \icat{} $\CatI$ of \icats{},
Lemma~\ref{lem:MAPfuncat} gives a mapping \icat{} functor
\[\txt{Nat}_{\mathcal{D}^{\op}} := \MAP_{\Fun(\mathcal{D}^{\op},
  \CatI)}\] for $\Fun(\mathcal{D}^{\op}, \CatI)$, for any \icat{}
$\mathcal{D}$. However, using the equivalence $\Fun(\mathcal{D}^{\op},
\CatI) \simeq \CatIDc$ we can construct another such
functor: the space of maps from $\mathcal{E}$ to $\mathcal{E}'$ in
$\CatIDc$ is the underlying $\infty$-groupoid of the
\icat{} $\Fun^{\txt{cart}}_{\mathcal{D}}(\mathcal{E}, \mathcal{E}')$,
the full subcategory of $\Fun_{\mathcal{D}}(\mathcal{E},
\mathcal{E}')$ spanned by the functors that preserve Cartesian
morphisms. We will now prove that these two functors are equivalent:

\begin{propn}\label{propn:CartNateq}
  For every \icat{} $\mathcal{C}$ there is a natural equivalence
  \[\Fun^{\txt{cart}}_{\mathcal{C}}(\mathcal{E}, \mathcal{E}') \simeq
  \txt{Nat}_{\mathcal{C}^{\op}}(\txt{St}_{\mathcal{C}}\mathcal{E},
  \txt{St}_{\mathcal{C}}\mathcal{E}').\]
\end{propn}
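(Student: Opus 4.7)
My plan is to apply the Yoneda lemma in $\CatI$, reducing the desired equivalence of \icats{} to a natural equivalence of mapping spaces
\[
\Map_{\CatI}(\mathcal{K},\Fun^{\txt{cart}}_{\mathcal{C}}(\mathcal{E},\mathcal{E}'))\simeq \Map_{\CatI}(\mathcal{K},\txt{Nat}_{\mathcal{C}^{\op}}(\txt{St}_{\mathcal{C}}\mathcal{E},\txt{St}_{\mathcal{C}}\mathcal{E}'))
\]
natural in $\mathcal{K}\in\CatI$ and in the pair $(\mathcal{E},\mathcal{E}')$. For the left-hand side, currying identifies the space of functors $\mathcal{K}\to\Fun^{\txt{cart}}_{\mathcal{C}}(\mathcal{E},\mathcal{E}')$ with the space of functors $\mathcal{K}\times\mathcal{E}\to\mathcal{E}'$ over $\mathcal{C}$ whose fibrewise restrictions preserve Cartesian morphisms, and a short direct check (using that a Cartesian edge of $\mathcal{K}\times\mathcal{E}\to\mathcal{C}$ is precisely a pair of an equivalence in $\mathcal{K}$ and a Cartesian edge of $\mathcal{E}$) shows this agrees with $\MapCc(\mathcal{K}\times\mathcal{E},\mathcal{E}')$, where $\mathcal{K}\times\mathcal{E}\to\mathcal{C}$ denotes the pullback of $\mathcal{E}$ along the projection $\mathcal{K}\times\mathcal{C}\to\mathcal{C}$.

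For the right-hand side, write $F=\txt{St}_{\mathcal{C}}\mathcal{E}$ and $G=\txt{St}_{\mathcal{C}}\mathcal{E}'$. By Lemma~\ref{lem:tensfuncat} applied to the tautological self-tensoring of $\CatI$, the \icat{} $\Fun(\mathcal{C}^{\op},\CatI)$ is tensored over $\CatI$ with $\mathcal{K}\otimes F$ computed pointwise by $c\mapsto\mathcal{K}\times F(c)$, so the defining adjunction for the mapping-\icat{} functor yields
\[
\Map_{\CatI}(\mathcal{K},\txt{Nat}_{\mathcal{C}^{\op}}(F,G))\simeq \Map_{\Fun(\mathcal{C}^{\op},\CatI)}(\mathcal{K}\otimes F,G),
\]
and the straightening equivalence on underlying mapping spaces rewrites the latter as $\MapCc(\txt{Un}_{\mathcal{C}}(\mathcal{K}\otimes F),\mathcal{E}')$.

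Everything thus reduces to producing a natural equivalence
\[
\txt{Un}_{\mathcal{C}}(\mathcal{K}\otimes \txt{St}_{\mathcal{C}}\mathcal{E})\simeq \mathcal{K}\times\mathcal{E}
\]
in $\CatICc$; this is the essential content and also the main obstacle, since it amounts to showing that straightening intertwines the two $\CatI$-tensor structures. I would argue it by inspecting fibres and transition functors: both sides are Cartesian fibrations over $\mathcal{C}$ with fibre $\mathcal{K}\times\mathcal{E}_{c}=\mathcal{K}\times F(c)$ over $c$ and with transition functor $\id_{\mathcal{K}}\times F(f)$ for $f\colon c\to c'$ in $\mathcal{C}$. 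To promote this to a natural equivalence of fibrations one uses that $\mathcal{K}\times\mathcal{C}\to\mathcal{C}$ is classified by the constant functor at $\mathcal{K}$ (which can be extracted from Proposition~\ref{propn:sxweighted} after restricting along $\simp_{/\mathcal{C}}\to\CatI$) together with the pseudonaturality of the unstraightening functors from the appendix, which guarantees that straightening commutes with the relevant pullbacks up to coherent equivalence. Chaining the resulting equivalences and invoking Yoneda then completes the proof.
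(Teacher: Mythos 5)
Your proposal is correct and follows essentially the same route as the paper: reduce via Yoneda to mapping spaces, identify the left side with $\MapCc(\mathcal{K}\times\mathcal{E},\mathcal{E}')$, use Lemma~\ref{lem:tensfuncat} on the right, and match the tensoring with the fibre product over $\mathcal{C}$. The only difference is that the step you single out as the ``main obstacle,'' namely $\txt{Un}_{\mathcal{C}}(\mathcal{K}\otimes\txt{St}_{\mathcal{C}}\mathcal{E})\simeq\mathcal{K}\times\mathcal{E}$, needs no fibrewise inspection: since $\txt{St}_{\mathcal{C}}$ is an equivalence it preserves products, and $\mathcal{K}\times\mathcal{C}\to\mathcal{C}$ is classified by the constant functor at $\mathcal{K}$ (being the pullback of $\mathcal{K}\to *$ along $\mathcal{C}\to *$), which immediately gives $\txt{St}_{\mathcal{C}}(\mathcal{K}\times\mathcal{E})\simeq c^{*}\mathcal{K}\times\txt{St}_{\mathcal{C}}\mathcal{E}=\mathcal{K}\otimes\txt{St}_{\mathcal{C}}\mathcal{E}$.
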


\begin{proof}
  By the Yoneda Lemma it suffices to show that there are natural
  equivalences \[\Map_{\CatI}(\mathcal{X},
  \Fun^{\txt{cart}}_{\mathcal{C}}(\mathcal{E}, \mathcal{E}')) \simeq
  \Map_{\CatI}(\mathcal{X},
  \txt{Nat}_{\mathcal{C}^{\op}}(\txt{St}_{\mathcal{C}}\mathcal{E},
  \txt{St}_{\mathcal{C}}\mathcal{E}')).\] It is easy to see that
  $\Map_{\CatI}(\mathcal{X},
  \Fun^{\txt{cart}}_{\mathcal{C}}(\mathcal{E}, \mathcal{E}'))$ is
  naturally equivalent to $\Map_{\CatICc}(\mathcal{X} \times
  \mathcal{E}, \mathcal{E}')$ --- these correspond to the same
  components of $\Map_{\CatI}(\mathcal{X},
  \Fun_{\mathcal{C}}(\mathcal{E}, \mathcal{E}'))$. The equivalence
  $\txt{St}_{\mathcal{C}}$ preserves products, so this is equivalent
  to the mapping space \[\Map_{\Fun(\mathcal{C}^{\op},
    \CatI)}(\txt{St}_{\mathcal{C}}(\mathcal{X} \times \mathcal{C})
  \times \txt{St}_{\mathcal{C}}\mathcal{E},
  \txt{St}_{\mathcal{C}}\mathcal{E}').\] But the projection
  $\mathcal{X} \times \mathcal{C} \to \mathcal{C}$ corresponds to the
  constant functor $c^{*}\mathcal{X} \colon \mathcal{C}^{\op} \to *
  \to \CatI$ with value $\mathcal{C}$ (since the Cartesian fibration
  classified by this composite is precisely the pullback of
  $\mathcal{X} \to *$ along $\mathcal{C} \to *$). Thus there is a
  natural equivalence
  \[\Map_{\CatI}(\mathcal{X},
  \Fun^{\txt{cart}}_{\mathcal{C}}(\mathcal{E}, \mathcal{E}')) \simeq
  \Map_{\Fun(\mathcal{C}^{\op}, \CatI)}(c^{*}\mathcal{X} \times
  \txt{St}_{\mathcal{C}}\mathcal{E},
  \txt{St}_{\mathcal{C}}\mathcal{E}').\] But by
  Lemma~\ref{lem:tensfuncat}, the \icat{} $\Fun(\mathcal{C}^{\op},
  \CatI)$ is tensored over $\CatI$ and this is naturally equivalent
  to $\Map_{\CatI}(\mathcal{X},
  \txt{Nat}_{\mathcal{C}^{\op}}(\txt{St}_{\mathcal{C}}\mathcal{E},
  \txt{St}_{\mathcal{C}}\mathcal{E}'))$, as required.
\end{proof}

\section{Cartesian and CoCartesian Fibrations as Weighted Colimits}\label{sec:Cartwtcolim}
In ordinary category theory it is a familiar fact that the
Grothendieck fibration classified by a functor $F \colon
\mathbf{C}^{\op} \to \Cat$ can be identified with the lax colimit of
$F$, and the Grothendieck opfibration classified by a functor $F
\colon \mathbf{C} \to \Cat$ with the oplax colimit of $F$. In this
section we will show that Cartesian and coCartesian fibrations admit
analogous descriptions.

It is immediate from our formula for the free Cartesian fibration
that the sections of a Cartesian fibration are given by the oplax
limit of the corresponding functor:
\begin{propn}\label{propn:seclaxlim}
  The \icat{} of sections of the Cartesian fibration classified by $F$
  is given by the oplax limit of $F$. In other words, there is a
  natural equivalence \[ \Fun_{\mathcal{C}}(\mathcal{C},
  \txt{Un}_{\mathcal{C}}(F)) \simeq \lim_{\txt{Tw}(\mathcal{C})^\op}
  \Fun(\mathcal{C}_{\blank/}, F(\blank))\] of functors
  $\Fun(\mathcal{C}^{\op}, \CatI) \to \CatI$.
\end{propn}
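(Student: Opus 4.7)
The plan is to chain together Proposition~\ref{propn:freeonfuncat}(ii), Proposition~\ref{propn:CartNateq}, and Lemma~\ref{lem:MAPfuncat}. The conceptual pivot is the observation that applying the free Cartesian fibration functor to the identity $\id_\mathcal{C}\colon\mathcal{C}\to\mathcal{C}$ produces the source projection $\mathcal{C}^{\Delta^{1}}\to\mathcal{C}$, whose straightening is precisely the weight $\mathcal{C}_{\blank/}\colon\mathcal{C}^{\op}\to\CatI$ that appears on the right-hand side of the claimed formula.

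Concretely, I would first apply Proposition~\ref{propn:freeonfuncat}(ii) with $X=\id_\mathcal{C}$ and $Y=\txt{Un}_\mathcal{C}(F)=:\mathcal{E}$, producing a natural equivalence
\[ \Fun_\mathcal{C}(\mathcal{C},\mathcal{E}) \simeq \Fun^{\txt{cart}}_\mathcal{C}(\mathcal{C}^{\Delta^{1}},\mathcal{E}). \]
Proposition~\ref{propn:CartNateq} then identifies the right-hand side with $\txt{Nat}_{\mathcal{C}^{\op}}(\txt{St}_\mathcal{C}(\mathcal{C}^{\Delta^{1}}),F)$, which under the identification $\txt{St}_\mathcal{C}(\mathcal{C}^{\Delta^{1}})\simeq\mathcal{C}_{\blank/}$ becomes $\txt{Nat}_{\mathcal{C}^{\op}}(\mathcal{C}_{\blank/},F)$. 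Finally, Lemma~\ref{lem:MAPfuncat} applied with the self-enrichment $\MAP_{\CatI}=\Fun$ and $\mathcal{D}=\mathcal{C}^{\op}$ presents this as the end
\[ \lim_{\txt{Tw}(\mathcal{C}^{\op})} \Fun(\mathcal{C}_{\blank/},F(\blank)), \]
and the canonical isomorphism $\txt{Tw}(\mathcal{C}^{\op})\cong\txt{Tw}(\mathcal{C})$, arising from the self-opposite structure on $[n]\star[n]^{\op}$, delivers the stated formula.

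The main obstacle is bookkeeping of variances: one must verify that the straightening of the Cartesian fibration $\mathcal{C}^{\Delta^{1}}\to\mathcal{C}$ (evaluation at $0$) really coincides with the weight $\mathcal{C}_{\blank/}$ as introduced in \S\ref{sec:twist}, and trace the orientation conventions through the isomorphism $\txt{Tw}(\mathcal{C}^{\op})\cong\txt{Tw}(\mathcal{C})$. One must additionally check that each step of the chain is natural in $F$, so as to obtain an equivalence of functors $\Fun(\mathcal{C}^{\op},\CatI)\to\CatI$ rather than a mere pointwise equivalence; this should follow routinely because each ingredient is an explicit categorical construction natural in its data.
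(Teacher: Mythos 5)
Your proof is correct and follows essentially the same route as the paper: the paper also chains the free-fibration adjunction (in the form $\Fun_{\mathcal{C}}(\mathcal{C},\txt{Un}_{\mathcal{C}}(F))\simeq\Fun^{\txt{cart}}_{\mathcal{C}}(F(\mathcal{C}),\txt{Un}_{\mathcal{C}}(F))$ with $F(\mathcal{C})=\mathcal{C}^{\Delta^1}$), then Proposition~\ref{propn:CartNateq}, then the end formula of Lemma~\ref{lem:MAPfuncat}. If anything your write-up is slightly more careful, since citing Proposition~\ref{propn:freeonfuncat}(ii) (rather than just Theorem~\ref{thm:free}) is what actually gives the equivalence at the level of functor \icats{}, and you make explicit the identification $\txt{Tw}(\mathcal{C}^{\op})\cong\txt{Tw}(\mathcal{C})$ that the paper leaves implicit.
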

\begin{proof}
  By Theorem~\ref{thm:free} and Proposition~\ref{propn:CartNateq} we
  have natural equivalences
  \[\displaylines{ \Fun_{\mathcal{C}}(\mathcal{C}, \txt{Un}_{\mathcal{C}}(F)) \simeq
  \Fun_{\mathcal{C}}^{\txt{cart}}(F(\mathcal{C}),  \txt{Un}_{\mathcal{C}}(F)) \simeq\hfill\cr
\hfill\simeq  \txt{Nat}_{\mathcal{C}^{\op}}(\mathcal{C}_{\blank/}, F) \simeq
  \lim_{\txt{Tw}(\mathcal{C})^{\op}}\Fun(\mathcal{C}_{\blank/},
  F(\blank)).\qedhere}\]
\end{proof}

\begin{defn}
  Let $F \colon \mathcal{C} \to \CatI$ be a functor, and let
  $\mathcal{F} \to \mathcal{C}$ be its associated coCartesian
  fibration. Given an \icat{} $\mathcal{X}$, write
  $\Phi_{\mathcal{X}}^{F}$ for the simplicial set over $\mathcal{C}$
  with the universal property
  \[ \Hom(K \times_{\mathcal{C}} \mathcal{F}, \mathcal{X}) \cong
  \Hom_{\mathcal{C}}(K, \Phi^{F}_{\mathcal{X}}).\] By
  \cite{HTT}*{Corollary 3.2.2.13} the projection
  $\Phi_{\mathcal{X}}^{F} \to \mathcal{C}$ is a Cartesian fibration.
\end{defn}

\begin{propn}\label{propn:UnFunrightadj}
  The Cartesian fibration $\Phi_{\mathcal{X}}^{F} \to \mathcal{C}$
  corresponds to the functor \[\Fun(F(\blank), \mathcal{X}) \colon
  \mathcal{C}^{\op} \to \CatI.\]
\end{propn}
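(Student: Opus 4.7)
The plan is to identify the classifying functor $\txt{St}_{\mathcal{C}}(\Phi_{\mathcal{X}}^F) \colon \mathcal{C}^{\op} \to \CatI$ with $\Fun(F(\blank), \mathcal{X})$ via a pointwise fibre computation based on the defining universal property, followed by a naturality argument using the tools of Section~\ref{sec:freefib}.

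First, I would upgrade the defining simplex-level bijection of $\Phi_{\mathcal{X}}^F$ to an equivalence of mapping \icats{}: for any $K \to \mathcal{C}$,
\[ \Fun_{\mathcal{C}}(K, \Phi_{\mathcal{X}}^F) \simeq \Fun(K \times_{\mathcal{C}} \mathcal{F}, \mathcal{X}), \]
obtained by applying the given bijection to $\Delta^n \times K$ for every $n$. Specialising to $K = \{c\}$ gives the identification of fibres
\[ \txt{St}_{\mathcal{C}}(\Phi_{\mathcal{X}}^F)(c) \simeq (\Phi_{\mathcal{X}}^F)_c \simeq \Fun_{\mathcal{C}}(\{c\}, \Phi_{\mathcal{X}}^F) \simeq \Fun(\{c\} \times_{\mathcal{C}} \mathcal{F}, \mathcal{X}) = \Fun(F(c), \mathcal{X}). \]

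The main obstacle is verifying naturality in $c$ so that these pointwise equivalences assemble into an equivalence of functors $\mathcal{C}^{\op} \to \CatI$ rather than remaining a mere pointwise equivalence. The cleanest way to carry this out is to construct an explicit comparison map of Cartesian fibrations $\Psi \colon \txt{Un}_{\mathcal{C}}(\Fun(F(\blank), \mathcal{X})) \to \Phi_{\mathcal{X}}^F$ over $\mathcal{C}$. By the upgraded universal property, such a $\Psi$ corresponds to a functor
\[ \txt{Un}_{\mathcal{C}}(\Fun(F(\blank), \mathcal{X})) \times_{\mathcal{C}} \mathcal{F} \to \mathcal{X}, \]
which can be produced from the fibrewise evaluation $\Fun(F(c), \mathcal{X}) \times F(c) \to \mathcal{X}$ assembled naturally in $c$ as the counit of the tensoring of $\CatI$ over itself (Lemma~\ref{lem:tensfuncat} with $\mathcal{C} = \CatI$). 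The pointwise computation of the previous paragraph implies that $\Psi$ is a fibrewise equivalence between Cartesian fibrations; since equivalences of Cartesian fibrations are detected fibrewise, $\Psi$ is an equivalence, and applying $\txt{St}_{\mathcal{C}}$ then yields the desired equivalence of functors. An alternative, more abstract route is to combine Proposition~\ref{propn:CartNateq} with Proposition~\ref{propn:freeonfuncat}(ii) to obtain the natural equivalence $(\txt{St}_{\mathcal{C}}\mathcal{E})(c) \simeq \Fun^{\txt{cart}}_{\mathcal{C}}(\mathcal{C}_{/c}, \mathcal{E}) \simeq \Fun_{\mathcal{C}}(\{c\}, \mathcal{E})$ in $c$, after which the universal-property equivalence (natural in $K$) supplies naturality automatically.
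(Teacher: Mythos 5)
Your fibre computation is correct and is indeed the easy half of the statement, but the heart of the proposition is the naturality in $c$, and neither of your two routes actually establishes it. In the first route, the comparison map $\Psi$ is to come from a functor $\txt{Un}_{\mathcal{C}}(\Fun(F(\blank),\mathcal{X})) \times_{\mathcal{C}} \mathcal{F} \to \mathcal{X}$ restricting to evaluation on each fibre. This is a map out of the fibre product of a \emph{Cartesian} fibration with a \emph{coCartesian} fibration, and the fibrewise evaluations $\Fun(F(c),\mathcal{X}) \times F(c) \to \mathcal{X}$ are only \emph{extranatural} in $c$ (contravariant in one factor, covariant in the other). Lemma~\ref{lem:tensfuncat} produces the counit for each fixed $c$, or a counit internal to a functor category, but it does not assemble these into a single functor on the total space of this mixed-variance fibre product; doing that coherently is essentially equivalent to the statement you are trying to prove. (Even granting $\Psi$, you would also need to check that it preserves Cartesian edges before concluding from the fibrewise equivalence that it is an equivalence.) The alternative route fails for the same underlying reason: the equivalence $\Fun_{\mathcal{C}}(K,\Phi^{F}_{\mathcal{X}}) \simeq \Fun(K\times_{\mathcal{C}}\mathcal{F},\mathcal{X})$ is natural in $K \in \CatIC$, but the point inclusions $\{c\} \to \mathcal{C}$ span only a copy of the groupoid $\iota\mathcal{C}$ inside $\CatIC$, so naturality in $K$ gives functoriality of the fibres only over $\iota\mathcal{C}$, not over $\mathcal{C}^{\op}$. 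Extracting the contravariant functoriality would force you to evaluate on $K = \mathcal{C}_{/c}$ or on edges of $\mathcal{C}$ and then identify $\Fun(\mathcal{C}_{/c}\times_{\mathcal{C}}\mathcal{F},\mathcal{X})$ with $\Fun(F(c),\mathcal{X})$, which is again a nontrivial assertion of exactly the same flavour; likewise the step $\txt{Nat}_{\mathcal{C}^{\op}}(y_{\mathcal{C}}(c),\txt{St}_{\mathcal{C}}\mathcal{E}) \simeq (\txt{St}_{\mathcal{C}}\mathcal{E})(c)$ is an enriched Yoneda lemma not available to you at this point.

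The paper circumvents the coherence problem by first taking $\mathcal{C} = \Delta^{n}$: Proposition~\ref{propn:sxweighted} gives an explicit coend model for $\txt{Un}^{\txt{co}}_{[n]}(\phi)$, natural in $\simp^{\op}$, so that Proposition~\ref{propn:seclaxlim} identifies $\Fun_{\Delta^{n}}(\Delta^{n},\Phi^{\phi}_{\mathcal{X}})$ with $\Fun_{\Delta^{n}}(\Delta^{n},\txt{Un}_{[n]}(\Fun(\phi(\blank),\mathcal{X})))$ naturally in $[n]$, and the Yoneda lemma yields the result over a simplex. The general case then follows by writing $\mathcal{C}$ as the colimit of $\simp^{\op}_{/\mathcal{C}} \to \CatI$ and using that pullback along a Cartesian fibration preserves colimits. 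If you want to salvage your comparison-map strategy, you would in effect have to construct the evaluation map by a simplex-by-simplex argument of this kind anyway.
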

\begin{proof}
  We first consider the case where $\mathcal{C}$ is a simplex
  $\Delta^{n}$. By Proposition~\ref{propn:sxweighted} there are natural equivalences
  \[\colim_{\txt{Tw}([n])} \phi(\blank) \times [n]_{\blank/} \isoto
  \txt{Un}^{\txt{co}}_{[n]}(\phi)\] for any $\phi \colon [n] \to \CatI$, natural in $\simp^{\op}$. Thus by
  Proposition~\ref{propn:seclaxlim} there are natural equivalences
  \[ 
  \begin{split} 
    \Fun_{\Delta^{n}}(\Delta^{n}, \Phi^{\phi}_{\mathcal{X}}) & \simeq
  \Fun(\txt{Un}^{\txt{co}}_{[n]}(\phi), \mathcal{X}) \simeq
  \lim_{\txt{Tw}([n])^{\op}} \Fun([n]_{\blank/}, \Fun(\phi(\blank),
  \mathcal{X})) \\ & \simeq \Fun_{\Delta^{n}}(\Delta^{n},
  \txt{Un}_{[n]}(\Fun(\phi(\blank), \mathcal{X}))).
\end{split}
\]
  Since this equivalence is natural in $\simp^{\op}$ and $\CatI$ is a
  localization of the presheaf \icat{} $\mathcal{P}(\simp)$, we get by
  the Yoneda lemma a natural equivalence
  \[ \Phi^{\phi}_{\mathcal{X}} \simeq
  \txt{Un}_{[n]}^{\txt{cart}}(\Fun(\phi(\blank), \mathcal{X}))).\]
  Since $\CatI$ is an accessible localization of $\mathcal{P}(\simp)$,
  any \icat{} $\mathcal{C}$ is naturally equivalent to the colimit of
  the diagram $\simp^{\op}_{/\mathcal{C}} \to \CatIC \to \CatI$. Now given $F \colon \mathcal{C} \to \CatI$, we have, since pullback
  along a Cartesian fibration preserves colimits,
  \[ \txt{Un}_{\mathcal{C}}^{\txt{cart}}(\Fun(F(\blank),
  \mathcal{X}))) \simeq \colim_{\sigma \in \simp^{\op}_{/\mathcal{C}}}
  \txt{Un}_{[n]}(\Fun(F\sigma(\blank), \mathcal{X})) \simeq
  \colim_{\sigma \in \simp^{\op}_{/\mathcal{C}}}
  \Phi^{F\sigma}_{\mathcal{X}} \simeq \Phi^{F}_{\mathcal{X}},\] which
  completes the proof.
\end{proof}

\begin{thm}\label{thm:Uncolax}
  The coCartesian fibration classified by a functor $F \colon
  \mathcal{C} \to \CatI$ is given by the oplax colimit of $F$. In other
  words, there is a natural
  equivalence \[\txt{Un}^{\txt{co}}_{\mathcal{C}}(F) \simeq
  \colim_{\txt{Tw}(\mathcal{C})} F(\blank) \times \mathcal{C}_{\blank/}
  \] of functors $\Fun(\mathcal{C}, \CatI) \to \CatI$.
\end{thm}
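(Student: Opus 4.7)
The plan is to deduce the theorem from Propositions~\ref{propn:UnFunrightadj} and \ref{propn:seclaxlim} by a Yoneda argument. Write $\mathcal{F} := \txt{Un}^{\txt{co}}_{\mathcal{C}}(F)$ and $L(F) := \colim_{\txt{Tw}(\mathcal{C})} \mathcal{C}_{\blank/} \times F(\blank)$. To produce the claimed natural equivalence of functors $\Fun(\mathcal{C}, \CatI) \to \CatI$, it suffices by the Yoneda lemma in $\CatI$ to exhibit a natural equivalence
\[ \Fun(\mathcal{F}, \mathcal{X}) \simeq \Fun(L(F), \mathcal{X}) \]
of \icats{}, natural in both $F$ and $\mathcal{X} \in \CatI$, and then to take $\iota$.

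The chain of natural equivalences runs as follows. The defining universal property of $\Phi^{F}_{\mathcal{X}}$ applied to $K = \mathcal{C}$ (so that $K \times_{\mathcal{C}} \mathcal{F} = \mathcal{F}$) gives
\[ \Fun(\mathcal{F}, \mathcal{X}) \simeq \Fun_{\mathcal{C}}(\mathcal{C}, \Phi^{F}_{\mathcal{X}}). \]
By Proposition~\ref{propn:UnFunrightadj}, $\Phi^{F}_{\mathcal{X}}$ is the Cartesian unstraightening $\txt{Un}_{\mathcal{C}}(G)$ of the functor $G := \Fun(F(\blank), \mathcal{X}) \colon \mathcal{C}^{\op} \to \CatI$. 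Proposition~\ref{propn:seclaxlim} applied to $G$, followed by the pointwise exponential adjunction $\Fun(A, \Fun(B, \mathcal{X})) \simeq \Fun(A \times B, \mathcal{X})$, then yields
\[ \Fun_{\mathcal{C}}(\mathcal{C}, \Phi^{F}_{\mathcal{X}}) \simeq \lim_{\txt{Tw}(\mathcal{C})} \Fun\bigl(\mathcal{C}_{\blank/},\, \Fun(F(\blank), \mathcal{X})\bigr) \simeq \lim_{\txt{Tw}(\mathcal{C})} \Fun\bigl(\mathcal{C}_{\blank/} \times F(\blank),\, \mathcal{X}\bigr). \]
Finally, since $\Fun(\blank, \mathcal{X}) \colon \CatI^{\op} \to \CatI$ carries colimits to limits, this last expression is naturally equivalent to $\Fun(L(F), \mathcal{X})$, as required.

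I do not anticipate any serious obstacle, since the real work has already been done in Propositions~\ref{propn:UnFunrightadj} and \ref{propn:seclaxlim}; the remainder is essentially bookkeeping. The one point that deserves care is to verify that each step above is natural in both $F$ and $\mathcal{X}$ at the \icatl{} level, rather than merely pointwise. Naturality in $\mathcal{X}$ is visible from the constructions (the universal property of $\Phi^{F}_{\mathcal{X}}$ is manifestly contravariantly functorial in $\mathcal{X}$, and the exponential adjunction is natural). Naturality in $F$ follows from the functoriality of the unstraightening, the functoriality of $\Phi^{F}_{\mathcal{X}}$ in $F$, and the fact that Propositions~\ref{propn:UnFunrightadj} and \ref{propn:seclaxlim} are established as equivalences of functors on $\Fun(\mathcal{C}, \CatI)$ rather than pointwise statements.
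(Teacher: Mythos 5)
Your proof is correct and is essentially identical to the paper's: both identify $\Fun(\txt{Un}^{\txt{co}}_{\mathcal{C}}(F),\mathcal{X})$ with $\Fun_{\mathcal{C}}(\mathcal{C},\Phi^{F}_{\mathcal{X}})$ via Proposition~\ref{propn:UnFunrightadj}, rewrite this as the oplax limit using Proposition~\ref{propn:seclaxlim}, pull the limit out as $\Fun$ of the lax colimit, and conclude by Yoneda. Your explicit attention to naturality in $F$ and $\mathcal{X}$ is a reasonable elaboration of what the paper leaves implicit.
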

\begin{proof}
  Let $F \colon \mathcal{C} \to \CatI$ be a functor. Then by
  Proposition~\ref{propn:UnFunrightadj}, we have a natural equivalence
  \[ \Fun(\txt{Un}_{\mathcal{C}}(F), \mathcal{X}) \simeq
  \Fun_{\mathcal{C}}(\mathcal{C}, \Phi^{F}_{\mathcal{X}}).\] By
  Proposition~\ref{propn:seclaxlim} we have a natural equivalence
  between the right-hand side and \[\lim_{\txt{Tw}(\mathcal{C})^\op}
  \Fun(\mathcal{C}_{\blank/}, \Fun(F(\blank), \mathcal{X})) \simeq
  \Fun\big(\colim_{\txt{Tw}(\mathcal{C})} F(\blank)\times \mathcal{C}_{\blank/}, \mathcal{X}\big)
  .\] By the Yoneda lemma, it follows that
  $\txt{Un}_{\mathcal{C}}(F)$ is naturally equivalent to
  $\colim_{\txt{Tw}(\mathcal{C})}F(\blank) \times
  \mathcal{C}_{\blank/}$.
\end{proof}

\begin{cor}
  Any \icat{} $\mathcal{C}$ is the oplax colimit of the constant functor
  $\mathcal{C} \to \CatI$ with value $*$.
\end{cor}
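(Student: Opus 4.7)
The plan is to apply Theorem~\ref{thm:Uncolax} directly to the constant functor $c^{*}(*) \colon \mathcal{C} \to \CatI$ sending every object to the terminal \icat{} $*$, and then identify the associated coCartesian fibration.

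First, I would observe that the coCartesian fibration associated to the constant functor with value $*$ is the identity $\id_{\mathcal{C}} \colon \mathcal{C} \to \mathcal{C}$. Indeed, the identity is trivially a coCartesian fibration (every morphism is coCartesian), and each of its fibres over $x \in \mathcal{C}$ is the terminal \icat{} $\{x\} \simeq *$; by uniqueness of coCartesian straightening, $\txt{Un}_{\mathcal{C}}^{\txt{co}}(c^{*}*) \simeq \mathcal{C}$ over $\mathcal{C}$. Equivalently, $c^{*}* \colon \mathcal{C} \to \CatI$ factors through the terminal object of $\CatI$ and the coCartesian unstraightening preserves this terminal object, which corresponds to the identity fibration.

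Next, I would apply Theorem~\ref{thm:Uncolax}, which gives the natural equivalence
\[\txt{Un}^{\txt{co}}_{\mathcal{C}}(c^{*}*) \simeq \colim_{\txt{Tw}(\mathcal{C})} \mathcal{C}_{\blank/} \times *.\]
Since $\mathcal{C}_{\blank/} \times * \simeq \mathcal{C}_{\blank/}$ as a functor $\mathcal{C} \to \CatI$ (product with the terminal object being the identity up to equivalence), the right-hand side is exactly the lax colimit of $c^{*}*$. Combining with the identification from the previous step, one concludes $\mathcal{C} \simeq \colim_{\txt{Tw}(\mathcal{C})} \mathcal{C}_{\blank/}$, which is precisely the claim.

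There is no real obstacle here: the entire content is in Theorem~\ref{thm:Uncolax}, and the only thing to verify is the trivial identification of the straightening of the constant functor at the terminal \icat{}. The proof is essentially a one-line corollary.
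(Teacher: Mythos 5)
Your proposal is correct and matches the paper's argument, which likewise consists of the single observation that the identity $\mathcal{C}\to\mathcal{C}$ is the coCartesian fibration associated to the constant functor with value $*$, after which Theorem~\ref{thm:Uncolax} gives the result. You have simply spelled out the identification of the unstraightening in slightly more detail.
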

\begin{proof}
  The identity $\mathcal{C} \to \mathcal{C}$ is the coCartesian
  fibration classified by this functor.
\end{proof}

\begin{cor}
  The Cartesian fibration classified by a functor $F \colon
  \mathcal{C}^{\op} \to \CatI$ is given by the lax colimit of
  $F$. In other words, there is a natural
  equivalence \[\txt{Un}_{\mathcal{C}}(F) \simeq
  \colim_{\txt{Tw}(\mathcal{C^\op})} F(\blank) \times \mathcal{C}_{/\blank}
  \] of functors $\Fun(\mathcal{C}^{\op}, \CatI) \to \CatI$.
\end{cor}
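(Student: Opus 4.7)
The plan is to deduce this corollary from Theorem~\ref{thm:Uncolax} by passing to opposites. The key observation is that taking opposite total spaces exchanges Cartesian fibrations over $\mathcal{C}$ with coCartesian fibrations over $\mathcal{C}^{\op}$: given $F \colon \mathcal{C}^{\op} \to \CatI$, the opposite $\txt{Un}_{\mathcal{C}}(F)^{\op}$ is a coCartesian fibration over $\mathcal{C}^{\op}$ whose fibre over $c$ is $F(c)^{\op}$. It is therefore classified, as a functor $\mathcal{C}^{\op} \to \CatI$, by the composite $F^{\vee} := (\blank)^{\op} \circ F$, a fact encoded in the definition of the coCartesian (un)straightening recalled in the notation section. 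Hence we have a natural equivalence
\[ \txt{Un}_{\mathcal{C}}(F)^{\op} \simeq \txt{Un}^{\txt{co}}_{\mathcal{C}^{\op}}(F^{\vee}). \]

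Applying Theorem~\ref{thm:Uncolax}, with $\mathcal{C}$ replaced by $\mathcal{C}^{\op}$ and $F$ by $F^{\vee}$, then yields
\[ \txt{Un}_{\mathcal{C}}(F)^{\op} \simeq \colim_{\txt{Tw}(\mathcal{C}^{\op})} (\mathcal{C}^{\op})_{\blank/} \times F(\blank)^{\op}. \]
Since $(\blank)^{\op} \colon \CatI \to \CatI$ is an auto-equivalence, it commutes with colimits and binary products; taking opposites of both sides gives
\[ \txt{Un}_{\mathcal{C}}(F) \simeq \colim_{\txt{Tw}(\mathcal{C}^{\op})} \big((\mathcal{C}^{\op})_{\blank/}\big)^{\op} \times F(\blank) \simeq \colim_{\txt{Tw}(\mathcal{C}^{\op})} \mathcal{C}_{/\blank} \times F(\blank), \]
using the standard identification $((\mathcal{C}^{\op})_{c/})^{\op} \simeq \mathcal{C}_{/c}$ of the opposite of an under-slice with the corresponding over-slice.

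To match the indexing against the stated formula, I would invoke the natural equivalence $\txt{Tw}(\mathcal{C}^{\op}) \simeq \txt{Tw}(\mathcal{C})$, which swaps the two factors in the projection to $\mathcal{C} \times \mathcal{C}^{\op}$ and follows from the self-duality $([n] \star [n]^{\op})^{\op} \cong [n] \star [n]^{\op}$ of the edgewise subdivision functor $\epsilon$; this can be verified directly on simplices from the definition, as in Example~\ref{ex:Twn}. The main obstacle is not any deep categorical difficulty but the bookkeeping of variances: keeping careful track of when the opposite is taken at the level of the base, of the total space, of the fibres, or of the indexing \icat{} $\txt{Tw}$. No further input beyond Theorem~\ref{thm:Uncolax} and the compatibility of (un)straightening with opposites is required.
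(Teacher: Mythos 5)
Your proposal is correct and follows essentially the same route as the paper: the paper also writes $\txt{Un}_{\mathcal{C}}(F) \simeq \txt{Un}^{\txt{co}}_{\mathcal{C}^{\op}}(F^{\op})^{\op}$ (using the definitional relation between the Cartesian and coCartesian unstraightenings), applies Theorem~\ref{thm:Uncolax}, and then pulls $(\blank)^{\op}$ inside the colimit using that it is a colimit-preserving automorphism of $\CatI$, together with $((\mathcal{C}^{\op})_{c/})^{\op} \simeq \mathcal{C}_{/c}$. Your extra care with the identification $\txt{Tw}(\mathcal{C}^{\op}) \simeq \txt{Tw}(\mathcal{C})$, which the paper elides, is accurate and welcome.
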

\begin{proof}
  We have a natural equivalence $\txt{Un}_{\mathcal{C}}(F) \simeq
  \txt{Un}_{\mathcal{C}^{\op}}^{\txt{co}}(F^\op)^{\op}$. Since
  $(\blank)^{\op}$ is an automorphism of $\CatI$ it preserves
  colimits, so by Theorem~\ref{thm:Uncolax} we have
  \[ \txt{Un}_{\mathcal{C}}(F) \simeq \left(\colim_{\txt{Tw}(\mathcal{C}^\op)}
  \,F(\blank)^\op \times (\mathcal{C}^{\op})_{\blank/} \right)^{\op} \simeq
\colim_{\txt{Tw}(\mathcal{C}^\op)} F(\blank) \times \mathcal{C}_{/\blank}  .\qedhere\]
\end{proof}

Similarly, dualizing Proposition~\ref{propn:seclaxlim} gives:
\begin{cor}
  The \icat{} of sections of the coCartesian fibration classified by
  $F$ is given by the lax limit of $F$. In other words, there is a
  natural equivalence
  \[ \Fun_{\mathcal{C}}(\mathcal{C},
  \txt{Un}^{\txt{co}}_{\mathcal{C}}(F)) \simeq
  \lim_{\txt{Tw}(\mathcal{C})^\op} \Fun(\mathcal{C}_{/\blank},
  F(\blank))\] of functors $\Fun(\mathcal{C}, \CatI) \to \CatI$.
\end{cor}

\section{2-Representable Functors}\label{sec:laxrep}
Suppose $\mathcal{C}$ is an \icat{} equipped with a mapping \icat{}
functor $\MAP_{\mathcal{C}}\colon\mathcal{C}^{\op}\times\mathcal{C}\to\CatI$. We say a functor $F \colon
\mathcal{C}^{\op} \to \CatI$ is \emph{2-representable} by $C \in
\mathcal{C}$ if $F$ is equivalent to $\MAP_{\mathcal{C}}(\blank,
C)$. Similarly, we say a Cartesian fibration $p \colon \mathcal{E} \to
\mathcal{C}$ is \emph{2-representable} by $C$ if $p$ is classified
by the functor $\MAP_{\mathcal{C}}(\blank, C)$. Our goal in this
section is to prove that if a Cartesian fibration $p \colon
\mathcal{E} \to \mathcal{C}$ is 2-representable then, under mild
hypotheses, the same is true for the induced map
$\mathcal{E}^{\mathcal{D}}\to \mathcal{C}^{\mathcal{D}}$ for any \icat{}
$\mathcal{D}$. We begin by giving a somewhat unwieldy description of
the functor classifying such fibrations for an arbitrary Cartesian
fibration $p$:

\begin{propn}\label{propn:ftrcart}
  Suppose $p \colon \mathcal{E} \to \mathcal{C}$ is a Cartesian
  fibration corresponding to a functor $F \colon \mathcal{C}^{\op} \to
  \CatI$. Then for any \icat{} $\mathcal{D}$ the functor
  $\mathcal{E}^{\mathcal{D}} \to \mathcal{C}^{\mathcal{D}}$ given by
  composition with $p$ is a Cartesian fibration
  classified by a functor $F_{\mathcal{D}} \colon
  (\mathcal{C}^{\mathcal{D}})^{\op} \to \CatI$ that sends a functor
  $\phi \colon \mathcal{D} \to \mathcal{C}$ to
  \[ \lim_{\txt{Tw}(\mathcal{D})^\op} \Fun(\mathcal{D}_{\blank/},
  F\circ\phi(\blank)).\]
\end{propn}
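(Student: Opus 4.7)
The plan is to combine our earlier results on sections, free Cartesian fibrations, and enhanced mapping functors. First, $q \colon \mathcal{E}^{\mathcal{D}} \to \mathcal{C}^{\mathcal{D}}$ is itself a Cartesian fibration---a natural transformation in $\mathcal{E}^{\mathcal{D}}$ is $q$-Cartesian precisely when it is pointwise $p$-Cartesian---so let $F_{\mathcal{D}}$ be its classifying functor. The fiber $F_{\mathcal{D}}(\phi) = \{\phi\} \times_{\mathcal{C}^{\mathcal{D}}} \mathcal{E}^{\mathcal{D}}$ is canonically identified with the \icat{} $\Fun_{\mathcal{D}}(\mathcal{D}, \phi^{*}\mathcal{E})$ of sections of the pullback $\phi^{*}\mathcal{E} := \mathcal{D} \times_{\mathcal{C}} \mathcal{E} \to \mathcal{D}$; this latter is the Cartesian fibration classified by $F \circ \phi^{\op}$, and Proposition~\ref{propn:seclaxlim} then yields the desired formula
\[ F_{\mathcal{D}}(\phi) \simeq \lim_{\txt{Tw}(\mathcal{D})} \Fun(\mathcal{D}_{\blank/}, F\circ\phi(\blank)) \]
at the level of objects.

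The main obstacle is upgrading this pointwise equivalence to an equivalence of functors in $\phi$. I would proceed via Yoneda: compute $\Map_{\CatI}(\mathcal{X}, F_{\mathcal{D}}(\phi))$ for varying $\mathcal{X} \in \CatI$, naturally in both arguments. This mapping space is the space of lifts of the constant-at-$\phi$ functor $\mathcal{X} \to \mathcal{C}^{\mathcal{D}}$ to $\mathcal{E}^{\mathcal{D}}$, equivalently $\iota\Fun_{\mathcal{D}}(\mathcal{X} \times \mathcal{D}, \phi^{*}\mathcal{E})$. By Theorem~\ref{thm:free} together with Proposition~\ref{propn:freeonfuncat}(i) (which gives $F(\mathcal{X}\times\mathcal{D}) \simeq \mathcal{X} \times \mathcal{D}^{\Delta^{1}}$), this becomes $\iota\Fun^{\txt{cart}}_{\mathcal{D}}(\mathcal{X}\times\mathcal{D}^{\Delta^{1}}, \phi^{*}\mathcal{E})$. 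The source fibration $\mathcal{D}^{\Delta^{1}} \to \mathcal{D}$ is classified by $\mathcal{D}_{\blank/}$, so Proposition~\ref{propn:CartNateq} combined with Lemma~\ref{lem:MAPfuncat} converts this to $\iota\lim_{\txt{Tw}(\mathcal{D})}\Fun(\mathcal{X}\times\mathcal{D}_{\blank/}, F\circ\phi(\blank))$; commuting the limit past $\Fun(\mathcal{X},-)$ and invoking Yoneda recovers the claimed formula.

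Each identification in this chain is natural by construction---Theorem~\ref{thm:free}, Proposition~\ref{propn:CartNateq}, and Lemma~\ref{lem:MAPfuncat} are all stated as natural equivalences---and the assignment $\phi \mapsto F \circ \phi^{\op}$ is manifestly functorial in $\phi$, so the composite will assemble into an equivalence of functors $(\mathcal{C}^{\mathcal{D}})^{\op} \to \CatI$. The only slightly delicate bookkeeping will lie in tracking the op-directions and the two projections $\txt{Tw}(\mathcal{D}) \to \mathcal{D}^{\op} \times \mathcal{D}$, but this should be routine given the conventions fixed in Section~\ref{sec:twist}.
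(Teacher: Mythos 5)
Your first paragraph and your overall strategy (reduce everything to the section formula of Proposition~\ref{propn:seclaxlim}) do match the paper, and the fibrewise identification $F_{\mathcal{D}}(\phi) \simeq \lim_{\txt{Tw}(\mathcal{D})}\Fun(\mathcal{D}_{\blank/}, F\circ\phi(\blank))$ is correct. The gap is precisely at the point you flag yourself: naturality in $\phi$. Your Yoneda argument computes $\Map_{\CatI}(\mathcal{X}, F_{\mathcal{D}}(\phi))$ as the space of lifts of the \emph{constant} functor $\mathcal{X}\to\mathcal{C}^{\mathcal{D}}$ at $\phi$, and every identification in your chain is carried out for one fixed $\phi$ at a time. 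But the functor $F_{\mathcal{D}}$ is not determined by its values on objects together with naturality in equivalences: its action on a non-invertible natural transformation $\phi\to\psi$ is encoded by Cartesian transport in $\mathcal{E}^{\mathcal{D}}\to\mathcal{C}^{\mathcal{D}}$, and constant test functors $\mathcal{X}\to\mathcal{C}^{\mathcal{D}}$ never see that transport. Asserting that ``each identification in this chain is natural by construction'' does not close this, because as written your diagram of equivalences is only indexed by $\iota(\mathcal{C}^{\mathcal{D}})\times\CatI^{\op}$, not by $(\mathcal{C}^{\mathcal{D}})^{\op}\times\CatI^{\op}$; producing the latter indexing is the actual content of the proposition.

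The paper's proof repairs exactly this by testing against an arbitrary object $K\to\mathcal{C}^{\mathcal{D}}$ of $\Cat_{\infty/\mathcal{C}^{\mathcal{D}}}$, i.e.\ an arbitrary $f\colon K\times\mathcal{D}\to\mathcal{C}$ (already $K=\Delta^{1}$ probes the transport). One has $\Fun_{\mathcal{C}^{\mathcal{D}}}(K,\mathcal{E}^{\mathcal{D}})\simeq\Fun_{K\times\mathcal{D}}(K\times\mathcal{D},f^{*}\mathcal{E})$; applying Proposition~\ref{propn:seclaxlim} over $K\times\mathcal{D}$ and using that $\txt{Tw}$ preserves products turns this into $\lim_{\txt{Tw}(K)}\Fun(K_{\blank/},F_{\mathcal{D}}\circ f(\blank))$, which a second application of Proposition~\ref{propn:seclaxlim} identifies with $\Fun_{\mathcal{C}^{\mathcal{D}}}(K,\txt{Un}_{\mathcal{C}^{\mathcal{D}}}(F_{\mathcal{D}}))$. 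Here $F_{\mathcal{D}}$ is \emph{defined} outright as the composite $(\mathcal{C}^{\mathcal{D}})^{\op}\to\Fun(\txt{Tw}(\mathcal{D}),\CatI)\xto{\lim}\CatI$, so its functoriality needs no separate verification, and the Yoneda lemma applied in $\Cat_{\infty/\mathcal{C}^{\mathcal{D}}}$ then identifies $\mathcal{E}^{\mathcal{D}}$ with $\txt{Un}_{\mathcal{C}^{\mathcal{D}}}(F_{\mathcal{D}})$ as Cartesian fibrations over $\mathcal{C}^{\mathcal{D}}$. If you replace your constant test functors by arbitrary $K$ and run your chain for the composite $K\times\mathcal{D}\to\mathcal{C}$, your argument becomes essentially the paper's; note also that your middle steps via Theorem~\ref{thm:free}, Proposition~\ref{propn:CartNateq}, and Lemma~\ref{lem:MAPfuncat} amount to re-deriving Proposition~\ref{propn:seclaxlim} rather than citing it, which is harmless but redundant.
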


\begin{proof}
  The induced functor $\mathcal{E}^{\mathcal{D}} \to
  \mathcal{C}^{\mathcal{D}}$ is a Cartesian fibration by
  \cite[Proposition 3.1.2.1]{HTT}.
  For $f \colon K \times \mathcal{D}\to \mathcal{C}$ we have a natural
  equivalence
  \[ \Fun_{\mathcal{C}^{\mathcal{D}}}(K, \mathcal{E}^{\mathcal{D}})
  \simeq \Fun_{\mathcal{C}}(K \times \mathcal{D}, \mathcal{E}) \simeq
  \Fun_{K \times \mathcal{D}}(K \times \mathcal{D},
  f^{*}\mathcal{E}).\]
  But then by Proposition~\ref{propn:seclaxlim} we have a natural equivalence
  \[ \Fun_{K \times \mathcal{D}}(K \times \mathcal{D},
  f^{*}\mathcal{E}) \simeq \lim_{\txt{Tw}(K \times \mathcal{D})^{\op}} \Fun((K \times
  \mathcal{D})_{\blank/}, F\circ f(\blank)),\]
  and then as $\txt{Tw}$ preserves products (being a right adjoint) we
  can rewrite this as
  \[ \lim_{\txt{Tw}(K)^{\op}} \Fun(K_{\blank/}, \lim_{\txt{Tw}(\mathcal{D})^{\op}}
  \Fun(\mathcal{D}_{\blank/}, F\circ f(\blank))) \simeq
  \lim_{\txt{Tw}(K)^\op}\Fun(K_{\blank/}, F_{\mathcal{D}}\circ
  f(\blank)), \]
  which we can identify, using Proposition~\ref{propn:seclaxlim}, with
  \[\Fun_{K}(K,
  f^{*}\txt{Un}_{\mathcal{C}^{\mathcal{D}}}(F_{\mathcal{D}})) \simeq
  \Fun_{\mathcal{C}^{\mathcal{D}}}(K,
  \txt{Un}_{\mathcal{C}^{\mathcal{D}}}(F_{\mathcal{D}})).\]
  Now applying the Yoneda Lemma completes the proof.
\end{proof}

\begin{defn}
  Suppose $\mathcal{C}$ is an \icat{} equipped with a mapping \icat{}
  functor. We say that $\mathcal{C}$ is \emph{cotensored over $\CatI$}
  if for every $C \in \mathcal{C}$ the functor
  $\MAP_{\mathcal{C}}(\blank, C) \colon \mathcal{C} \to \CatI^{\op}$
  has a right adjoint $C^{(\blank)} \colon \CatI^{\op} \to \mathcal{C}$;
  in this case these adjoints determine an essentially unique functor
  $(\blank)^{(\blank)} \colon \CatI^{\op} \times \mathcal{C} \to
  \mathcal{C}$. If $\mathcal{C}$ is also tensored over $\CatI$, being
  cotensored is equivalent to the functor $\blank \otimes \mathcal{X}$
  having a right adjoint $(\blank)^{\mathcal{X}}$ for all \icats{}
  $\mathcal{X}$.
\end{defn}

\begin{cor}
  Let $\mathcal{C}$ be an \icat{} equipped with a mapping \icat{} functor
  $\MAP_{\mathcal{C}}$ that is tensored and cotensored over
  $\CatI$, and suppose $p \colon \mathcal{E} \to \mathcal{C}$ is a
  Cartesian fibration that is 2-representable by $C \in
  \mathcal{C}$. Then for any \icat{} $\mathcal{D}$ the fibration
  $\mathcal{E}^{\mathcal{D}} \to \mathcal{C}^{\mathcal{D}}$ is 2-representable by the functor $C^{\mathcal{D}_{\blank/}}$.
\end{cor}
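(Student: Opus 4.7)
The plan is to combine Proposition~\ref{propn:ftrcart} with the cotensor adjunction and then recognize the resulting formula as $\MAP_{\mathcal{C}^{\mathcal{D}}}(\blank, C^{\mathcal{D}_{\blank/}})$ via Lemma~\ref{lem:MAPfuncat}. First, since $p$ is lax representable by $C$, its classifying functor is $F \simeq \MAP_{\mathcal{C}}(\blank, C)$. Applying Proposition~\ref{propn:ftrcart}, the Cartesian fibration $\mathcal{E}^{\mathcal{D}}\to\mathcal{C}^{\mathcal{D}}$ is classified by the functor
\[ F_{\mathcal{D}}(\phi) \simeq \lim_{\txt{Tw}(\mathcal{D})} \Fun\bigl(\mathcal{D}_{\blank/},\, \MAP_{\mathcal{C}}(\phi(\blank), C)\bigr), \]
so our task is to identify the integrand with $\MAP_{\mathcal{C}}(\phi(\blank), C^{\mathcal{D}_{\blank/}})$.

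Second, I would promote the cotensor adjunction from mapping spaces to mapping \icats{}. Since $\mathcal{C}$ is both tensored and cotensored, for any $\mathcal{X}\in\CatI$ and $Y,Z\in\mathcal{C}$ we have the chain of natural equivalences of spaces $\Map_{\mathcal{C}}(\mathcal{X}\otimes Y, Z)\simeq\Map_{\CatI}(\mathcal{X}, \MAP_{\mathcal{C}}(Y,Z))\simeq \Map_{\mathcal{C}}(Y, Z^{\mathcal{X}})$. Promoting this to an equivalence of \icats{}, i.e.\ an equivalence $\Fun(\mathcal{X}, \MAP_{\mathcal{C}}(Y,Z))\simeq \MAP_{\mathcal{C}}(Y, Z^{\mathcal{X}})$, is achieved by applying the previous display to $\mathcal{K}\times \mathcal{X}\otimes Y$ for an arbitrary auxiliary \icat{} $\mathcal{K}$ and invoking the Yoneda lemma (using $\MAP_{\CatI}=\Fun$). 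Setting $\mathcal{X}=\mathcal{D}_{d/}$, $Y=\phi(d)$, and $Z=C$ rewrites the integrand as $\MAP_{\mathcal{C}}(\phi(d), C^{\mathcal{D}_{d/}})$.

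Third, observe that $C^{\mathcal{D}_{\blank/}}$ is by definition the functor $\mathcal{D}\to\mathcal{C}$ obtained by composing $\mathcal{D}_{\blank/}\colon\mathcal{D}^{\op}\to\CatI$ with the cotensor $C^{(\blank)}\colon\CatI^{\op}\to\mathcal{C}$. Substituting this back, we get
\[ F_{\mathcal{D}}(\phi) \simeq \lim_{\txt{Tw}(\mathcal{D})} \MAP_{\mathcal{C}}\bigl(\phi(\blank),\, C^{\mathcal{D}_{\blank/}}(\blank)\bigr), \]
and the right-hand side is exactly the formula defining $\MAP_{\mathcal{C}^{\mathcal{D}}}(\phi, C^{\mathcal{D}_{\blank/}})$ in Lemma~\ref{lem:MAPfuncat}. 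Since all of the equivalences are natural in $\phi$, this exhibits $F_{\mathcal{D}}\simeq \MAP_{\mathcal{C}^{\mathcal{D}}}(\blank, C^{\mathcal{D}_{\blank/}})$, so $\mathcal{E}^{\mathcal{D}}\to\mathcal{C}^{\mathcal{D}}$ is lax representable by $C^{\mathcal{D}_{\blank/}}$, as claimed.

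The main obstacle is the enhancement of the cotensor adjunction from mapping spaces to mapping \icats{}; once this enriched hom-tensor-cotensor identity is in place, the remainder of the argument is a direct reorganization of the end formula. A secondary point to be careful about is the naturality of the chosen mapping \icat{} functor on $\mathcal{C}^{\mathcal{D}}$ relative to Lemma~\ref{lem:MAPfuncat}, but this is built into the definition there and hence incurs no extra work.
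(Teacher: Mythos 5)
Your proposal is correct and follows essentially the same route as the paper: apply Proposition~\ref{propn:ftrcart}, rewrite the integrand via the cotensor adjunction as $\MAP_{\mathcal{C}}(\phi(\blank), C^{\mathcal{D}_{\blank/}})$, and recognize the resulting end as $\MAP_{\mathcal{C}^{\mathcal{D}}}(\phi, C^{\mathcal{D}_{\blank/}})$ from Lemma~\ref{lem:MAPfuncat}. The only difference is that you spell out the Yoneda argument promoting the space-level adjunction $\Fun(\mathcal{X},\MAP_{\mathcal{C}}(Y,Z))\simeq\MAP_{\mathcal{C}}(Y,Z^{\mathcal{X}})$ to mapping \icats{}, a step the paper simply asserts.
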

\begin{proof}
  By Proposition~\ref{propn:ftrcart} this fibration corresponds to the
  functor sending $\phi \colon \mathcal{D} \to \mathcal{C}$ to
  \[ \lim_{\txt{Tw}(\mathcal{D})^{\op}} \Fun(\mathcal{D}_{\blank/},
  F\circ\phi(\blank)),\]
  where $F$ is the functor corresponding to $p$. If $F$ is 2-representable by $C$, this is equivalent to 
  \[ \lim_{\txt{Tw}(\mathcal{D})^{\op}} \Fun(\mathcal{D}_{\blank/},
  \MAP_{\mathcal{C}}(\phi(\blank), C) \simeq
  \lim_{\txt{Tw}(\mathcal{D})^{\op}} \MAP_{\mathcal{C}}(\phi(\blank),
  C^{\mathcal{D}_{\blank/}}),\]
  which is $\MAP_{\mathcal{C}^{\mathcal{D}}}(\phi,
  C^{\mathcal{D}_{\blank/}})$ by definition of the mapping \icat{} of
  $\mathcal{C}^{\mathcal{D}}$.
\end{proof}

\section{Some Cartesian Fibrations Identified}\label{sec:proof}
In this section we will use our results so far to explicitly identify
the Cartesian fibrations classified by certain classes of
functors. This is the key input needed to prove our presentability
result in the next section. We start with some notation:

\begin{defn}
  If $p \colon \mathcal{E} \to \mathcal{B}$ is a functor of \icats{},
  we denote by $\mathcal{E}^{\triangleright}_{\mathcal{B}}$ the
  pushout \[\mathcal{B} \amalg_{\mathcal{E} \times \{1\}} \mathcal{E}
  \times \Delta^{1},\]
  and by $\mathcal{E}^{\triangleleft}_{\mathcal{B}}$ the
  pushout \[\mathcal{B} \amalg_{\mathcal{E} \times \{0\}} \mathcal{E}
  \times \Delta^{1}.\]
\end{defn}

\begin{warning}
  The notations $\mathcal{E}^{\triangleright}_{\mathcal{B}}$ and
  $\mathcal{E}^{\triangleleft}_{\mathcal{B}}$ are somewhat abusive, as
  these simplicial sets depend on the functor $p$ rather than just on
  $\mathcal{E}$ and $\mathcal{B}$. Moreover, if $\mathcal{B} =
  \Delta^{0}$ then the definition does not reduce to
  $\mathcal{E}^{\triangleright}$ and $\mathcal{E}^{\triangleleft}$ but
  rather the ``alternative joins'' $\mathcal{E} \diamond \Delta^{0}$
  and $\Delta^{0} \diamond \mathcal{E}$ in the notation of
  \cite{HTT}*{\S 4.2.1}. However, these are weakly equivalent to the
  usual joins by \cite{HTT}*{Proposition 4.2.1.2}.
\end{warning}

\begin{remark}
  Observe that $(\mathcal{E}_{\mathcal{B}}^{\triangleright})^{\op}
  \simeq (\mathcal{E}^{\op})_{\mathcal{B}^{\op}}^{\triangleleft}$ and
  $(\mathcal{E}_{\mathcal{B}}^{\triangleleft})^{\op} \simeq
  (\mathcal{E}^{\op})_{\mathcal{B}^{\op}}^{\triangleright}$.
\end{remark}

We then have the following simple observation:
\begin{lemma}\label{lem:PEBident}
  Given a functor $p \colon \mathcal{E} \to \mathcal{B}$, write
  $i \colon \mathcal{B} \hookrightarrow
  \mathcal{E}_{\mathcal{B}}^{\triangleleft}$
  and
  $j \colon \mathcal{B} \hookrightarrow
  \mathcal{E}_{\mathcal{B}}^{\triangleright}$
  for the inclusions in the pushout diagrams defining
  $\mathcal{E}_{\mathcal{B}}^{\triangleleft}$ and
  $\mathcal{E}_{\mathcal{B}}^{\triangleright}$, respectively. Then for
  any \icat{} $\mathcal{D}$, we have:
  \begin{enumerate}[(i)]
  \item The functor \[i^{*} \colon
    \Fun(\mathcal{E}_{\mathcal{B}}^{\triangleleft}, \mathcal{D}) \to
    \Fun(\mathcal{B}, \mathcal{D})\] given by composition with $i$ is
    a Cartesian fibration classified by the functor $\Fun(\mathcal{B},
    \mathcal{D})^{\op} \to \CatI$ that sends $F$ to $\Fun(\mathcal{E},
    \mathcal{D})_{F\circ p/}$.
  \item The functor \[j^{*} \colon
    \Fun(\mathcal{E}_{\mathcal{B}}^{\triangleright}, \mathcal{D}) \to
    \Fun(\mathcal{B}, \mathcal{D})\] given by composition with $j$ is
    a coCartesian fibration classified by the functor $\Fun(\mathcal{B},
    \mathcal{D}) \to \CatI$ that sends $F$ to $\Fun(\mathcal{E},
    \mathcal{D})_{/F\circ p}$.
  \end{enumerate}
\end{lemma}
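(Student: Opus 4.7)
The plan is to identify $i^{*}$ (resp.~$j^{*}$) explicitly as a base change of the source (resp.~target) fibration on the arrow $\infty$-category $\Fun(\mathcal{E}, \mathcal{D})^{\Delta^{1}}$, and then invoke the standard classification of these fibrations.

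First, applying $\Fun(\blank, \mathcal{D})$ to the defining pushout square of $\mathcal{E}^{\triangleleft}_{\mathcal{B}}$ turns it into a pullback square, so there is a natural identification
\[
\Fun(\mathcal{E}^{\triangleleft}_{\mathcal{B}}, \mathcal{D}) \simeq \Fun(\mathcal{B}, \mathcal{D}) \times_{\Fun(\mathcal{E}, \mathcal{D})} \Fun(\mathcal{E}, \mathcal{D})^{\Delta^{1}},
\]
where the map from the arrow category to $\Fun(\mathcal{E}, \mathcal{D})$ is evaluation at $0$ (since $\mathcal{B}$ is glued to $\mathcal{E}\times\Delta^{1}$ along $\mathcal{E}\times\{0\}$), the map $\Fun(\mathcal{B}, \mathcal{D}) \to \Fun(\mathcal{E}, \mathcal{D})$ is precomposition with $p$, and $i^{*}$ is the projection onto $\Fun(\mathcal{B}, \mathcal{D})$. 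An analogous identification holds for $\mathcal{E}^{\triangleright}_{\mathcal{B}}$, but with evaluation at $1$ in place of evaluation at $0$, and with $j^{*}$ as the projection.

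Second, recall from \S\ref{sec:twist} (the source/target fibration discussion) that evaluation at $0$ gives the source fibration $\mathcal{A}^{\Delta^{1}} \to \mathcal{A}$ the structure of a Cartesian fibration classified by $\mathcal{A}_{\blank/} \colon \mathcal{A} \to \CatI$, while evaluation at $1$ gives the target fibration the structure of a coCartesian fibration classified by $\mathcal{A}_{/\blank} \colon \mathcal{A}^{\op} \to \CatI$. Applying this to $\mathcal{A} = \Fun(\mathcal{E}, \mathcal{D})$ we see that the two arrow-category projections are respectively a Cartesian fibration classified by $H \mapsto \Fun(\mathcal{E}, \mathcal{D})_{H/}$ and a coCartesian fibration classified by $H \mapsto \Fun(\mathcal{E}, \mathcal{D})_{/H}$.

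Third, both Cartesian and coCartesian fibrations are stable under arbitrary base change, and the classifying functor pulls back by precomposition. Since $i^{*}$ and $j^{*}$ are base changes along $p^{*} \colon \Fun(\mathcal{B}, \mathcal{D}) \to \Fun(\mathcal{E}, \mathcal{D})$ of the two arrow projections above, this immediately gives (i) and (ii): the Cartesian fibration $i^{*}$ is classified by $F \mapsto \Fun(\mathcal{E}, \mathcal{D})_{F\circ p /}$, and the coCartesian fibration $j^{*}$ is classified by $F \mapsto \Fun(\mathcal{E}, \mathcal{D})_{/ F\circ p}$. The two cases are in any event interchanged by the duality $(\mathcal{E}^{\triangleleft}_{\mathcal{B}})^{\op} \simeq (\mathcal{E}^{\op})^{\triangleright}_{\mathcal{B}^{\op}}$ noted in the preceding remark, so proving one yields the other for free. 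There is no real obstacle here; the only subtle point is to keep the two evaluation maps and the Cartesian/coCartesian roles straight, which the pushout decomposition makes transparent.
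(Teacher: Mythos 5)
Your argument is exactly the paper's: apply $\Fun(\blank,\mathcal{D})$ to the defining pushout to exhibit $i^{*}$ (resp.\ $j^{*}$) as the base change along $p^{*}$ of the source (resp.\ target) fibration on $\Fun(\mathcal{E},\mathcal{D})^{\Delta^{1}}$, which is the Cartesian (resp.\ coCartesian) fibration classified by the under- (resp.\ over-) category functor. The only blemish is a swapped variance annotation in your second step --- the Cartesian source fibration is classified by $\mathcal{A}_{\blank/}\colon\mathcal{A}^{\op}\to\CatI$ and the coCartesian target fibration by $\mathcal{A}_{/\blank}\colon\mathcal{A}\to\CatI$, not the other way around --- but the substance is correct.
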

\begin{proof}
  We will prove (i); the proof of (ii) is similar. By the definition
  of $\mathcal{E}_{\mathcal{B}}^{\triangleleft}$ we have a pullback
  square \csquare{\Fun(\mathcal{E}_{\mathcal{B}}^{\triangleleft},
    \mathcal{D})}{\Fun(\mathcal{E} \times \Delta^{1},
    \mathcal{D})}{\Fun(\mathcal{B}, \mathcal{D})}{\Fun(\mathcal{E},
    \mathcal{D})}{}{i^{*}}{\txt{ev}_{0}}{p^{*},} where the right
  vertical map can be identified with the evaluation-at-0 functor
  \[
  \Fun(\mathcal{E}, \mathcal{D})^{\Delta^{1}} \to \Fun(\mathcal{E},\mathcal{D})
  \]
 This is the Cartesian fibration classified by the
  undercategory functor $\Fun(\mathcal{E}, \mathcal{D})_{(\blank)/}$,
   hence the pullback $i^{*}$ is the Cartesian fibration classified by
  the composite functor $\Fun(\mathcal{E},
  \mathcal{D})_{p^{*}(\blank)/}$.
\end{proof}
\begin{remark}
  If $\mathcal{D}$ has pushouts, then $i^{*}$ is also a coCartesian
  fibration, with coCartesian morphisms given by taking
  pushouts. Similarly, if $\mathcal{D}$ has pullbacks then $j^{*}$ is
  also a Cartesian fibration, with Cartesian morphisms given by taking
  pullbacks.
\end{remark}

In particular, given a map $p \colon \mathcal{E} \to \mathcal{B}$ we
see that $\mathcal{P}(\mathcal{E}_{\mathcal{B}}^{\triangleleft}) \to
\mathcal{P}(\mathcal{B})$ is a coCartesian and Cartesian fibration (recall that $\mathcal{P}(\mathcal{C}) = \Fun(\mathcal{C}^{\op},\mathcal{S})$ denotes the presheaf $\infty$-category).
The corresponding functors are given on objects by
$\mathcal{P}(\mathcal{E})_{/p^{*}(\blank)}$, with functoriality
determined by composition and pullbacks, respectively. Our next goal
is to give an alternative description of this functor:
\begin{propn}\label{propn:PEBCart}
  Let $p \colon \mathcal{E} \to \mathcal{B}$ be a functor of \icats{},
  and let $j \colon \mathcal{B} \to
  \mathcal{E}^{\triangleleft}_{\mathcal{B}}$ be the obvious
  inclusion.  Then the functor $j^{*} \colon
  \mathcal{P}(\mathcal{E}^{\triangleleft}_{\mathcal{B}}) \to
  \mathcal{P}(\mathcal{B})$ is a Cartesian fibration corresponding to
  the functor $\mathcal{P}(\mathcal{B})^{\op} \simeq
  \txt{RFib}(\mathcal{B})^{\op} \to \CatI$ that sends a right
  fibration $\mathcal{Y} \to \mathcal{B}$ to $\mathcal{P}(\mathcal{Y}
  \times_{\mathcal{B}} \mathcal{E})$.
\end{propn}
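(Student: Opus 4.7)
The plan is to deduce Proposition~\ref{propn:PEBCart} from Lemma~\ref{lem:PEBident}(ii) combined with the standard identification of slices of presheaf \icats{}. First, I would use the op-duality $(\mathcal{E}^{\triangleleft}_{\mathcal{B}})^{\op} \simeq (\mathcal{E}^{\op})^{\triangleright}_{\mathcal{B}^{\op}}$ from the remark after Definition~8.1 to rewrite
\[ \mathcal{P}(\mathcal{E}^{\triangleleft}_{\mathcal{B}}) = \Fun((\mathcal{E}^{\triangleleft}_{\mathcal{B}})^{\op}, \mathcal{S}) \simeq \Fun((\mathcal{E}^{\op})^{\triangleright}_{\mathcal{B}^{\op}}, \mathcal{S}), \]
under which $j^{*}$ becomes precisely the restriction map appearing in Lemma~\ref{lem:PEBident}(ii), applied to $p^{\op}\colon\mathcal{E}^{\op}\to\mathcal{B}^{\op}$ with $\mathcal{D}=\mathcal{S}$. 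That lemma immediately identifies $j^{*}$ as a coCartesian fibration classified by the functor $\mathcal{P}(\mathcal{B}) \to \CatI$ sending $F$ to $\mathcal{P}(\mathcal{E})_{/p^{*}F}$.

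Next, since $\mathcal{S}$ has pullbacks, the remark following Lemma~\ref{lem:PEBident} upgrades $j^{*}$ to a Cartesian fibration, with Cartesian morphisms given by pullback in the slice. Consequently, $j^{*}$ is classified as a Cartesian fibration by a functor $\mathcal{P}(\mathcal{B})^{\op}\to\CatI$ that agrees on objects with the coCartesian classifier, namely $F \mapsto \mathcal{P}(\mathcal{E})_{/p^{*}F}$, but whose functoriality on a morphism $F \to G$ is the pullback functor $\mathcal{P}(\mathcal{E})_{/p^{*}G} \to \mathcal{P}(\mathcal{E})_{/p^{*}F}$ induced by $p^{*}F \to p^{*}G$.

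To rewrite this in terms of right fibrations, I would invoke the standard slice equivalence $\mathcal{P}(\mathcal{C})_{/X} \simeq \mathcal{P}(\mathcal{X})$, where $\mathcal{X} \to \mathcal{C}$ is the right fibration classifying $X$ (this is \cite{HTT}*{Corollary 5.1.6.12}). If $F$ corresponds to the right fibration $\mathcal{Y} \to \mathcal{B}$ under the equivalence $\mathcal{P}(\mathcal{B}) \simeq \txt{RFib}(\mathcal{B})$, then because straightening intertwines pullback of presheaves with pullback of right fibrations, $p^{*}F$ corresponds to $\mathcal{Y} \times_{\mathcal{B}} \mathcal{E} \to \mathcal{E}$. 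Applying the slice equivalence then yields
\[ \mathcal{P}(\mathcal{E})_{/p^{*}F} \simeq \mathcal{P}(\mathcal{Y} \times_{\mathcal{B}} \mathcal{E}), \]
which is the claimed formula.

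The main obstacle is checking that these identifications are natural in $F$, so that one really obtains the Cartesian classifier $\mathcal{Y} \mapsto \mathcal{P}(\mathcal{Y} \times_{\mathcal{B}} \mathcal{E})$ rather than just a levelwise equivalence: namely, that pullback along $p^{*}F \to p^{*}G$ in the slice $\mathcal{P}(\mathcal{E})$ corresponds, under the equivalence $\mathcal{P}(\mathcal{E})_{/p^{*}F} \simeq \mathcal{P}(\mathcal{Y} \times_{\mathcal{B}} \mathcal{E})$, to the base-change of right fibrations over $\mathcal{E}$. This should follow formally from the naturality of the equivalence $\mathcal{P}(\mathcal{C})_{/X} \simeq \mathcal{P}(\mathcal{X})$ in both arguments, together with the compatibility of straightening with pullback.
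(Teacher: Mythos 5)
Your proposal is correct and follows essentially the same route as the paper: op-dualize so that Lemma~\ref{lem:PEBident}(ii) and the remark after it exhibit $j^{*}$ as a Cartesian fibration with fibres $\mathcal{P}(\mathcal{E})_{/p^{*}F}$ and functoriality by pullback, then convert these slices of presheaf \icats{} into presheaves on the corresponding right fibrations using the compatibility of straightening with pullback. The naturality issue you flag at the end is precisely what the paper's Corollary~\ref{cor:FuntoSeq} (deduced from Proposition~\ref{propn:rfibeq}) supplies, so your appeal to the naturality of the slice equivalence is the right move and is already available in the text.
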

 To prove this, we need to identify the functor
 $\mathcal{P}(\mathcal{E})_{/p^{*}(\blank)}$
 with the functor $\mathcal{P}(\blank \times_{\mathcal{B}}
\mathcal{E})$ under the equivalence $\mathcal{P}(\mathcal{B}) \simeq
\txt{RFib}(\mathcal{B})$, for which we use the following observation:
\begin{propn}\label{propn:rfibeq}
  Suppose $p \colon \mathcal{K} \to \mathcal{C}$ is a right fibration
  of \icats{}. Then the functor \[p_{!} \colon \RFib(\mathcal{K}) \to
  \RFib(\mathcal{C})_{/p}\] given by composition with $p$ is an
  equivalence. Moreover, this equivalence is natural in $p \in
  \RFib(\mathcal{C})$ (with respect to composition with maps $f \colon
  \mathcal{K} \to \mathcal{L}$ over $\mathcal{C}$, and also with respect to pullbacks
  along such maps).
\end{propn}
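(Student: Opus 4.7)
The plan is to exhibit both sides as full subcategories of the common ambient $\infty$-category $\CatI_{/\mathcal{K}}$, show that these subcategories coincide, and then obtain both naturality statements formally.

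First I would invoke the standard identification of iterated slice $\infty$-categories,
\[ (\CatI_{/\mathcal{C}})_{/p} \simeq \CatI_{/\mathcal{K}}, \]
under which a pair consisting of $g \colon \mathcal{E} \to \mathcal{C}$ together with a factorization $h \colon \mathcal{E} \to \mathcal{K}$ of $g$ through $p$ corresponds to the map $h$ alone (the datum of $g \simeq p \circ h$ being redundant). By definition, $\RFib(\mathcal{C})_{/p}$ is the full subcategory of $(\CatI_{/\mathcal{C}})_{/p}$ spanned by pairs $(g,h)$ with $g$ a right fibration, so under this identification it becomes the full subcategory of $\CatI_{/\mathcal{K}}$ on maps $h$ whose composite $p \circ h$ is a right fibration. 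The functor $p_{!}$, which sends $h$ to the pair $(p \circ h, h)$, is then precisely the inclusion of $\RFib(\mathcal{K})$ into this subcategory.

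Next I would verify that these two full subcategories of $\CatI_{/\mathcal{K}}$ in fact agree. If $h$ is a right fibration, then so is $p \circ h$ by stability of right fibrations under composition. Conversely, if both $p$ and $p \circ h$ are right fibrations, Corollary~\ref{cor:firstoftworightfib} applied to the triangle with vertices $\mathcal{E}, \mathcal{K}, \mathcal{C}$ and edges $h$, $p \circ h$, $p$ forces $h$ itself to be a right fibration. Hence $p_{!}$ is an equivalence of $\infty$-categories.

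For naturality, the functor $p_{!}$ is literally postcomposition with $p$, so given a morphism $f \colon p \to q$ in $\RFib(\mathcal{C})$ (i.e.\ $f \colon \mathcal{K} \to \mathcal{L}$ over $\mathcal{C}$), the squares
\[
\begin{tikzcd}
\RFib(\mathcal{K}) \arrow{r}{f \circ -} \arrow{d}{p_{!}} & \RFib(\mathcal{L}) \arrow{d}{q_{!}} \\
\RFib(\mathcal{C})_{/p} \arrow{r}{f_{*}} & \RFib(\mathcal{C})_{/q}
\end{tikzcd}
\qquad
\begin{tikzcd}
\RFib(\mathcal{L}) \arrow{r}{f^{*}} \arrow{d}{q_{!}} & \RFib(\mathcal{K}) \arrow{d}{p_{!}} \\
\RFib(\mathcal{C})_{/q} \arrow{r}{f^{*}} & \RFib(\mathcal{C})_{/p}
\end{tikzcd}
\]
commute by associativity of composition and by the pasting lemma for pullbacks, respectively. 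The only step requiring genuine care is the iterated slice identification at the $\infty$-categorical level, which is a standard fact about slice $\infty$-categories; given this, everything else is either formal or directly supplied by Corollary~\ref{cor:firstoftworightfib}.
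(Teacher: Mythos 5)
Your argument is correct and, at its core, rests on the same two ingredients as the paper's proof: Corollary~\ref{cor:firstoftworightfib} and the observation that a map over $\mathcal{C}$ into $p$ is the same thing as a map over $\mathcal{K}$. The packaging differs in a way worth noting. The paper proves essential surjectivity (via the corollary) and full faithfulness (by identifying both mapping spaces with $\Map_{\mathcal{K}}(\mathcal{A},\mathcal{B})$) separately; you instead use the slice-of-slice equivalence $(\CatI_{/\mathcal{C}})_{/p}\simeq \CatI_{/\mathcal{K}}$ to exhibit both sides as full subcategories of $\CatI_{/\mathcal{K}}$ and then show these subcategories coincide. That is a cleaner and more conceptual formulation of the same argument, and it makes the full faithfulness genuinely trivial rather than ``clear.'' The one place where your write-up is weaker than the paper's is the naturality clause: asserting that the two squares commute ``by associativity of composition and by the pasting lemma for pullbacks'' establishes commutativity object by object, but the proposition asserts a natural equivalence of functors out of $\RFib(\mathcal{C})$ (resp.\ its opposite), which requires coherence data, not just pointwise commuting squares. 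The paper obtains this by observing that the comparison map is induced by a strictly natural left Quillen functor of simplicial model categories, which descends to a natural transformation of \icats{} via the machinery of appendix~\ref{sec:pseudo}. Your framework could be upgraded to deliver this --- the slice-of-slice identification is itself natural in $p$, and full subcategory inclusions inherit naturality --- but as written that step is asserted rather than proved, so you should either supply the coherence or fall back on the model-categorical naturality as the paper does.
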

\begin{proof}
  The functor $p_{!}$ is described by the left Quillen functor
  $p_{!} \colon \Set_{\Delta/\mathcal{K}} \to (\Set_{\Delta/\mathcal{C}})_{/p}$
 given by composition with $p$, where $\Set_{\Delta/\mathcal{K}}$ is equipped with the contravariant model
 structure of \cite{HTT}*{\S 2.1.4} and
 $(\Set_{\Delta/\mathcal{C}})_{/p}$ with the model structure induced
 from the contravariant model structure on
 $\Set_{\Delta/\mathcal{C}}$. It therefore suffices to show that $p_{!}$ is a
 left Quillen equivalence. The functor $p_{!}$ is obviously an equivalence of
 underlying categories, and we claim it is actually an equivalence of
 model categories. The cofibrations clearly correspond under $p_{!}$, being the
 monomorphisms of underlying simplicial sets in both cases, so by
 \cite{JoyalUABNotes}*{Proposition E.1.10} it suffices to show the fibrant objects are the
 same. In $\Set_{\Delta/\mathcal{K}}$ these are the right fibrations
 $\mathcal{X} \to \mathcal{K}$ by \cite{HTT}*{Corollary 2.2.3.12},
 while in $(\Set_{\Delta/\mathcal{C}})_{/p}$ they are the diagrams
 \opctriangle{\mathcal{X}}{\mathcal{K}}{\mathcal{C}}{f}{}{p}
 where $f$ is a fibration in $\Set_{\Delta/\mathcal{C}}$. But as $p$
 is a right fibration, this is equivalent to $f$ being a right
 fibration by \cite{HTT}*{Corollary 2.2.3.14}.
\end{proof}

\begin{cor}\label{cor:FuntoSeq}
  Suppose $p \colon \mathcal{K} \to \mathcal{C}$ is a right fibration,
  corresponding to a functor $F \colon \mathcal{C}^{\op} \to
  \mathcal{S}$. Then the functor $p_{!} \colon
  \mathcal{P}(\mathcal{K}) \to \mathcal{P}(\mathcal{C})_{/F}$ given by
  left Kan extension along $p^{\op}$ is an equivalence, natural in $p
  \in \RFib(\mathcal{C})$ (with respect to left Kan extensions along
  maps $f \colon \mathcal{K} \to \mathcal{L}$ over $\mathcal{C}$ and
  composition with the associated natural transformation, as well as
  with respect to composition with $f$ and pullback along the natural
  transformation).
\end{cor}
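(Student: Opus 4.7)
The plan is to reduce Corollary~\ref{cor:FuntoSeq} directly to Proposition~\ref{propn:rfibeq} by transporting everything across the straightening equivalence $\mathcal{P}(\mathcal{C})\simeq\RFib(\mathcal{C})$. Under this equivalence the object $F\colon\mathcal{C}^{\op}\to\mathcal{S}$ corresponds by hypothesis to the right fibration $p\colon\mathcal{K}\to\mathcal{C}$, and since equivalences of \icats{} induce equivalences on slices, we obtain an equivalence $\mathcal{P}(\mathcal{C})_{/F}\simeq\RFib(\mathcal{C})_{/p}$. Applying the same identification to $\mathcal{K}$ gives $\mathcal{P}(\mathcal{K})\simeq\RFib(\mathcal{K})$, so it suffices to check that the left Kan extension $p_{!}$ corresponds, under these identifications, to composition with $p$ as studied in Proposition~\ref{propn:rfibeq}.

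To identify $p_{!}$ with composition along $p$, I would invoke the characterization of both functors as left adjoints to the same restriction/pullback functor: restriction along $p^{\op}\colon\mathcal{K}^{\op}\to\mathcal{C}^{\op}$ corresponds under unstraightening to pullback of right fibrations along $p$, and both composition with $p$ and left Kan extension along $p^{\op}$ are left adjoint to these. By uniqueness of adjoints, the two functors agree up to canonical equivalence. With this identification in place, Proposition~\ref{propn:rfibeq} immediately provides the desired equivalence $\mathcal{P}(\mathcal{K})\isoto\mathcal{P}(\mathcal{C})_{/F}$.

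For the naturality statement, I would observe that both variances (covariant in $f\colon\mathcal{K}\to\mathcal{L}$ over $\mathcal{C}$, and contravariant via pullback along the natural transformations associated to such $f$) translate cleanly: on the presheaf side, left Kan extension along $f^{\op}$ and composition with the induced natural transformation $p_{!}\mathcal{A}\to q_{!}\mathcal{A}$ correspond, respectively, to composition with $f$ and to pullback along the map of right fibrations, which are precisely the functorialities handled in Proposition~\ref{propn:rfibeq}. The compatibility of these two structures with the straightening equivalence follows, as in \emph{loc.~cit.}, from the fact that on the level of simplicial model categories these operations are implemented by left Quillen functors whose adjoints are manifestly compatible with straightening.

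The only non-routine step is the identification of $p_{!}$ with composition along $p$ coherently, i.e.\ as \icatl{} functors rather than merely pointwise; I expect this to be the main obstacle. It can be resolved by recognizing that the identification between left Kan extension along $p^{\op}$ and composition with $p$ is itself an instance of the adjoint equivalence between presheaves and right fibrations being functorial in the base, and so follows from the functoriality of straightening in the simplicial set $S$ established in \cite{HTT}*{\S 3.2}.
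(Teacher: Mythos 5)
Your proposal is correct and follows essentially the same route as the paper: the paper's proof is precisely to combine Proposition~\ref{propn:rfibeq} with the naturality of the straightening equivalence between right fibrations and presheaves (established by the relative Grothendieck fibration argument of Corollary~\ref{cor:unsteqnat}), which is exactly your reduction. Your extra step identifying $p_{!}$ with composition along $p$ via uniqueness of left adjoints to the common restriction/pullback functor is a reasonable way to make explicit what the paper leaves implicit.
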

\begin{proof}
  This follows from combining Proposition~\ref{propn:rfibeq} with the
  naturality of the straightening equivalence between right fibrations
  and functors, which can be proved by the same argument as in the
  proof of Corollary~\ref{cor:unsteqnat}.
\end{proof}

\begin{proof}[Proof of Proposition~\ref{propn:PEBCart}]
  This follows from combining Lemma~\ref{lem:PEBident} with
  Corollary~\ref{cor:FuntoSeq}, since under the equivalence between
  presheaves and right fibrations the functor $p^{*} \colon
  \mathcal{P}(\mathcal{B}) \to \mathcal{P}(\mathcal{E})$ corresponds
  to pullback along $p$.
\end{proof}

\begin{cor}\label{cor:PEBfun}
  \ 
  \begin{enumerate}[(i)]
  \item Let $p \colon \mathcal{E} \to \mathcal{B}$ be a Cartesian
    fibration classified by a functor $F \colon \mathcal{B}^{\op} \to
    \CatI$, and write $j$ for the inclusion $\mathcal{B}
    \hookrightarrow \mathcal{E}_{\mathcal{B}}^{\triangleleft}$. Then
    the functor $j^{*} \colon
    \mathcal{P}(\mathcal{E}_{\mathcal{B}}^{\triangleleft}) \to
    \mathcal{P}(\mathcal{B})$ is a Cartesian fibration classified by
    the functor $\txt{RFib}(\mathcal{B})^{\op} \to \CatI$ that sends
    $\mathcal{Y} \to \mathcal{B}$ to
    $\Fun_{\mathcal{B}^{\op}}(\mathcal{Y}^{\op},
    \Phi^{F}_{\mathcal{S}})$, where $\Phi^{F}_{\mathcal{S}} \to
    \mathcal{B}^{\op}$ is the Cartesian fibration classified by the
    functor $\mathcal{P} \circ F \colon \mathcal{B} \to \CatI$.
  \item Let $p \colon \mathcal{E} \to \mathcal{B}$ be a coCartesian
    fibration classified by a functor $F \colon \mathcal{B} \to
    \CatI$, and write $j$ for the inclusion $\mathcal{B}
    \hookrightarrow \mathcal{E}_{\mathcal{B}}^{\triangleleft}$. Then
    the functor $j^{*} \colon
    \mathcal{P}(\mathcal{E}_{\mathcal{B}}^{\triangleleft}) \to
    \mathcal{P}(\mathcal{B})$ is a Cartesian fibration classified by 
    the functor $\txt{RFib}(\mathcal{B})^{\op} \to \CatI$ that sends
    $\mathcal{Y} \to \mathcal{B}$ to
    $\Fun_{\mathcal{B}^{\op}}(\mathcal{Y}^{\op},
    \widetilde{\Phi}^{F}_{\mathcal{S}})$, where $\widetilde{\Phi}^{F}_{\mathcal{S}} \to
    \mathcal{B}^{\op}$ is the coCartesian fibration classified by the
    functor $\mathcal{P} \circ F \colon \mathcal{B}^{\op} \to \CatI$.
  \end{enumerate}
\end{cor}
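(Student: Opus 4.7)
The plan is to deduce the corollary by combining Proposition~\ref{propn:PEBCart} (which already computes the classifying functor of $j^{*}$ as $\mathcal{Y} \mapsto \mathcal{P}(\mathcal{Y} \times_{\mathcal{B}} \mathcal{E})$ under the straightening equivalence $\mathcal{P}(\mathcal{B}) \simeq \RFib(\mathcal{B})$) with the universal property of $\Phi^{F}_{\mathcal{S}}$ supplied by Proposition~\ref{propn:UnFunrightadj}. Thus the whole content of the corollary is the natural equivalence
\[ \mathcal{P}(\mathcal{Y} \times_{\mathcal{B}} \mathcal{E}) \simeq \Fun_{\mathcal{B}^{\op}}(\mathcal{Y}^{\op}, \Phi^{F}_{\mathcal{S}}), \]
natural in the right fibration $\mathcal{Y} \to \mathcal{B}$.

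For part (i) the plan is to recognize both sides as spaces of functors to $\mathcal{S}$ out of the opposite of the same pullback. On the left, tautologically, $\mathcal{P}(\mathcal{Y} \times_{\mathcal{B}} \mathcal{E}) = \Fun((\mathcal{Y} \times_{\mathcal{B}} \mathcal{E})^{\op}, \mathcal{S})$. On the right, let $\mathcal{F} \to \mathcal{B}^{\op}$ denote the coCartesian fibration classified by $F$ (so that $\Phi^{F}_{\mathcal{S}}$ is defined by Proposition~\ref{propn:UnFunrightadj} applied with $\mathcal{C} = \mathcal{B}^{\op}$). The defining universal property of $\Phi^{F}_{\mathcal{S}}$ gives
\[ \Fun_{\mathcal{B}^{\op}}(\mathcal{Y}^{\op}, \Phi^{F}_{\mathcal{S}}) \simeq \Fun(\mathcal{Y}^{\op} \times_{\mathcal{B}^{\op}} \mathcal{F}, \mathcal{S}). \]
It therefore remains to identify $\mathcal{Y}^{\op} \times_{\mathcal{B}^{\op}} \mathcal{F}$ with $(\mathcal{Y} \times_{\mathcal{B}} \mathcal{E})^{\op}$; this follows from the relation $\txt{St}^{+,\txt{co}}_{S}(X) = \txt{St}^{+}_{S^{\op}}(X^{\op})^{\op}$ recorded in the notation conventions, which identifies $\mathcal{F}$ with $\mathcal{E}^{\op}$ as a fibration over $\mathcal{B}^{\op}$, together with the fact that $(\blank)^{\op}$ commutes with pullbacks.

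Part (ii) will run dually: $p$ is now coCartesian, so Proposition~\ref{propn:PEBCart} still supplies the classifying functor $\mathcal{Y}\mapsto\mathcal{P}(\mathcal{Y}\times_{\mathcal{B}}\mathcal{E})$, but $\mathcal{E}$ is now the coCartesian rather than Cartesian fibration classified by $F$, so the opposite fibration $\mathcal{E}^{\op}\to\mathcal{B}^{\op}$ is Cartesian and classified by the same $F$; pairing with the analogous universal property of $\widetilde{\Phi}^{F}_{\mathcal{S}}$ produces the desired equivalence after an identical manipulation.

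Naturality in $\mathcal{Y}$ comes for free, since every equivalence invoked (Proposition~\ref{propn:PEBCart}, Corollary~\ref{cor:FuntoSeq}, Proposition~\ref{propn:UnFunrightadj}, and the $(\blank)^{\op}$-manipulations) is manifestly functorial in right fibrations over $\mathcal{B}$. The only genuine obstacle in the whole argument is bookkeeping: keeping the Cartesian/coCartesian distinction and the $\op$-conventions straight so that ``$\mathcal{P}\circ F$'' on the nose matches the functor classifying the fibration obtained from the universal property of $\Phi^{F}_{\mathcal{S}}$ (respectively $\widetilde{\Phi}^{F}_{\mathcal{S}}$). Once this is lined up, nothing beyond what is already in Proposition~\ref{propn:PEBCart} and Proposition~\ref{propn:UnFunrightadj} is needed.
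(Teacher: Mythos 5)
Your proposal is correct and is essentially the paper's own proof, which simply says to combine Proposition~\ref{propn:PEBCart} with Proposition~\ref{propn:UnFunrightadj} and its dual; you have just spelled out the intermediate step, namely that the defining universal property of $\Phi^{F}_{\mathcal{S}}$ together with the identification of the coCartesian fibration over $\mathcal{B}^{\op}$ with $\mathcal{E}^{\op}$ yields $\Fun_{\mathcal{B}^{\op}}(\mathcal{Y}^{\op},\Phi^{F}_{\mathcal{S}})\simeq \Fun((\mathcal{Y}\times_{\mathcal{B}}\mathcal{E})^{\op},\mathcal{S})=\mathcal{P}(\mathcal{Y}\times_{\mathcal{B}}\mathcal{E})$.
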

\begin{proof}
  Combine Proposition~\ref{propn:PEBCart} with
  Proposition~\ref{propn:UnFunrightadj} and its dual.
\end{proof}

Our next observation lets us identify several interesting functors
with full subfunctors of the functor
$\Fun_{\mathcal{B}^{\op}}((\blank)^{\op},
\widetilde{\Phi}^{F}_{\mathcal{S}})$, which will allow us to identify
the corresponding Cartesian fibrations with full subcategories of
$\mathcal{P}(\mathcal{E}_{\mathcal{B}}^{\triangleleft})$.

\begin{lemma}\label{lem:limextident}
  Suppose $F \colon \mathcal{B}^{\op} \to \CatI$ is a functor of
  \icats{} corresponding to the Cartesian fibration
  $p \colon \mathcal{E} \to \mathcal{B}$ and the coCartesian fibration
  $q \colon \mathcal{F} \to \mathcal{B}^{\op}$. Let $\widehat{F}
  \colon \mathcal{P}(\mathcal{B})^{\op} \to \CatI$ be the unique
  limit-preserving functor extending $F$. Then:
  \begin{enumerate}[(i)]
  \item The functor $\Fun^{\txt{cart}}_{\mathcal{B}}(\mathcal{B}_{/\blank},
    \mathcal{E}) \colon \mathcal{B}^{\op} \to \CatI$ is equivalent to $F$.
  \item The functor $\Fun^{\txt{cart}}_{\mathcal{B}}(\blank,
    \mathcal{E}) \colon \txt{RFib}(\mathcal{B}) \to \CatI$ corresponds
    to $\widehat{F}$ under the equivalence $\txt{RFib}(\mathcal{B})
    \simeq \mathcal{P}(\mathcal{B})$.
  \item The functor $\Fun^{\txt{cocart}}_{\mathcal{B}^{\op}}((\mathcal{B}^{\op})_{\blank/},
    \mathcal{F}) \colon \mathcal{B}^{\op} \to \CatI$ is equivalent to $F$.
  \item The functor $\Fun^{\txt{cocart}}_{\mathcal{B}^{\op}}((\blank)^{\op},
    \mathcal{F}) \colon \txt{RFib}(\mathcal{B}) \to \CatI$ corresponds
    to $\widehat{F}$ under the equivalence $\txt{RFib}(\mathcal{B})
    \simeq \mathcal{P}(\mathcal{B})$.
  \end{enumerate}
\end{lemma}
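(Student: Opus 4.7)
The plan is to prove (i) first as a direct consequence of the free Cartesian fibration theorem, then derive (ii) using the universal property of $\mathcal{P}(\mathcal{B})$ as the free limit completion of $\mathcal{B}^{\op}$; parts (iii) and (iv) will follow by passage to opposite \icats{}.

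For (i), the key observation is that the slice $\mathcal{B}_{/b}$ with its source projection to $\mathcal{B}$ is precisely the free Cartesian fibration on $\{b\} \hookrightarrow \mathcal{B}$, as is immediate from unwinding the defining pullback $\{b\} \times_{\mathcal{B}} \mathcal{B}^{\Delta^{1}}$ along evaluation at $1$. Applying Proposition~\ref{propn:freeonfuncat}(ii) with $X = \{b\}$ and $Y = \mathcal{E}$ then yields
\[
  \Fun^{\txt{cart}}_{\mathcal{B}}(\mathcal{B}_{/b}, \mathcal{E}) \simeq \Fun_{\mathcal{B}}(\{b\}, \mathcal{E}) \simeq \mathcal{E}_{b} \simeq F(b).
\]
Alternatively, and more canonically, Proposition~\ref{propn:CartNateq} rewrites the left-hand side as $\txt{Nat}_{\mathcal{B}^{\op}}(\txt{St}_{\mathcal{B}}(\mathcal{B}_{/b}), F)$; since $\mathcal{B}_{/b}$ straightens to the representable $\Map_{\mathcal{B}}(\blank, b)$, the Yoneda lemma reduces this to $F(b)$. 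This second formulation packages the naturality in $b \in \mathcal{B}^{\op}$ into the naturality of the straightening equivalence and of the Yoneda embedding, which is already established.

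For (ii), consider the functor $\Phi \colon \txt{RFib}(\mathcal{B})^{\op} \to \CatI$ given by $\mathcal{Y} \mapsto \Fun^{\txt{cart}}_{\mathcal{B}}(\mathcal{Y}, \mathcal{E})$. By Proposition~\ref{propn:CartNateq}, $\Phi$ is naturally equivalent, under the equivalence $\txt{RFib}(\mathcal{B}) \simeq \mathcal{P}(\mathcal{B})$, to the functor $G \mapsto \txt{Nat}_{\mathcal{B}^{\op}}(G, F)$ on $\mathcal{P}(\mathcal{B})^{\op}$. As an enriched mapping \icat{} into a fixed object, this functor carries colimits in $G$ to limits in $\CatI$, so $\Phi$ is limit-preserving; by (i), it restricts along the Yoneda embedding to $F$. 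Since $\mathcal{P}(\mathcal{B})^{\op}$ is the free limit completion of $\mathcal{B}^{\op}$, any limit-preserving extension of $F$ along Yoneda is unique up to equivalence, and by definition this extension is $\widehat{F}$. Hence $\Phi \simeq \widehat{F}$.

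Parts (iii) and (iv) follow by passing to opposites. The dual of Proposition~\ref{propn:freeonfuncat} identifies $(\mathcal{B}^{\op})_{b/}$ as the free coCartesian fibration on $\{b\} \hookrightarrow \mathcal{B}^{\op}$, so the argument of (i) transposes verbatim to give $\Fun^{\txt{cocart}}_{\mathcal{B}^{\op}}((\mathcal{B}^{\op})_{b/}, \mathcal{F}) \simeq F(b)$, and the extension to arbitrary $\mathcal{Y} \in \txt{RFib}(\mathcal{B})$ repeats the universal-property argument of (ii) using the coCartesian version of Proposition~\ref{propn:CartNateq}. The principal obstacle throughout is not any individual pointwise identification but rather showing that the equivalences assemble into natural transformations of functors, not merely equivalences of values; routing everything through Proposition~\ref{propn:CartNateq} reduces this entirely to the (already proved) naturality of the straightening equivalence.
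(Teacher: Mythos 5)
Your proof is correct and follows essentially the same route as the paper's: the paper also establishes (i) by testing $\Fun^{\txt{cart}}_{\mathcal{B}}(\mathcal{B}_{\blank/},\mathcal{E})$ against an arbitrary $\mathcal{C}$, passing through the straightening equivalence to $\Map_{\Fun(\mathcal{B}^{\op},\CatI)}(\mathcal{C}\times y_{\mathcal{B}}(\blank),F)$ and applying the Yoneda lemma, and proves (ii) by checking limit-preservation via the fact that the product in $\CatIC^{\txt{cart}}$ preserves colimits in each variable and then invoking uniqueness of the limit-preserving extension. Your alternative pointwise identification of (i) via Proposition~\ref{propn:freeonfuncat}(ii) is a valid shortcut, and you correctly note that the naturality in $b$ is what forces the detour through Proposition~\ref{propn:CartNateq}, which is exactly how the paper handles it.
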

\begin{proof}
  We will prove (i) and (ii); the proofs of (iii) and (iv) are
  essentially the same. To prove (i), observe that the straightening
  equivalence between Cartesian fibrations and functors to $\CatI$
  gives us a natural equivalence
  \[
  \begin{split}
\Map_{\CatI}(\mathcal{C},
  \Fun^{\txt{cart}}_{\mathcal{B}}(\mathcal{B}_{/\blank}, \mathcal{E}))
  & \simeq \Map^{\txt{cart}}_{\mathcal{B}}(\mathcal{C} \times
  \mathcal{B}_{/\blank}, \mathcal{E}) \\ & \simeq
  \Map_{\Fun(\mathcal{B}^{\op}, \CatI)}(\mathcal{C} \times
  y_{\mathcal{B}}(\blank), F) \\ & \simeq \Map_{\Fun(\mathcal{B}^{\op},
    \CatI)}(y_{\mathcal{B}}(\blank), F^{\mathcal{C}}).    
  \end{split}
\]
  Now the Yoneda Lemma implies that this is naturally equivalent to
  $\iota F^{\mathcal{C}}(\blank) \simeq \Map_{\CatI}(\mathcal{C},
  F(\blank))$, and so we must have
  $\Fun^{\txt{cart}}_{\mathcal{B}}(\mathcal{B}_{/\blank}, \mathcal{E})
  \simeq F$. This proves (i).

  To prove (ii), we first observe that the functor
  $\Fun^{\txt{cart}}_{\mathcal{B}}(\blank, \mathcal{E})$ preserves
  limits, since for any \icat{} $\mathcal{C}$ we have
  \[ \Map_{\CatI}(\mathcal{C}, \Fun^{\txt{cart}}_{\mathcal{B}}(\blank,
  \mathcal{E})) \simeq \Map^{\txt{cart}}_{\mathcal{B}}((\mathcal{C}
  \times \mathcal{B}) \times_{\mathcal{B}} (\blank), \mathcal{E})\]
  and the Cartesian product in
  $\Cat_{\infty/\mathcal{B}}^{\txt{cart}}$ preserves colimits in each
  variable. Moreover, it follows from (i) that this functor extends
  $F$, since the right fibration $\mathcal{B}_{/b} \to \mathcal{B}$
  corresponds to the presheaf $y_{\mathcal{B}}(b)$ under the
  equivalence between $\txt{RFib}(\mathcal{B})$ and
  $\mathcal{P}(\mathcal{B})$.
\end{proof}

\begin{defn}
  Suppose $F \colon \mathcal{B} \to \CatI$ is a functor. Then we write
  $\mathcal{P}F \colon \mathcal{B}^{\op} \to \CatI$ for the composite of
  $F^{\op}$ with $\mathcal{P} \colon \CatI^{\op} \to \LCatI$, and let
  $\widehat{\mathcal{P}F} \colon \mathcal{P}(\mathcal{B})^{\op} \to
  \LCatI$ be the unique limit-preserving functor extending $\mathcal{P}F$.
\end{defn}

\begin{propn}\label{propn:hatPEBident}
  Let $F \colon \mathcal{B} \to \CatI$ be a functor, with $p \colon
  \mathcal{E} \to \mathcal{B}$ an associated coCartesian fibration. We
  define
  $\mathcal{P}^{\txt{cocart}}(\mathcal{E}_{\mathcal{B}}^{\triangleleft})$
  to be the full subcategory of
  $\mathcal{P}(\mathcal{E}_{\mathcal{B}}^{\triangleleft})$ spanned by
  those presheaves $\phi \colon
  (\mathcal{E}_{\mathcal{B}}^{\triangleleft})^{\op} \to \mathcal{S}$
  such that for every coCartesian morphism $\bar{\alpha} \colon e \to
  \alpha_{!}e$ in $\mathcal{E}$ over $\alpha \colon b \to b'$ in
  $\mathcal{B}$, the commutative square
  \csquare{\phi(\alpha_{!}e, 1)}{\phi(e, 1)}{\phi(b', 0)}{\phi(b,
    0)}{\phi(\bar{\alpha}, 1)}{}{}{\phi(\alpha, 0)}
  is a pullback square. Then the restricted projection
  $\mathcal{P}^{\txt{cocart}}(\mathcal{E}_{\mathcal{B}}^{\triangleleft})
  \to \mathcal{P}(\mathcal{B})$ is a Cartesian fibration classified by 
  the functor $\widehat{\mathcal{P}F}$.
\end{propn}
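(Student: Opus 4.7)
The plan is to identify $\mathcal{P}^{\txt{cocart}}(\mathcal{E}_{\mathcal{B}}^{\triangleleft})$ as a full sub-Cartesian-fibration of the Cartesian fibration $j^{*} \colon \mathcal{P}(\mathcal{E}_{\mathcal{B}}^{\triangleleft}) \to \mathcal{P}(\mathcal{B})$ given by Corollary~\ref{cor:PEBfun}(ii), whose fibre over $\phi_{0} \in \mathcal{P}(\mathcal{B})$ matches the full subcategory $\Fun_{\mathcal{B}^{\op}}^{\txt{cocart}}(\mathcal{Y}_{\phi_{0}}^{\op}, \widetilde{\Phi}^{F}_{\mathcal{S}}) \subseteq \Fun_{\mathcal{B}^{\op}}(\mathcal{Y}_{\phi_{0}}^{\op}, \widetilde{\Phi}^{F}_{\mathcal{S}})$ spanned by those functors that preserve coCartesian edges, where $\mathcal{Y}_{\phi_{0}} \to \mathcal{B}$ is the right fibration classifying $\phi_{0}$.

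The heart of the argument is to unwind Corollary~\ref{cor:PEBfun}(ii) on objects. A presheaf $\phi$ with $j^{*}\phi = \phi_{0}$ corresponds to the functor $\widetilde\phi \colon \mathcal{Y}_{\phi_{0}}^{\op} \to \widetilde{\Phi}^{F}_{\mathcal{S}}$ sending $(b, x)$ with $x \in \phi_{0}(b)$ to the presheaf $\psi_{b, x} \in \mathcal{P}(F(b))$ defined by $\psi_{b, x}(e) := \phi(e, 1) \times_{\phi(b, 0)} \{x\}$, and sending a morphism $(b, x) \to (b', x')$ lying over $\alpha \colon b \to b'$ (so $\phi_{0}(\alpha)(x') = x$) to the edge over $\alpha^{\op}$ in $\widetilde{\Phi}^{F}_{\mathcal{S}}$ classified, via the coCartesian fibration structure corresponding to $\mathcal{P}F$, by the map $F(\alpha)^{*}\psi_{b', x'} \to \psi_{b, x}$ whose value at $e \in \mathcal{E}_{b}$ is the induced map of fibres $\phi(\alpha_{!}e, 1)_{x'} \to \phi(e, 1)_{x}$. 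This map is an equivalence for every such $x'$ precisely when the square in the statement is a pullback, and since every morphism of the left fibration $\mathcal{Y}_{\phi_{0}}^{\op} \to \mathcal{B}^{\op}$ is coCartesian, this identifies $\mathcal{P}^{\txt{cocart}}(\mathcal{E}_{\mathcal{B}}^{\triangleleft})_{\phi_{0}}$ with $\Fun_{\mathcal{B}^{\op}}^{\txt{cocart}}(\mathcal{Y}_{\phi_{0}}^{\op}, \widetilde{\Phi}^{F}_{\mathcal{S}})$ inside the ambient fibre. To promote this identification to a subfibration I would check that Cartesian lifts in $j^{*}$, which are computed by pullback, preserve the pullback condition defining $\mathcal{P}^{\txt{cocart}}$; this follows from pullback-pasting.

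It then remains to identify the classifying functor. The dual of Proposition~\ref{propn:CartNateq} gives
\[ \Fun_{\mathcal{B}^{\op}}^{\txt{cocart}}(\mathcal{Y}_{\phi_{0}}^{\op}, \widetilde{\Phi}^{F}_{\mathcal{S}}) \simeq \txt{Nat}_{\mathcal{B}^{\op}}(\phi_{0}, \mathcal{P}F), \]
since $\mathcal{Y}_{\phi_{0}}^{\op} \to \mathcal{B}^{\op}$ is the left fibration classifying $\phi_{0} \colon \mathcal{B}^{\op} \to \mathcal{S} \subseteq \CatI$ and $\widetilde{\Phi}^{F}_{\mathcal{S}}$ classifies $\mathcal{P}F$. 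By the coYoneda lemma and the limit-preservation of $\widehat{\mathcal{P}F}$, the right-hand side is naturally equivalent to $\widehat{\mathcal{P}F}(\phi_{0})$, and naturality in $\phi_{0}$ is inherited from the equivalences above. The main obstacle will be the object-level computation in the second paragraph, as it requires carefully unwinding the equivalence of Corollary~\ref{cor:PEBfun}(ii) through the coCartesian straightening of $\widetilde{\Phi}^{F}_{\mathcal{S}}$, tracking how the map on fibres induced by $\phi(\bar\alpha, 1)$ is exactly the component of the classifying map $F(\alpha)^{*}\psi_{b', x'} \to \psi_{b, x}$.
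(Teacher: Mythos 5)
Your proposal is correct and follows essentially the same route as the paper: identify the classifying functor $\widehat{\mathcal{P}F}$ with the natural full subfunctor $\Fun^{\txt{cocart}}_{\mathcal{B}^{\op}}((\blank)^{\op},\widetilde{\Phi}^{F}_{\mathcal{S}})$ of the functor classified by $j^{*}$ (Corollary~\ref{cor:PEBfun}(ii)), and then unwind the fibrewise equivalence $\mathcal{P}(\mathcal{E}^{\triangleleft}_{\mathcal{B}})_{\phi_{0}}\simeq\mathcal{P}(\mathcal{E}\times_{\mathcal{B}}\mathcal{Y}_{\phi_{0}})$ to see that preservation of coCartesian edges is exactly the stated pullback condition on fibres. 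The only cosmetic difference is that where you re-derive the identification of $\widehat{\mathcal{P}F}$ via the dual of Proposition~\ref{propn:CartNateq} together with coYoneda, the paper packages that step as Lemma~\ref{lem:limextident}(iv), which is proved by the same argument.
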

\begin{proof}
  Combining Lemma~\ref{lem:limextident} with (the dual of)
  Proposition~\ref{propn:UnFunrightadj}, we may identify
  $\widehat{\mathcal{P}F}$ with the functor
  $\Fun^{\txt{cocart}}_{\mathcal{B}^{\op}}((\blank)^{\op},
  \widetilde{\Phi}^{F}_{\mathcal{S}})$.  This is a natural full
  subcategory of $\Fun_{\mathcal{B}^{\op}}((\blank)^{\op},
  \widetilde{\Phi}^{F}_{\mathcal{S}})$, the functor classified by the
  Cartesian fibration $\mathcal{P}(\mathcal{E}_{\mathcal{B}}^{\triangleleft})
  \to \mathcal{P}(\mathcal{B})$ by Proposition~\ref{propn:PEBCart}. It
  follows that $\widehat{\mathcal{P}F}$ is classified by the
  projection to $\mathcal{P}(\mathcal{B})$ of the full subcategory of
  $\mathcal{P}(\mathcal{E}_{\mathcal{B}}^{\triangleleft})$ spanned by
  those presheaves that correspond to objects of $\Fun^{\txt{cocart}}_{\mathcal{B}^{\op}}((\blank)^{\op},
  \widetilde{\Phi}^{F}_{\mathcal{S}})$ under the identification of the
  fibres with $\Fun_{\mathcal{B}^{\op}}((\blank)^{\op},
  \widetilde{\Phi}^{F}_{\mathcal{S}})$.

  By \cite{HTT}*{Corollary 3.2.2.13}, the coCartesian edges of
  $\widetilde{\Phi}^{F}_{\mathcal{S}}$ are those that correspond to
  functors $\Delta^{1} \times_{\mathcal{B}^{\op}} \mathcal{E}^{\op}
  \to \mathcal{S}$ that take Cartesian edges of $\mathcal{E}^{\op}$
  to equivalences in $\mathcal{S}$. Thus if $\mathcal{F} \to
  \mathcal{B}^{\op}$ is a coCartesian fibration, a functor
  $\mathcal{F} \to \widetilde{\Phi}^{F}_{\mathcal{S}}$ over
  $\mathcal{B}^{\op}$ preserves coCartesian morphisms precisely if the
  classifying functor $\mathcal{F} \times_{\mathcal{B}^{\op}}
  \mathcal{E}^{\op} \to \mathcal{S}$ takes morphisms of the form
  $(\phi,\epsilon)$ with $\phi$ coCartesian in $\mathcal{F}$ and
  $\epsilon$ Cartesian in $\mathcal{E}^{\op}$ to equivalences in
  $\mathcal{S}$.

  If $\mathcal{Y} \to \mathcal{B}$ is a right fibration, this means
  that $\Fun^{\txt{cocart}}_{\mathcal{B}^{\op}}(\mathcal{Y}^{\op},
  \widetilde{\Phi}^{F}_{\mathcal{S}})$ corresponds to the full
  subcategory of $\mathcal{P}(\mathcal{Y} \times_{\mathcal{B}}
  \mathcal{E})$ spanned by the presheaves $(\mathcal{Y}
  \times_{\mathcal{B}} \mathcal{E})^{\op} \to \mathcal{S}$ that take
  morphisms of the form $(\eta, \epsilon)$ with $\epsilon$ coCartesian
  in $\mathcal{E}$ to equivalences in $\mathcal{S}$. 

  Suppose $\phi \colon \mathcal{B}^{\op} \to \mathcal{S}$ is the
  presheaf classified by $\mathcal{Y} \to \mathcal{B}$. Then unwinding
  the equivalence
  \[ \mathcal{P}(\mathcal{E}^{\triangleleft}_{\mathcal{B}})_{\phi}
  \simeq \mathcal{P}(\mathcal{E})_{/p^{*}\phi} \simeq
  \txt{RFib}(\mathcal{E})_{/p^{*}\mathcal{Y}} \simeq
  \txt{RFib}(\mathcal{E} \times_{\mathcal{B}}\mathcal{Y}) \simeq
  \mathcal{P}(\mathcal{E} \times_{\mathcal{B}}\mathcal{Y}),\]
  we see that the presheaf $\widetilde{\Psi}$ on $\mathcal{E} \times_{\mathcal{B}}
  \mathcal{Y}$ classified by $\Psi \colon \mathcal{E}^{\op} \to
  \mathcal{S}$ over $p^{*}\phi$ assigns to $(e, y) \in \mathcal{E}
  \times_{\mathcal{B}} \mathcal{Y}$ the fibre $\Psi(e)_{y}$ of the map $\Psi(e) \to
  \phi(pe)$ at $y$. Thus $\widetilde{\Psi}(\bar{\alpha}, \eta)$ is an
  equivalence for every coCartesian morphism $\bar{\alpha} \colon e
  \to \alpha_{!}e$ in
  $\mathcal{E}$ \IFF{} for every $y \in \phi(p(\alpha_{!}e))$ the map
  on fibres $\Psi(\alpha_{!}e)_{y} \to \Psi(e)_{\phi(\alpha)(e)}$ is
  an equivalence. This is equivalent to the commutative square
  \nolabelcsquare{\Psi(\alpha_{!}e)}{\Psi(e)}{\phi(p\alpha_{!}e)}{\phi(pe)}
  being Cartesian. Thus $\Fun^{\txt{cocart}}_{\mathcal{B}^{\op}}(\mathcal{Y}^{\op},
  \widetilde{\Phi}^{F}_{\mathcal{S}})$ corresponds to the full
  subcategory
  $\mathcal{P}^{\txt{cocart}}(\mathcal{E}^{\triangleleft}_{\mathcal{B}})_{\phi}$
  of $\mathcal{P}(\mathcal{E}^{\triangleleft}_{\mathcal{B}})_{\phi}$,
  which completes the proof.
\end{proof}

\begin{defn}
  Let $\mathcal{K}$ be a collection of small simplicial sets. We write
  $\CatI^{\mathcal{K}}$ for the subcategory of $\CatI$ with objects
  the small \icats{} that admit $K$-indexed colimits for all $K \in
  \mathcal{K}$ and morphisms the functors that preserve these. Given
  $\mathcal{C} \in \CatI^{\mathcal{K}}$ we let
  $\mathcal{P}_{\mathcal{K}}(\mathcal{C})$ denote the full subcategory
  of $\mathcal{P}(\mathcal{C})$ spanned by those presheaves
  $\mathcal{C}^{\op} \to \mathcal{S}$ that take $K$-indexed colimits
  in $\mathcal{C}$ to limits for all $K \in \mathcal{K}$. This defines
  a functor $\mathcal{P}_{\mathcal{K}} \colon
  (\CatI^{\mathcal{K}})^{\op} \to \LCatI$.
\end{defn}

\begin{ex}
  Let $\mathcal{K}(\kappa)$ be the collection of all $\kappa$-small
  simplicial sets. In this case we write $\CatI^{\kappa}$ for
  $\CatI^{\mathcal{K}(\kappa)}$ and $\mathcal{P}_{\kappa}$ for $\mathcal{P}_{\mathcal{K}(\kappa)}$. For $\mathcal{C} \in
  \CatI^{\kappa}$ the \icat{}
  $\mathcal{P}_{\kappa}(\mathcal{C})$ is equivalent to
  $\Ind_{\kappa}\mathcal{C}$ by \cite{HTT}*{Corollary 5.3.5.4}.
\end{ex}

\begin{defn}
  Suppose $\mathcal{K}$ is a collection of small simplicial sets and
  $F \colon \mathcal{B} \to \CatI^{\mathcal{K}}$ is a functor of \icats{}. Then we write
  $\mathcal{P}_{\mathcal{K}}F \colon \mathcal{B}^{\op} \to \LCatI$ for the composite of
  $F^{\op}$ with $\mathcal{P}_{\mathcal{K}} \colon (\CatI^{\mathcal{K}})^{\op} \to \LCatI$, and let
  $\widehat{\mathcal{P}_{\mathcal{K}}F} \colon \mathcal{P}(\mathcal{B})^{\op} \to
  \LCatI$ be the unique limit-preserving functor extending
  $\mathcal{P}_{\mathcal{K}}F$. 
\end{defn}

\begin{propn}
  Let $\mathcal{K}$ be a collection of small simplicial sets and let
  $F \colon \mathcal{B} \to \CatI^{\mathcal{K}}$ be a functor, with $p
  \colon \mathcal{E} \to \mathcal{B}$ an associated coCartesian
  fibration. We define
  $\mathcal{P}^{\txt{cocart}}_{\mathcal{K}}(\mathcal{E}_{\mathcal{B}}^{\triangleleft})$
  to be the full subcategory of
  $\mathcal{P}^{\txt{cocart}}(\mathcal{E}_{\mathcal{B}}^{\triangleleft})$
  spanned by those presheaves $\phi \colon
  (\mathcal{E}_{\mathcal{B}}^{\triangleleft})^{\op} \to \mathcal{S}$
  such that for every colimit diagram $\bar{q} \colon
  K^{\triangleright} \to \mathcal{E}_{b}$ (for any $b \in
  \mathcal{B}$), the composite
  \[ (K^{\triangleleft\triangleright})^{\op} \to
  (\mathcal{E}_{b}^{\triangleleft})^{\op} \to
  (\mathcal{E}^{\triangleleft}_{\mathcal{B}})^{\op} \to \mathcal{S}\]
  is a limit diagram. Then the restricted projection
  $\mathcal{P}^{\txt{cocart}}_{\mathcal{K}}(\mathcal{E}_{\mathcal{B}}^{\triangleleft})
  \to \mathcal{P}(\mathcal{B})$ is a Cartesian fibration associated to
  the functor $\widehat{\mathcal{P}_{\mathcal{K}}F}$.
\end{propn}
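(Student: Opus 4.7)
The plan is to obtain the result as a refinement of Proposition~\ref{propn:hatPEBident}, by cutting out the $\mathcal{K}$-condition fibrewise from the Cartesian fibration already identified there. First, recall that by Proposition~\ref{propn:hatPEBident} the projection $\mathcal{P}^{\txt{cocart}}(\mathcal{E}_{\mathcal{B}}^{\triangleleft}) \to \mathcal{P}(\mathcal{B})$ is a Cartesian fibration classified by $\widehat{\mathcal{P}F}$. Since $\widehat{\mathcal{P}F}$ and $\widehat{\mathcal{P}_{\mathcal{K}}F}$ are both limit-preserving extensions, and every $\phi\in\mathcal{P}(\mathcal{B})$ is the colimit of its right fibration $\pi \colon \mathcal{Y}\to\mathcal{B}$, I would use the identifications
\[ \widehat{\mathcal{P}F}(\phi)\simeq\lim_{\mathcal{Y}^{\op}}\mathcal{P}F\circ\pi, \qquad \widehat{\mathcal{P}_{\mathcal{K}}F}(\phi)\simeq\lim_{\mathcal{Y}^{\op}}\mathcal{P}_{\mathcal{K}}F\circ\pi, \]
together with the fact that each inclusion $\mathcal{P}_{\mathcal{K}}(F(b))\hookrightarrow\mathcal{P}(F(b))$ is fully faithful, to present $\widehat{\mathcal{P}_{\mathcal{K}}F}$ as a pointwise-fully-faithful natural subfunctor of $\widehat{\mathcal{P}F}$.

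Next I would unwind, exactly as in the proof of Proposition~\ref{propn:hatPEBident}, the equivalence $\mathcal{P}^{\txt{cocart}}(\mathcal{E}_{\mathcal{B}}^{\triangleleft})_{\phi}\simeq\mathcal{P}^{\txt{cocart}}(\mathcal{Y}\times_{\mathcal{B}}\mathcal{E})$. Under this equivalence the restriction of a presheaf $\Psi$ to the fibre over $y\in\mathcal{Y}$ is precisely the corresponding component $\Psi_{y}\in\mathcal{P}(F(\pi(y)))$ of the image of $\Psi$ in the limit $\lim_{\mathcal{Y}^{\op}}\mathcal{P}(F\circ\pi)$. The condition defining $\mathcal{P}_{\mathcal{K}}^{\txt{cocart}}$ demands that for every colimit diagram $\bar{q} \colon K^{\triangleright}\to\mathcal{E}_{b}$ the restriction of $\Psi$ to $(\mathcal{E}_{b}^{\triangleleft})^{\op}$ carries $\bar{q}^{\op}$ to a limit diagram; unwinding, this says exactly that each $\Psi_{y}$ lies in $\mathcal{P}_{\mathcal{K}}(F(\pi(y)))$. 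Hence the fibre over $\phi$ of $\mathcal{P}_{\mathcal{K}}^{\txt{cocart}}(\mathcal{E}_{\mathcal{B}}^{\triangleleft})\to\mathcal{P}(\mathcal{B})$ is identified with $\lim_{\mathcal{Y}^{\op}}\mathcal{P}_{\mathcal{K}}F\circ\pi\simeq\widehat{\mathcal{P}_{\mathcal{K}}F}(\phi)$.

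It then remains to check that the Cartesian fibration structure restricts. Cartesian morphisms in $\mathcal{P}^{\txt{cocart}}(\mathcal{E}_{\mathcal{B}}^{\triangleleft})\to\mathcal{P}(\mathcal{B})$ over a map $\phi'\to\phi$ of presheaves are computed by pullback along $j$, which on fibres is given by the pullback of presheaves along the induced map $\mathcal{Y}'\times_{\mathcal{B}}\mathcal{E}\to\mathcal{Y}\times_{\mathcal{B}}\mathcal{E}$. Since pullback acts on each fibre $\mathcal{E}_{b}$ by the identity (up to the equivalence of fibres), a Cartesian lift $\Psi'$ of $\Psi$ has the same fibrewise restrictions $\Psi_{y'}\simeq\Psi_{\pi'(y')}$, and thus satisfies the $\mathcal{K}$-condition whenever $\Psi$ does. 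This yields the desired Cartesian fibration and identifies its classifying functor with $\widehat{\mathcal{P}_{\mathcal{K}}F}$ by the naturality of the chain of equivalences above.

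The main obstacle is the middle step: making the translation between the ``global'' condition on $\Psi\in\mathcal{P}(\mathcal{E}_{\mathcal{B}}^{\triangleleft})$ (asking that certain composite diagrams in $(\mathcal{E}_{\mathcal{B}}^{\triangleleft})^{\op}$ become limits) and the ``fibrewise'' condition (each $\Psi_{y}$ lies in $\mathcal{P}_{\mathcal{K}}(F(\pi(y)))$) fully precise, particularly with respect to naturality in $\phi$. Once this has been set up carefully, everything else follows mechanically from Proposition~\ref{propn:hatPEBident} and the definition of $\widehat{\mathcal{P}_{\mathcal{K}}F}$.
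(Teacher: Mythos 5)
Your plan is correct and follows essentially the same route as the paper: restrict the identification of Proposition~\ref{propn:hatPEBident} to the natural full subfunctor $\widehat{\mathcal{P}_{\mathcal{K}}F}$ of $\widehat{\mathcal{P}F}$ and then match the fibrewise conditions under the equivalence $\mathcal{P}(\mathcal{E}^{\triangleleft}_{\mathcal{B}})_{\phi}\simeq\mathcal{P}(\mathcal{E}\times_{\mathcal{B}}\mathcal{Y})$. The translation step you flag as the main obstacle is resolved in the paper by noting that the restriction of such a presheaf to $\mathcal{E}_{b}^{\op}$ naturally lands in $\mathcal{S}_{/\phi(b)}\simeq\Fun(\phi(b),\mathcal{S})$, where limits are computed pointwise (so the condition for each $y\in\phi(b)$ can be checked all at once), together with the observation that a diagram $K^{\triangleleft}\to\mathcal{C}_{/x}$ is a limit diagram if and only if the associated diagram $K^{\triangleleft\triangleright}\to\mathcal{C}$ is one.
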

\begin{proof}
  Since $\mathcal{P}_{\mathcal{K}}F(b)$ is a natural full subcategory
  of $\mathcal{P}F(b)$, we may identify the coCartesian fibration
classified by $\mathcal{P}_{\mathcal{K}}F$ with the projection to
  $\mathcal{B}^{\op}$ of a full subcategory
  $\widetilde{\Phi}^{F}_{\mathcal{S}}(\mathcal{K})$ of
  $\widetilde{\Phi}^{F}_{\mathcal{S}}$. By Lemma~\ref{lem:limextident}
  we may then identify the functor
  $\widehat{\mathcal{P}_{\mathcal{K}}F}$ with
  $\Fun^{\txt{cocart}}_{\mathcal{B}^{\op}}((\blank)^{\op},
  \widetilde{\Phi}^{F}_{\mathcal{S}}(\mathcal{K}))$, which is a
  natural full subcategory of $\Fun^{\txt{cocart}}_{\mathcal{B}^{\op}}((\blank)^{\op},
  \widetilde{\Phi}^{F}_{\mathcal{S}})$. By
  Proposition~\ref{propn:hatPEBident} we may therefore identify the
  Cartesian fibration classified by
  $\widehat{\mathcal{P}_{\mathcal{K}}F}$ with the projection to
  $\mathcal{P}(\mathcal{B})$ of a full subcategory of
  $\mathcal{P}^{\txt{cocart}}(\mathcal{E}_{\mathcal{B}}^{\triangleleft})$.
  
  It thus remains to identify those presheaves on
  $\mathcal{E}_{\mathcal{B}}^{\triangleleft}$ that correspond to
  objects of the \icat{}
  $\Fun^{\txt{cocart}}_{\mathcal{B}^{\op}}((\blank)^{\op},
  \widetilde{\Phi}^{F}_{\mathcal{S}}(\mathcal{K}))$ under the
  identification of the fibres with
  $\Fun_{\mathcal{B}^{\op}}((\blank)^{\op},
  \widetilde{\Phi}^{F}_{\mathcal{S}})$. If $\mathcal{Y} \to
  \mathcal{B}$ is a right fibration, it is clear that
  $\Fun_{\mathcal{B}^{\op}}(\mathcal{Y}^{\op},
  \widetilde{\Phi}^{F}_{\mathcal{S}}(\mathcal{K}))$ corresponds to the
  full subcategory of $\mathcal{P}(\mathcal{Y} \times_{\mathcal{B}}
  \mathcal{E})$ spanned by the presheaves $(\mathcal{Y}
  \times_{\mathcal{B}} \mathcal{E})^{\op} \to \mathcal{S}$ such that
  for every $y \in \mathcal{Y}$ over $b \in \mathcal{B}$, the
  restriction $(\{y\} \times_{\mathcal{B}} \mathcal{E})^{\op} \simeq
  \mathcal{E}_{b}^{\op} \to \mathcal{S}$ preserves $K$-indexed limits
  for all $K \in \mathcal{K}$.

  Suppose $\phi \colon \mathcal{B}^{\op} \to \mathcal{S}$ is the
  presheaf classified by $\mathcal{Y} \to \mathcal{B}$. Then unwinding
  the equivalence
  \[ \mathcal{P}(\mathcal{E}^{\triangleleft}_{\mathcal{B}})_{\phi}
  \simeq \mathcal{P}(\mathcal{E})_{/p^{*}\phi} \simeq
  \txt{RFib}(\mathcal{E})_{/p^{*}\mathcal{Y}} \simeq
  \txt{RFib}(\mathcal{E} \times_{\mathcal{B}}\mathcal{Y}) \simeq
  \mathcal{P}(\mathcal{E} \times_{\mathcal{B}}\mathcal{Y}),\]
  we see that these presheaves on $\mathcal{E} \times_{\mathcal{B}}
  \mathcal{Y}$ correspond to presheaves $\Psi \colon \mathcal{E}^{\op}
  \to \mathcal{S}$ over $p^{*}\phi$ such that for every $b \in
  \mathcal{B}$, the restriction
  \[ \mathcal{E}_{b}^{\op} \to \mathcal{S}_{/\phi(b)} \]
  has the property that for every $y \in \phi(b)$, the composite with
  the map
  $\mathcal{S}_{/\phi(b)} \to \mathcal{S}$ given by pullback along
  $\{y\} \to \phi(b)$ takes $K$-indexed colimits to limits for all $K
  \in \mathcal{K}$.

  Now recall that the \icat{} $\mathcal{S}_{/\phi(b)}$ is equivalent to $\Fun(\phi(b),
  \mathcal{S})$, with pullback to $\{y\}$ corresponding to evaluation
  at $y$, and that limits in functor categories are computed
  pointwise. Thus we may identify our full subcategory with that of
  presheaves $\Psi$ such that for every $b \in \mathcal{B}$, the
  restriction
  \[ \mathcal{E}_{b}^{\op} \to \mathcal{S}_{/\phi(b)} \]
  takes $K$-indexed colimits to limits in $\mathcal{S}_{/\phi(b)}$.

  For any \icat{} $\mathcal{C}$ and $x \in \mathcal{C}$, a diagram
  $K^{\triangleleft} \to \mathcal{C}_{/x}$ is a limit diagram \IFF{}
  the associated diagram $K^{\triangleleft\triangleright} \to
  \mathcal{C}$ is a limit diagram. Therefore, the full subcategory of
  $\mathcal{P}^{\txt{cocart}}(\mathcal{E}_{\mathcal{B}}^{\triangleleft})$
  we have identified is precisely
  $\mathcal{P}^{\txt{cocart}}_{\mathcal{K}}(\mathcal{E}_{\mathcal{B}}^{\triangleleft})$,
  which completes the proof.
\end{proof}

\begin{cor}\label{cor:PEBIndkappaident}
  Suppose $F \colon \mathcal{B} \to \CatI^{\kappa}$ is a functor, with $p
  \colon \mathcal{E} \to \mathcal{B}$ an associated coCartesian
  fibration. Let $\widehat{\Ind}_{\kappa}F \colon
  \mathcal{P}(\mathcal{B})^{\op} \to \LCatI$ be the unique
  limit-preserving functor extending $\Ind_{\kappa} \circ F^{\op}
 \colon \mathcal{B}^{\op} \to \LCatI$. Then the restricted projection
  $\mathcal{P}^{\txt{cocart}}_{\kappa}(\mathcal{E}_{\mathcal{B}}^{\triangleleft})
  := \mathcal{P}^{\txt{cocart}}_{\mathcal{K}(\kappa)}(\mathcal{E}_{\mathcal{B}}^{\triangleleft})
  \to \mathcal{P}(\mathcal{B})$ is a Cartesian fibration classified by 
  the functor $\widehat{\Ind}_{\kappa}F$.
  
\end{cor}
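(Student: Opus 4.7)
The plan is to obtain this as a direct specialization of the preceding proposition to the case $\mathcal{K} = \mathcal{K}(\kappa)$, combined with the identification of $\mathcal{P}_{\kappa}$-presheaves with $\Ind_{\kappa}$. Concretely, I would first take $\mathcal{K} = \mathcal{K}(\kappa)$, the collection of all $\kappa$-small simplicial sets, so that by definition $\CatI^{\mathcal{K}(\kappa)} = \CatI^{\kappa}$ and $\mathcal{P}^{\txt{cocart}}_{\mathcal{K}(\kappa)}(\mathcal{E}^{\triangleleft}_{\mathcal{B}}) = \mathcal{P}^{\txt{cocart}}_{\kappa}(\mathcal{E}^{\triangleleft}_{\mathcal{B}})$. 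Applying the preceding proposition then immediately identifies the restricted projection $\mathcal{P}^{\txt{cocart}}_{\kappa}(\mathcal{E}^{\triangleleft}_{\mathcal{B}}) \to \mathcal{P}(\mathcal{B})$ as a Cartesian fibration classified by $\widehat{\mathcal{P}_{\mathcal{K}(\kappa)}F}$.

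Next, I would upgrade the Example to a natural equivalence of functors $\mathcal{P}_{\kappa} \simeq \Ind_{\kappa}$ from $(\CatI^{\kappa})^{\op}$ to $\LCatI$. The pointwise equivalence $\mathcal{P}_{\kappa}(\mathcal{C}) \simeq \Ind_{\kappa}\mathcal{C}$ is precisely \cite{HTT}*{Corollary 5.3.5.4}, and its naturality in morphisms $\mathcal{C} \to \mathcal{D}$ of $\CatI^{\kappa}$ (i.e.\ functors preserving $\kappa$-small colimits) follows from the universal property of $\Ind_{\kappa}$ as the free cocompletion under $\kappa$-filtered colimits of an \icat{} admitting $\kappa$-small colimits. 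Composing this natural equivalence with $F^{\op} \colon \mathcal{B}^{\op} \to (\CatI^{\kappa})^{\op}$ gives $\mathcal{P}_{\mathcal{K}(\kappa)}F \simeq \Ind_{\kappa}\circ F^{\op}$ as functors $\mathcal{B}^{\op} \to \LCatI$.

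Finally, by the uniqueness clause in the definition of $\widehat{(\blank)}$ (limit-preserving extension from $\mathcal{B}$ along the Yoneda embedding to $\mathcal{P}(\mathcal{B})$), this pointwise equivalence yields a natural equivalence $\widehat{\mathcal{P}_{\mathcal{K}(\kappa)}F} \simeq \widehat{\Ind}_{\kappa}F$. Substituting into the conclusion of the preceding proposition completes the proof.

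I do not expect any real obstacle here, since all the substantive content has been established in the preceding proposition; the corollary is essentially a translation of terminology. The only minor point requiring care is the naturality of $\mathcal{P}_{\kappa} \simeq \Ind_{\kappa}$ on $\CatI^{\kappa}$, but this is a standard consequence of the universal property in \cite{HTT}*{\S 5.3.5}.
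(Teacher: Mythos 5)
Your proposal is correct and matches the paper's (implicit) argument: the corollary is stated in the paper without proof precisely because it is the specialization $\mathcal{K}=\mathcal{K}(\kappa)$ of the preceding proposition combined with the identification $\mathcal{P}_{\kappa}\simeq\Ind_{\kappa}$ from \cite{HTT}*{Corollary 5.3.5.4}. Your extra care about the naturality of that equivalence and the uniqueness of the limit-preserving extension is exactly the right way to make the translation precise.
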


\section{Presentable Fibrations are Presentable}\label{sec:pres}
In ordinary category theory, an \emph{accessible fibration} is a
Grothendieck fibration $p \colon \mathbf{E} \to \mathbf{C}$ such that
$\mathbf{C}$ is an accessible category, the corresponding functor $F
\colon \mathbf{C}^{\op} \to \widehat{\Cat}$ factors through the
category of accessible categories and accessible functors, and $F$
preserves $\kappa$-filtered limits for $\kappa$ sufficiently large.

In \cite{MakkaiPare}, Makkai and Paré prove that if $p$ is an
accessible fibration, then its source $\mathbf{E}$ is also an
accessible category, and $p$ is an accessible functor. The goal of
this section is to prove an \icatl{} variant of this result. As it
makes the proof much clearer we will, however, restrict ourselves to
considering only \emph{presentable fibrations} of \icats{}, defined as
follows:
\begin{defn}
  A \defterm{presentable fibration} is a Cartesian fibration $p \colon
  \mathcal{E} \to \mathcal{B}$ such that $\mathcal{B}$ is a
  presentable \icat{}, the corresponding functor $F \colon \mathcal{B}^{\op} \to
  \LCatI$ factors through the \icat{} $\PrR$
  of presentable \icats{} and right adjoints, and $F$ preserves
  $\kappa$-filtered limits for $\kappa$ sufficiently large.
\end{defn}
\begin{remark}
  Suppose $p \colon \mathcal{E} \to \mathcal{B}$ is a presentable
  fibration. Since the morphisms of $\mathcal{B}$ are all mapped to
  right adjoints under the associated functor, it follows that $p$ is
  also a coCartesian fibration.
\end{remark}
The goal of this section is then to prove the following:
\begin{thm}\label{thm:PresFib}
  Let $p \colon \mathcal{E} \to \mathcal{B}$ be a presentable
  fibration. Then $\mathcal{E}$ is a presentable \icat{}.
\end{thm}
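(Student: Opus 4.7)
The strategy is to recognize $\mathcal{E}$ as a pullback of presentable \icats{} along accessible right adjoints, using Corollary~\ref{cor:PEBIndkappaident} applied to a small model for the coCartesian transport. To begin, since $F \colon \mathcal{B}^{\op} \to \LCatI$ factors through $\PrR$, every $F(\alpha)$ is a right adjoint, so $p$ is also a coCartesian fibration; let $G \colon \mathcal{B} \to \PrL$ denote the associated transport functor, with $G(\alpha)$ left adjoint to $F(\alpha)$.

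Next I would enlarge $\kappa$ if necessary so that: (a) $\mathcal{B}$ is $\kappa$-accessible, hence $\mathcal{B} \simeq \mathcal{P}_{\kappa}(\mathcal{B}^{\kappa})$; (b) each fibre $\mathcal{E}_{b}$ with $b \in \mathcal{B}^{\kappa}$ is $\kappa$-presentable; (c) each $G(\alpha)$ with $\alpha$ in $\mathcal{B}^{\kappa}$ preserves $\kappa$-compact objects; and (d) $F$ preserves $\kappa$-filtered limits. Restriction to $\kappa$-compacts then defines a functor $G_{0} \colon \mathcal{B}^{\kappa} \to \CatI^{\kappa}$ with $G_{0}(b) = \mathcal{E}_{b}^{\kappa}$, and I would let $\pi_{0} \colon \mathcal{E}_{0} \to \mathcal{B}^{\kappa}$ be its associated coCartesian fibration, which is essentially small since $\mathcal{B}^{\kappa}$ and each $\mathcal{E}_{b}^{\kappa}$ are.

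Applying Corollary~\ref{cor:PEBIndkappaident} to $G_{0}$ produces the Cartesian fibration
\[\pi \colon \mathcal{P}^{\txt{cocart}}_{\kappa}((\mathcal{E}_{0})^{\triangleleft}_{\mathcal{B}^{\kappa}}) \to \mathcal{P}(\mathcal{B}^{\kappa})\]
classified by the limit-preserving extension $\widehat{\Ind}_{\kappa}G_{0}$ of $\Ind_{\kappa}\circ G_{0}^{\op}$. I would verify that pulling $\pi$ back along the full inclusion $j \colon \mathcal{B} \simeq \mathcal{P}_{\kappa}(\mathcal{B}^{\kappa}) \hookrightarrow \mathcal{P}(\mathcal{B}^{\kappa})$ recovers $p$. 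Both $\widehat{\Ind}_{\kappa}G_{0} \circ j^{\op}$ and $F$ are functors $\mathcal{B}^{\op} \to \LCatI$ preserving $\kappa$-filtered limits (the former because $j$ preserves $\kappa$-filtered colimits while $\widehat{\Ind}_{\kappa}G_{0}$ is limit-preserving, the latter by hypothesis), and both agree on $\mathcal{B}^{\kappa,\op}$: on objects by $b \mapsto \mathcal{E}_{b} \simeq \Ind_{\kappa}(\mathcal{E}_{b}^{\kappa})$, and on morphisms by the right adjoints of the $G_{0}(\alpha)$, which coincide with the $F(\alpha)$. Since a $\kappa$-filtered-limit-preserving functor out of $\mathcal{B}^{\op} \simeq \mathcal{P}_{\kappa}(\mathcal{B}^{\kappa})^{\op}$ is determined by its restriction to $\mathcal{B}^{\kappa,\op}$, this yields a natural equivalence $F \simeq \widehat{\Ind}_{\kappa}G_{0} \circ j^{\op}$, and hence
\[\mathcal{E} \simeq \mathcal{P}^{\txt{cocart}}_{\kappa}((\mathcal{E}_{0})^{\triangleleft}_{\mathcal{B}^{\kappa}}) \times_{\mathcal{P}(\mathcal{B}^{\kappa})} \mathcal{B}.\]

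To conclude, the presheaf \icat{} $\mathcal{P}((\mathcal{E}_{0})^{\triangleleft}_{\mathcal{B}^{\kappa}})$ is presentable, and its full subcategory $\mathcal{P}^{\txt{cocart}}_{\kappa}((\mathcal{E}_{0})^{\triangleleft}_{\mathcal{B}^{\kappa}})$ is cut out by the pullback square condition of Proposition~\ref{propn:hatPEBident} together with the fibrewise condition from the corollary, both of which determine accessible reflective subcategories, so this subcategory is again presentable; its projection to $\mathcal{P}(\mathcal{B}^{\kappa})$ factors as the reflective inclusion into $\mathcal{P}((\mathcal{E}_{0})^{\triangleleft}_{\mathcal{B}^{\kappa}})$ followed by restriction along $\mathcal{B}^{\kappa} \hookrightarrow (\mathcal{E}_{0})^{\triangleleft}_{\mathcal{B}^{\kappa}}$, both of which lie in $\PrR$, and likewise $j$ lies in $\PrR$. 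Hence $\mathcal{E}$ is a pullback of presentable \icats{} along morphisms in $\PrR$, so it is presentable by \cite{HTT}*{Proposition 5.5.3.12}. The main obstacle will be the identification of the classifying functors carried out in the third paragraph: tracking the contravariant duality between $F$ and the coCartesian $G$ carefully, and upgrading the pointwise agreement on $\mathcal{B}^{\kappa,\op}$ to a natural equivalence of functors out of $\mathcal{B}^{\op}$.
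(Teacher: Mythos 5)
Your proposal follows essentially the same route as the paper's proof: both reduce to the small subcategory $\mathcal{B}^{\kappa}$ of compact objects, pass to the presheaf \icat{} $\mathcal{P}(\mathcal{B}^{\kappa})$, identify the resulting Cartesian fibration with the explicit model $\mathcal{P}^{\txt{cocart}}_{\kappa}\bigl((\mathcal{E}_{0})_{\mathcal{B}^{\kappa}}^{\triangleleft}\bigr)$ supplied by Corollary~\ref{cor:PEBIndkappaident} (this is exactly the content of Proposition~\ref{propn:PS}), and recover $\mathcal{E}$ as a pullback over $\mathcal{P}(\mathcal{B}^{\kappa})$. Two remarks. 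First, there is a small circularity in your choice of cardinal: conditions (b) and (c) quantify over objects and morphisms of $\mathcal{B}^{\kappa}$, which grows as you enlarge $\kappa$, so ``enlarge $\kappa$ until (a)--(d) hold'' is not a legitimate single step. The paper sidesteps this by fixing $\kappa$ for the base first and then choosing a second, possibly larger, cardinal $\lambda$ that works for the fibres over the now-fixed small category $\mathcal{B}^{\kappa}$; your argument goes through verbatim with $\Ind_{\lambda}$ and $\mathcal{P}^{\txt{cocart}}_{\lambda}$ on the fibres while keeping $\mathcal{B}\simeq\Ind_{\kappa}\mathcal{B}^{\kappa}$ for the base, so this is a patch rather than a flaw in the strategy. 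Second, your endgame differs from the paper's: you conclude presentability in one stroke from closure of $\PrR$ under small limits (the relevant citation is \cite{HTT}*{Proposition 5.5.3.13}; 5.5.3.12 concerns $\PrL$), which requires verifying that both legs of the pullback --- the inclusion $\mathcal{B}\hookrightarrow\mathcal{P}(\mathcal{B}^{\kappa})$ and the restricted projection $j^{*}$ --- are accessible right adjoints, as you indicate. The paper instead establishes cocompleteness of $\mathcal{E}$ separately (Lemma~\ref{lem:coCartcolims}) and then only needs accessibility of the pullback, via \cite{HTT}*{Proposition 5.4.6.6}. Both endgames are valid; yours is marginally more economical once the $\PrR$ verifications are in place, while the paper's decomposition isolates the accessibility statement (Proposition~\ref{propn:PS}) as a result of independent interest.
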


As in Makkai and Paré's proof of \cite{MakkaiPare}*{Theorem 5.3.4}, we
will prove this by explicitly describing the total space of the
presentable fibration classified by a special class of functors as an
accessible localization of a presheaf \icat{}. To state this result we
first recall some notation from \cite{HTT}*{\S 5.5.7}:
\begin{defn}
  Suppose $\kappa$ is a regular cardinal. As before, let $\CatI^{\kappa}$ be
  the category of small \icats{} that have all $\kappa$-small
  colimits, and functors that preserve these. Then $\Ind_{\kappa}$ gives a functor from $\CatI^{\kappa,\op}$
  to the \icat{} $\PrRk$ of $\kappa$-presentable
  \icats{} and limit-preserving functors that preserve
  $\kappa$-filtered colimits.  Using the equivalence $\PrL \simeq
  (\PrR)^{\op}$ we may equivalently regard this as a functor
  $\Ind_{\kappa}^{\vee} \colon \CatI^{\kappa} \to \PrLk$,
  where $\PrLk$ is the \icat{} of $\kappa$-presentable
  \icats{} and colimit-preserving functors that preserve
  $\kappa$-compact objects.
\end{defn}
The key step in the proof of Theorem~\ref{thm:PresFib} can then be
stated as follows:
\begin{propn}\label{propn:PS}
  Suppose $F \colon \mathcal{B} \to \CatI^{\kappa}$ is a functor of
  \icats{} with associated coCartesian fibration
  $p \colon \mathcal{E} \to \mathcal{B}$. Let $q \colon \widehat{\mathcal{E}}
  \to \mathcal{P}(\mathcal{B})$ be a Cartesian fibration classified by the unique limit-preserving functor $\widehat{\Ind}_{\kappa}F
  \colon \mathcal{P}(\mathcal{B})^{\op} \to \PrRk$
  extending $\Ind_{\kappa} \circ F^{\op} \colon \mathcal{B}^{\op} \to
  \PrRk$. Then the \icat{} $\widehat{\mathcal{E}}$ is
  an accessible \icat{}, and $q$ is an accessible functor.
\end{propn}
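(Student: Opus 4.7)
The plan is to apply Corollary~\ref{cor:PEBIndkappaident} to identify $\widehat{\mathcal{E}}$ over $\mathcal{P}(\mathcal{B})$ with the full subcategory $\mathcal{P}^{\txt{cocart}}_{\kappa}(\mathcal{E}_{\mathcal{B}}^{\triangleleft})$ of the presheaf \icat{} $\mathcal{P}(\mathcal{E}_{\mathcal{B}}^{\triangleleft})$, and then to show that this inclusion is accessibly reflective. Under this identification the projection $q$ factors as
\[
\widehat{\mathcal{E}} \simeq \mathcal{P}^{\txt{cocart}}_{\kappa}(\mathcal{E}_{\mathcal{B}}^{\triangleleft}) \hookrightarrow \mathcal{P}(\mathcal{E}_{\mathcal{B}}^{\triangleleft}) \xrightarrow{j^{*}} \mathcal{P}(\mathcal{B}),
\]
where $j\colon \mathcal{B} \hookrightarrow \mathcal{E}_{\mathcal{B}}^{\triangleleft}$ is the canonical inclusion, so accessibility of $q$ will reduce to accessibility of each of these two factors.

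The subcategory $\mathcal{P}^{\txt{cocart}}_{\kappa}(\mathcal{E}_{\mathcal{B}}^{\triangleleft})$ is cut out by two families of conditions on a presheaf $\phi$: that $\phi$ carry certain squares (one for each coCartesian morphism of $\mathcal{E}$) to pullback squares, and that $\phi$ carry certain cones (one for each $\kappa$-small colimit diagram in a fibre $\mathcal{E}_{b}$, prolonged to $\mathcal{E}_{\mathcal{B}}^{\triangleleft}$) to limit cones. Since $\mathcal{E}$ and $\mathcal{B}$ are small and the $\kappa$-small simplicial sets form an essentially small class, both families are essentially small. Each such condition can be rephrased as the inversion of a single canonical map in $\mathcal{P}(\mathcal{E}_{\mathcal{B}}^{\triangleleft})$ (the map from the representable at the cone-point to the appropriate finite limit of representables on the diagram), so $\mathcal{P}^{\txt{cocart}}_{\kappa}(\mathcal{E}_{\mathcal{B}}^{\triangleleft})$ is the full subcategory of $\mathcal{P}(\mathcal{E}_{\mathcal{B}}^{\triangleleft})$ obtained by inverting a small set of morphisms. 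By \cite{HTT}*{Proposition 5.5.4.15} it is therefore an accessible localization; in particular it is a presentable (hence accessible) \icat{}, and its inclusion into $\mathcal{P}(\mathcal{E}_{\mathcal{B}}^{\triangleleft})$ is an accessible right adjoint.

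Finally, $j^{*}$ is restriction along a map of small \icats{} and so has both a left and a right adjoint (given by left and right Kan extension); in particular it preserves all colimits. The composite $q$ is therefore the composite of an accessible functor with a colimit-preserving one, and so is itself accessible, which completes the argument. The only real obstacle is a bookkeeping step: reformulating the "sends cone to limit cone" conditions defining $\mathcal{P}^{\txt{cocart}}_{\kappa}(\mathcal{E}_{\mathcal{B}}^{\triangleleft})$ as the inversion of an honest small set of morphisms in $\mathcal{P}(\mathcal{E}_{\mathcal{B}}^{\triangleleft})$, so that \cite{HTT}*{Proposition 5.5.4.15} applies directly. This is routine but is where all the setup of the previous section pays off.
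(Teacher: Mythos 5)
Your proof is correct and takes essentially the same route as the paper: identify $\widehat{\mathcal{E}}$ over $\mathcal{P}(\mathcal{B})$ with $\mathcal{P}^{\txt{cocart}}_{\kappa}(\mathcal{E}_{\mathcal{B}}^{\triangleleft})$ via Corollary~\ref{cor:PEBIndkappaident}, exhibit that full subcategory as an accessible localization of $\mathcal{P}(\mathcal{E}_{\mathcal{B}}^{\triangleleft})$ at a small set of maps (the paper packages this as Lemma~\ref{lem:PresheafLimAcc}, and handles the size issue for the fibrewise $\kappa$-small colimit conditions by reducing to pushouts and $\kappa$-small coproducts rather than by choosing representatives of indexing shapes), and note that $j^{*}$ preserves colimits. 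One small correction to your ``bookkeeping step'': a presheaf carries a cone $\overline{p}\colon K^{\triangleright}\to\mathcal{C}$ to a limit cone precisely when it is local with respect to the canonical map $\colim\,(y_{\mathcal{C}}\circ\overline{p}|_{K})\to y_{\mathcal{C}}(\infty)$, a map \emph{into} the representable at the cone point from a \emph{colimit} of representables, not a map out of that representable into a limit of representables.
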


We will prove Proposition~\ref{propn:PS} using Corollary~\ref{cor:PEBIndkappaident}
together with the following simple observation:
\begin{lemma}\label{lem:PresheafLimAcc}
  Suppose $\mathcal{C}$ is a small \icat{}, and let $S =
  \{\overline{p}_{\alpha} \colon K_{\alpha}^{\triangleright} \to
  \mathcal{C}\}$ be a small set of diagrams in $\mathcal{C}$. Then the
  full subcategory of $\mathcal{P}(\mathcal{C})$ spanned by presheaves
  that take the diagrams in $S$ to limit diagrams in $\mathcal{S}$
  is accessible, and the inclusion of this into
  $\mathcal{P}(\mathcal{C})$ is an accessible functor.
\end{lemma}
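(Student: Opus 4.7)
The plan is to exhibit the full subcategory in question as the subcategory of local objects in $\mathcal{P}(\mathcal{C})$ with respect to a small set of morphisms, and then invoke the standard result that such a subcategory is an accessible localization of a presentable \icat{}.

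Concretely, for each diagram $\overline{p}_{\alpha} \colon K_{\alpha}^{\triangleright} \to \mathcal{C}$, let $p_{\alpha} \colon K_{\alpha} \to \mathcal{C}$ be its restriction and let $c_{\alpha} \in \mathcal{C}$ be the image of the cone point. Composing with the Yoneda embedding gives a diagram $y_{\mathcal{C}} \circ \overline{p}_{\alpha} \colon K_{\alpha}^{\triangleright} \to \mathcal{P}(\mathcal{C})$ whose cone point is $y_{\mathcal{C}}(c_{\alpha})$, and we let
\[ f_{\alpha} \colon \colim_{K_{\alpha}}(y_{\mathcal{C}} \circ p_{\alpha}) \to y_{\mathcal{C}}(c_{\alpha}) \]
be the induced comparison map in $\mathcal{P}(\mathcal{C})$. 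For any presheaf $F \in \mathcal{P}(\mathcal{C})$, the Yoneda lemma identifies $\Map(y_{\mathcal{C}}(c_{\alpha}), F) \simeq F(c_{\alpha})$, and the fact that $\Map(\blank, F) \colon \mathcal{P}(\mathcal{C})^{\op} \to \mathcal{S}$ carries colimits to limits gives $\Map(\colim_{K_{\alpha}}(y_{\mathcal{C}} \circ p_{\alpha}), F) \simeq \lim_{K_{\alpha}^{\op}} F(p_{\alpha}(\blank))$. Under these identifications, the map $f_{\alpha}^{*}$ is exactly the comparison map $F(c_{\alpha}) \to \lim_{K_{\alpha}^{\op}} F \circ p_{\alpha}$, so $F$ is $f_{\alpha}$-local \IFF{} $F$ sends $\overline{p}_{\alpha}$ to a limit diagram in $\mathcal{S}$. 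Hence the full subcategory in the statement coincides with the subcategory of $\{f_{\alpha}\}$-local objects.

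Since $\mathcal{P}(\mathcal{C})$ is presentable and $\{f_{\alpha}\}$ is a small set of morphisms, it then follows from \cite{HTT}*{Proposition 5.5.4.15} that the subcategory of local objects is presentable (in particular accessible), that it is an accessible localization of $\mathcal{P}(\mathcal{C})$, and that the inclusion is an accessible functor (being the right adjoint of a localization between presentable \icats{}). The only mildly delicate step is the identification of the local objects, but once the maps $f_{\alpha}$ are written down, this is a direct application of the Yoneda lemma together with the fact that representable mapping-space functors on a presheaf \icat{} take colimits to limits; no real obstacle arises.
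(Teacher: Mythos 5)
Your proposal is correct and follows essentially the same route as the paper: both identify the subcategory with the $\{f_{\alpha}\}$-local objects for the comparison maps $\colim(y_{\mathcal{C}}\circ p_{\alpha}) \to y_{\mathcal{C}}(c_{\alpha})$ and then invoke the theory of accessible localizations with respect to a small set of morphisms. Your write-up simply spells out the Yoneda-lemma verification of the locality condition in more detail than the paper does.
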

\begin{proof}
  Let
  $y_{\mathcal{C}} \colon \mathcal{C} \to \mathcal{P}(\mathcal{C})$
  denote the Yoneda embedding. A presheaf
  $F \colon \mathcal{C}^{\op} \to \mathcal{S}$ takes
  $\overline{p}_{\alpha}^{\op}$ to a limit diagram \IFF{} it is local
  with respect to the map of presheaves
  \[\colim (y_{\mathcal{C}} \circ \overline{p}|_{K_{\alpha}}) \to
  y_{\mathcal{C}}(\infty),\]
  where $\infty$ denotes the cone point. Thus if $S'$ is the set of
  these morphisms for $\overline{p}_{\alpha} \in S$, the subcategory
  in question is precisely the full subcategory of $S'$-local
  objects. (This observation can also be found e.g.\ in the proof of
  \cite{HTT}*{Proposition 5.3.6.2}.) Since $S$, and hence $S'$, is by
  assumption a small set, it follows that this subcategory is an
  accessible localization of $\mathcal{P}(\mathcal{C})$. In
  particular, it is itself accessible and the inclusion into
  $\mathcal{P}(\mathcal{C})$ is an accessible functor.
\end{proof}

\begin{proof}[Proof of Proposition~\ref{propn:PS}]
  By Proposition~\ref{cor:PEBIndkappaident} the Cartesian fibration
  $\widehat{\mathcal{E}} \to \mathcal{P}(\mathcal{B})$ can be identified with
  the restriction to the full subcategory
  $\mathcal{P}^{\txt{cocart}}_{\kappa}(\mathcal{E}_{\mathcal{B}}^{\triangleleft})$
  of the functor $j^{*} \colon
  \mathcal{P}(\mathcal{E}_{\mathcal{B}}^{\triangleleft})
  \to \mathcal{P}(\mathcal{B})$ induced by composition with the
  inclusion $j \colon \mathcal{B} \hookrightarrow
  \mathcal{E}_{\mathcal{B}}^{\triangleleft}$.

  The \icat{}
  $\mathcal{P}^{\txt{cocart}}_{\kappa}(\mathcal{E}_{\mathcal{B}}^{\triangleleft})$
  is by definition the full subcategory of
  $\mathcal{P}(\mathcal{E}_{\mathcal{B}}^{\triangleleft})$ spanned by
  presheaves that take two
  classes of diagrams to limit diagrams in $\mathcal{S}$ --- one
  indexed by coCartesian morphisms in $\mathcal{E}$, which form a set,
  and one indexed by $\kappa$-small colimit diagrams in the fibres of $p$;
  these do not form a set, but we can equivalently consider only
  pushout squares and coproducts indexed by $\kappa$-small sets, which
  \emph{do} form a set. It then follows from
  Lemma~\ref{lem:PresheafLimAcc} that 
  $\mathcal{P}^{\txt{cocart}}_{\kappa}(\mathcal{E}_{\mathcal{B}}^{\triangleleft})$
  is accessible and the inclusion
  $\mathcal{P}^{\txt{cocart}}_{\kappa}(\mathcal{E}_{\mathcal{B}}^{\triangleleft})
  \hookrightarrow
  \mathcal{P}(\mathcal{E}_{\mathcal{B}}^{\triangleleft})$ is an
  accessible functor. The functor $j^{*} \colon
  \mathcal{P}(\mathcal{E}_{\mathcal{B}}^{\triangleleft}) \to
  \mathcal{P}(\mathcal{B})$ preserves colimits, since these are
  computed pointwise, and so the composite
  $\mathcal{P}^{\txt{cocart}}_{\kappa}(\mathcal{E}_{\mathcal{B}}^{\triangleleft})
  \to \mathcal{P}(\mathcal{B})$ is also an accessible functor.
\end{proof}

To complete the proof of Theorem~\ref{thm:PresFib} we now just need
an easy Lemma:
\begin{lemma}\label{lem:coCartcolims}
  Suppose $\pi \colon \mathcal{E} \to \mathcal{B}$ is a coCartesian
  fibration such that both $\mathcal{B}$ and the fibres
  $\mathcal{E}_{b}$ for all $b \in \mathcal{B}$ admit small colimits,
  and the functors $f_{!} \colon \mathcal{E}_{b} \to \mathcal{E}_{b'}$
  preserve colimits for all morphisms $f \colon b \to b'$ in
  $\mathcal{B}$. Then $\mathcal{E}$ admits small colimits.
\end{lemma}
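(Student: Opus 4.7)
The strategy is the classical one for Grothendieck opfibrations: compute the colimit in $\mathcal{B}$, transport the diagram into a single fibre via coCartesian pushforwards, and take the colimit there. Given $F \colon K \to \mathcal{E}$, choose a colimit cone $\bar{F}^{\triangleright} \colon K^{\triangleright} \to \mathcal{B}$ extending $\bar{F} := \pi \circ F$, with vertex $b := \bar{F}^{\triangleright}(\infty)$. Pulling $\pi$ back along $\bar{F}^{\triangleright}$ produces a coCartesian fibration $\pi' \colon \mathcal{E}' \to K^{\triangleright}$ whose fibre at $\infty$ is $\mathcal{E}_{b}$, and $F$ becomes a section $s \colon K \to \mathcal{E}'$ of $\pi'$ over $K \subseteq K^{\triangleright}$. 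CoCartesian-lifting the tautological map $K \times \Delta^{1} \to K^{\triangleright}$ (which collapses $K \times \{1\}$ to $\infty$) along the marked edges $\{k\} \times \Delta^{1}$, starting from $s$, produces a diagram $K \times \Delta^{1} \to \mathcal{E}'$ whose restriction to $K \times \{1\}$ is a genuine functor $\tilde{F} \colon K \to \mathcal{E}_{b}$ with $\tilde{F}(k) \simeq (g_{k})_{!}F(k)$, where $g_{k}$ denotes the coCartesian pushforward along the cone leg $\bar{F}(k) \to b$. Finally, set $c := \colim_{K}\tilde{F}$ in $\mathcal{E}_{b}$ and define $F^{\triangleright} \colon K^{\triangleright} \to \mathcal{E}$ to be the cone whose legs $F(k) \to c$ are the composites $F(k) \to \tilde{F}(k) \to c$.

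To verify that $F^{\triangleright}$ is a colimit cone, compute $\Map_{\mathcal{E}}(c, e)$ for arbitrary $e \in \mathcal{E}$ lying over $b' := \pi(e)$. The coCartesian structure on $\pi$ presents $\Map_{\mathcal{E}}(c, e)$ as a fibration over $\Map_{\mathcal{B}}(b, b')$ with fibre $\Map_{\mathcal{E}_{b'}}(f_{!}c, e)$ over $f$. Since $f_{!}$ preserves colimits by hypothesis and $\tilde{F}(k) \simeq (g_{k})_{!}F(k)$, composition of coCartesian pushforwards gives $f_{!}c \simeq \colim_{k}(f g_{k})_{!}F(k)$, so this fibre equals $\lim_{k}\Map_{\mathcal{E}_{b'}}((f g_{k})_{!}F(k), e)$. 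On the other side, $\lim_{k}\Map_{\mathcal{E}}(F(k), e)$ sits over $\lim_{k}\Map_{\mathcal{B}}(\pi F(k), b') \simeq \Map_{\mathcal{B}}(b, b')$, by the universal property of $b$, with the same fibre over each $f$ (the compatible system $(f g_{k})_{k}$ corresponding to $f$). A fibrewise comparison of these two fibrations over $\Map_{\mathcal{B}}(b, b')$ then yields $\Map_{\mathcal{E}}(c, e) \simeq \lim_{k}\Map_{\mathcal{E}}(F(k), e)$.

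The main obstacle is the coherent construction of $\tilde{F} \colon K \to \mathcal{E}_{b}$ from the section $s$ --- i.e. producing a single honest functor realising the pointwise pushforwards $(g_{k})_{!}F(k)$ together with all the required compatibilities across morphisms of $K$. I would handle this by the simultaneous coCartesian lifting argument above, using the marked simplicial sets machinery of \cite{HTT}*{\S 3.1}; alternatively one can straighten $\pi'$ to a functor $G \colon K^{\triangleright} \to \CatI$ and build $\tilde{F}$ by postcomposing the straightening of $s$ with the natural transformation from $G|_{K}$ to the constant functor at $\mathcal{E}_{b}$ determined by the universal cone. Once this coherence is in place, the remainder of the argument is formal, using only the standard mapping-space decomposition in a coCartesian fibration together with the hypothesis that each $f_{!}$ preserves $K$-indexed colimits.
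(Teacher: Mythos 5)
Your proof is correct, and at bottom it is the same argument the paper uses; the difference is that the paper compresses the entire construction into a citation. The proof in the text merely observes that $\pi$ satisfies the hypotheses of \cite{HTT}*{Corollary 4.3.1.11} (fibres admit $K$-indexed colimits, pushforwards preserve them), so that any $p \colon K \to \mathcal{E}$ admits a $\pi$-colimit diagram lying over any prescribed extension of $\pi \circ p$ to $K^{\triangleright}$; choosing that extension to be a colimit cone in $\mathcal{B}$, the resulting relative colimit is an absolute one. What you have written out---pulling back along the cone, transporting the section into the fibre over the cone point via a coCartesian lift of $K \times \Delta^{1} \to K^{\triangleright}$, taking the colimit there, and verifying the universal property through the fibrewise decomposition of mapping spaces over $\Map_{\mathcal{B}}(b,b')$---is essentially the proof of \cite{HTT}*{Proposition 4.3.1.10} specialized to this situation. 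The coherence issue you flag in building $\tilde{F}$ as an honest functor is resolved exactly as you suggest, by marked-anodyne lifting as in \cite{HTT}*{\S 3.1}, and the identification $f_{!}c \simeq \colim_{k}(fg_{k})_{!}F(k)$ is precisely where the hypothesis on the pushforward functors enters. Your version buys self-containedness at the cost of length; the paper's buys brevity at the cost of outsourcing the real work to Lurie. Nothing is missing from your argument.
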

\begin{proof}
  The coCartesian fibration $\pi$ satisfies the conditions of \cite[Corollary
  4.3.1.11]{HTT} for all small simplicial sets $K$, and so in every
  diagram
  \liftcsquare{K}{\mathcal{E}}{K^{\triangleright}}{\mathcal{B}}{p}{}{\pi}{\bar{q}}{\bar{p}}
  there exists a lift $\bar{p}$ that is a $\pi$-colimit of $p$. Given
  a diagram $p \colon K \to \mathcal{E}$
  we can apply this with $\bar{q}$ a colimit of $\pi \circ p$ to get a
  colimit $\bar{p} \colon K^{\triangleright} \to
  \mathcal{E}$ of $p$.
\end{proof}

\begin{proof}[Proof of Theorem~\ref{thm:PresFib}]
  It follows from Lemma~\ref{lem:coCartcolims} that $\mathcal{E}$ has
  small colimits. It thus remains to prove that $\mathcal{E}$ is
  accessible and $p$ is an accessible functor. Let $F \colon
  \mathcal{B}^{\op}\to \LCatI$ be a functor corresponding to $p$. Choose
  a regular cardinal $\kappa$ so that $\mathcal{B}$ is $\kappa$-presentable
  and $F$ preserves $\kappa$-filtered limits. Since $\mathcal{B}$ is
  $\kappa$-presentable, $\mathcal{B} \simeq
  \Ind_{\kappa}\mathcal{B}^{\kappa}$ is the full subcategory of
  $\mathcal{P}(\mathcal{B}^{\kappa})$ spanned by the presheaves that
  preserve $\kappa$-small limits. Let $\widehat{F} \colon
  \mathcal{P}(\mathcal{B}^{\kappa})^{\op} \to \LCatI$ be the unique
  limit-preserving functor extending $F|_{\mathcal{B}^{\kappa,\op}}$;
  then $F$ is equivalent to the restriction of $\widehat{F}$ to
  $\Ind_{\kappa}\mathcal{B}^{\kappa}$. If $\widehat{p} \colon
  \widehat{\mathcal{E}} \to \mathcal{P}(\mathcal{B}^{\kappa})$ is a
  Cartesian fibration classified by $\widehat{F}$ we therefore have a
  pullback square
  \nolabelcsquare{\mathcal{E}}{\widehat{\mathcal{E}}}{\mathcal{B}}{\mathcal{P}(\mathcal{B}^{\kappa}),}
  where the bottom map preserves $\kappa$-filtered colimits, so by
  \cite{HTT}*{Proposition 5.4.6.6} it suffices to show that
  $\widehat{\mathcal{E}}$ is accessible and $\widehat{p}$ is an
  accessible functor.

  Since $\mathcal{B}^{\kappa}$ is a small \icat{}, we can choose a
  cardinal $\lambda$ such that $F|_{\mathcal{B}^{\kappa,\op}}$ factors
  through the \icat{} $\Pr^{\txt{R},\lambda}$ of $\lambda$-presentable
  \icats{} and right adjoints that preserve $\lambda$-filtered
  colimits. By \cite{HTT}*{Proposition 5.5.7.2} we can equivalently think
  of this, via the equivalence $\PrR \simeq (\PrL)^{\op}$, as a
  functor from $\mathcal{B}^{\kappa}$ to the \icat{}
  $\Pr^{\txt{L},\lambda}$ of $\lambda$-presentable \icats{} and
  functors that preserve colimits and $\lambda$-compact
  objects. Taking $\lambda$-compact objects defines a functor
  $(\blank)^{\lambda}\colon \Pr^{\txt{L},\lambda} \to
  \CatI^{\lambda}$. Then, defining $F_{0} \colon
  (\mathcal{B}^{\kappa}) \to \CatI$ to be
  $(F^{\op}|_{\mathcal{B}^{\kappa}})^{\lambda}$, we see that $F \simeq
  \Ind_{\lambda}F_{0}$, and so $\widehat{\mathcal{E}}$ is accessible
  and $\widehat{p}$ is an accessible functor by
  Proposition~\ref{propn:PS}.
\end{proof}

\appendix

\section{Pseudofunctors and the Naturality of Unstraightening}\label{sec:pseudo}
At several points in this paper we will need to know that the
unstraightening functors $\Fun(\mathcal{C}^{\op}, \CatI) \to \CatICc$
(and a number of similar constructions) are natural as we vary the
\icat{} $\mathcal{C}$. The obvious way to prove this is to consider
the naturality of the unstraightening $\txt{Un}_{S}^{+}\colon
\Fun(\mathfrak{C}(S), \sSet^{+}) \to (\sSet^{+})_{/S}$ as we vary the
simplicial set $S$. However, since pullbacks are only determined up to
canonical isomorphism, these functors are not natural ``on the nose'',
but only up to natural isomorphism --- i.e.\  they are only
\emph{pseudo-natural}. In the body of the paper we have swept such
issues under the rug, but in this appendix we indulge ourselves in a
bit of 2-category theory to prove that pseudo-naturality on the level
of model categories does indeed give naturality on the level of
\icats{}. We begin by reviewing Duskin's nerve of
bicategories~\cite{Duskin} and its basic properties. However, we will
only need to consider the case of \emph{strict} 2- and
(2,1)-categories:
\begin{defn}
  A \emph{strict 2-category} is a category enriched in $\Cat$, and a
  \emph{strict (2,1)-category} is a category enriched in $\Gpd$. We
  write $\Cat_{2}$ for the category of strict 2-categories and
  $\Cat_{(2,1)}$ for the category of strict (2,1)-categories.
\end{defn}

\begin{defn}\label{defn:oplax}
  Suppose $\mathbf{C}$ and $\mathbf{D}$ are strict 2-categories. A
  \emph{normal oplax functor} $F \colon \mathbf{C} \to \mathbf{D}$ consists
  of the following data:
  \begin{enumerate}[(a)]
  \item for each object $x \in \mathbf{C}$, an object $F(x) \in \mathbf{D}$,
  \item for each 1-morphism $f \colon x \to y$ in $\mathbf{C}$, a
    1-morphism $F(f) \colon F(x) \to F(y)$,
  \item for each 2-morphism $\phi \colon f \To g$ in
    $\mathbf{C}(x,y)$, a 2-morphism $F(\phi) \colon F(f) \To F(g)$ in
    $\mathbf{D}(F(x), F(y))$,
  \item for each pair of composable 1-morphisms $f \colon x \to y$, $g
    \colon y \to z$ in $\mathbf{C}$, a 2-morphism
    $\eta_{f,g} \colon F(g \circ f) \To F(g) \circ F(f)$,
  \end{enumerate}
  such that:
  \begin{enumerate}[(i)]
  \item for every object $x \in \mathbf{C}$, the 1-morphism
    $F(\id_{x}) = \id_{F(x)}$,
  \item for every 1-morphism $f \colon x \to y$ in $\mathbf{C}$, the
    2-morphism $F(\id_{f}) = \id_{F(f)}$, 
  \item for composable 2-morphisms $\phi \colon f \To g$, $\psi \colon
    g \To h$ in $\mathbf{C}(x,y)$, we have $F(\psi \circ \phi) =
    F(\psi) \circ F(\phi)$,
  \item for every morphism $f \colon x \to y$, the morphisms
    $\eta_{\id_{x},f}$ and $\eta_{f,\id_{y}} \colon F(f) \to F(f)$ are
    both $\id_{F(f)}$,
  \item if $\phi \colon f \To f'$ is a 2-morphism in
    $\mathbf{C}(x,y)$ and $\psi \colon g \To g'$ is a 2-morphism in
    $\mathbf{C}(y,z)$, then the diagram
    \csquare{F(g\circ f)}{F(g) \circ F(f)}{F(g'\circ f')}{F(g')
      \circ F(f')}{\eta_{f,g}}{F(\psi \circ \phi)}{F(\psi) \circ
      F(\phi)}{\eta_{f',g'}}
    commutes,
  \item for composable triples of 1-morphisms $f \colon x \to y$, $g
    \colon y \to z$, $h \colon z \to w$, the diagram
    \csquare{F(h \circ g\circ f)}{F(h \circ g) \circ F(f)}{F(h)
      \circ F(g \circ f)}{F(h) \circ F(g) \circ
      F(f))}{\eta_{f,hg}}{\eta_{gf,h}}{\eta_{g,h}\circ \id}{\id \circ
      \eta_{f,g}}
    commutes.
  \end{enumerate}
  We say a normal oplax functor $F$ from $\mathbf{C}$ to $\mathbf{D}$
  is a \emph{normal pseudofunctor} if the 2-morphisms $\eta_{f,g}$ are
  all isomorphisms. In particular, if the 2-category $\mathbf{C}$
  is a (2,1)-category, all normal oplax functors $\mathbf{C} \to
  \mathbf{D}$ are normal pseudofunctors.
\end{defn}

\begin{remark}
  In 2-category theory one typically considers the more general
  notions of (not necessarily normal) oplax functors and
  pseudofunctors, which do not satisfy $F(\id_{x}) = \id_{F(x)}$ but
  instead include the data of natural maps $F(\id_{x}) \to \id_{F(x)}$
  (which are isomorphisms for pseudofunctors). We only consider the
  normal versions because, as we will see below, these correspond to
  maps of simplicial sets between the \emph{nerves} of strict 2- and
  (2,1)-categories.
\end{remark}

Before we recall the definition of the nerve of a strict 2-category,
we first review the definition of nerves for ordinary categories and
simplicial categories:
\begin{defn}
  Let $\mathrm{N} \colon \Cat \to \sSet$ be the usual nerve of
  categories, i.e.\  if $\mathbf{C}$ is a category then
  $\mathrm{N}\mathbf{C}_{k}$ is the set $\Hom([k], \mathbf{C})$
  where $[k]$ is the category corresponding to the partially ordered
  set $\{0, \ldots, k\}$.
\end{defn}
\begin{remark}
  Since $\Cat$ has colimits, the functor $\mathrm{N}$ has a left
  adjoint $\mathrm{C} \colon \sSet \to \Cat$, which is the unique
  colimit-preserving functor such that $\mathrm{C}(\Delta^{n}) = [n]$.
\end{remark}

\begin{lemma}\label{lem:Cinneranodyne}
  The functor $\mathrm{C} \colon \sSet \to \Cat$ takes inner anodyne
  morphisms to isomorphisms.
\end{lemma}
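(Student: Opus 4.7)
The plan is to reduce to checking the generators of inner anodyne morphisms and then verify these by a direct computation. Since $\mathrm{C}$ is a left adjoint it preserves colimits, and isomorphisms in $\Cat$ are closed under pushouts, small coproducts, retracts, and transfinite composition. Consequently, the class of morphisms in $\sSet$ that $\mathrm{C}$ sends to isomorphisms is saturated, so it suffices to verify that $\mathrm{C}(\Lambda^{n}_{i}) \to \mathrm{C}(\Delta^{n}) = [n]$ is an isomorphism for every $n \geq 2$ and every $0 < i < n$.

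For this I would use the standard presentation of $\mathrm{C}(X)$: its objects are $X_{0}$, and its morphisms are freely generated by $X_{1}$ subject to the relations $s_{0}(x) = \id_{x}$ for each vertex $x$, and $d_{1}(\sigma) = d_{0}(\sigma) \circ d_{2}(\sigma)$ for each $2$-simplex $\sigma$. This follows formally from the adjunction together with the observation that a map $X \to N\mathbf{D}$ into the nerve of a $1$-category is determined by its restriction to the $2$-skeleton of $X$. In particular, $\mathrm{C}(X)$ depends only on $\mathrm{sk}_{2}(X)$.

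With this in hand, the verification splits into three easy cases. For $n \geq 4$, every non-degenerate $2$-simplex $\Delta^{\{a,b,c\}}$ of $\Delta^{n}$ lies in some face $d_{j}\Delta^{n}$ with $j \neq i$, since $[n] \setminus (\{a,b,c\} \cup \{i\})$ has at least $n - 3 \geq 1$ elements; likewise for vertices and edges. Thus the $2$-skeleta of $\Lambda^{n}_{i}$ and $\Delta^{n}$ coincide, so $\mathrm{C}(\Lambda^{n}_{i}) = \mathrm{C}(\Delta^{n})$. For $n = 2$, the horn $\Lambda^{2}_{1}$ consists of the two edges $\Delta^{\{0,1\}}$ and $\Delta^{\{1,2\}}$, so $\mathrm{C}(\Lambda^{2}_{1})$ is the free category on a pair of composable arrows, which is literally $[2]$.

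The case $n = 3$ is the only step that requires an actual argument beyond unwinding definitions: the missing face $d_{i}\Delta^{3} = \Delta^{[3] \setminus \{i\}}$ is itself a $2$-simplex, and I would check directly that the composition relation it imposes is already a formal consequence of the three relations coming from the other $2$-simplices. For instance when $i = 1$, the three $2$-simplices in $\Lambda^{3}_{1}$ give $(0\to 2) = (1\to 2)\circ(0\to 1)$, $(1\to 3) = (2\to 3)\circ(1\to 2)$, and $(0\to 3) = (1\to 3)\circ(0\to 1)$, and chaining these yields $(0\to 3) = (2\to 3)\circ(0\to 2)$, which is exactly the relation contributed by $\Delta^{\{0,2,3\}}$; the case $i = 2$ is symmetric. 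This is the only mildly nontrivial step, but it is entirely mechanical, so I don't anticipate any real obstacle.
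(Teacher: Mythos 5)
Your proof is correct, but it takes a genuinely different route from the paper's. Both arguments begin with the same reduction: since $\mathrm{C}$ preserves colimits, the class of maps it inverts is saturated, so one only needs the generating inner horn inclusions $\Lambda^n_i \to \Delta^n$. From there the paper argues via the spine $\txt{Sp}^n$: it shows $\mathrm{C}(\txt{Sp}^n) \to \mathrm{C}(\Delta^n)$ is an isomorphism because $[n]$ is the free category on a linear graph, shows $\mathrm{C}(\txt{Sp}^n) \to \mathrm{C}(\Lambda^n_i)$ is an isomorphism by writing the spine-to-horn inclusion as a composite of pushouts of lower-dimensional spine inclusions, and concludes by two-out-of-three. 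You instead use the presentation of $\mathrm{C}(X)$ by generators ($X_1$) and relations (degeneracies and $2$-simplices), i.e.\ the fact that $\mathrm{C}$ factors through the $2$-truncation, and then dispose of $n \geq 4$ by observing the $2$-skeleta of $\Lambda^n_i$ and $\Delta^n$ agree, leaving only the explicit checks at $n=2,3$. Your approach is more elementary and makes the only genuine content ($n=3$) completely explicit, whereas the paper's spine argument avoids all case analysis but quietly relies on the (itself combinatorial, and not spelled out) decomposition of $\txt{Sp}^n \hookrightarrow \Lambda^n_i$ into pushouts of spine inclusions. The one place you should be slightly more careful is the justification of the presentation of $\mathrm{C}(X)$: injectivity of restriction $\Hom(X, \mathrm{N}\mathbf{D}) \to \Hom(\mathrm{sk}_2 X, \mathrm{N}\mathbf{D})$ is not quite enough; you also want surjectivity, which follows because the nerve of a category is $2$-coskeletal, giving $\mathrm{C}(X) \cong \mathrm{C}(\mathrm{sk}_2 X)$ and hence the stated presentation as a colimit of the categories $[k]$, $k \leq 2$. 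This is standard and not a gap, but worth a sentence.
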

\begin{proof}
  Let $\mathfrak{W}$ denote the class of monomorphisms of simplicial sets that
  are taken to isomorphisms by $\mathrm{C}$. To see that
  $\mathfrak{W}$ contains the inner anodyne morphisms we apply
  \cite{JoyalTierney}*{Lemma 3.5}, which says that $\mathfrak{W}$
  contains the inner anodyne maps if
  \begin{enumerate}[(i)]
  \item $\mathfrak{W}$ is weakly saturated, i.e.\ it contains the
    isomorphisms and is closed under composition, transfinite
    composition, cobase change, and codomain retracts,
  \item $\mathfrak{W}$ has the right cancellation property, i.e.\ it
    if $fg$ and $g$ are in $\mathfrak{W}$ then $f$ is in $\mathfrak{W}$,
  \item $\mathfrak{W}$ contains the inclusions
    $\txt{Sp}^{n} \hookrightarrow \Delta^{n}$, where $\txt{Sp}^{n}$
    denotes the $n$-spine, i.e.\ the simplicial set
    $\Delta^{\{0,1\}} \amalg_{\Delta^{\{1\}}} \cdots
    \amalg_{\Delta^{\{n-1\}}} \Delta^{\{n-1,n\}}$. 
  \end{enumerate}
  Here conditions (i) and (ii) follow immediately from the definition
  of $\mathfrak{W}$, as the functor $\mathrm{C}$ preserves
  colimits. It remains to prove (iii), i.e.\ to show that
  $\mathrm{C}(\txt{Sp}^{n}) \to \mathrm{C}(\Delta^{n})$ is an
  isomorphism. Since $\mathrm{C}$ preserves colimits, this is the map
  of categories
  \[ [1] \amalg_{[0]} \cdots \amalg_{[0]} [1] \to [n]. \]
  But the category $[n]$ is the free category on the graph
  with vertices $0, \ldots, n$ and edges $i \to (i+1)$, which obviously
  decomposes as a colimit in this way, and the free category functor
  on graphs preserves colimits.
\end{proof}

\begin{propn}\label{propn:Cprod} 
  The functor $\mathrm{C} \colon \sSet \to \Cat$ preserves products.
\end{propn}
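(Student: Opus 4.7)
The plan is to reduce to the case of simplices and then extend by the colimit-preservation properties of $\mathrm{C}$. Specifically, I will first verify that $\mathrm{C}(\Delta^{n} \times \Delta^{m}) \cong [n] \times [m]$, and then bootstrap to arbitrary simplicial sets by writing them as colimits of simplices and exploiting the fact that products commute with colimits in each variable in both $\sSet$ and $\Cat$.

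For the simplex case, I will use that the nerve functor $\mathrm{N}$ is a right adjoint and therefore preserves products, so $\mathrm{N}([n] \times [m]) \cong \mathrm{N}([n]) \times \mathrm{N}([m]) = \Delta^{n} \times \Delta^{m}$. Applying $\mathrm{C}$ and using that the counit $\mathrm{C}\mathrm{N} \to \id_{\Cat}$ is a natural isomorphism (because $\mathrm{N}$ is fully faithful), we obtain $\mathrm{C}(\Delta^{n} \times \Delta^{m}) \cong \mathrm{C}\mathrm{N}([n] \times [m]) \cong [n] \times [m] = \mathrm{C}(\Delta^{n}) \times \mathrm{C}(\Delta^{m})$, naturally in $[n]$ and $[m]$.

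For the general case, I will write $X \cong \colim_{\Delta^{n} \to X}\Delta^{n}$ and $Y \cong \colim_{\Delta^{m} \to Y}\Delta^{m}$ as colimits over their respective categories of simplices. Since $\sSet$ is cartesian closed, the product functor preserves colimits in each variable, so $X \times Y \cong \colim_{(\sigma, \tau)} \Delta^{n} \times \Delta^{m}$ where the colimit is over pairs $\sigma \colon \Delta^{n}\to X$, $\tau \colon \Delta^{m} \to Y$. Applying $\mathrm{C}$, which preserves colimits as a left adjoint, and substituting the simplex case gives $\mathrm{C}(X \times Y) \cong \colim_{(\sigma, \tau)} [n] \times [m]$. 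On the other hand, $\Cat$ is also cartesian closed (for small categories, with internal hom given by the functor category $\mathbf{D}^{\mathbf{C}}$), so its product likewise preserves colimits in each variable, yielding $\mathrm{C}(X) \times \mathrm{C}(Y) \cong (\colim_{\sigma}[n]) \times (\colim_{\tau}[m]) \cong \colim_{(\sigma,\tau)}[n] \times [m]$. Matching the two colimit expressions and checking that the isomorphisms are natural completes the proof.

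No serious obstacle is expected here: the only potentially delicate point is the assertion that products in $\Cat$ commute with colimits in each variable, but this is a standard consequence of the exponential $(\blank)^{\mathbf{C}}$ giving a right adjoint to $\blank \times \mathbf{C}$. All other steps are formal consequences of the adjunction $\mathrm{C} \dashv \mathrm{N}$ and the full faithfulness of the nerve.
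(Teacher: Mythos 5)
Your proof is correct, and the overall skeleton (reduce to representables via the colimit decomposition of $X$ and $Y$, using that products commute with colimits in each variable in both $\sSet$ and $\Cat$) is exactly the paper's. Where you genuinely diverge is in the key case of two simplices. The paper handles $\mathrm{C}(\Delta^{n}\times\Delta^{m})\to[n]\times[m]$ by a combinatorial reduction: it first proves (Lemma~\ref{lem:Cinneranodyne}) that $\mathrm{C}$ inverts inner anodyne maps, uses \cite{HTT}*{Corollary 2.3.2.4} to see that $\txt{Sp}^{n}\times\txt{Sp}^{m}\to\Delta^{n}\times\Delta^{m}$ is inner anodyne, and thereby reduces to $n,m\in\{0,1\}$, finishing with an explicit identification of $\Delta^{1}\times\Delta^{1}$ as a pushout of two $2$-simplices. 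You instead invoke the full faithfulness of the nerve, so that the counit $\mathrm{C}\mathrm{N}\to\id_{\Cat}$ is a natural isomorphism, and conclude $\mathrm{C}(\Delta^{n}\times\Delta^{m})\cong\mathrm{C}\mathrm{N}([n]\times[m])\cong[n]\times[m]$ in one line. This is shorter and avoids all the spine combinatorics (it would even render Lemma~\ref{lem:Cinneranodyne} unnecessary for this purpose); what it costs you is only the small check, which you flag but do not carry out, that the isomorphism so produced coincides with the canonical comparison map $(\mathrm{C}(\pi_{1}),\mathrm{C}(\pi_{2}))$ --- this follows from naturality of the counit applied to the two projections, together with the identification $\pi_{i}^{\sSet}=\mathrm{N}(\pi_{i}^{\Cat})$ under the product-preservation isomorphism for $\mathrm{N}$. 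With that remark added, your argument is complete.
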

\begin{proof}
  Since $\mathrm{C}$ preserves colimits and the Cartesian products in
  $\Cat$ and $\sSet$ both commute with colimits in each variable, it
  suffices to check that the natural map $\mathrm{C}(\Delta^{n}\times
  \Delta^{m}) \to \mathrm{C}(\Delta^{n}) \times
  \mathrm{C}(\Delta^{m})$ is an isomorphism for all $n, m$. Since
  products of inner anodyne maps are inner anodyne by
  \cite{HTT}*{Corollary 2.3.2.4}, the inclusion $\txt{Sp}^{n} \times
  \txt{Sp}^{m} \to \Delta^{n} \times \Delta^{m}$ is inner
  anodyne. Thus in the diagram \nolabelcsquare{\mathrm{C}(\txt{Sp}^{n}
    \times \txt{Sp}^{m})}{\mathrm{C}(\txt{Sp}^{n}) \times
    \mathrm{C}(\txt{Sp}^{m})}{\mathrm{C}(\Delta^{n} \times
    \Delta^{m})}{\mathrm{C}(\Delta^{n}) \times \mathrm{C}(\Delta^{m})}
  the vertical maps are isomorphisms by
  Lemma~\ref{lem:Cinneranodyne}. It hence suffices to prove that the
  upper horizontal map is an isomorphism. Since $\mathrm{C}$
  preserves colimits and the Cartesian products preserve colimits in
  each variable, in the commutative diagram
  \[
 \begin{tikzcd}[column sep=small]
   \mathrm{C}(\Delta^{1} \times \txt{Sp}^{m})
   \amalg_{\mathrm{C}(\Delta^{0} \times \txt{Sp}^{m})}
   \mathrm{C}(\txt{Sp}^{n} \times \txt{Sp}^{m}) \arrow{d} \arrow{r} & \mathrm{C}(\txt{Sp}^{n+1}
   \times \txt{Sp}^{m}) \arrow{d} \\
\mathrm{C}(\Delta^{1}) \times \mathrm{C}(\txt{Sp}^{m})
   \amalg_{\mathrm{C}(\Delta^{0}) \times \mathrm{C}(\txt{Sp}^{m})}
   \mathrm{C}(\txt{Sp}^{n}) \times \mathrm{C}(\txt{Sp}^{m}) \arrow{r}
   & \mathrm{C}(\txt{Sp}^{n+1}) \times
   \mathrm{C}(\txt{Sp}^{m})
 \end{tikzcd}
 \]
  \[
 \begin{tikzcd}
   \mathrm{C}(\Delta^{1} \times \txt{Sp}^{m})
   \underset{\mathrm{C}(\Delta^{0} \times \txt{Sp}^{m})}{\amalg}
   \mathrm{C}(\txt{Sp}^{n} \times \txt{Sp}^{m}) \arrow{d} \arrow{r} & \mathrm{C}(\txt{Sp}^{n+1}
   \times \txt{Sp}^{m}) \arrow{d} \\
\mathrm{C}(\Delta^{1}) \times \mathrm{C}(\txt{Sp}^{m})
   \underset{\mathrm{C}(\Delta^{0}) \times \mathrm{C}(\txt{Sp}^{m})}{\amalg}
   \mathrm{C}(\txt{Sp}^{n}) \times \mathrm{C}(\txt{Sp}^{m}) \arrow{r}
   & \mathrm{C}(\txt{Sp}^{n+1}) \times
   \mathrm{C}(\txt{Sp}^{m})
 \end{tikzcd}
 \]
  the vertical morphisms are isomorphisms. By inducting on $n$ and $m$
  this implies that the map in question is an isomorphism for all $n$ and $m$
  provided \[\mathrm{C}(\Delta^{n}
  \times \Delta^{m}) \to \mathrm{C}(\Delta^{n}) \times
  \mathrm{C}(\Delta^{m})\] is an isomorphism when $n$ and $m$ are both
  either $0$ or $1$. The cases where $n$ or $m$ is $0$ are trivial, so
  it only remains to show that $\mathrm{C}(\Delta^{1} \times \Delta^{1}) \to
  [1] \times [1]$ is an isomorphism. The simplicial set $\Delta^{1}
  \times \Delta^{1}$ is the pushout $\Delta^{2}
  \amalg_{\Delta^{\{0,2\}}} \Delta^{2}$, so this amounts to showing
  that the analogous functor $[2] \amalg_{[1]} [2] \to [1] \times [1]$
  is an isomorphism, or equivalently that for any category
  $\mathbf{C}$, the square \nolabelcsquare{\Hom([1] \times [1],
    \mathbf{C})}{\Hom([2], \mathbf{C})}{\Hom([2],
    \mathbf{C})}{\Hom([1], \mathbf{C})} is Cartesian.  But this claim
  is equivalent to the statement that a commutative square in
  $\mathbf{C}$ is the same as two compatible commutative triangles,
  which is obvious.
\end{proof}

\begin{defn}
  The functor $\mathrm{N}$ preserves products, being a right adjoint,
  and so induces a functor $\mathrm{N}_{*} \colon \Cat_{2}
  \to \sCat$, given by applying $\mathrm{N}$ on the mapping spaces;
  this has a left adjoint $\mathrm{C}_{*} \colon \sCat \to
  \Cat_{2}$ given by composition with $\mathrm{C}$,
  since $\mathrm{C}$ preserves products by Proposition~\ref{propn:Cprod}.
\end{defn}

We now briefly recall the definition of the coherent nerve functor
from simplicial categories to simplicial sets, following
\cite{HTT}*{\S 1.1.5}:
\begin{defn}
  Let $P_{i,j}$ be
  the partially ordered set of subsets of $\{i, i+1,\ldots,j\}$
  containing $i$ and $j$. Then $\mathfrak{C}(\Delta^{n})$ denotes the simplicial category with
  objects $0, \ldots, n$ and 
  \[ \mathfrak{C}(\Delta^{n})(i,j) =
  \begin{cases}
    \emptyset, & i > j\\
    \mathrm{N}P_{i,j}, & \txt{otherwise}
  \end{cases}
  \]
  Composition of morphisms is induced by union of the subsets in the $P_{i,j}$'s.
\end{defn}
\begin{remark}
  The simplicial set $\mathrm{N}P_{i,j}$ is isomorphic to
  $(\Delta^{1})^{\times (j - i - 1)}$ for $j > i$.
\end{remark}

\begin{defn}
  The \emph{coherent nerve} is the functor $\mathfrak{N} \colon \sCat
  \to \sSet$ defined by \[\mathfrak{N}\mathbf{C}_{k} =
  \Hom(\mathfrak{C}(\Delta^{k}), \mathbf{C}).\] This has a left
  adjoint $\mathfrak{C} \colon \sSet \to \sCat$, which is the unique
  colimit-preserving functor extending the cosimplicial simplicial
  category $\mathfrak{C}(\Delta^{\bullet})$.
\end{defn}

\begin{defn}
  Let $\mathrm{N}_{2} \colon \Cat_{2} \to \sSet$ denote the
  composite
  \[ \Cat_{2} \xto{\mathrm{N}_{*}} \sCat
  \xto{\mathfrak{N}} \sSet.\]
  This functor has a left adjoint $\mathrm{C}_{2}$, which is the
  composite
  \[ \sSet \xto{\mathfrak{C}} \sCat \xto{\mathrm{C}_{*}} \Cat_{2}.\]
\end{defn}

\begin{remark}
  It is clear from the definitions given in \cite{Duskin}*{\S\S 6.1--6.7} that the
  functor $\mathrm{N}_{2}$ as we have defined it is simply the
  restriction of Duskin's nerve for bicategories to strict
  2-categories. (This nerve also implicitly appeared earlier in
  \cite{StreetAlgOrSx}.)
\end{remark}

\begin{remark}\label{rmk:nervesmall}
  We can describe the strict 2-category $\mathrm{C}_{2}(\Delta^{n})$
  as follows: its objects are $0$, \ldots, $n$. For $i > j$, the
  category $\mathrm{C}_{2}(\Delta^{n})(i, j)$ is empty, and for $j >
  i$ it is the partially ordered set $P_{i,j}$ (which is isomorphic to
  $[1]^{\times (j -i-1)}$ if $j > i$). We can thus describe the
  low-dimensional simplices of the nerve $\mathrm{N}_{2}\mathbf{C}$
  of a strict 2-category $\mathbf{C}$ as follows:
  \begin{itemize}
  \item The 0-simplices are the objects of $\mathbf{C}$.
  \item The 1-simplices are the 1-morphisms of $\mathbf{C}$.
  \item A 2-simplex in $\mathrm{N}_{2}\mathbf{C}$ is given by objects
    $x_{0},x_{1},x_{2}$, 1-morphisms $f_{01} \colon  x_{0}\to
    x_{1}$, $f_{12} \colon x_{1} \to x_{2}$, $f_{02} \colon x_{0} \to
    x_{2}$, and a 2-morphism $\phi_{012} \colon f_{02} \To f_{12} \circ f_{01}$.
  \item A 3-simplex is given by 
    \begin{itemize}
    \item objects $x_{0},x_{1},x_{2},x_{3}$,
    \item 1-morphisms $f_{ij} \colon x_{i} \to x_{j}$ for $0 \leq i <
      j \leq 3$,
    \item 2-morphisms $\phi_{012} \colon f_{02} \To f_{12} \circ
      f_{01}$,
      $\phi_{123} \colon f_{13} \To f_{23} \circ
      f_{12}$,
      $\phi_{023} \colon f_{03} \To f_{23} \circ
      f_{02}$ and $\phi_{013} \colon f_{03} \To f_{13} \circ
      f_{01}$, such that the square
      \csquare{f_{03}}{f_{13}\circ f_{01}}{f_{23}\circ f_{02}}{f_{23}
        \circ f_{12} \circ f_{01}}{\phi_{013}}{\phi_{023}}{\phi_{123}
        \circ \id}{\id \circ \phi_{012}}
      commutes.
    \end{itemize}
  \end{itemize}
\end{remark}

\begin{defn}
  Let $\simp_{\leq k}$ denote the full subcategory of $\simp$ spanned
  by the objects $[n]$ for $n \leq k$. The restriction $\txt{sk}_{k}
  \colon \sSet \to
  \Fun(\simp^{\op}_{\leq k}, \Set)$ has a right adjoint \[\txt{cosk}_{k}
  \colon \Fun(\simp^{\op}_{\leq k}, \Set) \to \sSet.\] We say a
  simplicial set $X$ is \emph{$k$-coskeletal} if it is in the image of
  the functor $\txt{cosk}_{k}$. Equivalently,
  $X$ is $k$-coskeletal if every map $\partial \Delta^{n} \to X$
  extends to a unique $n$-simplex $\Delta^{n} \to X$ when $n > k$.
\end{defn}

\begin{propn}\label{propn:2cat3cosk}
  For every strict 2-category $\mathbf{C}$, the simplicial set
  $\mathrm{N}_{2}\mathbf{C}$ is 3-coskeletal.
\end{propn}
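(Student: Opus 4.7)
The plan is to use the adjunction $\mathrm{C}_{2} \dashv \mathrm{N}_{2}$ in order to reduce the statement to showing that the canonical comparison map $\mathrm{C}_{2}(\partial\Delta^{n}) \to \mathrm{C}_{2}(\Delta^{n})$ is an isomorphism of strict 2-categories for every $n \geq 4$. Indeed, an $n$-simplex of $\mathrm{N}_{2}\mathbf{C}$ is by definition a 2-functor $\mathrm{C}_{2}(\Delta^{n}) \to \mathbf{C}$, and since $\mathrm{C}_{2}$ is a left adjoint a map $\partial\Delta^{n} \to \mathrm{N}_{2}\mathbf{C}$ is the same as a 2-functor $\mathrm{C}_{2}(\partial\Delta^{n}) \to \mathbf{C}$. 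So an isomorphism at the level of $\mathrm{C}_{2}$ immediately yields that every boundary extends uniquely, which is precisely 3-coskeletality.

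To prove this isomorphism, I would unwind the explicit description of $\mathrm{C}_{2}(\Delta^{n}) = \mathrm{C}_{*}\mathfrak{C}(\Delta^{n})$: its objects are $\{0,\ldots,n\}$, for $i \leq j$ the hom-category is the poset $P_{i,j}$ of subsets of $\{i,\ldots,j\}$ containing $i$ and $j$ (with inclusions as 2-morphisms), and composition is given by union. The face $d_{k} \colon \mathrm{C}_{2}(\Delta^{n-1}) \hookrightarrow \mathrm{C}_{2}(\Delta^{n})$ is the inclusion of the full sub-2-category on $\{0,\ldots,n\} \setminus \{k\}$. Since $\mathrm{C}_{2}$ preserves colimits, $\mathrm{C}_{2}(\partial\Delta^{n})$ is the colimit over the diagram of faces and their pairwise intersections in $\Cat_{2}$, and maps to $\mathrm{C}_{2}(\Delta^{n})$ as the sub-2-category generated by the images of all $(n-1)$-faces.

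Next I would verify that this generated sub-2-category is all of $\mathrm{C}_{2}(\Delta^{n})$ provided $n \geq 4$. Every object $i$ lies in every face $d_{k}$ with $k \neq i$. A 1-morphism $S \in P_{i,j}$ lies in the image of $d_{k}$ whenever $k \notin \{i,\ldots,j\}$ or $k \in \{i,\ldots,j\} \setminus S$, so the only 1-morphism not directly in a face is the maximal chain $S = \{0,1,\ldots,n\}$ in $P_{0,n}$; but this chain is the composition of the edges $\{\ell, \ell+1\}$ for $\ell = 0, \ldots, n-1$, each of which lies in many faces once $n \geq 4$. Any 2-morphism $S \subseteq T$ in $P_{i,j}$ factors as a chain of elementary inclusions $S \subsetneq S \cup \{k_{1}\} \subsetneq \cdots \subseteq T$, and the analogous argument, combined with horizontal composition with identity 2-morphisms on the edge chains, reduces each elementary step to a 2-morphism already appearing in some face.

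The last step, and the main obstacle, is verifying uniqueness: the relations holding in $\mathrm{C}_{2}(\Delta^{n})$ must already be imposed in the colimit $\mathrm{C}_{2}(\partial\Delta^{n})$. These relations are associativity and unit laws for the composition-by-union, the middle four interchange law, and the fact that each $P_{i,j}$ is a poset (so 2-morphisms are determined by their source and target). Each such relation involves at most three vertices at a time, and for $n \geq 4$ any such triple is contained in some face $d_{k}$. The delicate point is the bookkeeping of the colimit, i.e.\ checking that the identifications along overlapping faces give precisely the relations in the target and nothing extra; this amounts to a careful but direct verification using the evident compatibility of the face inclusions with the poset structure of the hom-categories $P_{i,j}$, and together with the generation statement above establishes the desired isomorphism.
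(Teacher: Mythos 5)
Your opening reduction is correct and coincides with the paper's: by the adjunction $\mathrm{C}_{2}\dashv\mathrm{N}_{2}$, 3-coskeletality of $\mathrm{N}_{2}\mathbf{C}$ for all $\mathbf{C}$ is equivalent to the comparison map $\mathrm{C}_{2}(\partial\Delta^{n})\to\mathrm{C}_{2}(\Delta^{n})$ being an isomorphism for $n\geq 4$. But there are two problems afterwards. A local one: the face $d_{k}\colon\mathrm{C}_{2}(\Delta^{n-1})\to\mathrm{C}_{2}(\Delta^{n})$ is \emph{not} the inclusion of a full sub-2-category. On hom-categories it carries $P_{i,j}$ onto the subposet of $P_{d_{k}i,d_{k}j}$ of subsets not containing $k$; already for $n=2$, $k=1$ the image of $P_{0,1}$ in $P_{0,2}$ is $\{\{0,2\}\}$ and misses $\{0,1,2\}$. (Your later membership criterion ``$S$ lies in $d_{k}$ iff $k\notin S$'' is the correct one and contradicts the fullness claim.) The serious problem is that the whole content of the proposition sits in the step you defer as ``a careful but direct verification'': injectivity of the comparison map. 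Colimits in $\Cat_{2}$ are presented by generators and relations, so you must show that any two formal composites of face-generators with the same image in $\mathrm{C}_{2}(\Delta^{n})$ are already identified by relations internal to single faces. This is not bookkeeping: for instance the maximal chain $\{0,\dots,n\}\in P_{0,n}$ factors both as $\{1,\dots,n\}\circ\{0,1\}$ and as $\{2,\dots,n\}\circ\{0,1,2\}$, and in each factorization the two factors lie in no common face, so identifying the two composites requires a genuine argument (refining both to the composite of all the $\{\ell,\ell+1\}$ inside suitable faces), with a more intricate analogue for whiskered 2-morphisms under interchange. Your remark that ``each relation involves at most three vertices'' does not address this, since the relevant identities live in $P_{0,n}$ and involve subsets meeting all $n+1$ vertices.

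The paper avoids this colimit analysis entirely, and that is the idea your proposal is missing. Since $\mathrm{C}_{2}=\mathrm{C}_{*}\circ\mathfrak{C}$ and the simplicial category $\mathfrak{C}(\partial\Delta^{n})$ is explicitly known, the map $\mathfrak{C}(\partial\Delta^{n})\to\mathfrak{C}(\Delta^{n})$ is an isomorphism on objects and on every mapping object except $(0,n)$, where it is the boundary inclusion of the cube $(\Delta^{1})^{\times(n-1)}$. That inclusion is a composite of pushouts of inner horn inclusions and one copy of $\partial\Delta^{n-1}\to\Delta^{n-1}$, and the functor $\mathrm{C}\colon\sSet\to\Cat$ inverts inner anodynes (Lemma~\ref{lem:Cinneranodyne}) and inverts $\partial\Delta^{m}\to\Delta^{m}$ for $m\geq 3$ because nerves of categories are 2-coskeletal. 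Hence the problem localizes to a single hom-category and the isomorphism for $n\geq 4$ follows with no generators-and-relations argument. If you want to keep your formulation, the efficient repair is to prove the isomorphism $\mathrm{C}_{2}(\partial\Delta^{n})\cong\mathrm{C}_{2}(\Delta^{n})$ by this one-hom-object reduction rather than by analyzing the colimit of faces directly.
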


\begin{remark}
  A more general version of this result in the setting of bicategories
  appears in \cite{Duskin}.
\end{remark}

\begin{proof}
  We must show that every map $\partial \Delta^{k} \to
  \mathrm{N}_{2}\mathbf{C}$ extends to a unique map from $\Delta^{k}$
  if $k > 3$. Equivalently, we must show that given a map
  $\mathfrak{C}(\partial \Delta^{k}) \to \mathrm{N}_{*}\mathbf{C}$ it
  has a unique extension to $\mathfrak{C}(\Delta^{k})$ for $k > 3$. We can
  describe the simplicial category $\mathfrak{C}(\partial \Delta^{k})$
  and its map to $\mathfrak{C}(\Delta^{k})$ as follows:
  \begin{itemize}
  \item the objects of $\mathfrak{C}(\partial \Delta^{k})$ are $0,
    \ldots k$,
  \item the maps $\mathfrak{C}(\partial \Delta^{k})(i,j) \to
    \mathfrak{C}( \Delta^{k})(i,j)$ are isomorphisms except when $i =
    0$ and $j = k$,
  \item the simplicial set $\mathfrak{C}(\partial \Delta^{k})(0,k)$ is
    the boundary of the $(k-1)$-cube $\mathfrak{C}(\Delta^{k})(0,k)
    \cong (\Delta^{1})^{\times (k-1)}$.
  \end{itemize}
  Thus extending a map $F \colon \mathfrak{C}(\partial \Delta^{k}) \to
  \mathrm{N}_{*}\mathbf{C}$ to $\mathfrak{C}(\Delta^{k})$ amounts to
  extending the map \[\mathfrak{C}(\partial \Delta^{k})(0,k) \to
  \mathrm{N}\mathbf{C}(F(0), F(k))\] to
  $\mathfrak{C}(\Delta^{k})(0,k)$. But the inclusion
  $\mathfrak{C}(\partial \Delta^{k})(0,k) \to
  \mathfrak{C}(\Delta^{k})(0,k)$ is a composition of pushouts of inner
  horn inclusions and the inclusion $\partial \Delta^{k-1} \to
  \Delta^{k-1}$, and if $k-1 > 2$ the nerve of a category has unique
  extensions along these.
\end{proof}

\begin{thm}[Duskin, Bullejos--Faro--Blanco]
  Suppose $\mathbf{C}$ and $\mathbf{D}$ are strict
  2-categories. Then the maps of simplicial sets
  $\mathrm{N}_{2}\mathbf{C} \to \mathrm{N}_{2}\mathbf{D}$ can be
  identified with the normal oplax functors $\mathbf{C} \to
  \mathbf{D}$.
\end{thm}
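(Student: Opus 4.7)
The plan is to exploit Proposition~\ref{propn:2cat3cosk}: since $\mathrm{N}_{2}\mathbf{D}$ is $3$-coskeletal, the set $\Hom(\mathrm{N}_{2}\mathbf{C},\mathrm{N}_{2}\mathbf{D})$ is in bijection with the set of maps of $3$-truncated simplicial sets $\txt{sk}_{3}\mathrm{N}_{2}\mathbf{C}\to\txt{sk}_{3}\mathrm{N}_{2}\mathbf{D}$. I will therefore work entirely with simplices of dimension $\leq 3$, using the explicit description of these given in Remark~\ref{rmk:nervesmall}, and show that this data matches exactly the data and axioms of Definition~\ref{defn:oplax}.

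For the forward direction, given a simplicial map $f\colon\mathrm{N}_{2}\mathbf{C}\to\mathrm{N}_{2}\mathbf{D}$, I would define a normal oplax functor $F$ as follows. Set $F(x):=f(x)$ on objects and $F(g):=f(g)$ on $1$-morphisms; the normality condition $F(\id_{x})=\id_{F(x)}$ is immediate from the fact that $f$ commutes with the degeneracy $s_{0}\colon[0]\to[1]$. For a $2$-morphism $\phi\colon g\To g'$ in $\mathbf{C}(x,y)$, I view $\phi$ as the $2$-simplex with $x_{0}=x_{1}=x$, $x_{2}=y$, $f_{01}=\id_{x}$, $f_{12}=g'$, $f_{02}=g$, and $\phi_{012}=\phi$, and read off $F(\phi)$ from the $2$-morphism component of $f$ applied to this simplex. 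For composable $g,h$, the compositor $\eta_{g,h}$ is read off from $f$ applied to the $2$-simplex whose top $2$-morphism is $\id_{h\circ g}$. Properties (i)--(v) then follow from compatibility of $f$ with the face and degeneracy maps between $[0],[1],[2]$, and from the observation that a general $2$-simplex in $\mathrm{N}_{2}\mathbf{C}$ is sent by $f$ to the $2$-simplex whose $2$-morphism is $\eta_{f_{01},f_{12}}\circ F(\phi_{012})$ (this identification is forced by applying $f$ to the decomposition of a general $2$-simplex in terms of its faces and the specific $2$-simplices singled out above). Finally, property (vi) is precisely the constraint imposed on $3$-simplices by the description in Remark~\ref{rmk:nervesmall}, applied to the $3$-simplex determined by the composable triple $(f,g,h)$ with all $\phi_{ijk}$ equal to compositor or identity $2$-cells.

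For the reverse direction, given a normal oplax functor $F$, I would construct a map of $3$-truncated simplicial sets by the obvious assignments on $0$- and $1$-simplices, and on a $2$-simplex $(f_{01},f_{12},f_{02},\phi_{012})$ by the formula
\[
(F(f_{01}),\,F(f_{12}),\,F(f_{02}),\,\eta_{f_{01},f_{12}}\circ F(\phi_{012})).
\]
The simplicial identities between dimensions $\leq 2$ follow from axioms (i)--(v), the key points being that normality handles all degenerate cases and that (v) encodes compatibility with the $d_{0}$- and $d_{2}$-faces. On a $3$-simplex, one checks that the four $2$-simplex images are compatible, i.e.\ satisfy the commutative square of Remark~\ref{rmk:nervesmall} in $\mathbf{D}$; an elementary but slightly intricate diagram chase using (v) and (vi) shows that the commuting square for the image $3$-simplex is obtained by pasting together the compositor associativity square (vi) with the two naturality squares from (v). Since $\mathrm{N}_{2}\mathbf{D}$ is $3$-coskeletal by Proposition~\ref{propn:2cat3cosk}, this $3$-truncated map extends uniquely to a map $\mathrm{N}_{2}\mathbf{C}\to\mathrm{N}_{2}\mathbf{D}$. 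It is then routine to verify that the two constructions are mutually inverse.

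The main obstacle is the bookkeeping in the reverse direction: one must check that the pentagon-type coherence (vi) really is equivalent to the compatibility condition on $3$-simplices, and in particular that no further conditions are hidden in the $2$-simplex compatibility beyond (i)--(v). I expect this to reduce to a careful but mechanical diagram chase in $\mathbf{D}(F(x_{0}),F(x_{3}))$, where each of the four faces of a $3$-simplex contributes one compositor and one transported $2$-cell, and the square of Remark~\ref{rmk:nervesmall} becomes exactly the square in axiom (vi) after cancelling terms via naturality.
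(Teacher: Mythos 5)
Your proposal is correct in outline and takes essentially the same approach as the paper: reduce to $3$-truncated data via Proposition~\ref{propn:2cat3cosk} and match the low-dimensional simplices described in Remark~\ref{rmk:nervesmall} against the data and axioms of Definition~\ref{defn:oplax}. Note that the paper does not actually prove this theorem---it cites Duskin for the full result and only sketches the forward direction (a map of nerves yields a normal oplax functor) in exactly the way you describe, so your sketch of the converse, including the $3$-simplex coherence check, is the part the paper deliberately omits.
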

\begin{remark}
  The more general version of this result for bicategories appears to
  be an unpublished result of Duskin; for 2-categories it is proved by
  Bullejos, Faro, and Blanco as \cite{BullejosFaroBlanco}*{Proposition
    4.3}.  We do not include a complete proof here, but we will now
  briefly indicate how a map of nerves gives rise to a normal oplax
  functor. By Proposition~\ref{propn:2cat3cosk}, a map
  $\mathrm{N}_{2}\mathbf{C} \to \mathrm{N}_{2}\mathbf{D}$ can be
  identified with a map
  $F \colon \txt{sk}_{3}\mathrm{N}_{2}\mathbf{C} \to
  \txt{sk}_{3}\mathrm{N}_{2}\mathbf{D}$.
  Using Remark~\ref{rmk:nervesmall} we can identify this with the data
  of a normal oplax functor as given in Definition~\ref{defn:oplax}:
  \begin{itemize}
  \item The $0$-simplices of $\mathrm{N}_{2}\mathbf{C}$ are the
    objects of $\mathbf{C}$, so $F$ assigns an object $F(c) \in
    \mathbf{D}$ to every $c \in \mathbf{C}$, which gives (a)
  \item The 1-simplices of $\mathrm{N}_{2}\mathbf{C}$ are the
    1-morphisms in $\mathbf{C}$, with sources and targets given by the
    face maps $[0] \to [1]$, so $F$ assigns a 1-morphism $F(f) \colon
    F(x) \to F(y)$ to every 1-morphism $f \colon x \to y$ in
    $\mathbf{C}$, which gives (b).
  \item Moreover, identity $1$-morphisms correspond to degenerate
    edges in $\mathrm{N}_{2}\mathbf{C}$, so since these are preserved
    by any map of simplicial sets we get $F(\id_{x}) = \id_{F(x)}$,
    i.e.\  (i).
  \item The 2-simplices of $\mathrm{N}_{2}\mathbf{C}$ are given by
    three 1-morphisms $f \colon x \to y$, $g \colon y \to z$, $h
    \colon z \to w$ (corresponding to the three face maps), and a
    2-morphism $\phi \colon h \To g \circ f$. In particular:
    \begin{itemize}
    \item Considering 2-simplices where the second edge is degenerate,
      which correspond to 2-morphisms in $\mathbf{C}$, we see that $F$
      assigns a 2-morphism $F(\phi) \colon F(h) \To F(g)$ to every
      $\phi \colon h \To g$ in $\mathbf{C}$, which gives (c).
    \item Considering 2-simplices where the 2-morphism $\phi$ is the
      identity, we see (as this condition is not preserved by $F$)
      that $F$ assigns a 2-morphism $F(g\circ f) \To F(g) \circ F(f)$
      to all composable pairs of 1-morphisms, which gives (d).
    \end{itemize}
  \item Since $F$ preserves degenerate $2$-simplices, which correspond
    to identity 2-morphisms of the form $f \circ \id \To f$ and $\id
    \circ f \To f$, we get (ii) and (iv).
  \item  The 3-simplices of $\mathrm{N}_{2}\mathbf{C}$ are given by
    \begin{itemize}
    \item objects $x_{0},x_{1},x_{2},x_{3}$,
    \item 1-morphisms $f_{ij} \colon x_{i} \to x_{j}$ for $0 \leq i <
      j \leq 3$,
    \item 2-morphisms $\phi_{012} \colon f_{02} \To f_{12} \circ
      f_{01}$,
      $\phi_{123} \colon f_{13} \To f_{23} \circ
      f_{12}$,
      $\phi_{023} \colon f_{03} \To f_{23} \circ
      f_{02}$ and $\phi_{013} \colon f_{03} \To f_{13} \circ
      f_{01}$, such that the square
      \csquare{f_{03}}{f_{13}\circ f_{01}}{f_{23}\circ f_{02}}{f_{23}
        \circ f_{12} \circ f_{01}}{\phi_{013}}{\phi_{023}}{\phi_{123}
        \circ \id}{\id \circ \phi_{012}}
      commutes.
    \end{itemize}
    In particular, we have:
    \begin{itemize}
    \item If $x_{1} = x_{2} = x_{3}$, $f_{12} = f_{13} = f_{23} =
      \id_{x_{1}}$, and $\phi_{123} = \id_{\id_{x_{1}}}$, then this
      says $\phi_{013} = \phi_{012}\circ \phi_{023}$, and since $F$
      preserves identities this gives (iii).
    \item In the case where the 2-morphisms are all identities, we get (vi).
    \item To get (v), we consider the 3-simplices where $f_{12} =
      \id$, $\phi_{023} = \id$, and $\phi_{013}$ is the composite of
      $\phi_{012}$ and $\phi_{123}$.
    \end{itemize}
  \end{itemize}
\end{remark}

\begin{defn}
  The inclusion $\Gpd \hookrightarrow \Cat$ of the category of
  groupoids preserves products, and so induces a functor
  $\Cat_{(2,1)} \to \Cat_{2}$; we write 
  $\mathrm{N}_{(2,1)}$ for the composite \[\Cat_{(2,1)} \to
  \Cat_{2} \xto{\mathrm{N}_{2}} \sSet.\]
\end{defn}

\begin{cor}\label{cor:pseudosimpl}
  If $\mathbf{C}$ and $\mathbf{D}$ are strict (2,1)-categories, then
  a morphism of simplicial sets $\mathrm{N}_{(2,1)}\mathbf{C} \to
  \mathrm{N}_{(2,1)}\mathbf{D}$ can be identified with a
  normal pseudofunctor $\mathbf{C} \to \mathbf{D}$.
\end{cor}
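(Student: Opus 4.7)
The plan is to deduce this directly from Duskin's theorem (stated just above) together with the observation that the normality conditions and the invertibility condition come essentially for free in the $(2,1)$-setting.

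First I would unwind the definitions: by construction $\mathrm{N}_{(2,1)}$ is the restriction of $\mathrm{N}_{2}$ along the inclusion $\Cat_{(2,1)}^{\txt{str}} \hookrightarrow \Cat_{2}^{\txt{str}}$, so for strict $(2,1)$-categories $\mathbf{C}$ and $\mathbf{D}$ the simplicial sets $\mathrm{N}_{(2,1)}\mathbf{C}$ and $\mathrm{N}_{(2,1)}\mathbf{D}$ coincide with $\mathrm{N}_{2}\mathbf{C}$ and $\mathrm{N}_{2}\mathbf{D}$, viewed as underlying strict 2-categories. Thus by Duskin's theorem, a map of simplicial sets $\mathrm{N}_{(2,1)}\mathbf{C}\to \mathrm{N}_{(2,1)}\mathbf{D}$ corresponds to a normal oplax functor $F\colon \mathbf{C}\to \mathbf{D}$ in the sense of Definition~\ref{defn:oplax}.

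Next I would verify that every such normal oplax functor is automatically a normal pseudofunctor, i.e. that each comparison 2-morphism $\eta_{f,g}\colon F(g\circ f)\To F(g)\circ F(f)$ is invertible. This is immediate: $\eta_{f,g}$ is a 2-morphism in $\mathbf{D}$, and $\mathbf{D}$ is a strict $(2,1)$-category, so all of its 2-morphisms are isomorphisms by definition. Hence the additional invertibility condition in the definition of a normal pseudofunctor is vacuous.

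Finally, one checks the converse direction is tautological: any normal pseudofunctor $\mathbf{C}\to \mathbf{D}$ is in particular a normal oplax functor, and hence by Duskin's theorem defines a map of nerves. Since the correspondence is by construction a bijection at the level of data, this yields the claimed identification. There is no substantive obstacle; the only subtle point worth flagging is to make sure one is using Duskin's theorem with the correct handedness of lax/oplax convention, but once $\mathbf{D}$ is a $(2,1)$-category this distinction disappears since the structure 2-morphisms are invertible and may be inverted freely.
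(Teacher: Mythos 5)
Your proof is correct and is essentially the paper's (implicit) argument: Duskin's theorem identifies maps of nerves with normal oplax functors, and the structure 2-morphisms $\eta_{f,g}$ are automatically invertible because they live in the strict (2,1)-category $\mathbf{D}$, so every normal oplax functor is already a normal pseudofunctor. You correctly attribute the invertibility to the target $\mathbf{D}$ (the remark following Definition~\ref{defn:oplax} attributes it to the source $\mathbf{C}$, a harmless slip since both are (2,1)-categories here).
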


\begin{defn}
  Recall that a \emph{relative category} is a category $\mathbf{C}$
  equipped with a subcategory $W$ containing all isomorphisms; see
  \cite{BarwickKanRelCat} for a more extensive discussion. A functor
  of relative categories $f \colon (\mathbf{C}, W) \to (\mathbf{C}',
  W')$ is a functor $f \colon \mathbf{C} \to \mathbf{C}'$ that takes
  $W$ into $W'$. We write $\txt{RelCat}_{(2,1)}$ for the strict
  (2,1)-category of relative categories, functors of relative
  categories, and all natural isomorphisms between these.
\end{defn}

We now want to prove that a normal pseudofunctor to
$\txt{RelCat}_{(2,1)}$ determines a map of \icats{} to $\CatI$ via the
following construction:
\begin{defn}
  If $(\mathbf{C}, W)$ is a relative category, let
  $\mathrm{L}(\mathbf{C}, W) \in \sSet^{+}$ be the marked simplicial
  set $(\mathrm{N}\mathbf{C}, \mathrm{N}W_{1})$. This defines a
  simplicial functor $\mathrm{N}_{*}\txt{RelCat}_{(2,1)} \to
  \sSet^{+}$.
\end{defn}

\begin{defn}
  If $(\mathbf{C}, W)$ is a relative category, we write
  $\mathbf{C}[W^{-1}]$ for the \icat{} obtained by taking a fibrant
  replacement of the marked simplicial set $\mathrm{L}(\mathbf{C},
  W)$. More generally, if $\mathbf{C}$ is a strict (2,1)-category and
  $W$ is a collection of 1-morphisms in $\mathbf{C}$, we write
  $\mathbf{C}[W^{-1}]$ for the \icat{} obtained by fibrantly replacing
  the marked simplicial set $(\mathrm{N}_{(2,1)}\mathbf{C}, W)$.
\end{defn}

\begin{lemma}\label{lem:pseudorel}
  Let $\mathbf{C}$ be a strict (2,1)-category, and let $F$ be a normal
  pseudofunctor $F \colon \mathbf{C} \to \txt{RelCat}_{(2,1)}$. If $W$
  is a collection of 1-morphisms in $\mathbf{C}$ such that $F$ takes
  the morphisms in $W$ to weak equivalences of relative categories,
  then $F$ determines a functor of \icats{} $\mathbf{C}[W^{-1}] \to
  \CatI$, which sends $x \in \mathbf{C}$ to
  $\mathbf{E}_{x}[W_{x}^{-1}]$ where $F(x) = (\mathbf{E}_{x}, W_{x})$.
\end{lemma}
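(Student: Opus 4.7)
The plan is to factor the desired functor through a canonical localization map from the nerve of $\txt{RelCat}_{(2,1)}$ to $\CatI$. First, I would apply Corollary~\ref{cor:pseudosimpl} to convert the normal pseudofunctor $F$ into an honest map of simplicial sets
\[\tilde F \colon \mathrm{N}_{(2,1)}\mathbf{C} \to \mathrm{N}_{(2,1)}\txt{RelCat}_{(2,1)}.\]
The remaining task is to produce a canonical functor of \icats{} $\txt{loc}\colon \mathrm{N}_{(2,1)}\txt{RelCat}_{(2,1)} \to \CatI$ sending an object $(\mathbf{E}, W)$ to its localization $\mathbf{E}[W^{-1}]$, and then to verify that the composite $\txt{loc}\circ \tilde F$ inverts the morphisms in $W$.

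For the construction of $\txt{loc}$, I would start from the simplicial functor $\mathrm{L}\colon \mathrm{N}_*\txt{RelCat}_{(2,1)} \to \sSet^{+}$ and post-compose with a simplicially enriched fibrant replacement functor $R\colon \sSet^{+}\to (\sSet^{+})^{\circ}$ in the Cartesian model structure (which exists by the enriched small object argument in the simplicial combinatorial setting). Applying the coherent nerve then yields
\[\txt{loc} := \mathfrak{N}(R\circ \mathrm{L})\colon \mathrm{N}_{(2,1)}\txt{RelCat}_{(2,1)} = \mathfrak{N}\mathrm{N}_*\txt{RelCat}_{(2,1)} \to \mathfrak{N}(\sSet^{+})^{\circ} \simeq \CatI,\]
which sends $(\mathbf{E},W)$ to $\mathbf{E}[W^{-1}]$ by definition of the latter.

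To descend $\txt{loc}\circ \tilde F$ along the canonical map $\mathrm{N}_{(2,1)}\mathbf{C}\to \mathbf{C}[W^{-1}]$, I would regard the source as the underlying \icat{} of the marked simplicial set $(\mathrm{N}_{(2,1)}\mathbf{C}, W)$, and $\CatI$ as the underlying \icat{} of a fibrant marked simplicial set (marked by equivalences). It then suffices to verify that each $w \colon x \to y$ in $W$ is sent to an equivalence in $\CatI$; by construction of $\txt{loc}$, its image is the map of localizations induced by the morphism $F(w)\colon \mathbf{E}_x \to \mathbf{E}_y$ in $\txt{RelCat}_{(2,1)}$, which is an equivalence of \icats{} precisely because $F(w)$ is by hypothesis a weak equivalence of relative categories. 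By the universal property of fibrant replacement in the Cartesian model structure, the composite therefore extends essentially uniquely to a functor $\mathbf{C}[W^{-1}] \to \CatI$, and by inspection the extension sends $x$ to $\mathbf{E}_x[W_x^{-1}]$.

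The main obstacle is arranging the simplicial enrichment of the fibrant replacement $R$; this is automatic once the small object argument is run in an enriched setting, but must be made explicit to ensure that $\mathfrak{N}(R\circ \mathrm{L})$ is well defined as a functor of \icats{}. Once this has been checked, the remainder of the argument is formal, resting only on Corollary~\ref{cor:pseudosimpl} and the universal property of marked localization.
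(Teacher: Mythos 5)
Your overall strategy is the same as the paper's: convert $F$ into a map of simplicial sets via Corollary~\ref{cor:pseudosimpl}, feed the result through the simplicial functor $\mathrm{L}$ and the coherent nerve, and then descend to $\mathbf{C}[W^{-1}]$ using the universal property of fibrant replacement in the marked (Cartesian) model structure. The one place you diverge is in how you get from marked simplicial sets into $\CatI$. The paper simply composes with $\mathfrak{N}(\mathrm{L})$ to land in $\mathfrak{N}\sSet^{+}$ --- the coherent nerve of \emph{all} of $\sSet^{+}$, not just its fibrant objects --- marks the Cartesian equivalences there, and invokes \cite{HA}*{Theorem 1.3.4.20} to identify $\CatI$ as a fibrant replacement of that marked simplicial set; the desired functor then exists by the same formal descent you use at the end, with no further input. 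You instead insist on landing in $\CatI$ on the nose by postcomposing $\mathrm{L}$ with a \emph{simplicially enriched} fibrant replacement functor $R$, and this is where your write-up has a genuine gap: the ordinary small object argument does not produce a simplicially enriched functor, so calling the enriched version ``automatic'' oversells it. The enriched small object argument does apply in this case (the Cartesian model structure on $\sSet^{+}$ is a combinatorial simplicial model category in which every object is cofibrant, which is what makes the enriched and ordinary lifting classes agree), but that justification must actually be supplied --- and it is precisely the work that the citation of \cite{HA}*{Theorem 1.3.4.20} lets the paper avoid. Once that point is filled in, the remainder of your argument --- that morphisms of $W$ go to equivalences because $F$ sends them to weak equivalences of relative categories, and the extension along $(\mathrm{N}_{(2,1)}\mathbf{C}, W) \to \mathbf{C}[W^{-1}]$ --- is correct and agrees with the paper.
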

\begin{proof}
  By Proposition~\ref{cor:pseudosimpl} the normal pseudofunctor $F$
  corresponds to a map of simplicial sets
  $\mathrm{N}_{(2,1)}\mathbf{C} \to
  \mathrm{N}_{(2,1)}\txt{RelCat}_{(2,1)}$. Composing this with the map
  $\mathfrak{N}(L) \colon \mathrm{N}_{(2,1)}\txt{RelCat}_{(2,1)} \to
  \mathfrak{N}\sSet^{+}$ we get a map $\mathrm{N}_{(2,1)}\mathbf{C}
  \to \mathfrak{N}\sSet^{+}$. We may regard this as a map of marked
  (large) simplicial sets
  \[ (\mathrm{N}_{(2,1)}\mathbf{C}, W) \to (\mathfrak{N}\sSet^{+},
  W'),\] where $W'$ is the collection of marked equivalences in
  $\sSet^{+}$. Now invoking \cite{HA}*{Theorem 1.3.4.20} we conclude
  that $\CatI$ is a fibrant replacement for the marked simplicial set
  $(\mathfrak{N}\sSet^{+}, W')$, so this map corresponds to a map
  $\mathbf{C}[W^{-1}] \to \CatI$ in the \icat{} $\LCatI$ underlying
  the model category of (large) marked simplicial sets.
\end{proof}

We will now make use of Grothendieck's description of pseudofunctors to
the (2,1)-category of categories to get a way of constructing
pseudofunctors to $\txt{RelCat}_{(2,1)}$:
\begin{thm}[Grothendieck~\cite{SGA1}]\label{thm:sga}
  Let $\mathbf{C}$ be a category. Then pseudofunctors from
  $\mathbf{C}^{\op}$ to the strict 2-category $\CAT$ correspond to
  Grothendieck fibrations over $\mathbf{C}$. 
\end{thm}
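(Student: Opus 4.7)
This is a classical theorem of Grothendieck, so the plan is to outline the two mutually inverse constructions and indicate where the pseudofunctor coherences come in. Since the statement is cited rather than reproved from scratch in the paper, a sketch suffices.

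First, in the direction from pseudofunctors to fibrations, I would carry out the \emph{Grothendieck construction}. Given a pseudofunctor $F\colon \mathbf{C}^{\op}\to\CAT$ with coherence isomorphisms $\eta_{f,g}\colon F(gf)\To F(f)\circ F(g)$, define a category $\int F$ whose objects are pairs $(c,x)$ with $c\in\mathbf{C}$ and $x\in F(c)$, and whose morphisms $(c,x)\to(c',x')$ are pairs $(f,\phi)$ consisting of $f\colon c\to c'$ in $\mathbf{C}$ together with a morphism $\phi\colon x\to F(f)(x')$ in $F(c)$. Composition is defined by sending $(g,\psi)\circ(f,\phi)$ to $(gf, \eta_{f,g}(x'')\circ F(f)(\psi)\circ\phi)$, and associativity and unitality of composition in $\int F$ translate precisely into the hexagon and unit axioms for the pseudofunctor. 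Then I would verify that the projection $p\colon\int F\to\mathbf{C}$ sending $(c,x)\mapsto c$ is a Grothendieck fibration, with $p$-Cartesian morphisms being exactly those $(f,\phi)$ such that $\phi$ is an isomorphism in $F(c)$; the universal property is immediate since any factorization problem reduces to a factorization problem in the fibre $F(c)$.

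In the opposite direction, given a Grothendieck fibration $p\colon\mathbf{E}\to\mathbf{C}$, I would use the axiom of choice to pick, for each morphism $f\colon c\to c'$ of $\mathbf{C}$ and each object $x'\in\mathbf{E}_{c'}$, a specified Cartesian lift $\widetilde{f}(x')\colon f^{*}x'\to x'$. Functoriality of $f^{*}\colon\mathbf{E}_{c'}\to\mathbf{E}_{c}$ on morphisms of $\mathbf{E}_{c'}$ then follows from the universal property of Cartesian arrows, as does the existence of a unique comparison isomorphism $\eta_{f,g}\colon(gf)^{*}\To f^{*}g^{*}$ for composable $f,g$. The pentagon axiom for $\eta$ is likewise forced by uniqueness of factorizations through Cartesian arrows, so the assignment $c\mapsto\mathbf{E}_{c}$ assembles into a normal pseudofunctor $\mathbf{C}^{\op}\to\CAT$.

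Finally, I would check that these two constructions are mutually inverse up to equivalence of pseudofunctors: starting from $F$ and forming fibres of $\int F\to\mathbf{C}$ recovers $F$ on objects and morphisms, and the pseudo-structure on fibres is forced to agree with $\eta_{f,g}$; conversely, starting from $p$ and forming $\int$ of the associated pseudofunctor gives a category equivalent to $\mathbf{E}$ over $\mathbf{C}$, via the functor sending $(c,x)$ to $x$ using the chosen Cartesian lifts. The main nuisance --- though not a genuine obstacle --- is bookkeeping: verifying that each coherence diagram for the pseudofunctor corresponds exactly to an associativity or unit condition for composition in $\int F$, and conversely that Cartesian-lift uniqueness produces precisely the coherence isomorphisms satisfying the pentagon. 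Since the paper cites SGA1 and only uses this as a stepping stone for the analogous statement about relative categories via Lemma~\ref{lem:pseudorel}, no more than a sketch is needed.
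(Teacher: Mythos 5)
Your sketch is the standard argument and is essentially correct; the paper gives no proof of this theorem, citing SGA1, and its accompanying remark recalls exactly the normal-cleavage construction of your second paragraph, which is the only direction it actually uses (via Lemma~\ref{lem:relfib}). One small orientation slip: with $\eta_{f,g}\colon F(gf)\To F(f)\circ F(g)$ as you set it up, the component $\eta_{f,g}(x'')$ points from $F(gf)(x'')$ to $F(f)(F(g)(x''))$, so your composition formula needs $\eta_{f,g}(x'')^{-1}$ --- harmless here since the coherence cells of a pseudofunctor are invertible.
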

\begin{remark}
  Let us briefly recall how a pseudofunctor is constructed from a
  Grothendieck fibration, as this is the part of Grothendieck's
  theorem we will actually use. A \emph{cleavage} of a Grothendieck
  fibration $p \colon \mathbf{E} \to \mathbf{B}$ is the choice, for
  each $(e \in \mathbf{E}, f \colon b \to p(e))$, of a single
  Cartesian morphism over $f$ with target $e$; cleavages always exist,
  by the axiom of choice. Given a choice of cleavage of $p$, we define
  the pseudofunctor $\mathbf{C}^{\op} \to \CAT$ by assigning the fibre
  $\mathbf{E}_{b}$ to each $b \in \mathbf{B}$, and for each $f \colon
  b \to b'$ the functor $f^{*}$ assigns to $e \in \mathbf{E}_{b}$ the
  source of the Cartesian morphism over $f$ with target $e$ in the
  cleavage. Clearly, this pseudofunctor will be normal precisely when
  the cleavage is \emph{normal} in the sense that the Cartesian
  morphisms over the identities in $\mathbf{B}$ are all chosen to be
  identities in $\mathbf{E}$. Every Grothendieck fibration obviously
  has a normal cleavage, so from any Grothendieck fibration we can
  construct a normal pseudofunctor.
\end{remark}

\begin{defn}
  A \emph{relative Grothendieck fibration} is a Grothendieck fibration
  $p \colon \mathbf{E} \to \mathbf{C}$ together with a subcategory
  $\mathbf{W}$ of $\mathbf{E}$ containing all the $p$-Cartesian
  morphisms.  In particular, the restricted projection $\mathbf{W}
  \to \mathbf{C}$ is also a Cartesian fibration. Moreover, for every $x \in
  \mathbf{C}$ the fibres $(\mathbf{E}_{x}, \mathbf{W}_{x})$ are
  relative categories, and the functor $f^{*}$ induced by each $f$ in
  $\mathbf{C}$ is a functor of relative categories. If $(\mathbf{C},
  U)$ is a relative category, we say that the relative Grothendieck
  fibration is \emph{compatible with $U$} if this functor $f^{*}
  \colon (\mathbf{E}_{q}, \mathbf{W}_{q}) \to (\mathbf{E}_{p},
  \mathbf{W}_{p})$ is a weak equivalence of relative categories for
  every $f \colon p \to q$ in $U$.
\end{defn}

The following is then an obvious consequence of Theorem~\ref{thm:sga}:
\begin{lemma}\label{lem:relfib}
  Relative Grothendieck fibrations over a category $\mathbf{C}$
  correspond to normal pseudofunctors $\mathbf{C}^{\op} \to
  \txt{RelCat}_{(2,1)}$.
\end{lemma}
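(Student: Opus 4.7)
The plan is to derive this from Grothendieck's Theorem~\ref{thm:sga} by running the classical construction in parallel for $p$ and for its restriction to $\mathbf{W}$, with a single compatible choice of cleavage, and then repackaging the two resulting pseudofunctors into one landing in $\txt{RelCat}_{(2,1)}$.

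Starting from a relative Grothendieck fibration $(p \colon \mathbf{E} \to \mathbf{C}, \mathbf{W})$, I would choose a normal cleavage of $p$ consisting of $p$-Cartesian morphisms. Since every $p$-Cartesian morphism lies in $\mathbf{W}$ by hypothesis, those same morphisms furnish a normal cleavage that also exhibits the restriction $p|_{\mathbf{W}} \colon \mathbf{W} \to \mathbf{C}$ as a Grothendieck fibration. Applying Theorem~\ref{thm:sga} to each of $p$ and $p|_{\mathbf{W}}$ yields normal pseudofunctors $F_{\mathbf{E}}, F_{\mathbf{W}} \colon \mathbf{C}^{\op} \to \CAT$ sending $c$ to $\mathbf{E}_c$ and to $\mathbf{W}_c := \mathbf{W} \cap \mathbf{E}_c$ respectively. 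Because the two cleavages agree, each pullback functor $f^*$ restricts from $\mathbf{E}_c$ to $\mathbf{W}_c$ and the two pseudofunctors share coherence 2-cells on this common data. Any isomorphism in $\mathbf{E}_c$ is $p$-Cartesian over $\id_c$, hence lies in $\mathbf{W}_c$, so each pair $(\mathbf{E}_c, \mathbf{W}_c)$ is a relative category and each $f^*$ a relative functor. This packages into the required normal pseudofunctor $\mathbf{C}^{\op} \to \txt{RelCat}_{(2,1)}$.

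For the reverse direction, I would compose a given $F$ with the forgetful functor $\txt{RelCat}_{(2,1)} \to \CAT$ and invoke Theorem~\ref{thm:sga} to produce a Grothendieck fibration $p \colon \mathbf{E} \to \mathbf{C}$, then declare $\mathbf{W} \subseteq \mathbf{E}$ to consist of those morphisms $g \colon e_1 \to e_2$ whose vertical factor $e_1 \to p(g)^*e_2$, obtained from the canonical factorization through the chosen cleavage, lies in the subcategory of weak equivalences of the fibre $F(p(e_1))$. Containment of $p$-Cartesian morphisms is immediate, as their vertical factor is an identity. The step I expect to require the most care is closure of $\mathbf{W}$ under composition: given two composable morphisms in $\mathbf{W}$, one must rewrite the composite in canonical factorized form and check that the resulting vertical factor is still a weak equivalence. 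This uses both that each $f^*$ is a relative functor and that the pseudofunctor coherence 2-cells of $F$ are isomorphisms and therefore weak equivalences in every fibre. Once both constructions are established, mutual inverseness is a direct comparison of canonical factorizations with the axioms of Grothendieck's construction.
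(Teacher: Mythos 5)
The paper offers no proof of this lemma, stating it only as an ``obvious consequence'' of Theorem~\ref{thm:sga}, and your argument is exactly the intended elaboration: run Grothendieck's construction compatibly on $\mathbf{E}$ and on $\mathbf{W}$ in one direction (using the paper's definitional assertions that $\mathbf{W}_x$ contains all isomorphisms of $\mathbf{E}_x$ and that each $f^*$ restricts to a relative functor), and in the other direction mark the morphisms whose vertical factor is a weak equivalence. Your identification of closure of $\mathbf{W}$ under composition as the one step needing care --- using that the coherence $2$-cells are isomorphisms, hence fibrewise weak equivalences, together with relative functoriality of the $f^*$ --- is correct and is the only genuinely non-formal point.
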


\begin{propn}\label{propn:relGrothfib}
  Let $(\mathbf{E}, \mathbf{W})$ be
  a relative Grothendieck fibration over $\mathbf{C}$ compatible with a collection $U$ of
  morphisms in $\mathbf{C}$. Then this induces a functor of
  \icats{} \[ \mathbf{C}[U^{-1}]^{\op} \to \CatI \] that sends $p \in
  \mathbf{C}$ to $\mathbf{E}_{p}[\mathbf{W}_{p}^{-1}]$.
\end{propn}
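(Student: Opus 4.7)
The plan is to assemble Proposition~\ref{propn:relGrothfib} from the two lemmas immediately preceding it, with the only real work being a careful bookkeeping of opposites and of the compatibility hypothesis.

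First, I would start from the relative Grothendieck fibration $(\mathbf{E},\mathbf{W})\to\mathbf{C}$ and apply Lemma~\ref{lem:relfib} to obtain a normal pseudofunctor
\[ \Phi \colon \mathbf{C}^{\op} \to \txt{RelCat}_{(2,1)},\]
sending each object $p\in\mathbf{C}$ to the relative category $(\mathbf{E}_p,\mathbf{W}_p)$ and each morphism $f\colon p\to q$ to the pullback functor $f^{*}\colon(\mathbf{E}_q,\mathbf{W}_q)\to(\mathbf{E}_p,\mathbf{W}_p)$ determined by a chosen normal cleavage. This pullback takes $\mathbf{W}_q$ into $\mathbf{W}_p$ because $\mathbf{W}$ is a subcategory of $\mathbf{E}$ containing the $p$-Cartesian morphisms, so the cleavage produces $\mathbf{W}$-morphisms between the fibres, i.e.\ $f^{*}$ is indeed a functor of relative categories.

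Second, I would verify that the hypothesis that the fibration is compatible with $U$ is exactly the hypothesis of Lemma~\ref{lem:pseudorel} applied to $\Phi$ and to the collection $U$ of morphisms of $\mathbf{C}^{\op}$: compatibility means that for each $f\in U$, the induced functor $f^{*}$ is a weak equivalence of relative categories, which is precisely the condition that $\Phi$ sends $U$ to weak equivalences. Applying Lemma~\ref{lem:pseudorel} to $\Phi$ therefore yields a functor of \icats{}
\[ \mathbf{C}^{\op}[U^{-1}] \longrightarrow \CatI,\]
whose value at $p$ is the fibrant replacement $\mathbf{E}_p[\mathbf{W}_p^{-1}]$ of the marked simplicial set $(\mathrm{N}\mathbf{E}_p,\mathrm{N}\mathbf{W}_{p,1})$, as required.

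Finally, I would observe the identification $\mathbf{C}^{\op}[U^{-1}]\simeq\mathbf{C}[U^{-1}]^{\op}$: localization of the nerve at a marked set commutes with $(\blank)^{\op}$, since taking opposites is an involution on marked simplicial sets that preserves the Cartesian model structure and the marked equivalences. Composing with this identification produces the desired functor $\mathbf{C}[U^{-1}]^{\op}\to\CatI$ sending $p$ to $\mathbf{E}_p[\mathbf{W}_p^{-1}]$.

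The only part that requires any thought is checking that the choice of normal cleavage (which is non-canonical) does not affect the resulting functor of \icats{}; but this is automatic because different normal cleavages give rise to pseudofunctors related by a pseudonatural isomorphism, hence to the same map in $\LCatI$ after passing to nerves via $\mathrm{N}_{(2,1)}$ and the localization of Lemma~\ref{lem:pseudorel}. There is no genuine obstacle here beyond correctly organizing the bookkeeping.
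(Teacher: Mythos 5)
Your proof is correct and is essentially the paper's own argument: the paper's entire proof is ``Combine Lemmas~\ref{lem:relfib} and \ref{lem:pseudorel},'' and your additional remarks (matching the compatibility hypothesis, identifying $\mathbf{C}^{\op}[U^{-1}]$ with $\mathbf{C}[U^{-1}]^{\op}$, and noting the independence of the cleavage) just make explicit what the paper leaves implicit.
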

\begin{proof}
  Combine Lemmas~\ref{lem:relfib} and \ref{lem:pseudorel}.
\end{proof}

All the maps whose naturality we are interested in can easily be
constructed as relative Grothendieck fibrations. We will explicitly
describe this in the case of the unstraightening equivalence, and
leave the other cases to the reader.

\begin{propn}\label{propn:unstrelGr}
  The unstraightening functors $$\txt{Un}_{S}^{+} \colon
  \Fun_{\simp}(\mathfrak{C}(S)^{\op}, \sSet^{+})^{\txt{fib}} \to
  (\sSet^{+})^{\txt{fib}}_{/S}$$ define a relative Grothendieck
  fibration over $\sSet \times
  \Delta^{1}$ compatible with the categorical equivalences in $\sSet$.
\end{propn}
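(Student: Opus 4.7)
The plan is to assemble the unstraightening together with its variation in $S$ into a normal pseudofunctor $\mathcal{F}\colon (\sSet \times \Delta^{1})^{\op} \to \txt{RelCat}_{(2,1)}$ and then to invoke Lemma~\ref{lem:relfib} to obtain the relative Grothendieck fibration. On objects I set
\[
\mathcal{F}(S,0) = (\sSet^{+})^{\txt{fib}}_{/S}, \qquad \mathcal{F}(S,1) = \Fun_{\simp}(\mathfrak{C}(S)^{\op}, \sSet^{+})^{\txt{fib}},
\]
each equipped with its usual class of weak equivalences (the coCartesian marked model structure on the left, the projective model structure induced by the marked Kan--Quillen structure on the right). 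For a morphism $(f,\id)\colon (S,0) \to (S',0)$ with $f\colon S \to S'$ in $\sSet$ I assign pullback $f^{*}$, and similarly for $(S,1)\to(S',1)$ I assign precomposition with $\mathfrak{C}(f)^{\op}$. For the morphism $(\id,0\to1)\colon (S,1)\to (S,0)$ I assign $\txt{Un}^{+}_{S}$. The coherence 2-cells in $\mathcal{F}$ come from (i) the canonical isomorphisms witnessing the pseudofunctoriality of pullback in $\sSet$ and of precomposition with $\mathfrak{C}(-)^{\op}$ in $\sSet$, and (ii) the canonical comparison isomorphisms
\[
\txt{Un}^{+}_{S} \circ (\mathfrak{C}(f)^{\op})^{*} \cong f^{*} \circ \txt{Un}^{+}_{S'},
\]
which exist because both sides are built from the same pullback construction as in the formula defining unstraightening in \cite{HTT}*{\S3.2}.

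The verification that $\mathcal{F}$ is a normal pseudofunctor reduces to checking the pentagon and hexagon coherences between these structural isomorphisms. For the pentagon arising from two composable maps in $\sSet$ at a fixed height in $\Delta^{1}$, this is the standard fact that pullback along composable maps of simplicial sets is pseudofunctorial; the corresponding fact for precomposition on functor categories is trivially strict. For the mixed pentagon involving $\txt{Un}^{+}$ and a composition $S\to S'\to S''$, I trace through the explicit description of unstraightening as a colimit of pullbacks and observe that the two ways of comparing the outer iterated pullback both reduce to the same canonical pullback comparison; this verifies the coherence since no further choices are involved.

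Then Lemma~\ref{lem:relfib} produces a relative Grothendieck fibration $p\colon \mathbf{E}\to \sSet\times\Delta^{1}$, where a morphism in $\mathbf{W}$ is declared to be any morphism which is either $p$-Cartesian or a weak equivalence in the appropriate model structure on some fibre. By construction $\mathbf{W}$ contains all $p$-Cartesian morphisms and is closed under composition (using the pseudonaturality isomorphisms to compose a fibrewise weak equivalence with a Cartesian lift).

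It remains to show compatibility with the relative structure on $\sSet$ given by the categorical equivalences $U$. Concretely, for $f\colon S\to S'$ a categorical equivalence, I must show that the pullback $f^{*}\colon (\sSet^{+})^{\txt{fib}}_{/S'}\to (\sSet^{+})^{\txt{fib}}_{/S}$ is a DK-equivalence of relative categories, and similarly that precomposition with $\mathfrak{C}(f)^{\op}$ on projectively-fibrant diagrams is a DK-equivalence. The first is immediate from the fact that $f$ being a categorical equivalence makes pullback a right Quillen equivalence for the coCartesian model structures (\cite{HTT}*{Proposition 3.3.1.1}); the second follows because $\mathfrak{C}(f)$ is a Dwyer--Kan equivalence of simplicial categories, so precomposition is a Quillen equivalence on projective model structures. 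The main obstacle, as anticipated above, is bookkeeping the coherence isomorphisms carefully enough to apply Theorem~\ref{thm:sga}; the actual weak equivalence content in the last step is well-known Quillen equivalence statements.
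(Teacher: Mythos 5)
Your route is genuinely different from the paper's, and in fact runs in the opposite direction. The paper never builds the pseudofunctor by hand: it constructs the total category $\mathbf{E}$ directly, with objects $(i,S,X)$ ($X$ a fibrant object of $(\sSet^{+})_{/S^{\sharp}}$ or a fibrant functor $\mathfrak{C}(S)^{\op}\to\sSet^{+}$ according to $i$), morphisms given by commutative squares and natural transformations assembled via the comparison maps of \cite{HTT}*{Proposition 3.2.1.4}, and then simply checks that Cartesian lifts exist over the generating morphisms $(\id_{i},f)$ and $(0\to 1,\id_{S})$. The entire point of that design is that one never has to verify the pseudofunctor coherence axioms (conditions (iv)--(vi) of Definition~\ref{defn:oplax}): they are encoded automatically in the strict associativity of composition in $\mathbf{E}$. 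You instead build the pseudofunctor $\mathcal{F}$ first and then pass to the fibration via the correspondence of Lemma~\ref{lem:relfib}. That is legitimate in principle (Grothendieck's theorem is an equivalence), but note that the paper only spells out the direction ``fibration $\Rightarrow$ pseudofunctor'' and explicitly says that is the only direction it will use, so you are also invoking the Grothendieck construction for pseudofunctors, which the paper does not set up.

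The concrete gap is the coherence verification. Your 2-cells $\txt{Un}^{+}_{S}\circ(\mathfrak{C}(f)^{\op})^{*}\cong f^{*}\circ\txt{Un}^{+}_{S'}$ do exist as canonical isomorphisms (they are the mates of the isomorphisms $\txt{St}^{+}_{S'}\circ f_{!}\cong\mathfrak{C}(f)_{!}\circ\txt{St}^{+}_{S}$ of \cite{HTT}*{Proposition 3.2.1.4}), but the assertion that ``the two ways of comparing the outer iterated pullback both reduce to the same canonical pullback comparison'' is not a proof of the cocycle condition for a composite $S\to S'\to S''$; unstraightening is not literally a pullback, and the compatibility of these mates with composition is exactly the content one must establish. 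As written, the heart of the statement is asserted rather than proved, and this is precisely the bookkeeping the paper's construction is engineered to avoid. (Your treatment of the class $\mathbf{W}$ and of compatibility with categorical equivalences via the Quillen equivalences $f^{*}$ and $\mathfrak{C}(f)^{*}$ is correct, and is in fact more explicit than the paper's proof, which leaves the compatibility clause implicit.) If you want to keep your architecture, you should either verify condition (vi) of Definition~\ref{defn:oplax} for the mates directly, or better, notice that constructing the total category as the paper does lets you dispense with it.
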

\begin{proof}
  Let $\mathbf{E}$ be the category whose objects are triples $(i, S,
  X)$ where $i = 0$ or $1$, $S \in \sSet$, and $X$ is a fibrant map
  $Y \to S^{\sharp}$ in $\sSet^{+}$ if $i  = 0$ and a fibrant simplicial
  functor $\mathfrak{C}(S)^{\op} \to \sSet^{+}$ if $i = 1$; the morphisms
  $(i, S, X) \to (j, T, Y)$ consist of a morphism $i \to j$ in $[1]$, a
  morphism $f \colon S \to T$ in $\sSet$, and the following data:
  \begin{itemize}
  \item if $i = j = 1$, $X \colon \mathfrak{C}(S) \to \sSet^{+}$ and
    $Y \colon \mathfrak{C}(T) \to \sSet^{+}$, a simplicial natural transformation
    $X \to \mathfrak{C}(f) \circ Y$,
  \item if $i = j = 0$, $X$ is $E \to S$ and $Y$ is $F \to T$, a
    commutative square
    \nolabelcsquare{E}{F}{S^{\sharp}}{T^{\sharp}}
    in $\sSet^{+}$,
  \item if $i = 1$ and $j = 0$, $Y$ is a functor $\mathfrak{C}(S)^{\op} \to
    \sSet^{+}$ and $X$ is $E \to T$, a commutative square
    \nolabelcsquare{\txt{Un}^{+}_{S}(X)}{E}{S}{T.}
  \end{itemize}
  Composition is defined in the obvious way, using the natural maps of
  \cite{HTT}*{Proposition 3.2.1.4}. We claim that the projection
  $\mathbf{E} \to \Delta^{1} \times \sSet$ is a Grothendieck
  fibration. It suffices to check that Cartesian morphisms exist for
  morphisms of the form $(\id_{i}, f)$ and $(0 \to 1, \id_{S})$, which
  is clear.
\end{proof}

\begin{cor}\label{cor:unsteqnat}
  There is a functor of \icats{} $\CatI^{\op} \to \Fun(\Delta^{1}, \LCatI)$
  that sends $\mathcal{C}$ to the unstraightening equivalence
  \[ \Fun(\mathcal{C}^{\op}, \CatI) \isoto \CatICc.\]
\end{cor}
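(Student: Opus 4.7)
The strategy is to apply Proposition~\ref{propn:relGrothfib} to the relative Grothendieck fibration $(\mathbf{E}, \mathbf{W}) \to \sSet \times \Delta^1$ supplied by Proposition~\ref{propn:unstrelGr}, taking $U$ to be the class of categorical equivalences in $\sSet$ together with the (only) identities in $\Delta^1$. Compatibility with $U$ is essentially already recorded: pullback along a categorical equivalence $S \to T$ is a Quillen equivalence in both the Cartesian model structure on $(\sSet^+)_{/(\blank)^\sharp}$ and the projective model structure on $\Fun(\mathfrak{C}(\blank)^{op}, \sSet^+)$, so it takes fibrant objects to fibrewise weak equivalences. Proposition~\ref{propn:relGrothfib} therefore produces a functor of (very large) \icats{}
\[ G \colon (\sSet \times \Delta^1)[U^{-1}]^{op} \to \LCatI \]
whose value on $(S, 0)$ is $\CatICc$ (with $\mathcal{C}$ the \icat{} modelled by $S$) and on $(S, 1)$ is $\Fun(\mathcal{C}^{op}, \CatI)$.

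Next I would identify the source. Localisation of relative categories is the reflection onto \icats{} (inside marked simplicial sets) and therefore preserves finite products; since $(\Delta^1, \{\mathrm{id}\})$ already has underlying \icat{} $\Delta^1$ and $\sSet[U^{-1}] \simeq \CatI$ by \cite{HA}*{Theorem 1.3.4.20}, we obtain a natural equivalence
\[ (\sSet \times \Delta^1)[U^{-1}]^{op} \simeq \CatI^{op} \times \Delta^{1,op} \simeq \CatI^{op} \times \Delta^1, \]
using the self-duality of $\Delta^1$. The exponential adjunction in $\Cat_\infty$ then transposes $G$ to the desired functor
\[ \CatI^{op} \to \Fun(\Delta^1, \LCatI). \]
By the construction of $\mathbf{E}$ in Proposition~\ref{propn:unstrelGr} the edge of $\Delta^1$ over a fixed $S$ is assigned to the unstraightening functor $\txt{Un}^+_S$, which is a Quillen equivalence, so its image in $\LCatI$ is an equivalence; hence the value of the transposed functor at $\mathcal{C}$ is exactly the unstraightening equivalence $\Fun(\mathcal{C}^{op}, \CatI) \isoto \CatICc$.

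The main obstacle is really one of bookkeeping rather than of substance: one has to check that the $\Delta^1$-factor survives localisation intact (so that the product decomposition above is valid) and that after transposing and dualising we end up with $\Fun(\Delta^1, \LCatI)$ in the correct orientation, i.e.\ with the edge pointing from $\Fun(\mathcal{C}^{op}, \CatI)$ to $\CatICc$ rather than the reverse. Both of these reduce to straightforward checks using the universal property of localisation and the self-duality $\Delta^{1,op} \simeq \Delta^1$; no further input beyond the pseudonaturality of $\txt{Un}^+$ encoded in Proposition~\ref{propn:unstrelGr} is required.
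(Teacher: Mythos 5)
Your argument is correct and is exactly the route the paper intends: the corollary is obtained by feeding the relative Grothendieck fibration of Proposition~\ref{propn:unstrelGr} into Proposition~\ref{propn:relGrothfib}, identifying $(\sSet \times \Delta^{1})[U^{-1}]$ with $\CatI \times \Delta^{1}$, and transposing, with the unstraightening edge becoming an equivalence because it is a Quillen equivalence. The orientation and product-preservation checks you flag are precisely the bookkeeping the paper leaves implicit.
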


\begin{bibdiv}
  \begin{biblist}
\bib{BarwickKanRelCat}{article}{
  author={Barwick, C.},
  author={Kan, D. M.},
  title={Relative categories: another model for the homotopy theory of homotopy theories},
  journal={Indag. Math. (N.S.)},
  volume={23},
  date={2012},
  number={1-2},
  pages={42--68},
}

\bib{BarwickQ}{article}{
  author={Barwick, Clark},
  title={On the $Q$-construction for exact $\infty $-categories},
  eprint={arXiv:1301.4725},
  year={2013},
}

\bib{BarwickMackey}{article}{
  author={Barwick, Clark},
  title={Spectral {M}ackey functors and equivariant algebraic $K$-theory ({I})},
  journal={Adv. Math.},
  volume={304},
  date={2017},
  pages={646--727},
  eprint={arXiv:1404.0108},
  year={2014},
}

\bib{BarwickGlasmanNardinCart}{article}{
  author={Barwick, Clark},
  author={Glasman, Saul},
  author={Nardin, Denis},
  title={Dualizing cartesian and cocartesian fibrations},
  year={2014},
  eprint={arXiv:1409.2165},
}

\bib{BullejosFaroBlanco}{article}{
  author={Bullejos, M.},
  author={Faro, E.},
  author={Blanco, V.},
  title={A full and faithful nerve for 2-categories},
  journal={Appl. Categ. Structures},
  volume={13},
  date={2005},
  number={3},
  pages={223--233},
}

\bib{Differentialcohomology}{article}{
  author={Bunke, U.},
  author={Nikolaus, T.},
  title={Twisted differential cohomology},
  eprint={arXiv:1406.3231},
  date={2014},
}

\bib{CordierPorter}{article}{
  author={Cordier, Jean-Marc},
  author={Porter, Timothy},
  title={Homotopy coherent category theory},
  journal={Trans. Amer. Math. Soc.},
  volume={349},
  date={1997},
  number={1},
  pages={1--54},
}

\bib{Duskin}{article}{
  author={Duskin, John W.},
  title={Simplicial matrices and the nerves of weak $n$-categories {I}: nerves of bicategories },
  date={2002},
  journal={Theory Appl. Categ.},
  volume={9},
  pages={No. 10, 198\ndash 308 (electronic)},
}

\bib{GlasmanTHHHodge}{article}{
  author={Glasman, Saul},
  title={A spectrum-level {H}odge filtration on topological {H}ochschild homology},
  journal={Selecta Math. (N.S.)},
  volume={22},
  date={2016},
  number={3},
  pages={1583--1612},
  eprint={arXiv:1408.3065},
}

\bib{SGA1}{book}{
  author={Grothendieck, Alexander},
  title={Rev\^etements \'etales et groupe fondamental},
  series={S\'eminaire de G\'eom\'etrie Alg\'ebrique},
  volume={1960/61},
  publisher={Institut des Hautes \'Etudes Scientifiques, Paris},
  date={1963},
}

\bib{JoyalTierney}{article}{
  author={Joyal, Andr{\'e}},
  author={Tierney, Myles},
  title={Quasi-categories vs Segal spaces},
  conference={ title={Categories in algebra, geometry and mathematical physics}, },
  book={ series={Contemp. Math.}, volume={431}, publisher={Amer. Math. Soc.}, place={Providence, RI}, },
  date={2007},
  pages={277--326},
  eprint={arXiv:math/0607820},
}

\bib{JoyalUABNotes}{article}{
  author={Joyal, Andr\'{e}},
  title={The theory of quasi-categories and its applications},
  date={2008},
  book={ title={Advanced course on simplicial methods in higher categories}, series={CRM Quaderns}, volume={45-2}, },
  eprint={http://mat.uab.cat/~kock/crm/hocat/advanced-course/Quadern45-2.pdf},
}

\bib{HTT}{book}{
  author={Lurie, Jacob},
  title={Higher Topos Theory},
  series={Annals of Mathematics Studies},
  publisher={Princeton University Press},
  address={Princeton, NJ},
  date={2009},
  volume={170},
  note={Available at \url {http://math.harvard.edu/~lurie/papers/highertopoi.pdf}},
}

\bib{LurieGoodwillie}{article}{
  author={Lurie, Jacob},
  title={($\infty $,2)-Categories and the {G}oodwillie Calculus {I}},
  date={2009},
  eprint={http://math.harvard.edu/~lurie/papers/GoodwillieI.pdf},
}

\bib{HA}{book}{
  author={Lurie, Jacob},
  title={Higher Algebra},
  date={2014},
  note={Available at \url {http://math.harvard.edu/~lurie/papers/higheralgebra.pdf}},
}

\bib{MakkaiPare}{book}{
  author={Makkai, Michael},
  author={Par{\'e}, Robert},
  title={Accessible categories: the foundations of categorical model theory},
  series={Contemporary Mathematics},
  volume={104},
  publisher={American Mathematical Society},
  place={Providence, RI},
  date={1989},
}

\bib{StreetFibBicat}{article}{
  author={Street, Ross},
  title={Fibrations in bicategories},
  journal={Cahiers Topologie G\'eom. Diff\'erentielle},
  volume={21},
  date={1980},
  number={2},
  pages={111--160},
}

\bib{StreetAlgOrSx}{article}{
  author={Street, Ross},
  title={The algebra of oriented simplexes},
  journal={J. Pure Appl. Algebra},
  volume={49},
  date={1987},
  number={3},
  pages={283--335},
}

\bib{WeberYoneda}{article}{
  author={Weber, Mark},
  title={Yoneda structures from 2-toposes},
  journal={Appl. Categ. Structures},
  volume={15},
  date={2007},
  number={3},
  pages={259--323},
}
  \end{biblist}
\end{bibdiv}


\Addresses
\end{document}